\documentclass[12pt, reqno]{amsart}
\usepackage{amsmath, amsthm, amscd, amsfonts, amssymb, graphicx, color}
\usepackage{mathtools}
\usepackage[bookmarksnumbered, colorlinks, plainpages]{hyperref}
\usepackage[vcentermath]{youngtab}
\usepackage{amsmath, amssymb,ytableau,young,bm,stackrel,textcomp,amsxtra, graphicx}
\usepackage{amsmath, amssymb,amsxtra, graphicx, float,tikz,ytableau,young,bm,stackrel,textcomp,}
\usepackage{bold-extra}
\usepackage{float}
\usepackage{courier}
\usepackage{outlines}
\usepackage{tikz}
\usetikzlibrary{matrix,positioning,calc,fit,calc,shapes.geometric}
\usepackage{xcolor}
\usepackage{color}
\usepackage{tabularx}
\usepackage{tikz,pgf}
 \usetikzlibrary{calc} 
\usepackage{amsthm}
\newtheorem{definition}{Definition}[section] 
\newtheorem{theorem}{Theorem}[section]
\newtheorem{lemma}[theorem]{Lemma}
\newtheorem{proposition}[theorem]{Proposition}
\newtheorem{corollary}[theorem]{Corollary}
\newtheorem{example}[theorem]{Example}

\theoremstyle{remark}
\newtheorem{remark}[theorem]{Remark}

\numberwithin{equation}{section}

\newcommand{\PTab}{\text{PTab}}

\newcommand {\PCW}{\mathsf{PCW}}

\newcommand{\BW}{\mathsf{BW}}
\newcommand{\std}{\mathsf{std}}
\newcommand{\comm}[1]{}
\newcommand {\mmin}{\text{min}}
\newcommand {\mmax}{\text{max}}

\newcommand {\NULL}{\text{NULL}}

\newcommand{\wt}{\mathsf{wt}}
\newcommand{\DE}{\text{DE}}

\title{Combinatorial models for Quasicrystals and Schur-Weyl Duality}

\author{Glenn Appleby}
\address{Department of Mathematics \& Computer Science \\ Santa Clara University \\ Santa Clara, CA 95053, USA}
\email{gappleby@scu.edu}

\author{Mary Claire Simone}
\address{Department of Mathematics \\ Skagit Valley College \\ Mount Vernon, WA 98273}
\email{maryclaire.simone@skagit.edu}

\author{Tamsen Whitehead McGinley}
\address{Department of Mathematics \& Computer Science \\ Santa Clara University \\ Santa Clara, CA 95053, USA}
\email{tmcginley@scu.edu}

\subjclass[2020]{Primary 05E10, 05E05; Secondary 05C20} \keywords{Quasisymmetric function, dual equivalence graphs, Knuth equivalence, biwords, crystals}
\begin{document}
\raggedbottom

\maketitle

\medskip

\begin{abstract}
    Quasicrystals were defined by Cain and Malheiro and Maas-Gariépy \cite{CM17, FMG23} to model the decomposition of Schur polynomials into fundamental quasisymmetric functions.  We define a new combinatorial model for crystal graphs of type $A_{n-1}$, called \emph{parsed content words}, for which the decomposition into quasicrystals is particularly evident.  We use the model to describe a combinatorial interpretation of Schur-Weyl duality and recover the classical combinatorial proof of the well-known expansion of homogeneous symmetric functions into Schur functions within our framework. 
\end{abstract}

\section{Introduction}

Representations of Lie groups and Lie algebras have been analyzed by crystal graphs (\cite{hong2002introduction}, \cite{BS}, \cite{Kashiwara91}), which have been modeled by various combinatorial objects. We study combinatorial objects for both type $A_{n-1}$ crystal graphs and representations of symmetric groups. Irreducible representations for both are indexed by partitions $\lambda$. Naturally, efforts to identify good combinatorial models for irreducible representations focused on those of highest weight $\lambda$ (for $\mathfrak {gl}_n$) and shape $\lambda$ (for symmetric groups).

Many problems in combinatorial representation theory are challenging due to the underlying structures being studied. Sometimes, however, difficulties arise from striving to overcome complexities imposed by the choice of model itself, and not the theory for which it serves as a combinatorial proxy (we use the term \emph{proxy} to avoid referring to a model as a \emph{representation}.) If, given $\lambda$, a model \emph{only} provides  \emph{one} model for each isomorphic irreducible representations highest weight/shape $\lambda$, it might not be well-suited to problems connecting multiple isomorphic but otherwise distinct representations simultaneously (such as in Schur-Weyl duality), or might not be suited to problems devoted to the internal decomposition of representations (such as the theory of quasicrystals) for families of isomorphic representations. We seek to address this here. 

Fix integers $n$ and $k$ with $n \geq k$. Let $V$ be a fixed vector space of dimension $n$, and its $k$-fold tensor product $V^{\otimes k}$. Representations on $V^{\otimes k}$ can be modeled by graphs whose nodes are words of length $k$ in the alphabet $[n]$, denoted $[n]^{\otimes k}$ (called \emph{crystals on words} (see~\ref{subsection_word_crystals})).

Semistandard Young tableaux (SSYT) of shape $\lambda$ have often been used as a combinatorial model for the nodes of a crystal graph, serving as a proxy for the collection of isomorphic (yet distinct) irreducible crystals of highest weight $\lambda$. This model suffers from the difficulties described above. For example, it does not allow the combinatorial analysis of tensor products (Littlewood-Richardson Rule). 
In particular, in the crystal on words in $[n]^{\otimes k}$, only the \emph{distinguished crystal} of highest weight $\lambda$ (see Definition \ref{Def:distinguished crystal}) contains words that are the reading words of SSYT.  Other distinct irreducible crystals of highest weight $\lambda$ are only identified up to isomorphism when we restrict our attention to the SSYT model. So while the distinguished crystal, and its SSYT analogue, are important, they are not sufficient for all purposes.

 The theory of \emph{perforated tableaux} (or \emph{ptableaux}) was developed (see~\cite{AW2022,AW2023}) to analyze multiple $\mathfrak{gl}_n$ representations and rectify these problems.  The decomposition of $V^{\otimes k}$ into irreducible $\mathfrak{gl}_n$ representations is naturally modeled by ptableaux, and distinct crystals of words are identified with distinct but demonstrably isomorphic irreducible ptableaux crystals. A natural tensor product of ptableaux provided a generalization of the Littlewood-Richardson rule within this model, generalizing the classical theory of Littlewood-Richardson fillings. A version of the RSK algorithm was developed for ptableaux, and its role in classifying isomorphic irreducible crystals was explicated \cite{AW2023}.

In this paper, we use a crystal isomorphism between ptableaux and \emph{biwords}, finding the biword model more convenient for our purposes (though relying on previous perforated tableaux results for key steps in our proofs). We provide background and definitions and prove a simple, but key result relating Knuth equivalence to dual (Knuth) equivalence of words appearing in biwords (Theorem~\ref{theorem biword dual Knuth}). We use biwords to define a crystal structure on \emph{parsed content words} (Definition \ref{Def:parsed word}), which allows us to identify quasicrystals and their associated quasisymmetric functions more immediately than in either the SSYT model or the words model by circumventing the standardization algorithm.  Quasicrystals were originally studied by Cain--Malheiro \cite{CM17} and Maas-Gariépy \cite{FMG23}, and later analyzed by \cite{CMRR25}, \cite{CMRR26}, \cite{BCDS}, and \cite{BDMS} as tools to find the Schur function expansion of a symmetric function whose fundamental quasisymmetric function expansion is known, which includes LLT polynomials, modified Macdonald polynomials \cite{HHL2005}, characters of higher Lie modules \cite{GR1993}
and the plethysm of two Schur functions \cite{LW12}. 

We then apply these results from biwords and parsed content words models to give a combinatorial (graph-theoretic) analysis of Schur-Weyl duality in type $A_{n-1}$. In this setting, Theorem \ref{Thm:S-W Duality} shows that Schur-Weyl duality of $V^{\otimes k}$ can be combinatorially identified as the decomposition of $[n]^{\otimes k}$ into families of graphs indexed by $\lambda \vdash k$ isomorphic to a Cartesian product of an irreducible crystal graph of highest weight $\lambda$ and a dual equivalence graph of shape $\lambda$.  Finally, we include a proof of the Schur-expansion of the complete homogeneous symmetric functions via biwords. 

\section{Biwords}\label{Sec_Biwords}
 \subsection{Definitions}

This paper uses \emph{biwords} to compute various invariants and properties of representations, including crystals (in the representation of $\mathfrak{gl}_n$ or its Lie algebras) and representations of symmetric groups $S_k$. 

 \begin{definition} \label{Def:biword} A \emph{biword} is a two-rowed array of positive integers
\[ \left( \begin{matrix} \kappa \\ \rho \end{matrix} \right) = \left(\begin{matrix} \kappa_1&  \kappa_2 & \cdots &\kappa_k \\
\rho_1&  \rho_2 & \cdots &\rho_k \end{matrix}\right) \]
with the property that entries in the top row $\kappa$ are weakly increasing ($\kappa_i\le  \kappa_{i+1}$ for $i=1,2,\dots k-1$), and beneath every constant factor of $\kappa$ the entries in $\rho$ are \emph{weakly decreasing}. 
\end{definition}

Throughout the paper, we fix $[n]^{\otimes k}$ (consisting of words of length $k$ in the alphabet $[n]=\{1,2,\dots, n\}$). Let $\BW_{n,k}$ denote the subset of biwords such that the bottom row is a word in $[n]^{\otimes k}$. 

For example, the two-rowed array:
\[ \left( \begin{matrix} \kappa \\ \rho \end{matrix} \right) = \left( \begin{matrix} 1111222333334 \\ \, 
4431532664112 \end{matrix} \right)\]
is a biword in $\BW_{6,13}$.

Many works (\cite{fulton1997young},\cite{BS}) require that the bottom row $\rho$ of a biword be weakly \emph{increasing} under constant factors of $\kappa$; we have reversed that ordering to be compatible with our constructions of perforated tableaux \cite{AW2022} (described below) and our version of the RSK algorithm defined on them \cite{AW2023}.

We associate biwords to \emph{perforated tableaux} (see~\cite{AW2022,AW2023}), defined in Section~\ref{crystals}. The familiar semistandard Young tableau (SSYT) will be a special case of this construction.  For now, we record our intended use of the two rows in a biword: we let the \emph{top} row $\kappa$ of a biword $\kappa \choose \rho$ specify the \emph{content} of a tableau or perforated tableau and the \emph{bottom} row $\rho$ indicate the \emph{row} in the (perforated) tableau in which the content appears. That is, we have the following interpretation:

\[ \left( \begin{matrix} \kappa \\ \rho \end{matrix} \right) = \left(\begin{matrix} \kappa_1&  \kappa_2 & \cdots &\kappa_k \\
\rho_1&  \rho_2 & \cdots &\rho_k \end{matrix}\right)=\left( \begin{matrix}
    \hbox{Content} \\ \hbox{Row}
\end{matrix} \right). \]

\begin{definition}
    Given some biword, the top row (typically denoted $\kappa$)  is the the \emph{content word} of the biword, and the bottom row (typically denoted $\rho$) is the \emph{row word} of the biword. We define the \emph{content} of a biword $\kappa \choose \rho$ as the multiset of numbers appearing in its content word $\kappa$.
\end{definition}
In the depiction of a biword as a tableau (described in Section \ref{crystals} below) we will see that the content of the biword and the content of its associated tableau coincide.

\subsubsection{From Words to Biwords.} \label{Subsection: WordstoBi}

To generate biwords $\kappa \choose \rho$ from a given word $\rho \in [n]^{\otimes k}$, there are several choices of content words $\kappa$.

Given a word
$\omega = \omega_{1}\omega_{2} \cdots \omega_{k-1} \omega_k \in [n]^{\otimes k},$ let ${\mathcal P}_\mmin=\{i : \omega_i<\omega_{i+1}.\}$,  often called the \emph{ascent set} of $\omega.$ Let ${\mathcal P}$ be a multiset of $\{0,1,\ldots, k\}$ such that ${\mathcal P}_\mmin\subseteq {\mathcal P}.$ Let $p_i$ be the number of $i$'s in $\mathcal P$.  With this, we have the following.
\begin{definition}\label{Def:parsed word}
A \emph{parsed word of $\omega$}, $\omega_{\mathcal P}$, is the word $\omega$ with $p_i$ \emph{parsing bars} ``$|$'' inserted after $\omega_i$ (and $p_0$ parsing bars before $\omega_1$.)  We say that $\mathcal P$ is a \emph{valid parsing} of $\omega.$  We call ${\mathcal P}_\mmin$ the \emph{minimal parsing} of $\omega$, with corresponding parsed word $\omega_{{\mathcal P}_\mmin}$. Any valid parsing of $\omega$ is a \emph{refinement} of ${\mathcal P}_\mmin$.
\end{definition}
Note that a valid parsing $\mathcal P$ separates $\omega$ into a sequence of (possibly empty) weakly decreasing factors. 

\begin{example} If
\[ \omega=4433442233344411122133444 \in [n]^{\otimes 25}, \]
then ${\mathcal P}_\mmin=\{4,8,11,17,20,22\}$ so 
\[  \omega_{{\mathcal P}_{\mmin}}=4433|4422|333|444111|221|33|444. \]
If ${\mathcal P}= \{\!\{4,8,11,13,17,20,22,22\}\!\},$ then
\[ \omega_{\mathcal P} =4433|4422|333|44|4111|221|33||444 \]
(the ``$\, ||$" above denotes an empty factor).  
\end{example}

\begin{definition}\label{def:biword of a parsed word}
    The \emph{biword associated to a parsed word $\omega_{\mathcal P}$} is the biword $\kappa \choose \omega$ with $\kappa$ the content word containing only $i$'s over the $i$th factor of $\omega_{\mathcal P}$.
\end{definition}

\begin{example}
    In the previous example $\omega_{\mathcal P} =4433|4422|333|44|4111|221|33||444$, the biword associated to $\omega_{\mathcal P}$ would be:
    \[ \left( \begin{matrix} \kappa \\ \omega \end{matrix} \right) = \left( \begin{matrix} 1111222233344555566677999 \\ \, 
4433442233344411122133444 \end{matrix} \right).\]
\end{example}

In this paper we are primarily interested in biwords associated to two sorts of parsings.

\begin{definition}\label{defn:minimal_biword}
        Given a word $\rho \in [n]^{\otimes k}$, the \emph{minimal biword associated to $\rho$} is the biword $\kappa \choose \rho$ associated with the minimally parsed row word $\rho_{{\mathcal P}_{\mmin}}$ consisting of $1$'s over the first maximal weakly decreasing factor of $\rho$, $2$'s over the second maximal weakly decreasing factor of $\rho$, etc.
    \end{definition}

\begin{example}
    Let $\rho= 4431532664112$ with minimal parsing $\omega_{{\mathcal P}_{\mmin}}= 4431|532|66411|2$. The (unique) minimal biword associated to $\rho$ is:
\[ \left( \begin{matrix} \kappa \\ \rho \end{matrix} \right) = \left( \begin{matrix} 1111222333334 \\ \, 
4431532664112 \end{matrix} \right).\]
\end{example}

The minimal biword associated to a word $\rho \in [n]^{\otimes k}$ will be used to analyze models for crystals of a fixed highest weight and, in particular, their associated quasicrystals. See Section~\ref{section: quasicrystals}.

\medskip
To aid our analysis of Schur-Weyl duality (Section \ref{Section: S-W duality}), we also consider a different parsing of $\rho$:
\begin{definition}\label{def:maximal-biword}
The  \emph{maximal parsing} of the word $\rho = \rho_1 \rho_2\cdots \rho_k \in [n]^{\otimes k}$ is the set $\{1, 2, \ldots, k-1\}$. The \emph{maximal biword associated to $\rho$} is
     \[ \left( \begin{matrix} \kappa \\ \rho \end{matrix} \right) = \left(\begin{matrix} \kappa_1&  \kappa_2 & \cdots &\kappa_k \\
\rho_1&  \rho_2 & \cdots &\rho_k \end{matrix}\right) = \left(\begin{matrix} 1&  2 & \cdots &k \\
\rho_1&  \rho_2 & \cdots &\rho_k \end{matrix}\right).\]
\end{definition}

\begin{remark}
    For a word $\omega \in [n]^{\otimes k}$, the \emph{minimal parsing} of $\omega$ is the unique parsing of $\omega$ into the fewest possible number of weakly decreasing factors. In the associated minimal biword, the number of distinct letters appearing in the content word is minimal. The \textit{maximal parsing} of $\omega$ is a parsing of $\omega$ into factors of length $1$, and hence in the associated maximal biword, the number of distinct letters appearing in the content word is maximal.
\end{remark}

There are many content words $\kappa$ obtained from different parsings of $\rho$ lying between these two extremes (minimal and maximal parsings): given any parsing $\rho_{\mathcal P}$ of a row word $\rho$ into weakly decreasing factors (not necessarily of maximal length), we can construct a
biword $\kappa \choose \rho$ associated to $\rho_{\mathcal P}$ where the content word $\kappa$ is defined by putting all $1$'s above the first weakly decreasing factor of $\rho$ (relative to the given parsing), then all $2$'s over the second factor, etc. 
For example, the biword below associated to $\rho_{\mathcal P}=443|1|53|2|66|41|1|2$ is neither minimally nor maximally parsed:
\[ \left( \begin{matrix} 1112334556678 \\ \, 
4431532664112 \end{matrix} \right).\]

Such a parsing of $\rho$ is a \emph{refinement} of its minimal parsing.  We do not make much use of these parsings, although most of our results in Section~\ref{section: Parsed word quasicrystal} work for any parsing, and we present a result that uses a particular non-extreme parsing (neither minimal nor maximal) in Section \ref{homogeneous}.

    \subsection{Dual Biwords, Knuth equivalence, Dual (Knuth) Equivalence}

    \emph{Knuth equivalence} is a relation on words. We adopt the version used in \cite{Baker2000} and \cite{Lecouvey2002} that matches the Kashiwara convention for computing crystal operators used in this paper. This version of Knuth equivalence maintains the property that Knuth equivalent words have the same insertion tableau under the RSK algorithm.

    The notion of \emph{dual equivalence} (or dual Knuth equivalence) appeared in the work of Haiman \cite{Haiman1992} and was used by Assaf in \cite{Assaf07thesis, Assaf08fpsac-crystalops, Assaf15} to construct graphs. Through insertion schemes in RSK, dual equivalence is realized by tableaux and certain tableaux ``dual'' to them. There are two senses in which we proceed differently. First, we use a version of dual equivalence that corresponds to our version of Knuth equivalence (Definition \ref{Def:Knuth Equiv}). We also interpret these constructions in terms of associated biwords, not only tableaux constructed from them. This shift yields an  elementary proof of Theorem~\ref{theorem biword dual Knuth}, which relates Knuth and dual equivalences through a natural duality on biwords. 

    \begin{definition} \label{Def: dual biword}
        Let $\kappa \choose \rho$ be a biword (see Def. \ref{Def:biword}). We construct the biword \emph{dual} to $\kappa \choose \rho$, denoted $\widehat{\kappa \choose \rho}$, by swapping the rows of $\kappa \choose \rho$ and re-ordering its columns so that $\widehat{\kappa \choose \rho}$ is itself a biword.

We denote the dual of a biword $\kappa \choose \rho$ as
\[ \widehat{\left( \begin{matrix} \kappa \\ \rho \end{matrix} \right)} = \left( \begin{matrix} \rho^* \\ \widehat{\kappa}_{\rho} \end{matrix} \right),\]
where $\rho^*$ is the word made from the letters of $\rho$, written in weakly increasing order and the word $\widehat{\kappa}_{\rho}$ is uniquely determined by the condition that it is weakly decreasing under constant factors of $\rho^*$.
\end{definition}

For example, if 
\[ \left( \begin{matrix} \kappa \\ \rho \end{matrix} \right) = \left( \begin{matrix} 1111222333334 \\ \, 
4432532664111 \end{matrix} \right),\text{ then }
\left( \begin{matrix} \rho \\ \kappa \end{matrix} \right) = \left( \begin{matrix} 4432532664111 \\ \, 
 1111222333334\end{matrix} \right),\]
and
\[ \widehat{\left( \begin{matrix} \kappa \\ \rho \end{matrix} \right)} = 
\left( \begin{matrix} \rho^* \\ \widehat{\kappa}_{\rho} \end{matrix} \right)=
\left( \begin{matrix} 1112233444566 \\ \, 
4332121311233 \end{matrix} \right).\]

\medskip

\begin{definition}\label{Def:Knuth Equiv} \cite{Baker2000, Lecouvey2002}
    Let $\rho$ and $\rho'$ be two words of length $k$. Then $\rho$ is \emph{elementarily Knuth equivalent} to $\rho'$ if: 

 Case I. The word $\rho$ contains the factor $cab$ with $a<b \leq c$ and the word $\rho'$ contains the factor $acb$ in the same locations.  Otherwise $\rho$ and $\rho'$ are identical. If $a$ is the $i$-th letter of $\rho$, we denote this by $\rho'=K^{-}_{i}\rho$, indicating that we have switched the $i$-th letter of $\rho$ with the $(i-1)$-st letter via valid Knuth equivalence, and vice versa $K^{-}_{i} \rho' =\rho$. 

 \medskip
 \noindent Or
 \medskip

 Case II. The word $\rho$ contains the factor $bac$ with $a\leq b < c$, and the word $\rho'$ contains the factor $bca$ in the same locations.  Otherwise $\rho$ and $\rho'$ are identical. If $a$ is the $i$-th letter of $\rho$, we denote this by $\rho'=K^{+}_{i}\rho$, indicating that we have switched the $i$-th letter of $\rho$ with the letter to its immediate right under a valid Knuth equivalence, and vice versa $K_i^+\rho'=\rho$.

 If $\rho$ and $\rho'$ are related by a single swap of entries as described above (i.e. $\rho'=K_{i}^{\pm}\rho$),  we say they are \emph{elementarily Knuth equivalent}. If they are related by a sequence of elementary Knuth equivalences, we say they are \emph{Knuth equivalent}, denoted by:
 \[ \rho \equiv_{K} \rho'.\]
\end{definition}

We now define \emph{dual equivalence} for permutations. While the relation is equivalent to that found in \cite{Assaf15}, our notation differs somewhat. The need for the change will be apparent when we consider operators on graphs in Section~\ref{section dual equiv}.

\begin{definition}\label{Def:Dual equiv}
Suppose $\kappa$ and $\kappa'$ are two permutations on $[k]$ written in one-line notation.
Then $\kappa$ and $\kappa'$ are \emph{elementarily dual equivalent} if either:
    \medskip

   Case I.  
    \[\kappa = (\ldots i \ldots i+1 \ldots  i-1 \ldots ) , \ \ \hbox{and} \ \  \kappa' = (\ldots i-1 \ldots i+1 \ldots  i \ldots ), \]
but otherwise all entries of $\kappa$ and $\kappa'$ are the same. We denote this involution by $D_{i}^{-}\kappa = \kappa'$ because $D_i^-$ swaps the letters $i$ and $i-1$. 

\medskip
\noindent Or,
\medskip

Case II. 
 \[\kappa = (\ldots i+1 \ldots i-1 \ldots  i \ldots )  \ \ \hbox{and} \ \ \kappa' = (\ldots i \ldots i-1 \ldots  i+1 \ldots ). \]
 but otherwise all entries of $\kappa$ and $\kappa'$ are the same. In this case we denote the involution by $D_{i}^{+}\kappa = \kappa'$ because $D_i^+$ swaps the letters $i$ and $i+1$.
 
In either case, the individual maps $D_{i}^{+}$ or $D_{i}^{-}$ are \emph{elementary dual equivalence operators}.

If $\kappa$ and $\kappa'$ are related by a sequence of elementary dual equivalence operators, we write
\[ \kappa \equiv_{D} \kappa',\] 
and say that $\kappa$ and $\kappa'$ are \emph{dual equivalent.}

We \emph{extend} the definition of the involutions $D_{i}^{-}$ and $D_{i}^{+}$ to \emph{all} (one-rowed) permutations in $S_k$ by declaring $D_{i}^{\pm} (\rho) = \rho$ if $\rho$ does not satisfy the hypotheses of Case I or II above.
\end{definition}

\begin{theorem}\label{theorem biword dual Knuth}
   Suppose $\kappa \choose \rho$ and $\kappa \choose \rho'$  are maximal biwords (so $\kappa = 123\cdots k$). Then the row words $\rho$ and $\rho'$ are Knuth equivalent if and only if in their respective dual biwords $\left( \begin{matrix} \rho^* \\ \widehat{\kappa}_{\rho} \end{matrix} \right)$ and $\left( \begin{matrix} (\rho')^* \\ \widehat{\kappa}_{\rho'} \end{matrix} \right)$ the words $\widehat{\kappa}_\rho$ and $\widehat{\kappa}_{\rho'}$ are dual equivalent. That is,

\[ \rho \equiv_{K} \rho' \Longleftrightarrow \widehat{\kappa}_{\rho}  \equiv_{D} \widehat{\kappa}_{\rho'}  ,\] 
and, in particular,
\[ K_{i}^{\pm} \rho =\rho' \Longleftrightarrow D_{i}^{\pm} \widehat{\kappa}_{\rho}  = \widehat{\kappa}_{\rho'}  .\] 
\end{theorem}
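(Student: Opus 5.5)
The plan is to identify $\widehat{\kappa}_\rho$ with the inverse of a standardization of $\rho$. That reduces the theorem to the classical fact that Knuth moves on a permutation correspond to dual equivalence moves on its inverse. The work then lies in checking that ties among equal letters of $\rho$ are handled compatibly with the weak/strict inequalities in Definition~\ref{Def:Knuth Equiv}.

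\textbf{Step 1 (describe $\widehat{\kappa}_\rho$).} Since $\kappa=12\cdots k$, forming the dual biword lists the positions $1,\dots,k$ of $\rho$ in order of increasing letter. Among equal letters, the positions appear in decreasing order, since $\widehat{\kappa}_\rho$ is weakly decreasing under constant factors of $\rho^*$. Define $\mathrm{st}(\rho)\in S_k$ by labelling the letters of $\rho$ with $1,\dots,k$ in increasing order, breaking ties among equal letters from right to left. Then $\widehat{\kappa}_\rho=\mathrm{st}(\rho)^{-1}$ in one-line notation. Conversely, $\rho$ is recovered from the pair $(\rho^*,\widehat{\kappa}_\rho)$.

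\textbf{Step 2 (Knuth moves commute with $\mathrm{st}$).} In Case I, $\rho$ has the factor $cab$ with $a<b\le c$. If $b=c$, the right-to-left tie-breaking gives the left letter $c$ the larger label, so $\mathrm{st}(\rho)$ has a factor $c'a'b'$ with $a'<b'<c'$. In Case II, $\rho$ has the factor $bac$ with $a\le b<c$. If $a=b$, the right letter $a$ gets the smaller label, so again all three inequalities become strict. In either case one checks that the swap leaves the relative order of the labels unchanged, so that $\mathrm{st}(K_i^\pm\rho)=K_i^\pm\,\mathrm{st}(\rho)$. This tie check is exactly why the paper's asymmetric conventions (weak on one side, strict on the other) are the right ones.

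\textbf{Step 3 (inverting).} Consider Case I with the factor $cab$ at positions $i-1,i,i+1$. The values satisfy $a<b<c$ at positions $i,i+1,i-1$ respectively, so in $\sigma=\mathrm{st}(\rho)^{-1}$ the letters occur in the order $\ldots i\ldots i+1\ldots i-1\ldots$. After the swap they occur as $\ldots i-1\ldots i+1\ldots i\ldots$, which is exactly $D_i^-$. Case II is handled the same way: $bac$ at $i-1,i,i+1$ gives the order $i,\,i-1,\,i+1$, and swapping gives $i+1,\,i-1,\,i$. Since $D_i^+$ is an involution, this is its Case II. Combining with Step 2 gives $K_i^\pm\rho=\rho'\Rightarrow D_i^\pm\widehat{\kappa}_\rho=\widehat{\kappa}_{\rho'}$, and chaining elementary moves gives the forward implication.

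\textbf{Step 4 (converse; main obstacle).} Dual equivalence of $\widehat{\kappa}_\rho$ and $\widehat{\kappa}_{\rho'}$ alone cannot detect content. For example, $\rho=12$ and $\rho'=13$ have the same $\widehat{\kappa}$. So the converse should be read with $\rho^*=(\rho')^*$, which Knuth moves preserve. Under that reading, suppose $D_i^\pm\widehat{\kappa}_\rho=\widehat{\kappa}_{\rho'}$ with both sides valid dual words, that is, weakly decreasing under constant factors of $\rho^*$. Reverse Step 3: the pattern in $\sigma$ forces the letters of $\rho$ at positions $i-1,i,i+1$ to be in the relative order of Case I or Case II. The delicate point is showing that the validity of both $\widehat{\kappa}_\rho$ and $\widehat{\kappa}_{\rho'}$ forces exactly the correct weak and strict inequalities among the actual letters $a,b,c$ of $\rho$.
\begin{itemize}
\item First, values whose relative order is swapped by the move cannot lie in the same constant block of $\rho^*$, otherwise one of the two words would fail to be decreasing there. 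This gives the required strict inequality.
\item Second, equalities that survive are the permitted ones ($b=c$ in Case I, $a=b$ in Case II).
\end{itemize}
I expect this bookkeeping to be the only nonroutine part. I would do it by a short case analysis on which of the three values share a block of $\rho^*$. Once it is done, $\rho'=K_i^\pm\rho$, and induction on the number of elementary moves finishes the proof.
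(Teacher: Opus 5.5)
Your forward direction is correct and is essentially the paper's computation, routed through $\widehat{\kappa}_\rho=\mathrm{st}(\rho)^{-1}$. The paper reads off the dual biwords of $\binom{\kappa}{\rho}$ and $\binom{\kappa}{\rho'}$ directly and notes that $a<b$ keeps them decreasing on blocks of $\rho^*$. You instead separate the tie-breaking (Step 2) from the permutation case (Step 3), which makes the reduction to the permutation statement of \cite{R13} explicit. Your caveat about the converse is also right. As literally stated it fails for words of different content: your $12$ versus $13$, or, with a nontrivial move, $\rho=312$ and $\rho'=143$, where $D_2^-(231)=132=\widehat{\kappa}_{143}$. The paper's proof tacitly assumes $\rho^*=(\rho')^*$ by drawing both dual biwords over the same top row.

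The real flaw is the first bullet of Step 4, which attributes the strict inequality to the wrong pair. In Case I, let $p<q<r$ be the positions of $i$, $i+1$, $i-1$ in $\sigma=\widehat{\kappa}_\rho$. Put $a=\rho^*_p$, $b=\rho^*_q$, $c=\rho^*_r$, so $\rho$ reads $cab$ and $\rho'$ reads $acb$ at positions $i-1,i,i+1$.
\begin{itemize}
\item The values swapped by $D_i^-$ are $i-1$ and $i$, sitting at $p$ and $r$. Validity of $\sigma'$ there gives only $a<c$. That does not exclude $a=b<c$, and then $caa$ versus $aca$ is not a Knuth move.
\item What forces $a<b$ is the pair $(p,q)$. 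It is increasing in both words: $i$ before $i+1$ in $\sigma$, and $i-1$ before $i+1$ in $\sigma'$. So it cannot lie in one constant block of $\rho^*$. This is exactly the paper's ``ascent from $i$ to $i+1$'' argument.
\item Then $b\le c$ is automatic since $\rho^*$ is weakly increasing. The pair $(q,r)$, decreasing in both words, is what permits $b=c$.
\end{itemize}
Case II is symmetric. The swapped values $i,i+1$ again yield only $a<c$. The required $b<c$ comes from the pair of positions carrying $i-1$ followed by $i$ in $\sigma$ (by $i+1$ in $\sigma'$), which is increasing in both words. With this correction your case analysis closes the converse under the equal-content hypothesis, and the argument is complete.
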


This theorem generalizes \cite[Equation 2.18]{R13}, which is restricted to permutations. The two-rowed array of a permutation $\omega$ is a biword whose dual is the two-rowed array for $\omega^{-1}$. 

\begin{example}
Let $\kappa \choose \rho$ and $\kappa \choose \rho'$ be the two maximal biwords below. Note that we have the elementary Knuth equivalence $\rho'=K_{5}^{-}\rho$ (see the circled letters):
\emph{\[ \left( \begin{matrix} \kappa \\ \rho \end{matrix} \right) =\left( \begin{matrix} 123\hbox{\boxed{4}}\hbox{\boxed{5}}\hbox{\boxed{6}}789 \\ \, 
311\hbox{\textcircled{3}}\hbox{\textcircled{4}}\hbox{\textcircled{4}}252 \end{matrix} \right) ,\]}
  and
\emph{\[ \left( \begin{matrix} \kappa \\ \rho' \end{matrix} \right) =\left( \begin{matrix} 123\hbox{\boxed{4}}\hbox{\boxed{5}}\hbox{\boxed{6}}789 \\ \, 
311\hbox{\textcircled{4}}\hbox{\textcircled{3}}\hbox{\textcircled{4}}252 \end{matrix} \right),\]}

In the dual biwords
\emph{\[ \left( \begin{matrix} \rho^* \\ \widehat{\kappa}_{\rho} \end{matrix} \right) =\left( \begin{matrix} 1122\hbox{\textcircled{3}}3\hbox{\textcircled{4}}\hbox{\textcircled{4}}5 \\ \, 
3297\hbox{\boxed{4}}1\hbox{\boxed{6}}\hbox{\boxed{5}}8 \end{matrix} \right) \]}
and
\emph{\[ \left( \begin{matrix} (\rho')^* \\ \widehat{\kappa}_{\rho'} \end{matrix} \right) =  \left( \begin{matrix} 1122\hbox{\textcircled{3}}3\hbox{\textcircled{4}}\hbox{\textcircled{4}}5 \\ \, 
3297\hbox{\boxed{5}}1\hbox{\boxed{6}}\hbox{\boxed{4}}8 \end{matrix} \right),\]}
$\widehat{\kappa}_{\rho'}$ is obtained by applying the elementary dual equivalence operator $D_{5}^-$ to $\widehat{\kappa}_{\rho}$ (see the boxed letters).
\end{example}

\begin{proof}
 It suffices to prove, for maximal biwords $\kappa \choose \rho$ and $ \kappa \choose \rho'$, that $\rho$ and $\rho'$ are elementarily Knuth equivalent if and only if $\widehat{\kappa}_\rho$ and $\widehat{\kappa}_{\rho'}$ are elementarily dual equivalent. 

Case 1. Suppose that two maximal biwords are (Def. \ref{Def:Knuth Equiv}, Case I) Knuth equivalent.  Thus 

\[ \left( \begin{matrix} \kappa \\ \rho \end{matrix} \right)  =\left(\begin{array}{ccccc} 
  \ldots &  i-1 &  i  & i+1& \ldots \\
   \ldots  &  c & a  &   b  &\ldots
\end{array} \right) \]
where $a < b \leq c$, and

\[ \left( \begin{matrix} \kappa \\ \rho' \end{matrix} \right)  = \left(\begin{array}{ccccc} 
  \ldots &  i-1 &  i  & i+1& \ldots \\
   \ldots  &  a &  c   &   b  &\ldots
\end{array} \right) , \]
with all other entries of $\rho$ and $\rho'$(not represented) equal.

We compute the dual biwords of the above and obtain

\[ \left( \begin{matrix} \rho^* \\ \widehat{\kappa}_{\rho} \end{matrix} \right)  =  \left( \begin{array}{ccccccc}   \ldots  &  a & \ldots & b  & \ldots &   c  &\ldots \\
  \ldots &  i & \ldots & i+1 &\ldots & i-1& \ldots
\end{array} \right), \]
and
\[\left( \begin{matrix} (\rho')^* \\ \widehat{\kappa}_{\rho'} \end{matrix} \right)   = \left( \begin{array}{ccccccc}   \ldots  &  a & \ldots & b  & \ldots &   c  &\ldots \\
  \ldots &  i-1 & \ldots & i+1 &\ldots & i& \ldots
\end{array} \right). \]

The fact that $a<b$ ensures that $\widehat{\kappa}_\rho$ and $\widehat{\kappa}_{\rho'}$ are decreasing under constant factors of $\rho^*$ and $(\rho')^*$. 
Then $D_i^-\widehat{\kappa}_{\rho} = \widehat{\kappa}_{\rho'}$ and so the permutations $\widehat{\kappa}_{\rho}$ and $\widehat{\kappa}_{\rho'}$ are elementarily dual equivalent.  

To prove the converse, assume the permutations $\widehat{\kappa}_{\rho}$ and $\widehat{\kappa}_{\rho'}$ appearing in the dual biwords above are elementarily dual equivalent, specifically that $D_i^-\widehat{\kappa}_{\rho} = \widehat{\kappa}_{\rho'}$. We need only argue that, in the original biwords $\kappa \choose \rho$ and $\kappa \choose \rho'$, we must have $a< b \leq c$, implying $\rho$ and $\rho'$ are (Case I) elementarily Knuth equivalent. However, since in  $\rho^* \choose \widehat{\kappa}_{\rho}$ there is necessarily an ascent in the bottom row going from $i$ to $i+1$, there is necessarily an ascent going from $a$ to $b$ in the entries above, showing that $a<b$. The inequality $b \leq c$ follows because  $\rho^* \choose \widehat{\kappa}_{\rho}$ is a valid biword. Hence $\rho$ and $\rho'$ are (Case I) elementarily Knuth equivalent. 

Case 2 follows by virtually identical arguments using $D_i^+$.
\end{proof}

The corollary below follows immediately from Theorem \ref{theorem biword dual Knuth}, but will have important consequences in the analysis of Schur-Weyl duality (Section~\ref{Section: S-W duality}).
\begin{corollary}
    Let $\rho \in [n]^{\otimes k}$ and suppose $\sigma_{i}\in S_k$ is the transposition $(i-1,i)$ acting on the left on words by swapping the entries in positions $i$ and $(i-1)$.  If $\kappa \choose \rho$ is the maximal biword associated to $\rho$, then $K_{i}^{\pm} \rho = \rho'$ if and only if $\sigma_i \widehat{\kappa}_{\rho} =\widehat{\kappa}_{\rho'}$.

    In particular, when $D_{i}^{\pm}$ acts non-trivially on a one-rowed permutation $\widehat{\kappa}_{\rho}$, we have $\sigma_i \widehat{\kappa}_{\rho} = D_{i}^{\pm} \widehat{\kappa}_{\rho}$.
\end{corollary}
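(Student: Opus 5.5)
The plan is to reduce everything to Theorem~\ref{theorem biword dual Knuth}. The one extra ingredient is that an elementary dual equivalence operator, when it acts nontrivially, is left multiplication by a simple transposition. Here a permutation $\widehat{\kappa}_\rho$ in one-line notation is a word, and a transposition acting on the left exchanges two \emph{letters} (values). I will read $\sigma_i$ in that sense: it is the transposition exchanging the two values that the relevant operator exchanges. For $D_i^-$ these are $i-1$ and $i$, which is literally $(i-1,i)$. For $D_i^+$ they are $i$ and $i+1$, so the index has to be shifted accordingly.

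\textbf{Step 1 (the second sentence first).} Suppose $D_i^-$ acts nontrivially on $\widehat{\kappa}_\rho$. By Definition~\ref{Def:Dual equiv}, Case I, the word $(\ldots i \ldots i+1 \ldots i-1\ldots)$ becomes $(\ldots i-1 \ldots i+1 \ldots i \ldots)$, and every other entry is unchanged. This is exactly the word obtained by interchanging the values $i-1$ and $i$, i.e.\ $(i-1,i)\circ\widehat{\kappa}_\rho$. Case II is handled identically for $D_i^+$ with the values $i,i+1$. This is a direct check against the definition.

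\textbf{Step 2 (forward direction).} Assume $K_i^\pm\rho=\rho'$. Since both biwords are maximal with the same top row $\kappa=12\cdots k$, Theorem~\ref{theorem biword dual Knuth} gives $D_i^\pm\widehat{\kappa}_\rho=\widehat{\kappa}_{\rho'}$. The Knuth move is genuinely nontrivial, because it changes $\rho$. Hence $\widehat{\kappa}_{\rho'}\neq\widehat{\kappa}_\rho$, so $D_i^\pm$ acts nontrivially, and Step 1 converts this into $\sigma_i\widehat{\kappa}_\rho=\widehat{\kappa}_{\rho'}$.

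\textbf{Step 3 (converse) and the main obstacle.} Assume $\sigma_i\widehat{\kappa}_\rho=\widehat{\kappa}_{\rho'}$. Since the bottom row of the dual of a maximal biword records positions, swapping the values $i-1,i$ in $\widehat{\kappa}_\rho$ means that $\rho'$ is $\rho$ with the letters in two adjacent positions interchanged.

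The obstacle is that an arbitrary adjacent swap need not be a Knuth move; for instance $12\mapsto 21$ is not one. So the converse only makes sense under the standing hypothesis of the second sentence, namely that the swap is realized by a nontrivial $D_i^\pm$. I would state that hypothesis explicitly. Granting it, $\widehat{\kappa}_{\rho'}=D_i^\pm\widehat{\kappa}_\rho$ by Step 1. The converse direction of Theorem~\ref{theorem biword dual Knuth} then yields $K_i^\pm\rho=\rho'$. This is precisely the step in that proof where the ascent in the dual bottom row forces $a<b$, and validity of the dual biword forces $b\le c$.
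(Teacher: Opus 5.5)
Your Steps 1 and 2 are the paper's argument. The paper applies Theorem~\ref{theorem biword dual Knuth} to the Knuth move and observes that a nontrivial $D_i^{\pm}$ just exchanges two values of $\widehat{\kappa}_{\rho}$, i.e.\ is left multiplication by a transposition. That proves only the forward direction and the ``in particular'' clause. Both of your corrections to the statement are right. For $D_i^{+}$ (equivalently $K_i^{+}$, which swaps positions $i,i+1$ of $\rho$) the transposition must be $(i,i+1)$. The literal converse fails, as your $12\mapsto 21$ example shows.

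The gap is in your repaired converse in Step 3. The word $\widehat{\kappa}_{\rho'}$ does not determine $\rho'$: by Theorem~\ref{thm:quasi constant content}, every word in the quasicrystal of $\rho'$ has the same $\widehat{\kappa}$. Your sentence ``swapping the values \dots\ means that $\rho'$ is $\rho$ with the letters in two adjacent positions interchanged'' is therefore false unless $\rho'$ has the same content as $\rho$. Requiring the swap to be a nontrivial $D_i^{\pm}$ does not rescue it.

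Concretely, take $\rho=213$. Then $\widehat{\kappa}_{\rho}=213$, and $D_2^{+}$ acts nontrivially, giving $312$. But $\widehat{\kappa}_{121}=312$ as well, since the dual of $\binom{123}{121}$ is $\binom{112}{312}$, yet $K_2^{+}\rho=231\neq 121$.

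The converse of Theorem~\ref{theorem biword dual Knuth} that you cite has the same hidden hypothesis: its proof reads $a<b\le c$ off a top row $\rho^{*}$ shared by both dual biwords, so it cannot be invoked as stated. Your Step 2 inference $\rho'\neq\rho\Rightarrow\widehat{\kappa}_{\rho'}\neq\widehat{\kappa}_{\rho}$ is valid only because a Knuth move preserves content, which is exactly the fact missing here.

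The fix is to add the hypothesis $(\rho')^{*}=\rho^{*}$, and the argument then goes through:
\begin{itemize}
\item Read the pattern of $i-1,i,i+1$ in $\widehat{\kappa}_{\rho}$ against the common top row. It forces the factor $cab$ or $acb$ (resp.\ $bac$ or $bca$) in positions $i-1,i,i+1$ of $\rho$, so $K_i^{\pm}\rho$ is defined.
\item By Step 2, $K_i^{\pm}\rho$ has dual biword $\binom{\rho^{*}}{D_i^{\pm}\widehat{\kappa}_{\rho}}=\binom{(\rho')^{*}}{\widehat{\kappa}_{\rho'}}$.
\item Since duality is an involution, $K_i^{\pm}\rho=\rho'$.
\end{itemize}
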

\begin{proof}
     We obtain a word that is Knuth equivalent to $\rho$ by swapping two adjacent letters (in positions $i-1$ and $i$) of $\rho$. By Theorem \ref{theorem biword dual Knuth}, the one-rowed permutation $\widehat{\kappa}_{\rho}$ is elementarily dual equivalent to $\widehat{\kappa}_{\sigma_i \rho}$, and is obtained by swapping the positions of $i-1$ and $i$ in $\widehat{\kappa}_{\rho}$.  This is precisely the effect of left multiplication of a one-rowed permutation by the transposition $\sigma_i$.
\end{proof}

Note that this corollary does \emph{not} say that elementary dual equivalence operators simply reduce to the action of elementary transpositions on the underlying permutations. Rather, dual equivalence establishes the \emph{conditions} under which this is true, and the action is trivial otherwise. This will be interpreted in Section~\ref{Section: S-W duality} as the appropriate analogue in the setting of $S_k$ representations to the role of crystal operators for $\mathfrak{gl}_{n}$ representations. Crystal operators perform idealized Lie algebra actions on words serving as proxies for the actual elements of the Lie algebra, preserving enough of the structure of these actions to allow us to compute important invariants of representations without carrying the full (and more complicated) action of the Lie algebra. Similarly, we claim that dual equivalence operators serve as an idealized proxy for the full action of $S_k$ on the tensor product $V^{\otimes k}$, where we let dual equivalence operators on words in $[n]^{\otimes k}$ serve as the appropriate discrete analogue of the action of $S_k$ on $V^{\otimes k}$ via permutation of coordinates.

The following is immediate by duality of biwords.
\begin{corollary}
    Suppose $\kappa \choose \rho$ and $\kappa \choose \rho'$ are biwords with $\rho$ and $\rho'$ permutations on $[k]$. Then 
    \[ \rho \equiv_{D} \rho' \Longleftrightarrow \widehat{\kappa}_{\rho}  \equiv_{K} \widehat{\kappa}_{\rho'}  .\] 
\end{corollary}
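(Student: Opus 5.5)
The plan is to apply Theorem~\ref{theorem biword dual Knuth} to the \emph{dual} biwords rather than to the originals, and then use that taking duals twice returns the original biword.

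First, since $\rho$ is a permutation of $[k]$, its letters are exactly $1,2,\dots,k$, each appearing once. So $\rho^* = 12\cdots k$, and likewise $(\rho')^* = 12\cdots k$. Hence
\[ \widehat{\left( \begin{matrix} \kappa \\ \rho \end{matrix} \right)} = \left( \begin{matrix} 12\cdots k \\ \widehat{\kappa}_{\rho} \end{matrix} \right) \quad\text{and}\quad \widehat{\left( \begin{matrix} \kappa \\ \rho' \end{matrix} \right)} = \left( \begin{matrix} 12\cdots k \\ \widehat{\kappa}_{\rho'} \end{matrix} \right) \]
are both maximal biwords, with the same content word $12\cdots k$. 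These are exactly the hypotheses of Theorem~\ref{theorem biword dual Knuth}, applied with row words $\widehat{\kappa}_\rho$ and $\widehat{\kappa}_{\rho'}$. The theorem gives that $\widehat{\kappa}_\rho \equiv_K \widehat{\kappa}_{\rho'}$ if and only if the bottom rows of the duals of these two maximal biwords are dual equivalent.

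Second, I check that those bottom rows are $\rho$ and $\rho'$; that is, dualizing twice is the identity on biwords. Dualizing $\binom{12\cdots k}{\widehat{\kappa}_\rho}$ swaps the rows, which gives an array whose columns are exactly the columns $\binom{\kappa_j}{\rho_j}$ of the original biword, up to order. It then reorders these columns so that the top row is weakly increasing and the bottom row is weakly decreasing under constant factors of the top row. By Definition~\ref{Def: dual biword}, this reordering is uniquely determined by the multiset of columns. The original array $\binom{\kappa}{\rho}$ is already such an ordering, since it is a biword. Hence the double dual is $\binom{\kappa}{\rho}$, and its bottom row is $\rho$. The same argument applies to $\rho'$, and it uses that both biwords share the content word $\kappa$.

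Combining the two steps gives $\rho \equiv_D \rho' \Longleftrightarrow \widehat{\kappa}_\rho \equiv_K \widehat{\kappa}_{\rho'}$, and, elementwise, $D_i^{\pm}\rho=\rho' \Longleftrightarrow K_i^{\pm}\widehat{\kappa}_\rho=\widehat{\kappa}_{\rho'}$. The only step needing care is the involutivity of duality, specifically the uniqueness of the column reordering when repeated letters of $\kappa$ appear. I expect this to be routine, because the column multiset together with the biword ordering conventions fixes the array. I would also note that the dual equivalence in the theorem's conclusion is applied to $\rho$ and $\rho'$, which are permutations as Definition~\ref{Def:Dual equiv} requires, so no extension of that definition is needed.
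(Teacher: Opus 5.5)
Your proposal is correct and is exactly the argument the paper has in mind when it calls the corollary ``immediate by duality of biwords.'' Since $\rho^*=(\rho')^*=12\cdots k$, the duals are maximal biwords, so Theorem~\ref{theorem biword dual Knuth} applies to them, and involutivity of duality returns $\rho,\rho'$ as the row words of the double duals. One refinement: the shared content word $\kappa$ is not needed for involutivity, but it is what guarantees that $\widehat{\kappa}_\rho$ and $\widehat{\kappa}_{\rho'}$ have the same content. That is the setting the theorem's converse direction implicitly requires, since row words with different content share no Knuth class.
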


\section{Crystals}\label{crystals}
    \subsection{Crystal Graphs} Crystal graphs are directed graphs that model representations of Lie groups or Lie algebras.  Appearing in the work of Kashiwara \cite{Kashiwara91, Kashiwara.1990}, Lusztig \cite{Lusztig.1990} and Littelman \cite{Littelmann1995}, crystal graphs have connections to classical combinatorial theory including the Robinson-Schensted-Knuth correspondence,  Schur-Weyl duality, and symmetric functions. These graphs have the following features:
    \begin{itemize}
        \item \emph {Nodes} of the graph. To each node we associate a \emph {weight} (a composition).
        \item \emph {Directed edges} labeled by crystal operators $f_i$ (called lowering operators) and $e_i$ (raising operators) for $i=1,2\dots,n-1,$ where if $f_i(A)=B$, (so $f_i$ labels the directed edge $(A, B)$), then $e_i(B)=A$ and $e_i$ labels the directed edge $(B,A).$ 
        \item A \emph {highest weight node} and a \emph {lowest weight node} within each connected component. A node is highest weight if there are no $e_i$ edges from it, and similarly it is lowest weight if it has no $f_i$ edges from it.
    \end{itemize}
    The graph structure of a connected component of a crystal graph is determined by its highest weight $\lambda$, a partition.  We say that two connected components of highest weight $\lambda$ are crystal isomorphic.

    \subsection{Models for Crystals} 
    
   There are several models of crystal graphs for the $\mathfrak{gl}_n$ crystal structure (of type $A_{n-1}$), which use a variety of combinatorial objects as the nodes of the graph (see~\cite{BS},\cite{HenriquesKamnitzer2006}). The goal of such models is to make the determination of edges easier or more transparent, and to exemplify other features or related constructions (such as computing invariants of tensor products of representations by means of products of crystal graphs). 

In this section we briefly review several models for crystal graphs, namely:
\begin{enumerate}
    \item Crystals on Words (Section~\ref{subsection_word_crystals}).
    \item Crystals on Perforated Tableaux (Section~\ref{subsection_Ptab_crystals}).
    \item Crystals on Biwords (Section~\ref{subsection_biword_crystals}).
    \item The Distinguished Crystal, Crystals on SSYT (Section~\ref{distinguished})
\end{enumerate}
    
    Each connected component of a crystal graph, called an \emph{irreducible crystal}, has a unique highest weight node whose weight is given by a partition $\lambda.$ We denote an irreducible crystal of highest weight $\lambda$  by ${\mathcal B}_{\lambda}$.
    We use ${\mathcal B}_{\lambda}(T)$ to denote an irreducible crystal graph of highest weight $\lambda$ containing a node $T$ (where $T$ is a particular combinatorial object), thereby specifying the model for the crystal.
    For an excellent treatise on crystal graphs, see \cite{BS}.  
    
    We now describe several crystal graph models and their associated crystal operators. Each node is described by some combinatorial object (a word, tableau or perforated tableau for example) from which the weight of the node and the image of an operator $f_i$ or $e_i$ is computed.  When there is no edge labeled $f_i$ emanating from a given node, we say that the value of the operator $f_i$ at this node is \emph{NULL}.  A node $\omega$ in a crystal graph is called \emph{highest weight} if $e_i(\omega)=\NULL$ for $i=1,2,\dots n-1.$ 
    
    \subsubsection{Crystals on Words}\label{subsection_word_crystals}
    
    Let $[n]=\{1,2,\dots , n\}$ and $[n]^{\otimes k}$ be the set of words of length $k$ in the alphabet $[n]$. We define a crystal operator $f_i$ on $[n]^{\otimes k}$ for $i=1,2,\dots , n-1$.  To make the definition agree with crystal operators on ptableaux (see Definition \ref{ptab def}); our rule differs from that in \cite{BS}. 
    
    Suppose $\rho = \rho_{1}\rho_{2} \cdots \rho_k \in [n]^{\otimes k}$. 
 We first label each $i$ (resp. $i+1$) in $\rho$ with $-$ (resp. $+$). For example, if $\rho=3\,3\,2\,2\,3\,2\,1\,1\,1\,3\,1\,2\,2\,3$ and $i=2$, we label
\[\begin{matrix}
    3&3&2&2&3&2&1&1&1&3&1&2&2&3\\
    +&+&-&-&+&-&&&&+&&-&-&+.
\end{matrix}
\]
We then \emph{bracket} each $-$ with the closest $+$ to its right (if it exists) until all unbracketed $i$'s are to the right of any unbracketed $i+1$. We identify bracketed pairs in a set of parentheses. For our example, the bracketing is: 
\[\begin{matrix}
    3&3&2&2&3&2&1&1&1&3&1&2&2&3\\
    +&+&-&-&+&-&&&&+&&-&-&+\\
    +&+&-&(-&+)&(-&&&&+)&&-&(-&+).
\end{matrix}
\]
If there is no unbracketed $-$, then $f_i(\rho) = \NULL$.  Otherwise, we find the left-most unbracketed $-$ and change its corresponding $i$ to $i+1$.  Thus in our example, the left-most unbracketed $-$ corresponds to the first $2,$ so 
$$
f_2(3\,3\,2\,2\,3\,2\,1\,1\,1\,3\,1\,2\,2\,3) = 3\,3\,3\,2\,3\,2\,1\,1\,1\,3\,1\,1\,3\,3.
$$

The \emph{weight} of a word $\omega \in [n]^{\otimes k}$ is the composition $\wt(\omega) = (a_1, \ldots , a_n)$ where $a_i$ is the number of $i$'s in $\omega$. For example, if $\omega = 2\,1\,2\,2\,3\,3\,1\in [4]^{\otimes 7}$, then its weight is $\wt(\omega) = (2, 3, 2, 0)$.

These definitions determine an $A_{n-1}$ crystal structure on $[n]^{\otimes k}$. 

There is also a crystal operator $e_i$ for $i\in[n-1]$, which is the inverse of $f_i$.  In this case, we find the right-most unbracketed $i+1$ and change it to an $i$.  See \cite{BS}.

    \subsubsection{Crystals on Perforated Tableaux}\label{subsection_Ptab_crystals}
    Perforated tableaux were defined in \cite{AW2022} as an alternative combinatorial model for crystal graphs of type $A_{n-1}$. 
    Perforated tableaux are objects such as:
\ytableausetup{smalltableaux}
\[ T=\begin{ytableau} \ & & 1 && 3 & 4\\ & 1 &2 & 2&& \\3 & 3 & 4 & 4 && \end{ytableau} \]
that form the nodes of type $A_{n-1}$ crystal graphs. More formally:

\begin{definition}
    
(See~\cite{AW2022}) \label{ptab} A rectangular tableau $T$ is called a \emph{perforated tableau}  (or \emph{ptableau}) if:
 \begin{enumerate}
 \item All boxes in $T$ contain positive integers or are blank (unfilled, denoted $\,\,\Box$).
  \item For any positive integer $i$, the $i$'s in $T$ form a horizontal strip.
 \item If $i < j$, then no element in the horizontal strip of $j$'s is in a higher row, nor in a column to the left, of a column containing any element of the horizontal strip of $i$'s.
 \item The number of columns of $T$ subject to the above conditions is \emph{minimum}. In particular, $T$ has no column entirely composed of blanks (rows of all blanks are allowed).
\end{enumerate}

Given a perforated tableau $T$, we define the \emph{weight} of $T$ as the composition $\wt(T) = (a_1, \ldots , a_n)$ where $a_i$ is the number of filled boxes in row $i$ of $T$.

\end{definition}

A given perforated tableau can have several equivalent forms:

\begin{definition}
    
Suppose $S$ and $T$ are ptableaux with $n$ rows. Then $S$ and $T$ are \emph{row-equivalent} if we can transform $S$ into $T$ by a finite number of steps in which a blank in a given row is swapped with content immediately to its left or right, such that the result is a valid ptableau. Note that if $S$ and $T$ are row-equivalent, then $\wt(S) = \wt(T)$.
\end{definition} 
For example, 
$$\begin{ytableau} \ & & 1 && 3 & 4\\ & 1 &2 & 2&& \\3 & 3 & 4 & 4 && \end{ytableau} \text{ is row equivalent to }
    \begin{ytableau} \ & & 1 && 3 & 4\\ 1&  &2 & 2&& \\3 & 3 & 4 & &4&
\end{ytableau}$$
Two particular row-equivalent ptableaux will be used to define a crystal structure on ptableaux.
\begin{definition}
A ptableau $T$ is \emph{left-justified} if all of $T$'s content is as far to the \emph{left} as possible. That is, in any ptableau $T'$ that is row-equivalent to $T$, each entry of $T$ is weakly to the left of the corresponding entry in $T'$.

Similarly, a ptableau $T$ is \emph{right-justified} if all content of $T$ is as far right as possible.
\label{justified}
\end{definition}

The following are then easily proved:

\begin{lemma}
    
\begin{enumerate}
 \item Given any ptableau $T$, there is a unique ptableau $T^\leftarrow$ that is left-justified and row-equivalent to $T$.
\item Given a left-justified ptableau $T^\leftarrow$, there is a unique right-justified ptableau $T^\rightarrow$ that is row-equivalent to $T^\leftarrow$.
\end{enumerate}\label{left-right-justified}
\end{lemma}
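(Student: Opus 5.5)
The plan is to encode a ptableau by data that row-equivalence cannot change, and then show that this data together with the justification condition pins down a unique representative. Within a single row, a blank may only be swapped with adjacent content, so the left-to-right order of the filled entries in each row is invariant. Row-equivalence therefore only redistributes blanks among the gaps of each row. A ptableau row-equivalent to $T$ is thus determined by the column positions of its filled boxes, subject to two requirements: the row orders are fixed, and conditions (2)--(4) of Definition~\ref{ptab} must hold. Those conditions say that equal entries lie in distinct columns, and that the horizontal strip of $j$'s does not start left of, or above, the columns and rows used by the strip of $i$'s for $i<j$. Both requirements are ``column-monotone'' constraints: each one either demands that one filled box lie strictly or weakly to the right of another, or restricts the allowed column range.

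For part (1), I would construct $T^\leftarrow$ greedily. Process entries in increasing order of value, breaking ties among equal values by their row order. Place each entry in the leftmost column that is to the right of its left neighbour in its row and that satisfies all constraints imposed by entries already placed. To check that this works I would set up a partial order on placements, comparing them coordinatewise by column, and show two things.
\begin{enumerate}
\item The set of valid placements of the given row-content sequences is closed under coordinatewise minimum. This is the standard lattice argument: if every constraint has the form ``column of $x$ $\ge$ column of $y$ $+\,\epsilon$'', with $\epsilon\in\{0,1\}$, or is a lower bound, then the minimum of two solutions is again a solution.
\item Any valid placement can be reached from $T$ by legal blank-swaps, i.e.\ the configuration space is connected under single moves.
\end{enumerate}
Given these, the coordinatewise minimum is a well-defined valid ptableau, and it is left-justified in the sense of Definition~\ref{justified}. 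It is unique, because two left-justified representatives would each have to lie weakly left of the other entrywise.

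For part (2), the same argument applies with coordinatewise maximum in place of minimum, applied to the row-equivalence class of $T^\leftarrow$. The one extra point is boundedness. Condition (4), minimality of the number of columns, fixes the width, so the maximum exists inside the fixed rectangle. I also need to check that moving content right never forces the ptableau to gain or lose a column. If it did, condition (4) would fail or the result would change shape. The check is that the width is determined by the longest chain of strict column constraints, and this is an invariant of the class.

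I expect the main obstacle to be step 2, connectivity: that the lattice min and max are actually reachable by single adjacent blank-content swaps, each of which keeps the tableau valid. The approach I would try first is to move from an arbitrary valid placement toward the minimum by repeatedly taking the leftmost entry that is not yet at its minimal column. I would then argue that the cell immediately to its left is a blank, and that shifting this entry one step left violates no constraint, since all constraints bounding it from the left are already satisfied at the minimum. This makes the swap a legal move. Induction on the total displacement then finishes the argument.
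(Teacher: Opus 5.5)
The paper gives no proof of this lemma (it is introduced with ``the following are then easily proved''), so there is no argument to match. Your proposal is a correct, self-contained proof.

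Identify each filled box by its row and its rank within that row. Then conditions (2)--(3) become strict difference constraints $c(y)\ge c(x)+1$ between fixed pairs of boxes, together with $1\le c\le N$. The solution set is a finite lattice under coordinatewise $\min$ and $\max$. Because $N$ is already the minimal width, condition (4) holds automatically for every solution in the $N$-column rectangle.

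Your descent step is sound. Write $m$ for the lattice minimum. If $x$ is the leftmost box not at its minimal column, every box $y$ bounding $x$ from the left lies strictly left of $x$, hence sits at its minimum. So $c(x)-1\ge m(x)\ge m(y)+1=c(y)+1$; the cell left of $x$ is blank and the unit move is legal. This makes the whole solution set a single row-equivalence class, giving existence of $T^\leftarrow$ (uniqueness is trivial). Part (2) is the mirror image. A side benefit: any placement with no legal leftward move must equal $m$, so naive sliding in any order terminates at $T^\leftarrow$.

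There are two points to tighten.

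First, closure under $\min$ is exactly where the reading of the horizontal-strip condition matters. ``Equal entries lie in distinct columns'' is a disequality, and disequalities are not preserved by coordinatewise minimum: the solutions $(1,2)$ and $(2,1)$ of $c(x)\ne c(y)$ have minimum $(1,1)$. You need the ordering form: an $i$ in a lower row lies strictly left of an $i$ in a higher row. Likewise, read condition (3) as ``if $i<j$ and the $j$ lies in a weakly higher row than the $i$, then it lies strictly to its right,'' which matches every example in the paper. Since rows never change, each such condition is either vacuous or a strict difference constraint between two fixed boxes. Even under the weaker reading this is recoverable: two boxes in different rows cannot exchange column order by unit moves without passing through a common column.

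Second, the greedy construction is superfluous, since the lattice minimum replaces it. As phrased it can also fail: among equal values the lower rows must be placed first. If rows $1$ and $2$ each contain a single $1$ and the row-$1$ entry is placed first, it goes to column $1$ and leaves no room for the row-$2$ entry to its left. Similarly, the ``longest chain'' check in part (2) is unnecessary, because unit moves never leave the fixed $N$-column rectangle.
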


So, for example, if
\ytableausetup{smalltableaux}
\[ T= \begin{ytableau}
\ &&&&1&1&4 \\
&1&&1&&2&5\\
1&2&3&4&4&5& \end{ytableau}\, , \]
then we have the row-equivalent ptableaux:
\[ T^\leftarrow =\begin{ytableau}
\ &&&1&1&4& \\
&1&1&2&&&5\\
1&2&3&4&4&5& \end{ytableau} \ \ \hbox{and} \   T^\rightarrow = \begin{ytableau}
\ &&&&1&1&4 \\
&&1&1&&2&5\\
1&2&3&4&4&5&\end{ytableau}\, . \]

Row-equivalence is clearly an equivalence relation on ptableaux. A consequence of  Lemma 3.6 in \cite{AW2022} is that all perforated tableaux in a given row-equivalence class have the same number of columns.

\begin{definition} Let $\PTab$ denote the set of row-equivalence classes of ptableaux, each of which contains ptableaux that are row-equivalent to the left-justified representative $T^\leftarrow$ and that have the same number of columns as $T^\leftarrow$.  Let $\PTab_{n}$ denote the subset of $\PTab$ composed of ptableaux with $n$ rows (some of those rows may be entirely blank).
Let $\PTab_{n,k}$ be the subset of $\PTab_n$ with $k$ filled boxes.
\label{ptab def}
\end{definition}

\begin{lemma}[\cite{AW2022}] A given ptableau $T \in \PTab$ is determined by its content, and the rows in which that content appears. \label{unique}
\end{lemma}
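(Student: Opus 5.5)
To prove Lemma~\ref{unique}, I will show that the content and row data of a ptableau pin down its left-justified representative $T^\leftarrow$, and hence its row-equivalence class in $\PTab$. The data is the multiset of pairs (content $i$, row $r$). Encode it as a biword $\kappa \choose \rho$: the content word $\kappa$ lists the entries in weakly increasing order, and beneath each constant factor of $\kappa$ the rows are written in weakly decreasing order. I will reconstruct $T^\leftarrow$ letter by letter, processing the horizontal strips of $1$'s, $2$'s, \ldots\ in order, and show that every placement is forced.

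The first step is to place the strip of $i$'s. By condition (2) the $i$'s form a horizontal strip, so within the strip their rows are weakly decreasing from left to right. This is why $\rho$ is weakly decreasing under constant factors of $\kappa$, and it fixes the left-to-right order of the $i$'s up to ties within a row. Ties within a row do not matter, since those entries are identical and sit in consecutive positions of that row.

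The second step is to fix the columns. Condition (3) says every $j>i$ lies weakly below, and not in a column to the left of, any column containing an $i$. In a left-justified ptableau each entry sits as far left as these constraints allow. I will argue by induction on the entries in biword order. The leftmost admissible column for the next entry $i$ in row $r$ is determined by three things: the previously placed entries of row $r$, the column constraints from condition (3) relative to all smaller entries, and the horizontal-strip constraint relative to $i$'s already placed in higher rows. All three depend only on the pairs already placed. So the column of each entry of $T^\leftarrow$ is a function of the biword. Condition (4), minimality of the number of columns, then fixes the total width, because blank columns on the right are removed.

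The final step is to conclude. Any $T$ with the given content and rows is row-equivalent to its unique $T^\leftarrow$ by Lemma~\ref{left-right-justified}. Two ptableaux with the same data therefore have the same $T^\leftarrow$, and so they give the same element of $\PTab$.

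The main obstacle is the second step. It requires showing that the greedy leftmost placement is consistent and is actually what left-justification produces, so that no later entry forces an earlier entry to move right and introduce ambiguity. I would handle this with an exchange argument. Suppose two left-justified ptableaux with the same data differ, and take the first entry in biword order where their columns differ. Sliding that entry left in the ptableau where it sits further right gives a row-equivalent valid ptableau, which contradicts left-justification. If this comparison becomes delicate, I would fall back on the structural results of \cite{AW2022}, in particular Lemma~3.6 there on the column count of a row-equivalence class.
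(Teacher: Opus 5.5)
The paper gives no proof of this lemma. It is quoted from \cite{AW2022}, where it is the injectivity half of Lemma~\ref{lemma bijections}. So your argument has to stand on its own, and in outline it does. List the entries as the columns $x_1,\dots,x_k$ of $\binom{\kappa}{\rho}$, matching equal entries in a row from left to right. Read correctly, conditions (2)--(3) say exactly that $\operatorname{col}(x_t)>\operatorname{col}(x_s)$ whenever $s<t$ and $\rho_s\ge\rho_t$, and they impose nothing else. Hence every filling satisfies $\operatorname{col}(x_t)\ge g(t):=1+\max\{g(s): s<t,\ \rho_s\ge\rho_t\}$, and the filling given by $g$ is valid. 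Your first-difference slide then shows that every left-justified ptableau is this filling; the formula reproduces $T^\leftarrow$ and $(T')^\leftarrow$ in the paper's examples. Compared with the bare citation, your route gives an explicit description of $T^\leftarrow$. It also identifies the width as the length of a longest weakly decreasing subsequence of $\rho$.

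Three repairs are needed.
\begin{itemize}
\item Your paraphrase of (3), ``every $j>i$ lies weakly below and not left of any $i$'', is the literal reading, and the paper's examples refute it. In Example~\ref{ex:ptab-biword} a $3$ in row $1$ lies above a $1$ in row $2$, and a $3$ in row $3$ lies left of both $1$'s. The intended condition is that a $j>i$ in a weakly higher row than an $i$ must lie strictly to its right. Consequently only earlier entries in weakly \emph{lower} rows constrain a new entry. Likewise, the $i$'s already placed in biword order lie in \emph{lower} rows, not higher ones.
\item The slide must be checked to be legal. Let $x$ be the first differing entry, with $\operatorname{col}_A(x)>\operatorname{col}_B(x)$. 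Every entry left of $x$ in its row precedes $x$ in biword order, so it occupies the same column in $A$ and $B$. That column is less than $\operatorname{col}_B(x)\le\operatorname{col}_A(x)-1$, so the cell immediately left of $x$ in $A$ is blank. Constraints with earlier entries survive because the new column is still at least $\operatorname{col}_B(x)$, and constraints with later entries only get easier.
\item Most importantly, the slid array must still satisfy (4). This requires reading (4) as ``the number of columns is the minimum $m$ over all fillings with this data'', not as ``blank columns on the right are removed''. Under that reading, if a slide emptied a column, deleting it would give a valid filling with $m-1$ columns, a contradiction. Under the weaker reading the lemma is false. A $1$ directly above a $2$ in one column, versus a $1$ at $(1,1)$ and a $2$ at $(2,2)$, have the same data and no blank column, yet they are not row-equivalent.
\end{itemize}

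There is also a simpler finish. Slide the entries of an arbitrary $T$ leftward in biword order until each $x_t$ reaches column $g(t)$. The same checks make each step legal, and this proves the lemma directly, without invoking Lemma~\ref{left-right-justified}.
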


We now define the crystal operator $f_i$ on $\PTab_{n}$. 

\begin{definition}
Let $T \in \PTab_{n,k}$  and choose $i$, for $1 \leq i <n-1$. Let $T[i,i+1]$ denote the ptableau formed by the entries of $T$ in rows $i$ and $i+1$.\end{definition}

Note that, as columns of blanks are not allowed in a ptableau,  we omit any such columns in the construction of $T[i,i+1]$. For example, if
\ytableausetup{smalltableaux}
$$ T=  \begin{ytableau}
 \ &&&&&&&&4&4&5\\
&&&&1&1&2&&&6&6\\
&&1&1&2&&&4&5&&7\\
1&1&2&3&3&3&4&6&6&& \end{ytableau} \, , \quad \hbox{then} \ \  T[2,3] = \begin{ytableau}
\ &&1&1&2&6&6\\
1&1&2&4&5&&7 \end{ytableau} \, . $$

Lemma 4.6 in ~\cite{AW2022} shows that if content in row $(i+1)$ of $T[i,i+1]^\rightarrow$ has a blank below it, then the corresponding content in $T^\rightarrow$ also has a blank below it. With this, we have:

\begin{definition}  \label{f_i def} Suppose $T \in \PTab_{n,k}$ and that $\framebox{c}_{\, i}$ is an entry in row $i$ of $T$ such that in $T[i,i+1]^\rightarrow$ (the right-justified form of $T[i,i+1]$) the corresponding $\framebox{c}$ is the leftmost entry with a blank below it. Then,  define $f_i (T)$ to be the ptableau obtained by swapping $\framebox{c}_{\, i}$ with the blank below it (in row $i+1$) of $T$. If there is \emph{no} such entry $\framebox{c}$, then $f_i (T) = \NULL$.
\end{definition}
In~\cite{AW2022} it was shown the operators $f_i$ above put a type $A_{n-1}$ crystal structure on $\PTab_{n,k}$. 

\begin{example}\label{example ptab operator}
Consider $T$ and $T[2,3]$ as above.

We calculate $f_{2}(T)$:
\begin{equation*} T[2,3]^\rightarrow = \begin{ytableau}
\ &&1&1&2&6&6\\
1&1&2&4&5&&7 \end{ytableau} ,\label{example eq} \end{equation*}
and so the $6$ in the sixth column of $T[2,3]^\rightarrow$ is the leftmost entry over a blank. We circle this entry in $T^\rightarrow$ below:
\ytableausetup{centertableaux}

$$ T^\rightarrow=\begin{ytableau}
 \ &&&&&&&&4&4&5\\
&&&&&&1&1&2&\textcircled{$\scriptstyle 6$}&6\\
&&&&1&1&2&4&5&&7\\
1&1&2&3&3&3&4&6&6&& \end{ytableau}\, ,
 \hbox{ and so }
f_{2}(T) =  \begin{ytableau}
\ &&&&&&&&4&4&5\\
&&&&1&1&2&&&&6\\
&&1&1&2&&&4&5&\textcircled{$\scriptstyle 6$}&7\\
1&1&2&3&3&3&4&6&6&& \end{ytableau}\,.$$
\end{example}
 
 Theorem 6.2 of~\cite{AW2022} states:

\begin{theorem} A ptableau $T$ is highest weight if and only if its left-justified form $T^\leftarrow$ is a semistandard Young tableau. \label{highest weight ptab}
\end{theorem}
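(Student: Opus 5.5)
The claim is that a ptableau $T$ is highest weight if and only if $T^\leftarrow$ is a semistandard Young tableau. Everything reduces to a row-by-row criterion. By Definition~\ref{f_i def}, together with the fact that $e_i$ inverts $f_i$, the condition $e_i(T)=\NULL$ depends only on the two-row ptableau $T[i,i+1]$. So I would first describe $e_i$ explicitly on two-row ptableaux. Dualizing Definition~\ref{f_i def}, $e_i$ should move the rightmost entry of row $i+1$ of $T[i,i+1]^\leftarrow$ that has a blank above it up into row $i$. Hence $e_i(T)=\NULL$ exactly when, in the left-justified form $T[i,i+1]^\leftarrow$, every entry of row $i+1$ sits directly below a filled box of row $i$.

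The second step is to show that, for a two-row ptableau, this is equivalent to $T[i,i+1]^\leftarrow$ being a semistandard two-row skew-free tableau. That is, row $i$ is left-justified starting in column $1$, and row $i+1$ lies under it with strictly larger entries in each column. I would argue this from conditions (2)--(3) of Definition~\ref{ptab}. In a left-justified ptableau, an entry $c$ of row $i+1$ lying under a filled box $d$ of row $i$ must satisfy $d<c$. This holds because equal letters form a horizontal strip, and because a larger letter cannot be in a higher row than a smaller one in a column weakly to its right. Minimality of columns (condition (4)) together with left-justification then forces both rows to start at the left edge, with no internal blanks once every lower entry is covered.

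The third step globalizes. Suppose $T$ is highest weight. Then for each $i$, the pair of rows $i,i+1$ of $T[i,i+1]^\leftarrow$ is semistandard and column-strict. I need these local left-justified forms to agree with the rows of $T^\leftarrow$ itself. I would prove by induction on $i$, starting from row 1, that row $i$ of $T^\leftarrow$ has no blanks before its last entry. The covering condition forces each row of $T^\leftarrow$ to be a left-justified segment no longer than the one above it, which gives a partition shape. Column strictness then follows from the two-row statement. This transfer is where I would use the analogue of Lemma 4.6 of \cite{AW2022} for left-justification: blank/filled adjacency in $T[i,i+1]^\leftarrow$ matches that in $T^\leftarrow$. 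Conversely, if $T^\leftarrow$ is an SSYT, each $T[i,i+1]^\leftarrow$ is exactly rows $i,i+1$ of $T^\leftarrow$. Every entry in row $i+1$ is then covered, so all $e_i$ vanish.

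I expect the main obstacle to be this compatibility between the left-justification of the two-row restriction and that of the full tableau. Removing the other rows can delete blank columns and let entries slide, so I would check carefully, using condition (3), that sliding in $T[i,i+1]$ cannot create a covering that is absent in $T^\leftarrow$.
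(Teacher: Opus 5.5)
Your two-row criterion and your converse direction are fine, but the forward direction rests on a transfer that is false. You propose to use that blank/filled adjacency in $T[i,i+1]^\leftarrow$ matches that in $T^\leftarrow$, or at least that sliding inside $T[i,i+1]$ never creates a covering absent from $T^\leftarrow$.

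Take the ptableau with biword $\binom{1\,2\,3}{1\,3\,2}$: content $1$ in row $1$, $3$ in row $2$, $2$ in row $3$. Here $T[1,2]^\leftarrow$ is the single column $1$ over $3$, a two-row SSYT in which the $3$ is covered. In $T$, however, the $3$ lies in a higher row than the smaller $2$. By the very rule you invoke in your second step, it must therefore be strictly to the right of the $2$. Thus $T^\leftarrow$ has rows $1\,\Box$, $\Box\,3$, $2\,\Box$, and the $3$ has a blank above it. Here $e_2(T)\neq\NULL$, so this refutes only the transfer, not the theorem.

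The paper's own three-row example fails the same way. The $5$ in row $2$ of the displayed $T^\leftarrow$ lies under a blank, while in $T[1,2]^\leftarrow$, with rows $\Box\,\Box\,1\,1\,4$ and $1\,1\,2\,5\,\Box$, it lies under a $1$. The underlying reason is that the columns occupied by row $i$ of $T^\leftarrow$ are dictated by the rows \emph{below} $i$, which $T[i,i+1]$ discards. So a top-down induction from row $1$ fed only by pairwise information cannot close.

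The repair is to drop the transfer and use uniqueness of the left-justified representative.
\begin{itemize}
\item By Lemma~\ref{lemma bijections}, $e_i(T)=\NULL$ iff $e_i(\rho)=\NULL$, i.e.\ iff every prefix of $\rho$ has at least as many $i$'s as $(i+1)$'s.
\item Since $\rho$ lists entries by weakly increasing content, with $i+1$ before $i$ at equal content, this says exactly the following. If row $i$ holds $a_1\le\cdots\le a_m$ and row $i+1$ holds $b_1\le\cdots\le b_\ell$, then $\ell\le m$ and $a_t<b_t$ for $t\le\ell$. This gives your two-row criterion directly, without the unproved ``dualization'' of Definition~\ref{f_i def}.
\item Hence $T$ is highest weight iff the array $S$ obtained by writing each row's contents in increasing order, flush left from column $1$, is an SSYT of straight shape.
\item Such an $S$ satisfies Definition~\ref{ptab}: equal entries form horizontal strips, no larger entry is weakly northwest of a smaller one, and its $\lambda_1$ columns are forced by its first row. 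It also has the same contents in the same rows as $T$, so by Lemma~\ref{unique} it represents the same element of $\PTab$.
\item The $s$-th entry of every row of $S$ is in column $s$, so $S$ is left-justified, and $S=T^\leftarrow$ by Lemma~\ref{left-right-justified}.
\end{itemize}
If you want to keep a geometric induction instead, it has to run upward from the bottom row. For the record, the paper does not prove this statement; it quotes it as Theorem~6.2 of \cite{AW2022}.
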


For example, the ptableau
\[ T = \begin{ytableau}
\ &&&&1&1&4 \\
&&1&1&&2&5\\
1&2&3&4&4&5& \end{ytableau} \ \
\hbox{is not highest weight since} \
 T^\leftarrow =  \begin{ytableau}
\ &&&1&1&4 &\\
&1&1&2&&&5\\
1&2&3&4&4&5& \end{ytableau} \, , \]
which is not a SSYT. However,
the ptableau
\[ T' =  \begin{ytableau}
 1&1&1&1&1&4&4 \\
& 2 && 4&4&6&\\
&&3&&5&& \end{ytableau} \, , \ \
\hbox{is highest weight since}\
(T')^\leftarrow = \begin{ytableau}
 1&1&1&1&1&4&4 \\
 2 & 4&4&6&&&\\
3&5&&&&& \end{ytableau}\, , \]
which is semistandard.

\subsubsection{Crystals on Biwords and Bijections}\label{subsection_biword_crystals}

To establish precise relationships between crystal models on words and on ptableaux, we use biwords as an intermediary. 

\begin{definition}\label{def_ ptab_biword} Let $_{\BW}\Phi_{\PTab} :\PTab  \rightarrow \BW$ denote the map taking a ptableau $T$ to the biword $\binom {\kappa}{\rho}=\binom {\kappa_1\kappa_2\dots \kappa_k}{\rho_1\rho_2\dots \rho_k}$ where there is a column $\binom {i}{j} \in \binom{\kappa}{\rho}$ for each entry $i$ in row $j$ of $T$, ordered so that $_{\scriptstyle \BW}\Phi_{\PTab}(T)$ is a valid biword.

The weight of a biword ${\kappa \choose \rho}= _{\BW}\Phi_{\PTab}(T)$ is
$$\wt\binom{\kappa}{\rho} = \wt(\rho),$$
which necessarily equals $\wt(T)$.
\end{definition}
\begin{example}\label{ex:ptab-biword}

     If $$T=\begin{ytableau} \ & & 1 && 3 & 4\\ & 1 &2 & 2&& \\3 & 3 & 4 & 4 && \end{ytableau}
\text {, then the biword of }T \text{ is } _{\BW}\Phi_{\PTab}(T)=
\binom{\kappa}{\rho}  = \left(\begin{matrix} 1 & 1& 2 & 2& 3 & 3& 3& 4&4&4 \\
2 & 1 &2 & 2& 3&3&1&3&3&1 \end{matrix}\right), $$
with weight
$$\wt\binom{\kappa}{\rho} = \wt(\rho) = \wt(T) = (3,3,4).$$
Note that the content of $T$ and the biword $_{\BW}\Phi_{\PTab}(T)$ coincide.
\end{example}

The following is easily established (see~\cite{AW2022}, Lemma 3.5).

\begin{lemma}\label{lemma bijections} Let $\BW$ be the set of all biwords. 
    \begin{enumerate}
        \item The map $_{\BW}\Phi_{\PTab} :\PTab  \rightarrow \BW$ is a bijection. Restricting $_{\BW}\Phi_{\PTab}$ to $\PTab_{n,k}$ is a bijection onto $\BW_{n,k}$.
        \item The map $_{\BW}\Phi_{\PTab} :\PTab  \rightarrow \BW$ (and its restrictions) is a crystal isomorphism under the action of crystal operators defined by:
        \begin{equation}\label{eq:biwords_inherited_fi}
   f_i\left(\binom{\kappa}{\rho}\right) = \binom{\kappa}{f_i(\rho)} 
\end{equation}
with $f_i$ defined on words in Section \ref{subsection_word_crystals}.
        \end{enumerate}
\end{lemma}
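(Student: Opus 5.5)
Part (1) is a reconstruction problem, and I would settle it by writing down an explicit inverse. Start from a biword $\binom{\kappa}{\rho}$. For each column $\binom{i}{j}$ place the content $i$ in row $j$. By Lemma~\ref{unique}, a ptableau in $\PTab$ is determined by its content together with the rows in which that content appears, so at most one class $T$ can produce a given biword; this gives injectivity at once.

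Surjectivity is the real work in part (1). Given any biword, I would build a ptableau directly. Process the letters of $\kappa$ in increasing order. For the letter $i$, its columns in the biword have weakly decreasing row entries $j$, so the $i$'s should be placed as a horizontal strip running from the lowest row up to the highest, going left to right, with each new copy put in a fresh column immediately to the right of everything already placed. This satisfies conditions (2) and (3) of Definition~\ref{ptab}:
\begin{itemize}
\item[\textbullet] within a single letter $i$, the rows weakly rise as we move right, so the $i$'s form a horizontal strip;
\item[\textbullet] any $j>i$ is placed in columns strictly to the right of every $i$.
\end{itemize}
I would then compress to the minimum number of columns, since row-equivalence and left-justification preserve content and rows. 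The class of the resulting ptableau maps back to the given biword. Restricting to $\PTab_{n,k}$ forces row entries in $[n]$ and exactly $k$ columns, so the image is precisely $\BW_{n,k}$. The weights agree because $\wt(T)$ counts the filled boxes in each row, and that is exactly the number of times each letter occurs in $\rho$.

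For part (2) I would show that the ptableau operator $f_i$ of Definition~\ref{f_i def}, transported through $\Phi={}_{\BW}\Phi_{\PTab}$, agrees with the word operator applied to the row word $\rho$. Moving a content $c$ from row $i$ to row $i+1$ changes exactly one column $\binom{c}{i}$ into $\binom{c}{i+1}$, and because $\rho$ is read in biword order, it changes one letter $i$ of $\rho$ into $i+1$. One might worry that the column order has to be re-sorted afterwards. It does not: the changed letter is the leftmost unbracketed $i$, so under the same content $c$ no $i+1$ sits to its right in a way that would violate weak decrease. I would check this directly, and the same check shows the result is again a biword. It then remains to identify which letter is changed.

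Matching the selected letter is the heart of the argument and the step I expect to be the main obstacle. Rows $i$ and $i+1$ of $T[i,i+1]^\rightarrow$ correspond to the subword of $\rho$ on the letters $i$ and $i+1$, in biword order. I would claim that an entry in row $i$ of the right-justified form sits over a blank exactly when its letter $i$ in $\rho$ is unbracketed. The pairing I would use is that an $i$ is bracketed with the nearest unmatched $i+1$ to its right, and in the right-justified two-row ptableau that $i+1$ is precisely the entry occupying the slot directly below it.

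To prove the claim I would induct on the columns of $T[i,i+1]^\rightarrow$ from right to left, pushing entries right and tracking the two cases:
\begin{itemize}
\item[\textbullet] an $i+1$ with no row-$i$ entry above it corresponds to an unbracketed $+$;
\item[\textbullet] each row-$i$ entry is matched with the first available lower entry, respecting the content ordering from condition (3).
\end{itemize}
This is a standard parenthesis-matching argument, with the subtlety of ties between equal contents handled by the weakly decreasing convention of biwords. Once the claim holds, the leftmost entry over a blank is the leftmost unbracketed $i$, so $\Phi(f_i T)=\binom{\kappa}{f_i(\rho)}$, including the \NULL{} cases. The statements for $e_i$ follow by inverting.
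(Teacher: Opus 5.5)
Your part (1) is fine, and it goes further than the paper, which gives no argument for this lemma: it cites \cite[Lemma 3.5]{AW2022} for (1) and \cite[Theorem 5.1]{AW2022} for (2). Your compression step also needs no row-equivalence: once your spread-out filling satisfies conditions (1)--(3) of Definition~\ref{ptab}, an arrangement of the same content--row data with the fewest columns trivially exists.

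The gap is in part (2), exactly where you expected it. Your invariant says that the $i+1$ bracketed with a given $i$ is the entry directly below it in $T[i,i+1]^\rightarrow$. This is false, and the paper's Example~\ref{example ptab operator} refutes it.
\begin{itemize}
\item $T[2,3]^\rightarrow$ has rows $\cdot\,\cdot\,1\,1\,2\,6\,6$ and $1\,1\,2\,4\,5\,\cdot\,7$, so the column pairs are $(1,2),(1,4),(2,5),(6,7)$.
\item The letters $2,3$ of $\rho$ in biword order (contents $1,1,1,1,2,2,4,5,6,6,7$) read $3\,3\,2\,2\,3\,2\,3\,3\,2\,2\,3$. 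Their bracket pairs are $(1,2),(2,4),(1,5),(6,7)$.
\item So the row-$2$ entry $2$ is bracketed with the adjacent $3$ of content $4$, yet in $T[2,3]^\rightarrow$ it sits above the $5$. Only the unpaired entry (a $6$) agrees.
\end{itemize}
This is structural, not an accident of ties. Right-justification places the row-$(i+1)$ entries from largest to smallest, each under the largest still-uncovered smaller row-$i$ entry; this is the pairing of Definition~\ref{Def: f_i on PCW}. Bracketing is the same greedy run from smallest to largest (the stack rule). So no column-by-column induction identifying the two pairings can succeed.

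What you actually need is the weaker fact that these two different pairings leave the same row-$i$ entries unpaired, and that takes an argument you have not given. One route:
\begin{enumerate}
\item An $i$ at position $p$ of the $\{i,i+1\}$-subword is unbracketed if and only if every interval $[p,q]$ contains strictly more $i$'s than $(i+1)$'s.
\item Suppose the largest-first greedy leaves $p$ unpaired. Then each $i+1$ at $r\in(p,q]$ took a distinct $i$ strictly between $p$ and $r$, so $p$ satisfies the condition in step 1. Hence the greedy's unpaired set is contained in the unbracketed set.
\item Equality follows by counting. In the suffix starting at the first unbracketed $i$, the $i$'s outnumber the $(i+1)$'s by the number of unbracketed $i$'s. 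Since an $i$ can only pair with a later $i+1$, every pairing leaves at least that many $i$'s unpaired.
\end{enumerate}
You would also have to justify the description of $T[i,i+1]^\rightarrow$ used above from the axioms of Definition~\ref{ptab}: the upper row occupies the last columns, and each lower entry lies strictly left of the first upper entry $\geq$ it. Only then does ``leftmost entry over a blank $=$ leftmost unbracketed $i$'' follow.

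A smaller slip concerns re-sorting. The reason none is needed is that the leftmost unbracketed $i$ is the first $i$ in its content block, because an $i$ immediately preceding an unbracketed $i$ is itself unbracketed. The $(i+1)$'s of that content all lie to its left anyway, so they were never the issue.
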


The second statement of the lemma is an immediate consequence of Theorem 5.1 of~\cite{AW2022}, that states that crystal operators on ptableaux commute with crystal operators on the row words of their associated biwords:

\[  _{\BW}\Phi_{\PTab}(f_i (T)) = f_{i} ({_{\BW}\Phi_{\PTab}}(T)) = f_i \left({\kappa \choose \rho} \right)= {\kappa \choose f_i (\rho)} . \]

In Section \ref{Subsection: WordstoBi}, we defined the minimum and maximum biword of a word $\rho \in [n]^{\otimes k}$. By a minor extension of Lemma \ref{lemma bijections}, we have the following:
\begin{proposition}\label{prop: words to biwords}
   \begin{enumerate}
   \item Let 
    \[ _{{\BW}_{n,k}}\Phi^{\mmin}_{[n]^{\otimes k}}: [n]^{\otimes k} \rightarrow \BW_{n,k}\] be the map that takes a word $\rho \in [n]^{\otimes k}$ to the \emph{minimal} biword ${\kappa \choose \rho}$, where $\kappa$ is constant over maximal, weakly decreasing factors of $\rho$ (see Definition~\ref{defn:minimal_biword}).
       \item  Let 
    \[ _{{\BW}_{n,k}}\Phi^{\mmax}_{[n]^{\otimes k}}: [n]^{\otimes k} \rightarrow \BW_{n,k}\] be the map that takes a word $\rho \in [n]^{\otimes k}$ to the \emph{maximal} biword ${\kappa \choose \rho}$, where $\kappa= 123\cdots k$ (see Definition~\ref{def:maximal-biword}).
   \end{enumerate}
   Then both maps are injective, and induce crystal isomorphisms onto their images.
\end{proposition}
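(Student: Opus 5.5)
For both maps injectivity is immediate, because the bottom row of the image biword is the word $\rho$ itself, so $\rho$ is recovered by reading it off. Lemma \ref{lemma bijections} gives a crystal structure on $\BW_{n,k}$ via \eqref{eq:biwords_inherited_fi}, and under it $f_i$ acts only on the row word. So to show that a map $\rho\mapsto{\kappa(\rho)\choose\rho}$ intertwines the crystal operators, it suffices to check one thing: whenever $f_i(\rho)\neq\NULL$, the content word attached to $f_i(\rho)$ by the same rule equals $\kappa(\rho)$. Granting this, we get
\[ f_i{\kappa(\rho)\choose\rho}={\kappa(\rho)\choose f_i(\rho)}={\kappa(f_i\rho)\choose f_i\rho}. \]
The $\NULL$ cases correspond trivially, since both sides are $\NULL$ exactly when $f_i(\rho)=\NULL$. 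The operators $e_i$ are handled automatically, being the inverses of the $f_i$. The weight is the weight of the bottom row, so it is preserved. The image is then closed under the $f_i$ and $e_i$, and the map is a crystal isomorphism onto it.

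For the maximal biword the check is trivial: $\kappa=12\cdots k$ depends only on $k$, and $f_i$ preserves length.

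For the minimal biword, $\kappa(\rho)$ is determined by the ascent set ${\mathcal P}_\mmin(\rho)$. So the key step is to show that $f_i$ preserves the ascent set. Write $f_i(\rho)$ as the word obtained by changing $\rho_p=i$ to $i+1$, where position $p$ carries the leftmost unbracketed $-$. Comparisons of $\rho_p$ with letters outside $\{i,i+1\}$ are unaffected, since $i$ and $i+1$ compare the same way with all such letters. So only the neighbours $\rho_{p\pm1}\in\{i,i+1\}$ need to be examined.
\begin{itemize}
\item If $\rho_{p+1}=i+1$, then this $+$ is the closest symbol to the right of the $-$ at $p$, so the two would be bracketed. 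This is impossible.
\item If $\rho_{p+1}=i$, the pair changes from $i,i$ to $i+1,i$. Neither is an ascent.
\item If $\rho_{p-1}=i+1$, the pair changes from $i+1,i$ to $i+1,i+1$. Neither is an ascent.
\item If $\rho_{p-1}=i$, a new ascent would appear, so this case must be excluded.
\end{itemize}

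The main obstacle is this last case, and I would exclude it by a bracketing argument. Since the $-$ at $p$ is unbracketed, every $+$ to the right of $p$ is already paired with some $-$ lying strictly between $p-1$ and that $+$. Hence no $+$ to the right of $p-1$ is available for the $-$ at $p-1$, so the $-$ at $p-1$ is also unbracketed. This contradicts the choice of $p$ as the leftmost unbracketed $-$.

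Therefore ${\mathcal P}_\mmin(f_i\rho)={\mathcal P}_\mmin(\rho)$, so $\kappa$ is unchanged, and both maps are crystal isomorphisms onto their images.
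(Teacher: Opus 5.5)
Your proof is correct. It follows the route the paper indicates, which gives no argument beyond saying the proposition follows ``by a minor extension of Lemma~\ref{lemma bijections}'', and it supplies the one step the paper leaves implicit.

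That lemma (resting on \cite{AW2022}) tells you the biword operators act only on the row word and never change the content word. This settles the maximal map at once, since $\kappa=12\cdots k$ depends only on $k$. It does not by itself explain why the minimal map intertwines the operators. For that one needs the minimal content word to be constant along crystal edges, i.e.\ ${\mathcal P}_\mmin(f_i\rho)={\mathcal P}_\mmin(\rho)$, and the paper never states or proves this.

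Your bracketing argument proves exactly this invariance, elementarily and self-containedly. It also gives two further facts directly, without appeal to the ptableau isomorphism:
\begin{itemize}
\item the image of the minimal map is closed under $f_i$ and $e_i$;
\item ${\kappa\choose f_i(\rho)}$ is again a valid (indeed minimal) biword.
\end{itemize}
Your case analysis on the neighbours $\rho_{p\pm1}$ is complete.

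One claim deserves one more sentence: that every $+$ to the right of $p$ is bracketed with a $-$ lying to the right of $p$. This follows from two facts:
\begin{itemize}
\item the terminal condition of the bracketing, which leaves no unbracketed $+$ to the right of the unbracketed $-$ at $p$;
\item the non-crossing property of the brackets: a bracketed pair enclosing position $p$ would force the letter at $p$ to have been bracketed first.
\end{itemize}
With that made explicit, the argument is airtight.
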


By virtue of the above proposition, we have injections 
\[_{\PTab}\Phi^{\mmin}_{[n]^{\otimes k}} = _{\PTab}\Phi_{\BW}  \circ _{\BW}\Phi^{\mmin}_{[n]^{\otimes k}}\]
taking words to ptableaux associated to the minimal biwords of given row words, and 
\[_{\PTab}\Phi^{\mmax}_{[n]^{\otimes k}} = _{\PTab}\Phi_{\BW}  \circ _{\BW}\Phi^{\mmax}_{[n]^{\otimes k}}\]
taking words $[n]^{\otimes k}$ onto the set of ptableaux with content $[k]$. We use minimal biwords and associated ptableaux in Section~\ref{section: Parsed word quasicrystal} to construct transparent realizations of quasicrystals.  We use maximal biwords in Section~\ref{Section: S-W duality}, with all words in $[n]^{\otimes k}$ mapped to ptableaux and/or biwords with content $\{1,2,\dots, k\}$.

\subsubsection{The Distinguished Crystal} \label{distinguished}
As we have seen, the crystals on words model gives the decomposition of the $k$-fold tensor product $V^{\otimes k}$ of an $n$-dimensional vector space $V$ via crystal operators acting on words $[n]^{\otimes k}$. We can also model this decomposition with crystals on biwords, in bijection with crystals on perforated tableaux. 

Another well-studied model for irreducible crystals graphs of highest weight $\lambda$ uses SSYT of shape $\lambda$ \cite{KN1994}, \cite{Littelmann1995}. Chapters 3 and 6 of \cite{BS} have a full development of crystals on tableaux. All irreducible crystals of highest weight $\lambda$ are crystal isomorphic (\cite{Stembridge.2003}, \cite[Theorem 4.13]{BS}), but there is only a canonical isomorphism from the SSYT model to a subset of $[n]^{\otimes k}$, defined by mapping a SSYT to its reading word (see Definition~\ref{def:reading_word_right_left_top_btm}). That is, the crystal of SSYT corresponds uniquely to only \emph{one} irreducible crystal on words, and it does not differentiate isomorphic crystals within $[n]^{\otimes k}$ 

This problem stems from the fact that the reading word of some SSYT is defined by its \emph{content}. It is precisely this feature that prevents the SSYT model from being applied to distinct yet isomorphic crystals. However, as we saw in Proposition~\ref{prop: words to biwords}, the ptableau model identifies words in $[n]^{\otimes k}$ with the \emph{rows} in which content appears. 

To make the relations between our models for crystals and the SSYT model more precise, we define the following: 

\begin{definition}  \label{def:ptab_associated_parsed words}
Let $T$ be a ptableau with associated biword ${\kappa \choose \rho} = {\kappa_1\ldots \kappa_k \choose \rho_1\ldots \rho_k}$ (Definition~\ref{def_ ptab_biword}). 

\begin{itemize}
    \item The \emph{parsed row word associated to $T$ (or equivalently $\kappa \choose \rho$)},  denoted $\rho_{|\kappa|}$, is the row word $\rho$ with 
    $(\kappa^{i+1} - \kappa^i)$-many parsing bars ``$\,|$'' (for $i=1,2,\dots, k-1$) inserted between $\rho_i$ and $\rho_{i+1}$.
    \item The \emph{parsed content word associated to $T$ (or equivalently $\kappa \choose \rho$)},  denoted $\widehat{\kappa}_{|\rho|}$, is the row word $\widehat{\kappa}_{\rho}$ of the dual biword $\widehat{\binom{\kappa}{\rho}}$  with 
    $(\rho^{i+1} - \rho^i)$-many parsing bars ``$\,|$'' (for $i=1,2,\dots, k-1$) inserted between $\widehat{\kappa}_{{\rho}_i}$ and $\widehat{\kappa}_{{\rho}_{i+1}}$. 
    \item In particular, if $\kappa \choose \rho$ is the minimal biword of $\rho$, then we refer to the parsed content word $\widehat{\kappa}_{|\rho|}$ as a \emph{parsed minimal content word}. Similarly, if $\kappa \choose \rho$ is the maximal biword of $\rho$, then we refer to the parsed content word $\widehat{\kappa}_{|\rho|}$ as a \emph{parsed maximal content word}.
    (See Definitions~\ref{defn:minimal_biword} and~\ref{def:maximal-biword}.)
\end{itemize}
The letters between two consecutive parsing bars make up a \emph{parsed factor}. The \emph{parsed weight} of a parsed row or content word is the weak composition given by the sizes of the parsed factors  (including parts of size 0 for empty factors), denoted $\wt_{\mathcal{P}}(\cdot)$.
\end{definition}

\begin{remark}
    
    If $\rho_{|\kappa|}$ is the parsed row word associated to the biword $\binom{\kappa}{\rho}$, then the biword $\binom{\kappa}{\rho}$ is associated to the parsed word  $\rho_{|\kappa|}$ as in Definition~\ref{def:biword of a parsed word}. The parsed content word $\widehat{\kappa}_{|\rho|}$ of the biword $\binom{\kappa}{\rho}$ is associated to $\binom{\kappa}{\rho}$ under a different bijection between parsed words and biwords that is more crystal compatible, developed in Section~\ref{section: Parsed word quasicrystal}.
\end{remark}

\begin{example}
If
$$ \left( \begin{matrix} \kappa \\ \rho \end{matrix} \right) =\left( \begin{matrix} 1\,1\,2\,3\,3\,5\,5\,5\,6 \\ 2\,1\,3\,3\,1\,2\,2\,1\,4 \end{matrix} \right), $$
then the parsed row word of this biword is
$\rho_{ |\kappa|} =21|3|31||221|4,$ with parsed weight $(2,1,2,0,3,1)$.
The dual biword
\[\left( \begin{matrix} \rho^* \\ \widehat{\kappa}_{\rho} \end{matrix} \right)
=\left( \begin{matrix} 1\,1\,1\,2\,2\,2\,3\,3\,4 \\ 5\,3\,1\,5\,5\,1\,3\,2\,6 \end{matrix} \right),\]
has as its associated parsed content word: $\widehat{\kappa}_{|\rho|}=531|551|32|6$. The parsed weight of $\widehat{\kappa}_{|\rho|}$ is $\wt_{\mathcal P}(\widehat{\kappa}_{|\rho|})=(3,3,2,1)$. Note that $\wt_{\mathcal P}(\widehat{\kappa}_{|\rho|})=\wt(\rho)$.
\end{example}

The number of letters in a parsed row word $\rho_{|\kappa|}$ between the $(i-1)$-st and $i$-th parsing bar is the number of $i$'s in the content word $\kappa$ of its associated biword. Therefore, the biword of any parsed row word is uniquely recoverable from its associated parsed row word, or its associated parsed content word. 
\begin{lemma}
    The map from biwords (or ptableaux) to their associated parsed row words (or parsed content words) is a bijection.
\end{lemma}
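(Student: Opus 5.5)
We will prove the statement by writing down explicit inverse maps. By Lemma~\ref{lemma bijections}, ptableaux and biwords are in bijection via $_{\BW}\Phi_{\PTab}$, so it suffices to treat biwords. We read $\kappa^i$ in Definition~\ref{def:ptab_associated_parsed words} as the $i$-th letter $\kappa_i$, and allow $\kappa_1-1$ leading bars, so that the numbering of factors starts at $1$. With this convention, the parsed row word $\rho_{|\kappa|}$ records two things. It records $\rho$, obtained by deleting the bars. It also records, for each $j$, the number of letters lying between the $(j-1)$-st and $j$-th parsing bar, which is exactly the multiplicity of $j$ in $\kappa$.

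\emph{Injectivity for parsed row words.} Since $\kappa$ is weakly increasing, it is determined by its content, i.e.\ by these multiplicities. Thus $\binom{\kappa}{\rho}$ is recovered as the biword associated to the parsed word $\rho_{|\kappa|}$ in the sense of Definition~\ref{def:biword of a parsed word}.

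\emph{Surjectivity for parsed row words.} The natural codomain is the set of words parsed into weakly decreasing factors, i.e.\ words with a valid parsing as in Definition~\ref{Def:parsed word}. Given such a parsed word, Definition~\ref{def:biword of a parsed word} produces a biword, because the top row is weakly increasing and constant exactly over each weakly decreasing factor. Its parsed row word is the parsed word we started from. Hence the two constructions are mutually inverse.

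\emph{Parsed content words.} Here we reduce to the row-word case using duality. The dual operation $\binom{\kappa}{\rho}\mapsto\widehat{\binom{\kappa}{\rho}}$ of Definition~\ref{Def: dual biword} is an involution on biwords. Indeed, both a biword and its dual are determined by the same multiset of columns: swapping rows and re-sorting, with the decreasing-within-constant-factors tie-break, is unique. Applying the operation twice therefore gives back the original biword. Now compare the definitions: the parsed content word $\widehat{\kappa}_{|\rho|}$ of $\binom{\kappa}{\rho}$ is, letter for letter and bar for bar, the parsed row word of the dual biword $\binom{\rho^*}{\widehat{\kappa}_\rho}$. The content-word map is thus the composition of the dual involution with the row-word bijection, and so it is a bijection onto the same set of parsed words.

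The main obstacle is only bookkeeping. One must fix the indexing convention for the bars (the superscripts $\kappa^i$ and the treatment of $\kappa_1>1$), and check that the multiset-of-columns description makes the dual well defined and involutive. Once these are settled, every step is a direct unwinding of the definitions.
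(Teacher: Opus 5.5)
Your proof is correct and follows the paper's reasoning: the sizes of the parsed factors record the multiplicities of the letters of the weakly increasing word $\kappa$, so the biword can be recovered, and your duality-involution step for parsed content words is a tidier version of the paper's reconstruction (place the letters of the $i$-th factor over $i$ and sort the columns). One small bookkeeping correction: Definition~\ref{Def:parsed word} allows bars after $\omega_k$, and a parsed word ending in a bar is never of the form $\rho_{|\kappa|}$, so the codomain should be parsed words with weakly decreasing factors and no trailing bars (or parsed words taken modulo trailing bars).
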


We now extend the notion of duality for biwords (see Definition~\ref{Def: dual biword}) to ptableaux:
\begin{definition}
    Let $T$ be a perforated tableau with associated biword $\kappa \choose \rho$. The \emph{dual ptableau} of $T$, denoted $\widehat{T}$, is the perforated tableau associated to the dual biword $\widehat{\kappa \choose \rho}$. 
\end{definition}
Clearly, if $T$ has content $i$ in row $j$, then $\widehat{T}$ has content $j$ in row $i$.

The \emph{reading word} of a SSYT is a word obtained by listing its content in some order. We extend this notion to ptableaux, but need to be able to identifiy specific rows for specific content. Hence, the reading word of a ptableau is a parsed content word:

\begin{definition}\label{def:reading_word_right_left_top_btm}Let $T$ be a ptableau associated to the biword ${\kappa \choose \rho}$. The \emph{reading word of $T$}, $\operatorname{rw}(T)$, is the parsed word whose $i$-th parsed factor equals to the entries in row $i$ of $T$, read right to left.
\end{definition}

Since the factor between the $(i-1)$-st and $i$-th parsing bars in $\widehat{\kappa}_{|\rho|}$ determines the content of $T$ in row $i$, written in weakly decreasing order, we then have the following elementary observation:

\begin{lemma}\label{lem:reading_words_Ptab}
    If a ptableau $T$ and its dual $\widehat{T}$ have associated biwords ${\kappa \choose \rho}$ and ${\rho^* \choose \widehat{\kappa}_{ \rho}}$,   then $\text{rw}(T)=\widehat{\kappa}_{ |\rho|}$ and $\text{rw}(\widehat{T})=\rho_{|\kappa|}$.
\end{lemma}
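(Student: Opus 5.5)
The plan is to verify both identities directly from the definitions. I will use the fact that the columns of a biword are exactly the pairs (content, row) of the entries of the associated ptableau. This fact is recorded in Definition~\ref{def_ ptab_biword} and Lemma~\ref{lemma bijections}.

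\textbf{The first identity.} Take $T$ with biword ${\kappa \choose \rho}$. Each column ${i \choose j}$ of ${\kappa\choose\rho}$ records an entry $i$ in row $j$ of $T$. The dual biword ${\rho^* \choose \widehat{\kappa}_\rho}$ has the same multiset of columns, with the rows swapped, so its columns ${j \choose i}$ are sorted by $j$. Consequently, the entries of $\widehat{\kappa}_\rho$ lying over the constant factor of $j$'s in $\rho^*$ are exactly the contents of row $j$ of $T$. By the biword condition they appear in weakly decreasing order.

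Next I check the parsing bars in $\widehat{\kappa}_{|\rho|}$. The number of bars inserted between consecutive positions is the jump $\rho^*_{m+1}-\rho^*_m$. This includes extra bars, read as empty factors, when some row index is skipped; with the natural convention for leading bars, that also covers empty initial rows. As a result, the $j$-th parsed factor is precisely the block of positions where $\rho^*=j$, and it is empty if row $j$ of $T$ is empty.

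Rows of a ptableau have horizontal-strip and ordering conditions, so reading row $j$ from right to left lists its contents in weakly decreasing order. Note that blanks are ignored here, and that the content in a row is weakly increasing left to right by condition (3) of Definition~\ref{ptab}. Hence the $j$-th factor of $\mathrm{rw}(T)$ equals the $j$-th factor of $\widehat{\kappa}_{|\rho|}$, and so $\mathrm{rw}(T)=\widehat{\kappa}_{|\rho|}$.

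\textbf{The second identity.} I apply the first identity to $\widehat{T}$, whose biword is ${\rho^*\choose\widehat{\kappa}_\rho}$. Its dual biword returns ${\kappa\choose\rho}$: dualizing twice swaps the rows back and re-sorts them, and the sorted biword with these columns is unique. The parsed content word of $\widehat{T}$ is therefore $\rho$, parsed by the jumps in $\kappa$, which is exactly $\rho_{|\kappa|}$ by Definition~\ref{def:ptab_associated_parsed words}. Equivalently, one can repeat the first argument verbatim with the roles of content and row exchanged.

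\textbf{Main obstacle.} The only delicate point is bookkeeping of the parsing-bar conventions: empty factors, and bars before the first letter when the smallest row index exceeds $1$. I would handle this by agreeing that the factor indexed $j$ is the block where the top row equals $j$, with empty blocks producing consecutive bars. Once that convention is fixed, the identities follow immediately.
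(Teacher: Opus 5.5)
Your proof is correct and follows essentially the same reasoning as the paper, which justifies the lemma with the one-line observation that the block of $\widehat{\kappa}_{\rho}$ between consecutive parsing bars lists the content of row $i$ of $T$ in weakly decreasing order. Your derivation of the second identity from the fact that biword duality is an involution, and your care with the parsing-bar conventions, just make explicit what the paper treats as an elementary observation.
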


\begin{example}
Recall Example~\ref{ex:ptab-biword}:
$$T=\begin{ytableau} \ & & 1 && 3 & 4\\ & 1 &2 & 2&& \\3 & 3 & 4 & 4 && \end{ytableau}
\text {, with associated biword}\ 
\binom{\kappa}{\rho}  = \left(\begin{matrix} 1 & 1& 2 & 2& 3 & 3& 3& 4&4&4 \\
2 & 1 &2 & 2& 3&3&1&3&3&1 \end{matrix}\right). $$
The dual biword
\[ \widehat{\kappa \choose \rho} = \left(\begin{matrix} 1 & 1& 1 & 2& 2 & 2& 3& 3&3&3 \\
4 & 3 &1 & 2& 2&1&4&4&3&3 \end{matrix}\right) \ \hbox{has associated ptableau (dual to $T$)} \ \widehat{T}=\begin{ytableau} \ & & 1 & &2 \\ &&2&2&\\&1&&3&3\\1&3&3&&\end{ytableau} .\]

Then $\widehat{\kappa}_{|\rho|}=4\,3\,1|2\,2\,1|4\,4\,3\,3$ is the reading word of 
$T$, while $\rho_{|\kappa|} = 21|22|331|331$ is the reading word of $\widehat{T}$.
\end{example}
The following definitions allow us to identify the SSYT model within our framework.

\begin{definition}\label{Def:distinguished crystal}
Given a partition $\lambda=(\lambda_1, \lambda_2,\dots)$, let $T_\lambda$ be the ptableau with $\lambda_1$ $1$s in row $1$, $\lambda_2$ $2$s in row $2$, etc (so, in particular, $T^\leftarrow_{\lambda}$ is a SSYT of shape $\lambda$).  Then the irreducible ptableaux crystal ${\mathcal B_\lambda}(T_{\lambda})$ whose highest weight node is $T_\lambda$, is called the \emph {distinguished crystal} of highest weight $\lambda$.\label{distinguished crystal def}
\end{definition}

\begin{definition} \label{def:word condition}

A ptableau $T \in \PTab$ satisfies the \emph{word condition} if in $T^\rightarrow$, every $i+1$ has an $i$ above it (for $i\in [n-1])$.
\end{definition} 

Note that the word condition on ptableaux does \emph{not} correspond to the Yamanouchi condition on words. The name is chosen only to be consistent with usage of the term in characterizing Littlewood-Richardson tableaux. The following is a consequence of the above (see also Corollary 7.8 of~\cite{AW2022}).

\begin{proposition}\label{prop_distinguished_ptab} The following are equivalent for a ptableau $T$ in a crystal of highest weight $\lambda.$
 \begin{enumerate}
 \item $T$ lies in the distinguished crystal of highest weight $\lambda$, where $\lambda_i$ equals the number of $i$'s in $T$.
\item $T$ satisfies the word condition. 
\item The left-justified form of $\widehat T$ (the dual of $T$) is a SSYT or, equivalently, $\widehat{T}$ is highest weight (as a ptableau).
\item The biword ${\kappa \choose \rho}$ associated to $T$ is minimal and $\rho_{|\kappa|}$ is the reading word of a SSYT.
\end{enumerate} \label{word equiv}
\end{proposition}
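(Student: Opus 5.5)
I will prove the equivalences in the cycle (1)$\Leftrightarrow$(2), (2)$\Leftrightarrow$(3), (3)$\Leftrightarrow$(4), using Theorem~\ref{highest weight ptab}, Lemma~\ref{unique}, Lemma~\ref{lem:reading_words_Ptab} and the crystal isomorphism of Lemma~\ref{lemma bijections}. The basic tool is that the content of a ptableau together with the rows of that content determines it (Lemma~\ref{unique}). It is also invariant under the crystal operators, since by Lemma~\ref{lemma bijections} they act only on the row word and never on the content word $\kappa$.

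For (1)$\Leftrightarrow$(3), note that every node of ${\mathcal B}_\lambda(T_\lambda)$ has the same content word $\kappa$ as $T_\lambda$: $\lambda_1$ ones, $\lambda_2$ twos, and so on. Conversely, fix a ptableau $T$ in an irreducible crystal of highest weight $\lambda$ with $\lambda_i$ copies of $i$. Its highest weight node $H$ has the same content as $T$ and weight $\lambda$, and by Theorem~\ref{highest weight ptab} the tableau $H^\leftarrow$ is a SSYT of shape $\lambda$. A SSYT of shape $\lambda$ with content $\lambda$ is forced to have all $i$'s in row $i$, so $H=T_\lambda$ by Lemma~\ref{unique}. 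Hence (1) is equivalent to ``$T$ lies in the crystal of $T_\lambda$.'' To tie this to $\widehat T$, I will use that the row word of $\widehat{\binom{\kappa}{\rho}}$ lists, for each row index, the content in that row. Applying $e_i$ to $T$ moves content between rows $i+1$ and $i$, which changes $\widehat T$ only by rearranging the columns of the dual biword, not its row word. So I expect to show directly that $\widehat T$ is the same object for every $T$ in the crystal exactly when the crystal is distinguished. Since $\widehat{T_\lambda}=T_\lambda$ is highest weight, this yields (1)$\Rightarrow$(3). For the reverse direction I would argue that if $\widehat T^\leftarrow$ is a SSYT of shape $\mu$, then the content of $T$ forces $\mu=\lambda$, and then $T$ is recovered by running the operators from $T_\lambda$. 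This step is the main obstacle. I would first try to replace it by a more direct characterization: $\widehat T$ is highest weight iff its row word, read as a parsed factorization of $\rho_{|\kappa|}$, is the reading word of a SSYT.

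For (3)$\Leftrightarrow$(4), apply Lemma~\ref{lem:reading_words_Ptab}, which gives $\mathrm{rw}(\widehat T)=\rho_{|\kappa|}$. Then $\widehat T^\leftarrow$ is a SSYT iff $\rho_{|\kappa|}$ is its reading word, with rows equal to the factors. The rows of a SSYT are weakly increasing, so each factor read right to left is weakly decreasing, and the strict column condition forces consecutive factors to join at an ascent. The factorization is therefore the minimal parsing, and $\binom{\kappa}{\rho}$ is minimal.

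For (2)$\Leftrightarrow$(3), I would translate the word condition on $T^\rightarrow$ through duality. The condition ``every $i+1$ in $T^\rightarrow$ has an $i$ above it'' says that the $(i+1)$-strip sits, after right justification, directly below part of the $i$-strip. In $\widehat T$ this becomes the statement that in the left-justified form, row $i+1$ is dominated entrywise by row $i$ with strict increase down columns. I would verify this by comparing positions column by column, using Lemma~4.6 of \cite{AW2022} as the paper does for $f_i$, and then finish with Theorem~\ref{highest weight ptab}.
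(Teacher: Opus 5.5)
Your (3)$\Leftrightarrow$(4) argument via Lemma~\ref{lem:reading_words_Ptab} and Lemma~\ref{unique} is fine. So is your reduction of (1) to ``$T$ lies in the crystal of $T_\lambda$''. The argument for (1)$\Leftrightarrow$(3), however, rests on a false claim. Crystal operators on $T$ do \emph{not} leave the row word $\widehat{\kappa}_\rho$ of the dual biword fixed, and $\widehat T$ is not constant on the distinguished crystal. The content of $\widehat T$ is the weight of $T$, which changes under every non-$\NULL$ $f_i$. Even $\widehat{\kappa}_\rho$ is preserved only along edges inside a quasicrystal (Theorem~\ref{thm:quasi constant content}); for example, $f_1$ takes $31|2|3$ to $1|32|3$. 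Concretely, for $\lambda=(3,1)$, $T_\lambda\leftrightarrow\binom{1112}{1112}$ is self-dual. But $f_1T_\lambda\leftrightarrow\binom{1112}{2112}$ has dual $\binom{1122}{1121}$, whose ptableau is the SSYT with rows $112$ and $2$, not $T_\lambda$. So (1)$\Rightarrow$(3) is not established, and (3)$\Rightarrow$(1) you explicitly leave open.

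The missing idea is to track $\widehat T$ through its \emph{reading word} rather than its row word. By Lemma~\ref{lem:reading_words_Ptab}, $\mathrm{rw}(\widehat T)=\rho_{|\kappa|}$. Since $\kappa$ is constant on a crystal, $f_i$ on $T$ acts on $\mathrm{rw}(\widehat T)$ as the word operator $f_i$ with the parsing held fixed; in the example, $111|2\mapsto 211|2$. You then need the standard fact that reading SSYT of shape $\mu$ row by row, top to bottom and right to left, identifies them with one connected component of $[n]^{\otimes k}$. This component is closed under $e_i,f_i$ and has highest weight $1^{\mu_1}2^{\mu_2}\cdots$; the closure part is Corollary~7.8 of \cite{AW2022}, which the paper cites in lieu of a proof.
\begin{itemize}
\item (1)$\Rightarrow$(3) then follows by induction from $T_\lambda$.
\item For (3)$\Rightarrow$(1): if $\widehat T^{\leftarrow}$ is an SSYT, its shape $\mu$ equals the content of $T$. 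So $\rho$ is connected to $1^{\mu_1}2^{\mu_2}\cdots$ with $\kappa=1^{\mu_1}2^{\mu_2}\cdots$ fixed. This places $T$ in the crystal of $T_\mu$ and forces $\mu=\lambda$.
\end{itemize}
Finally, your (2)$\Leftrightarrow$(3) is only a sketch. The same-column injection in $T^\rightarrow$ does yield, by a matching argument, column-strictness of rows $i,i+1$ of $\widehat T^{\leftarrow}$. The converse, however, requires showing that right-justification actually places each $i+1$ beneath an $i$ whenever such a matching exists. ``Comparing positions column by column'' does not supply that.
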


\begin{definition} \label{def: sorts of disinguished crystals}
    Suppose ${\mathcal B_\lambda}(T_{\lambda})$ is the distinguished crystal of ptableaux of highest weight $\lambda$. Then the associated crystal of words, given by the row words of ${\mathcal B_\lambda}(T_{\lambda})$ will be called the {\emph{distinguished crystal of words}}, and the crystal of biwords of the ptableaux in ${\mathcal B_\lambda}(T_{\lambda})$ will be called the {\emph {distinguished crystal of biwords}}.
\end{definition}

\begin{example}
    The ptableau
    $\begin{displaystyle}T = \begin{ytableau} \ &&&1&1&1 \\ &1&1&&2&2 \\1&&2&2&3&3 \end{ytableau}\end{displaystyle}$
    satisfies the word condition and is associated to the minimal biword
    \[ {\kappa \choose \rho} = {111111222233 \choose 322111332233 }.\]

Its left-justified dual, $\widehat{T}^\leftarrow$, is the SSYT
$\begin{displaystyle}
    \begin{ytableau} 1&1&1&2&2&3\\2&2&3&3&&\\3&3&&&& \end{ytableau}\end{displaystyle}$,
with biword (dual to $\binom{\kappa}{\rho}$) of
$${\rho^* \choose \widehat{\kappa}_{\rho}}={111222233333 \choose 111221133221}.$$
As noted in Lemma \ref{lem:reading_words_Ptab}, $\operatorname{rw}(T) = \widehat{\kappa}_{|\rho|} =111|2211|33221 $  and  $\operatorname{rw}(\widehat{T}) =\rho_{|\kappa|}=322111|3322|33$.
\end{example}

In the literature, the crystal structure on SSYT of shape $\lambda$ is induced by identifying a SSYT $T$ with its reading word and then using the fact that a crystal operator takes a reading word of a SSYT of shape $\lambda$ to another such reading word (this is the content of Corollary 7.8 of~\cite{AW2022}). This action takes the \emph{dual} of a ptableau in the distinguished crystal (which is a SSYT) to the dual of its crystal operator image. While the map gives a valid crystal, it cannot be generalized to other (non-distinguished) crystals. In addition, the use of reading words to determine the crystal structure makes analysis of quasicrystals more challenging than the alternatives presented here (Theorem \ref{thm:quasi constant content}).

\begin{definition}\label{defn:plactic_equivalence}
    Suppose $\Phi: {\mathcal C} \rightarrow {\mathcal C}'$ is a crystal isomorphism for two crystal graphs ${\mathcal C}, {\mathcal C}'$ (in any model). If $c \in {\mathcal C}$ is a node in the crystal graph ${\mathcal C}$, and $\Phi(c) = c' \in {\mathcal C}'$, we say $c$ and $c'$ are \emph{plactically equivalent}. That is, $c$ and $c'$ occupy the same position in isomorphic crystal graphs.
\end{definition}

We have the well-known result:

\begin{proposition}[\cite{BS}]\label{prop: Knuth=plactic}
    If $\omega$ and $\omega'\in [n]^{\otimes k}$ are Knuth equivalent (see Definition~\ref{Def:Knuth Equiv}), then they are plactically equivalent.
\end{proposition}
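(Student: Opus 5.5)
It suffices to treat elementary Knuth equivalences: if $\omega'=K_i^{\pm}\omega$, I will build a crystal isomorphism $\Phi:\mathcal{B}(\omega)\to\mathcal{B}(\omega')$ between the connected components of $[n]^{\otimes k}$ containing them, with $\Phi(\omega)=\omega'$. Plactic equivalence is transitive (compose the isomorphisms), so the general case then follows along a chain of elementary moves. The natural candidate is the map \emph{apply the same Knuth move}: for $\nu\in\mathcal{B}(\omega)$, set $\Phi(\nu)=K_i^{\pm}\nu$ in the same positions $i-1,i,i+1$.

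First I check that this is well defined. Let $\nu=f_j\omega$. I claim that if $\omega$ contains the factor $cab$ with $a<b\le c$ at positions $i-1,i,i+1$, then $\nu$ still contains a factor $c'a'b'$ with $a'<b'\le c'$ there, and similarly in Case II. Since $f_j$ changes only one letter, and only from $j$ to $j+1$, the only dangerous situations are those where the relevant inequality could break. These are:
\begin{itemize}
\item $a=j$ raised to $j+1=b$;
\item $b=j$ raised while $c=j$.
\end{itemize}
In the first, $a=j$ followed by $b=j+1$ forms a bracketed pair, so $f_j$ cannot act on $a$. In the second, $c=j$, $a<j$, $b=j$; the left-most unbracketed $j$ would be $c$ before $b$, unless $c$ is bracketed. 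But $c$ can only bracket with a later $j+1$, which forces $b$ to be bracketed too. This local bracketing analysis is the step I expect to be the main obstacle, and I would carry it out for both cases and for $e_j$ as well.

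Second, I verify that $f_j$ commutes with the move, that is, $f_j(K_i^{\pm}\nu)=K_i^{\pm}f_j(\nu)$, including the NULL cases. The key point is that the bracketing pattern of $j,j+1$ letters in $cab$ versus $acb$ yields the same set of unbracketed letters outside the window, and the same number of unbracketed $j$ and $j+1$ inside it. Relative order only matters within the window, and a case check on which of $a,b,c$ lie in $\{j,j+1\}$ suffices. For instance, with $c=j+1$ and $a=j$: in $cab$ the $a$ may bracket with $b$ if $b=j+1$, while in $acb$ the $a$ brackets with $c$; in either case one $j+1$ remains to the right. Since the moves are involutions, this gives a bijection commuting with all $f_j$ and $e_j$ and preserving weight, hence a crystal isomorphism.

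Alternatively, I would cite that this is standard (\cite{BS}). Within the paper's framework, though, the direct bracketing check above is the proof I would write.
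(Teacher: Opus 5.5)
The paper gives no proof of this proposition; it simply cites \cite{BS}. Your reduction to a single elementary move, and the idea of transporting the move through the crystal, are the right strategy. However, your central claim is false. A Case II (resp.\ Case I) pattern at positions $i-1,i,i+1$ need not survive $f_j$ (resp.\ $e_j$), and $f_j(K_i^{\pm}\nu)=K_i^{\pm}f_j(\nu)$ fails. So $\Phi(\nu)=K_i^{\pm}\nu$ with a fixed move is not the isomorphism.

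Take $n=2$, $\omega=112$ and $\omega'=121=K_2^{+}\omega$ (Case II with $b=a=1$, $c=2$). Then $f_1(\omega)=212$ and $f_1(\omega')=122$. The word $212$ contains no Case II pattern: swapping its last two letters gives $221$, which lies in the component of $111$. Instead, $122=K_2^{-}(212)$ by Case I. Dually, $e_1(212)=112$ and $e_1(122)=121$ turn a Case I pair into a Case II pair.

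Your list of dangerous situations omits, for Case II, the configuration $b=a=j$, $c=j+1$. There $a$ brackets with $c$, so $b$ may be raised, which destroys $b<c$. Your own illustration in the second step ($c=b=j+1$, $a=j$) already shows the problem. The window has the same number of unbracketed letters in $cab$ and $acb$, but the surviving $j+1$ sits in position $i-1$ in one and $i+1$ in the other. So $e_j$ produces $j\,j\,(j+1)$ and $j\,(j+1)\,j$, which form a Case II pair, not a Case I pair.

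The repair is to let the move depend on the node. On a window $xyz$ with $x\ge y<z$, apply the Case I swap (first two letters) if $z\le x$, and the Case II swap (last two letters) if $z>x$. On windows with $x<y\ge z$, use the inverse assignment. Exactly one of $K_i^{+},K_i^{-}$ applies to such a window.

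The resulting involution $\psi$ is a crystal isomorphism between the two components of $[n]^{\otimes 3}$ of highest weight $(2,1)$. These are $\mathcal B(112)=\{xyz : x\ge y<z\}$ and $\mathcal B(121)=\{xyz : x<y\ge z\}$; this is a direct check on triples, since only the letters $j,j+1$ matter.

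Then use locality of the bracketing rule. Write $\nu=u\,t\,v$ with $t$ the window. Then $f_j(\nu)$ is one of $f_j(u)\,t\,v$, $u\,f_j(t)\,v$ or $u\,t\,f_j(v)$. Which one occurs is determined by the numbers of unbracketed $j$'s and $(j+1)$'s in $u$, $t$, $v$, and these numbers agree for $t$ and $\psi(t)$. The same holds for $e_j$. Hence $u\,t\,v\mapsto u\,\psi(t)\,v$ is the required crystal isomorphism.

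The switch $K_i^{+}\leftrightarrow K_i^{-}$ is exactly the phenomenon the paper records for $D_i^{\pm}$ in Proposition~\ref{prop f_i to dual}. It transfers to words via Theorem~\ref{theorem biword dual Knuth}.
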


Below are examples of irreducible crystal graphs of highest weight $(3, 1, 0).$
The crystal labeled (1) is the distinguished crystal of ptableaux (Def. \ref{Def:distinguished crystal}). Figure (2) is the corresponding SSYT crystal (see Proposition \ref{prop_distinguished_ptab}). Figure (3) is the corresponding biword crystal (Def. \ref{def_ ptab_biword}). Replacing each biword by the bottom row of the biword gives (4), the corresponding crystal of words in $[3]^{\otimes 4}$ (Sec. \ref{subsection_word_crystals}).

\[
\scalebox{0.75}{ \ytableausetup{smalltableaux}
\begin{tikzpicture}
\node (A) { \ytableaushort{111,2,}* {3,3,3}};
\node [below of=A, left of=A,node distance = 2cm](B) {\ytableaushort{\none11,1\none2,}* {3,3,3}};
\node (C) [right of = A,below of=A,node distance = 2cm] {\ytableaushort{111,\none,\none\none2}* {3,3,3}};
\node (D) [left of=B, below of = B,node distance = 2cm] {\ytableaushort{\none\none1,112,}* {3,3,3}};
\node (E) [below of=A,node distance = 3.4cm] {\ytableaushort{\none11,1,\none\none2}* {3,3,3} };
\node (F) [below of=E,node distance =1.2cm] {\ytableaushort{\none11,\none\none2,1}* {3,3,3} };
\node (G) [below of =B,node distance= 3.5cm]  {\ytableaushort{\none\none1,11,\none\none2}* {3,3,3}};
\node (H) [below of = G,node distance = 1.2 cm] {\ytableaushort{\none\none1,\none12,1}* {3,3,3} };
\node (I) [below of =C,node distance = 4.2cm] {\ytableaushort{\none11,\none\none\none,1\none2}* {3,3,3}};
\node (J) [below of=D, node distance = 4.2cm] {\ytableaushort{\none\none\none,111,\none\none2}* {3,3,3}};
\node (K) [below of = F,node distance = 3cm] {\ytableaushort{\none\none1,\none1,1\none2}* {3,3,3}  };
\node (L) [below of = K,node distance = 1.2 cm] {\ytableaushort{\none\none1,\none\none2,11}* {3,3,3}};
\node (M) [below of = H,node distance = 4 cm] { \ytableaushort{\none\none\none,\none11,1\none2}* {3,3,3}};
\node (N) [below of = I,node distance = 4.4 cm]{\ytableaushort{\none\none1,\none\none\none,112}* {3,3,3}};
\node (O) [below of = L,node distance = 3.4cm] {\ytableaushort{\none\none\none,\none\none1,112}* {3,3,3}};
\draw[->] (A.225) to node [above] {$f_{1}$} (B.45);
\draw[->] (A.315) to node [above] {$f_{2}$} (C.135);
\draw[->] (B.225) to node [above] {$f_{1}$} (D.45);
\draw[->] (B.250) to node [above] {$f_{2}$} (F.165);
\draw[->] (C.225) to node [above] {$f_{1}$} (E.0);
\draw[->] (E.180) to node [left] {$f_{1}$} (G.90);
\draw[->] (D.240) to node [above] {$f_{2}$} (H.180);
\draw[->] (G.195) to node [left] {$f_{1}$} (J.115);
\draw[->] (F.270) to node [above] {$f_{1}$} (H.0);
\draw[->] (F.315) to node [above] {$f_{2}$} (I.180);
\draw[->] (H.270) to node [above] {$f_{2}$} (L.180);
\draw[->] (I.225) to node [above] {$f_{1}$} (K.0);
\draw[->] (J.270) to node [above] {$f_{2}$} (M.135);
\draw[->] (G.0) to node [right] {$f_{2}$} (K.90);
\draw[->] (M.300) to node [above] {$f_{2}$} (O.180);
\draw[->] (K.215) to node [left] {$f_{1}$} (M.115);
\draw[->] (N.250) to node [above] {$f_{1}$} (O.0);
\draw[->] (L.0) to node [above] {$f_{2}$} (N.90);
\end{tikzpicture} 
\qquad \quad

\begin{tikzpicture}
\node (A) { \ytableaushort{111,2,}* {3,1,0}};
\node [below of=A, left of=A,node distance = 2cm](B) {\ytableaushort{112,2}* {3,1}};
\node (C) [right of = A,below of=A,node distance = 2cm] {\ytableaushort{111,3}* {3,1}};
\node (D) [left of=B, below of = B,node distance = 2cm] {\ytableaushort{122,2}* {3,1}};
\node (E) [below of=A,node distance = 3.4cm] {\ytableaushort{112,3,}* {3,1} };
\node (F) [below of=E,node distance =1.2cm] {\ytableaushort{ 113,2}* {3,1} };
\node (G) [below of =B,node distance= 3.5cm]  {\ytableaushort{122,3}* {3,1}};
\node (H) [below of = G,node distance = 1.2 cm] {\ytableaushort{123,2}* {3,1} };
\node (I) [below of =C,node distance = 4.2cm] {\ytableaushort{ 113,3}* {3,1}};
\node (J) [below of=D, node distance = 4.2cm] {\ytableaushort{222,3}* {3,1}};
\node (K) [below of = F,node distance = 3cm] {\ytableaushort{123,3}* {3,1}  };
\node (L) [below of = K,node distance = 1.2 cm] {\ytableaushort{133,2}* {3,1}};
\node (M) [below of = H,node distance = 4 cm] { \ytableaushort{223,3}* {3,1}};
\node (N) [below of = I,node distance = 4.4 cm]{\ytableaushort{133,3}* {3,1}};
\node (O) [below of = L,node distance = 3.4cm] {\ytableaushort{233,3}* {3,1}};
\draw[->] (A.225) to node [above] {$f_{1}$} (B.45);
\draw[->] (A.315) to node [above] {$f_{2}$} (C.135);
\draw[->] (B.225) to node [above] {$f_{1}$} (D.45);
\draw[->] (B.250) to node [above] {$f_{2}$} (F.165);
\draw[->] (C.225) to node [above] {$f_{1}$} (E.0);
\draw[->] (E.180) to node [left] {$f_{1}$} (G.90);
\draw[->] (D.240) to node [above] {$f_{2}$} (H.180);
\draw[->] (G.195) to node [left] {$f_{1}$} (J.115);
\draw[->] (F.270) to node [above] {$f_{1}$} (H.0);
\draw[->] (F.315) to node [above] {$f_{2}$} (I.180);
\draw[->] (H.270) to node [above] {$f_{2}$} (L.180);
\draw[->] (I.225) to node [above] {$f_{1}$} (K.0);
\draw[->] (J.270) to node [above] {$f_{2}$} (M.135);
\draw[->] (G.0) to node [right] {$f_{2}$} (K.90);
\draw[->] (M.300) to node [above] {$f_{2}$} (O.180);
\draw[->] (K.215) to node [left] {$f_{1}$} (M.115);
\draw[->] (N.250) to node [above] {$f_{1}$} (O.0);
\draw[->] (L.0) to node [above] {$f_{2}$} (N.90);
\end{tikzpicture}}
\]
\hskip 1.5in (1) \hskip 3in (2)

\[
\scalebox{0.75}{\begin{tikzpicture}
\node (A) {$\binom{1112}{1112}$};
\node [below of=A, left of=A,node distance = 2cm](B) {$\binom{1112}{2112}$};
\node (C) [right of = A,below of=A,node distance = 2cm] {$\binom{1112}{1113}$};
\node (D) [left of=B, below of = B,node distance = 2cm] {$\binom{1112}{2212}$};
\node (E) [below of=A,node distance = 3.4cm] {$\binom{1112}{2113}$};
\node (F) [below of=E,node distance =1.2cm] {$\binom{1112}{3112}$};
\node (G) [below of =B,node distance= 3.5cm]  {$\binom{1112}{2213}$};
\node (H) [below of = G,node distance = 1.2 cm] {$\binom{1112}{3212}$};
\node (I) [below of =C,node distance = 4.2cm] {$\binom{1112}{3113}$};
\node (J) [below of=D, node distance = 4.2cm] {$\binom{1112}{2223}$};
\node (K) [below of = F,node distance = 3cm] {$\binom{1112}{3213}$};
\node (L) [below of = K,node distance = 1.2 cm] {$\binom{1112}{3312}$};
\node (M) [below of = H,node distance = 4 cm] {$\binom{1112}{3223}$};
\node (N) [below of = I,node distance = 4.4 cm]{$\binom{1112}{3313}$};
\node (O) [below of = L,node distance = 3.4cm] {$\binom{1112}{3323}$};
\draw[->] (A.225) to node [above] {$f_{1}$} (B.45);
\draw[->] (A.315) to node [above] {$f_{2}$} (C.135);
\draw[->] (B.225) to node [above] {$f_{1}$} (D.45);
\draw[->] (B.250) to node [above] {$f_{2}$} (F.165);
\draw[->] (C.225) to node [above] {$f_{1}$} (E.0);
\draw[->] (E.180) to node [left] {$f_{1}$} (G.90);
\draw[->] (D.240) to node [above] {$f_{2}$} (H.180);
\draw[->] (G.195) to node [left] {$f_{1}$} (J.115);
\draw[->] (F.270) to node [above] {$f_{1}$} (H.0);
\draw[->] (F.315) to node [above] {$f_{2}$} (I.180);
\draw[->] (H.270) to node [above] {$f_{2}$} (L.180);
\draw[->] (I.225) to node [above] {$f_{1}$} (K.0);
\draw[->] (J.270) to node [above] {$f_{2}$} (M.135);
\draw[->] (G.0) to node [right] {$f_{2}$} (K.90);
\draw[->] (M.300) to node [above] {$f_{2}$} (O.180);
\draw[->] (K.215) to node [left] {$f_{1}$} (M.115);
\draw[->] (N.250) to node [above] {$f_{1}$} (O.0);
\draw[->] (L.0) to node [above] {$f_{2}$} (N.90);
\end{tikzpicture} 
\qquad \quad

\begin{tikzpicture}
\node (A) {$1112$};
\node [below of=A, left of=A,node distance = 2cm](B) {$2112$};
\node (C) [right of = A,below of=A,node distance = 2cm] {${1113}$};
\node (D) [left of=B, below of = B,node distance = 2cm] {${2212}$};
\node (E) [below of=A,node distance = 3.4cm] {${2113}$};
\node (F) [below of=E,node distance =1.2cm] {${3112}$};
\node (G) [below of =B,node distance= 3.5cm]  {${2213}$};
\node (H) [below of = G,node distance = 1.2 cm] {${3212}$};
\node (I) [below of =C,node distance = 4.2cm] {${3113}$};
\node (J) [below of=D, node distance = 4.2cm] {${2223}$};
\node (K) [below of = F,node distance = 3cm] {${3213}$};
\node (L) [below of = K,node distance = 1.2 cm] {${3312}$};
\node (M) [below of = H,node distance = 4 cm] {${3223}$};
\node (N) [below of = I,node distance = 4.4 cm]{${3313}$};
\node (O) [below of = L,node distance = 3.4cm] {${3323}$};
\draw[->] (A.225) to node [above] {$f_{1}$} (B.45);
\draw[->] (A.315) to node [above] {$f_{2}$} (C.135);
\draw[->] (B.225) to node [above] {$f_{1}$} (D.45);
\draw[->] (B.250) to node [above] {$f_{2}$} (F.165);
\draw[->] (C.225) to node [above] {$f_{1}$} (E.0);
\draw[->] (E.180) to node [left] {$f_{1}$} (G.90);
\draw[->] (D.240) to node [above] {$f_{2}$} (H.180);
\draw[->] (G.195) to node [left] {$f_{1}$} (J.115);
\draw[->] (F.270) to node [above] {$f_{1}$} (H.0);
\draw[->] (F.315) to node [above] {$f_{2}$} (I.180);
\draw[->] (H.270) to node [above] {$f_{2}$} (L.180);
\draw[->] (I.225) to node [above] {$f_{1}$} (K.0);
\draw[->] (J.270) to node [above] {$f_{2}$} (M.135);
\draw[->] (G.0) to node [right] {$f_{2}$} (K.90);
\draw[->] (M.300) to node [above] {$f_{2}$} (O.180);
\draw[->] (K.215) to node [left] {$f_{1}$} (M.115);
\draw[->] (N.250) to node [above] {$f_{1}$} (O.0);
\draw[->] (L.0) to node [above] {$f_{2}$} (N.90);
\end{tikzpicture}}
\]
\hskip 1.75in (3) \hskip 3in (4)


\section{Quasisymmetric Functions and Quasicrystals}\label{section: quasicrystals}

In this section, we provide the necessary background on symmetric functions, quasisymmetric functions, and crystal characters to motivate the study of quasicrystals. We then include the original definitions of quasicrystals by Cain and Malheiro \cite{CM17} and Maas-Gariépy \cite{FMG23} through the model on words and on semistandard Young tableaux, respectively. 

  \subsection{Crystal characters } 
  For $\mu=(\mu_1, \mu_2,\ldots, \mu_n)$ a composition of $k$,  let 
$x^\mu$ denote the monomial $x_1^{\mu_1}x_2^{\mu_2}\ldots x_n^{\mu_n}.$ The \textit{character} of a crystal graph $\mathcal{B}$ is the sum of the monomials of its nodes, denoted $\chi(\mathcal{B}) = \sum_{v\in \mathcal{B}} x^{\mathsf{wt}(v)}$.   The character of a type $A_{n-1}$ irreducible crystal $\mathcal{B}_{\lambda}$ of highest weight $\lambda$ is the Schur polynomial $s_\lambda(x_1,\ldots, x_n)$, most clearly realized in the semistandard Young tableaux model. Let $T_{\lambda}$ denote the \emph{superstandard} SSYT of shape $\lambda$ containing only $i$'s in row $i$. Then the crystal character of $\mathcal{B}_{\lambda}(T_\lambda)$ is given by
 \[ \chi(\mathcal{B}_{\lambda}(T_\lambda))(x_1,\ldots,x_n) = \sum_{T}  x^{\wt(T)}, \]
 where the sum is over all SSYT of shape $\lambda$ such that the length of the composition $\ell(\wt(T)) \leq n$.

  Although we work with a fixed type $A_{n-1}$, the inductive limit of such polynomials (as $n \rightarrow \infty$) is the Schur function $s_\lambda$. We imagine that the character given above denotes the inductive limit of Schur polynomials on the right side, with a similar limit of crystal graphs (as $n \rightarrow \infty$) on the left.      It is a significant feature of crystal graphs that they form a coherent inductive limit not only as the number of nodes goes to infinity, but as the number of edges does, that is, as crystal operators $f_{n-1}, e_{n-1}$ are definable for arbitrarily large values of $n$.
We express this as:
   
    \[ \chi(\mathcal{B}_{\lambda}(T_{\lambda})) = \sum_{\substack{T \in SSYT \\ \operatorname{sh}(T) = \lambda}} x^{\wt(T)} = s_{\lambda} \label{crystal character} \]
    where, in fact, the second equality is the traditional combinatorial definition of a Schur function.

  Symmetric functions (and symmetric polynomials) have a long history in the analysis of multiple problems in combinatorics and representation theory (see, for example, \cite{macdonald1995symmetric}, \cite{mendes2015counting}, \cite{EC2}). Gessel (\cite{G84}) developed a broader class of functions called \textit{quasisymmetric functions}, which have become an important tool for studying symmetric functions and, in particular, Schur positivity.

  Let $\mathbf{x}=\{x_1, x_2, \dots\}$ be a countably infinite set of commuting variables, and let $\mathbb C[[\mathbf{x}]]$ be the algebra of formal power series in $\mathbf{x}$.

  \begin{definition}
        A function $f\in \mathbb C[[\mathbf{x}]]$ is quasisymmetric if for every composition $\alpha = (\alpha_1, \alpha_2, \ldots, \alpha_\ell)$, the coefficient of $x_1^{\alpha_1}x_2^{\alpha_2}\cdots x_{\ell}^{\alpha_\ell}$ is the same as the coefficient of $x_{i_1}^{\alpha_1}x_{i_2}^{\alpha_2}\cdots x_{i_\ell}^{\alpha_\ell}$ for all $0 < i_1 < i_2 < \ldots < i_\ell$. 

  \end{definition}

The \textit{fundamental quasisymmetric functions} given by Gessel \cite{G84} form an important basis of the ring of quasisymmetric functions. 
\begin{definition}
    For a composition $\alpha = (\alpha_1, \alpha_2, \ldots, \alpha_\ell) \models k$, the \emph{fundamental quasisymmetric functions} $F_\alpha$ is defined as 
\begin{equation}
	 F_{\alpha} := \sum_{\substack{1 \leqslant i_1 \leqslant \cdots \leqslant i_k \,: \\ \, i_j < i_{j+1} \text{ if } j \in D(\alpha)}} x_{i_1}\cdots x_{i_k},
\end{equation}
where $D(\alpha):=\{\alpha_1, \alpha_1 + \alpha_2, \ldots, \alpha_1 + \alpha_2 + \ldots + \alpha_{\ell-1}\}$ is the (descent) set associated to $\alpha$. 
\end{definition}
The expansion of the Schur function $s_\lambda$ in terms of fundamental quasisymmetric functions as a sum over standard Young tableau of shape $\lambda$ is a well-known consequence of Stanley's theory of P-partitions. 
\begin{theorem}[\cite{EC2} Theorem 7.19.7]
    For $\lambda \vdash n$, the fundamental quasisymmetric function expansion of the Schur function $s_\lambda$ is
    \begin{equation}
	 s_\lambda = \sum_{T} F_{D(T)},
\end{equation}
where the sum is over all standard Young tableau of shape $\lambda$ and $D(T)$ is the descent set of $T$, the set of letters $i$ such that $i+1$ appears in a row of strictly greater index than $i$ in $T$.
\end{theorem}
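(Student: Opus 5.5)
We prove $s_\lambda=\sum_T F_{D(T)}$, summing over standard Young tableaux of shape $\lambda$, via the classical standardization argument. We work with the combinatorial definition $s_\lambda=\sum_{S} x^{\wt(S)}$ over SSYT $S$ of shape $\lambda$, equation \eqref{crystal character}. The plan is to partition the set of SSYT of shape $\lambda$ according to their standardizations and show that each block contributes exactly one fundamental quasisymmetric function.

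\textbf{Step 1: standardization.} Given a SSYT $S$ of shape $\lambda\vdash k$, define $\std(S)$ as follows. Replace the $1$'s of $S$, read left to right, by $1,2,\dots,a_1$. Then replace the $2$'s, again left to right, by $a_1+1,\dots,a_1+a_2$, and so on. Each letter's occurrences form a horizontal strip, so the result is a standard Young tableau $T$ of shape $\lambda$. We record the monomial $x^{\wt(S)}=x_{i_1}\cdots x_{i_k}$, where $i_j$ is the entry of $S$ in the cell that $T$ labels $j$; thus $i_1\le\cdots\le i_k$.

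\textbf{Step 2: the descent condition.} Fix $j\in D(T)$, so $j+1$ lies in a strictly lower row than $j$ in $T$. We claim $i_j<i_{j+1}$. Suppose instead $i_j=i_{j+1}$. Then the cells labelled $j$ and $j+1$ hold equal entries of $S$. Equal entries form a horizontal strip and are labelled left to right, so the cell of $j+1$ lies strictly right of the cell of $j$. But it also lies in a lower row, and then semistandardness forces the entry above-left to be strictly smaller. Concretely, the entry of $S$ in row $r$ (the row of $j$) and the column of $j+1$ is $\ge i_j$, by weak increase along rows. Strict increase down columns then makes the entry at $j+1$ strictly larger than $i_j$. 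This contradicts $i_j=i_{j+1}$.

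\textbf{Step 3: the inverse map.} Conversely, let $T$ be standard of shape $\lambda$ and let $i_1\le\cdots\le i_k$ satisfy $i_j<i_{j+1}$ for every $j\in D(T)$. Define $S$ by putting $i_j$ in the cell of $j$.

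Rows of $S$ are weakly increasing because rows of $T$ increase. For columns, take a cell labelled $j$ with the cell labelled $m>j$ directly below it. Along the chain $j,j+1,\dots,m$ there must be some descent $t\in D(T)$; otherwise $j,\dots,m$ would all lie in weakly higher rows and move weakly upward, never reaching the lower row. Hence $i_m\ge i_{t+1}>i_t\ge i_j$, so columns are strictly increasing.

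We must also check $\std(S)=T$. Within a maximal run of equal values $i_j=\cdots=i_m$ there are no descents of $T$, so these cells move weakly up and strictly right as the label increases. The left-to-right labelling of that run therefore reproduces $j,\dots,m$, and $\std(S)=T$. Together with Step 2, the map $S\mapsto(\std(S),(i_1,\dots,i_k))$ is a bijection.

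\textbf{Step 4: summing.} Summing $x^{\wt(S)}$ over the block with standardization $T$ gives exactly the defining sum for $F_{D(T)}$. Summing over all $T$ yields the theorem.

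The delicate point is the column-strictness argument in Step 3, which needs the lemma that between vertically adjacent cells the labels of $T$ must pass through a descent. I would prove that lemma first and separately.
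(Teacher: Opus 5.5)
Your proof is correct. The paper does not prove this statement itself. It quotes it from Stanley as a consequence of the theory of $P$-partitions. There one first proves, for an arbitrary labeled poset, that the generating function of its $P$-partitions is $\sum_{\pi} F_{D(\pi)}$ over linear extensions $\pi$. One then specializes to the poset of cells of $\lambda$, with a labeling for which $P$-partitions are exactly the SSYT and linear extensions are exactly the SYT, with matching descent sets.

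Your standardization bijection is that fundamental lemma unwound for this one poset. The map $\std$ (``sort cells by value, break ties left to right'') produces the unique compatible linear extension. Your Steps 2 and 3 check that compatibility means ``weakly increasing, and strictly increasing at descents.''

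What each route buys:
\begin{itemize}
\item The general route gives uniformity: one lemma covers every labeled poset.
\item Yours is self-contained and sits naturally inside the paper. The fibers of $\std$ you sum over are exactly the vertex sets of the SSYT quasicrystals of Definition~\ref{def:quasicrystal_definition}. So your Step 4 is precisely the character statement of Proposition~\ref{prop:quasicrystals_contain_all_destd_and_fqf_character}, without the connectivity claim.
\end{itemize}

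One line should be added in Step 3, where you assert that within a run of equal values the cell of $t+1$ is strictly right of the cell of $t$. If $t+1$ is in the same row, this is automatic. If $t$ is at $(r,c)$ and $t+1$ is at $(r',c')$ with $r'<r$, you need $c'>c$. Otherwise the cell $(r',c)$ exists, and $t+1=T(r',c')\le T(r',c)<T(r,c)=t$, a contradiction.

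The lemma you flag as delicate is already fully proved by your chain argument.
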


\subsection{Quasicrystals on words and semistandard Young tableaux} 

The expansion of a Schur polynomial into fundamental quasisymmetric polynomials can be represented as a decomposition of an irreducible type $A_{n-1}$ crystal graph into \textit{quasicrystals} (formally defined in Definition~\ref{def:quasicrystal_definition}), which were introduced by Cain and Malheiro and Maas-Gariépy \cite{CM17, FMG23}. See Sections 5 and 6 of \cite{CM17} and Section 2, Theorem 1 of \cite{FMG23} for full details and development of quasicrystals in the crystal on words and crystal on semistandard Young tableaux, respectively. In these models, a quasicrystal can be defined as an induced subgraph containing all of the nodes that \textit{standardize} to the same permutation or standard Young tableau.  We first define standardization in our conventions, which matches Kashiwara conventions and is opposite of the anti-Kashiwara conventions used in Cain--Malheiro and Maas-Gariépy.

   \begin{definition}\label{def:standardization-words-ssyt}
   \hskip .25in
       \begin{itemize}
           \item To \emph{standardize a word} $w = w_1w_2\ldots w_k 
 \in [n]^{\otimes k}$, we successively label the letters with $1, 2, \ldots, k$ by reading $w$ from right to left 
 and first labeling all of the $1$s, then labeling all of the $2$s, and so on. The standardization of $w$, denoted $\std(w)$ is the \emph{standard word} obtained by replacing the letters with their labels.
\item To \emph{standardize a semistandard Young tableau} $T$ with $k$ boxes, we successively label the entries with $1, 2, \ldots, k$ from left to right starting with the $1$s, then the $2$s and so on. The standardization of $T$, denoted $\std(T)$ is the \emph{standard Young tableau} obtained by replacing the entries with their labels. 
\item The \emph{maximal horizontal bands} in a semistandard Young tableau $T$ are ``bands" of boxes created by successively reading the entries $1,2,\ldots, k$ from the boxes in $\std(T)$ and beginning a new band if the box containing the next entry is not strictly to the right of the box containing the current entry. 
       \end{itemize}
   \end{definition}

\begin{example}\label{ex std} Let $w = 24123311 \in[4]^{\otimes 8}$. Then $\std(w) = 58347621$, as computed below.
\[
\begin{tabular}{cccccccc}
     $\cdot$&$\cdot$&3&$\cdot$&$\cdot$&$\cdot$&2&1  \\
     2&4&1&2&3&3&1&1
\end{tabular} \quad \rightarrow \begin{tabular}{cccccccc}
    5&$\cdot$&3&4&$\cdot$&$\cdot$&2&1  \\
     2&4&1&2&3&3&1&1
\end{tabular} \quad \rightarrow \begin{tabular}{cccccccc}
    5&8&3&4&7&6&2&1  \\
     2&4&1&2&3&3&1&1.
\end{tabular}
\]
Let $T = \begin{ytableau}  1& 1&1 &2 & 3 & 5\\ 2&  2 & 4 \\4 &4 \end{ytableau}$ be a SSYT of shape $(6,3,2)$. Then  $\std(T) = \begin{ytableau}  *(cyan) 1 & *(cyan)2&*(cyan) 3 &*(lime)6 & *(lime) 7 & *(olive) 11\\ *(lime) 4&  *(lime) 5 & *(olive) 10 \\*(olive) 8 &*(olive) 9 \end{ytableau}.$ The colors highlight the maximal horizontal bands of increasing consecutive content starting in the leftmost column.
\end{example}

\begin{definition}\label{def:quasicrystal_definition}
    Given a permutation $\pi$ (or resp. standard Young tableau $T$), the \emph{quasicrystal} containing $\pi$ (resp. $T$) is the induced subgraph of the crystal on words (resp. semistandard Young tableaux) on the subset of nodes that standardize to $\pi$ (resp. $T$). 
\end{definition}

A quasicrystal is indexed by the \textit{descent composition} of the standardization of its nodes, defined for both words and semistandard Young tableaux as follows. A word $w$ has descent composition $\alpha = (\alpha_1, \ldots, \alpha_\ell)$ if, in the standardization of $w$,
the letters $\{\alpha_1 + \ldots + \alpha_{i}+1,\ldots, \alpha_1 + \ldots + \alpha_{i} + \alpha_{i+1}\}$ appear in increasing order reading from right to left and the letter $\alpha_1 + \ldots + \alpha_{i+1}$ appears to the left of the letter $\alpha_1 + \ldots +\alpha_{i+1}+1$, for $0 \leq i\leq \ell$ ($\alpha_0 := 0$). A semistandard Young tableau $T$ has descent composition $\alpha = (\alpha_1, \ldots, \alpha_\ell)$ if, in the standardization of $T$, the letter $\alpha_1 + \ldots + \alpha_{i}+1$ appears in a row of greater index than the letter $\alpha_1 + \ldots + \alpha_{i}$ for $1 \leq i \leq \ell -1$. The descent composition of $T$ is also given by the number of boxes in its maximal horizontal bands (see Definition~\ref{def:standardization-words-ssyt} and \cite{FMG23}). 

\begin{example}
    Continuing Example~\ref{ex std}, in $\std( 24123311) = 58347621$, the letters $\{1,2,3\}, \{4,5\},$ \\$\{6,7,8\}$ appear in increasing order reading from right to left. Therefore $w$ has descent composition $(3, 5-3, 8-5) = (3,2,3)$. In $\std\left(\begin{ytableau}  1& 1&1 &2 & 3 & 5\\ 2&  2 & 4 \\4 &4 \end{ytableau}\right) = \begin{ytableau}  *(cyan) 1 & *(cyan)2&*(cyan) 3 &*(lime)6 & *(lime) 7 & *(olive) 11\\ *(lime) 4&  *(lime) 5 & *(olive) 10 \\*(olive) 8 &*(olive) 9 \end{ytableau}$, the letters $4$ and $8$ appear in rows of strictly greater index than $3$ and $7$. Therefore, $T$ has descent composition $(3, 7-3, 11-7) = (3,4,4)$, equal to the number of boxes in the maximal horizontal bands, colored blue, green, and olive.

\end{example}

Cain and Malheiro and Maas-Gariépy prove the following important facts that link quasicrystals to the fundamental quasisymmetric decomposition of type $A_{n-1}$ crystal characters. 

\begin{proposition}[\cite{FMG23} Theorem 1, \cite{CM17} Proposition 6.4]\label{prop:quasicrystals_are_connected_subgraphs}
    For any permutation $\rho$ in the crystal on words $[n]^{\otimes k}$ (resp. any standard Young tableau $T$ in the crystal on semistandard Young tableaux) with descent composition $\alpha$, the quasicrystal $Q_\alpha$ containing $\rho$ (resp. $T$) is a connected subgraph of the crystal on words (resp. semistandard Young tableaux). Furthermore, $Q_\alpha$ contains a unique highest weight node of weight $\alpha$ and, when $n=k$, a unique standard word (resp. standard Young tableau).
\end{proposition}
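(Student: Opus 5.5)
Since this proposition is quoted from Cain--Malheiro and Maas-Gariépy, the plan is to give a self-contained argument in the words model; the tableau statement then follows by transport through the reading-word crystal isomorphism. Fix a standard word $\pi$ with descent composition $\alpha$, and let $Q_\alpha$ be the set of words $w\in[n]^{\otimes k}$ with $\std(w)=\pi$. The first step is to describe $Q_\alpha$ explicitly. A word $w$ standardizes to $\pi$ exactly when $w$ is obtained from $\pi$ by a weakly order-preserving relabeling $j\mapsto c_j$ with $c_1\le c_2\le\cdots\le c_k$. The relabeling must satisfy a single constraint: $c_j<c_{j+1}$ whenever $j$ is a descent of $\pi$, meaning $j+1$ lies to the left of $j$ (equivalently $j\in D(\alpha)$). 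Conversely, every such sequence $(c_j)$ gives a word in $Q_\alpha$. With this description the weight generating function of $Q_\alpha$ is $F_\alpha$. Its unique highest weight node should be the relabeling $c_j=r$ for $j$ in the $r$-th block of $\alpha$, which has weight $\alpha$.

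The second step is to show that crystal edges interact correctly with $Q_\alpha$. Suppose $f_i(w)$ lies in $Q_\alpha$ together with $w$. In standardized terms, $f_i$ changes the leftmost unbracketed $i$ to $i+1$. The $i$'s of $w$ carry the labels of a consecutive interval $[p,q]$, placed right to left, and the $i+1$'s carry $[q+1,r]$. The changed letter is the leftmost $i$, so it carries the largest label $q$, and it must become the smallest label among the $(i+1)$'s. Hence $f_i$ preserves standardization exactly when the leftmost unbracketed $i$ is the leftmost $i$ overall and lies to the right of every $i+1$. Under the $c$-description, this is the move decreasing $c$ on the top element of a block. The key claim is then the following: if $w\in Q_\alpha$ is not the highest weight element described above, there is some $i$ with $e_i(w)\in Q_\alpha$. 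Take the smallest $i$ such that the relabeling can lower the least label $m$ with $c_m=i+1$. Lowering is possible precisely when $m-1$ is not a descent, or when $c_{m-1}<i$. In that case the $i+1$ carrying label $m$ is the rightmost $i+1$, and all $i$'s lie to its right. Since it sits to the right of the $i$ labelled $m-1$, which is the leftmost $i$, it can only be bracketed if some $i$ lies to its right. I expect a small case analysis here to be the main obstacle: a bracketing could otherwise block the move, and one must show that unblocked choices exist by choosing $i$ minimal, or by using the descent condition on $\pi$ between consecutive labels. Granting the claim, iterating $e_i$ inside $Q_\alpha$ strictly decreases $\sum_j c_j$, so every node reaches the unique minimal relabeling. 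This gives both connectedness and uniqueness of the highest weight node, whose weight is $\alpha$.

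Finally, when $n=k$ the standard word $\pi$ itself lies in $Q_\alpha$, with $c_j=j$. It is the unique standard word there, because a standard word standardizes to itself. For the tableau version, I will use that standardization commutes with taking reading words, in the sense that $\std(\mathrm{rw}(T))=\mathrm{rw}(\std(T))$ under the conventions of Definition~\ref{def:standardization-words-ssyt}. Combined with the fact that the reading-word map is a crystal embedding, this shows that the quasicrystal of $T$ is exactly the image of a word quasicrystal. Connectedness, the unique highest weight node of weight $\alpha$, and the unique standard Young tableau then transfer directly.
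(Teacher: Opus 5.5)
The paper gives no proof of its own here; the proposition is quoted from \cite{FMG23} and \cite{CM17}, so your argument has to stand alone. Its architecture is right:
the relabeling description of $Q_\alpha$;
your (correct) observation that an $f_i$-edge stays in $Q_\alpha$ exactly when every $i+1$ precedes every $i$;
a descent on $\sum_j c_j$ to the minimal relabeling $c^{\min}$, where $c^{\min}_1=1$, $c^{\min}_{j+1}=c^{\min}_j+1$ if $j\in D(\alpha)$, and $c^{\min}_{j+1}=c^{\min}_j$ otherwise.

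\textbf{The gap.} The one claim carrying both connectedness and uniqueness of the highest weight node is that every node of $Q_\alpha$ other than $c^{\min}$ has an $e_i$-edge inside $Q_\alpha$. You only grant it, and your sketch for it runs on a reversed picture:
\begin{itemize}
\item With the paper's right-to-left labelling of equal letters, $c_j=c_{j+1}$ forces the letter labelled $j+1$ to lie to the \emph{left} of the one labelled $j$. So strictness is forced when $j$ lies left of $j+1$, which is what $D(\alpha)$ records. Your gloss ``meaning $j+1$ lies to the left of $j$'' is therefore backwards, though ``$j\in D(\alpha)$'' is correct.
\item The same reversal produces the claim that the $i+1$ labelled $m$ ``sits to the right of the $i$ labelled $m-1$''. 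This contradicts your correct remark just before it that all $i$'s lie to its right.
\item It also produces the claim that this $i+1$ ``can only be bracketed if some $i$ lies to its right''. In fact an $i+1$ can only be bracketed by an $i$ on its \emph{left}.
\item ``Take the smallest $i$ such that lowering is possible'' presupposes the existence you need to prove.
\item Minor: $f_i$ \emph{raises} $c$ at the top of the block of $i$'s; it does not decrease it.
\end{itemize}

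\textbf{How to close it.} The missing step is short and needs no case analysis.
\begin{itemize}
\item By induction, every valid $c$ satisfies $c\ge c^{\min}$ pointwise.
\item If $c\neq c^{\min}$, let $j$ be least with $c_j>c^{\min}_j$ and put $i=c_j-1\ge 1$. Then $c_{j-1}=c^{\min}_{j-1}<c_j$, so $j$ is the least label of value $i+1$.
\item If $j-1\in D(\alpha)$ then $c_j\ge c_{j-1}+2$. So in every case, lowering $c_j$ to $i$ gives a valid relabeling $c'$.
\item In $w$, the letter labelled $j$ is the rightmost $i+1$. If $w$ has an $i$ at all, then $c_{j-1}=i$, which by the previous step forces $j-1\notin D(\alpha)$. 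Hence label $j$ lies to the left of label $j-1$, which is the leftmost $i$.
\item So every $i+1$ precedes every $i$, nothing is bracketed, and $e_i$ changes exactly the letter labelled $j$. This produces the word of $c'$, which lies in $Q_\alpha$.
\item Conversely, $c^{\min}$ has no such edge. An $e_i$-edge inside $Q_\alpha$ lowers a single $c_j$ by one, and no valid relabeling lies below $c^{\min}$.
\end{itemize}

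With this inserted, the rest of your plan goes through. For the tableau transfer, note why every word in the quasicrystal of $\mathrm{rw}(\std(T))$ is the reading word of a tableau of the right shape. It is because that quasicrystal is connected, so it lies in the crystal component of $\mathrm{rw}(\std(T))$, which is the image of the tableau crystal.
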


\begin{proposition}[\cite{FMG23} Theorem 1]\label{prop:quasicrystals_contain_all_destd_and_fqf_character}
    The character of $Q_\alpha$ is the fundamental quasisymmetric function $F_\alpha$, and thus, because they contain all words with the same standardization, quasicrystals in the irreducible crystal $\mathcal{B}_\lambda$ (of words or semistandard Young tableaux) decompose the vertex set of $\mathcal{B}_\lambda$ according to the fundamental quasisymmetric function expansion of the Schur function $s_\lambda$. 
\end{proposition}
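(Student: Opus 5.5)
The plan is to prove both statements by \emph{destandardization}: for a fixed standard object, describe explicitly all nodes that standardize to it. I would work first in the word model, then transfer to SSYT.

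\textbf{Step 1 (words).} Fix a standard word $\pi\in S_k$ with descent composition $\alpha$. The claim is that $\std(w)=\pi$ if and only if $w$ is obtained from $\pi$ by replacing each label $j$ with a value $i_j$, where
\[ i_1\le i_2\le\cdots\le i_k, \qquad i_j<i_{j+1} \text{ whenever } j\in D(\alpha). \]
\begin{itemize}
\item Forward direction. The standardization of Definition~\ref{def:standardization-words-ssyt} assigns labels in order of increasing letter, so $j\mapsto w_{\pi^{-1}(j)}$ is weakly increasing. Equal letters are labeled from right to left, so if $i_j=i_{j+1}$ then the label $j+1$ sits to the left of the label $j$. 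By the definition of descent composition for words, that means $j\notin D(\alpha)$.
\item Converse. Given such a sequence $(i_j)$, form $w$ and restandardize. Among positions carrying equal letters, the labels are consecutive, and by the non-descent condition they appear increasing from right to left. Hence the right-to-left labeling recovers exactly $\pi$.
\end{itemize}
Since $\wt(w)$ gives the monomial $x_{i_1}\cdots x_{i_k}$, summing over all admissible sequences gives exactly the defining sum of $F_\alpha$. So $\chi(Q_\alpha)=F_\alpha$, with the number of variables truncated at $n$ and passed to the inductive limit as in the discussion of crystal characters.

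\textbf{Step 2 (SSYT).} The same argument works for a standard Young tableau $T$. Entries equal to $i$ form a horizontal strip and are labeled left to right. So $i_j=i_{j+1}$ is possible exactly when box $j+1$ lies strictly right of, and weakly above, box $j$. That is exactly the condition $j\notin D(T)$. Conversely, any filling obtained by substituting such a weakly increasing sequence into $T$ is semistandard: rows stay weakly increasing, and columns stay strict because vertically adjacent boxes are separated by a descent. Its standardization is again $T$. Thus the quasicrystal of $T$ has character $F_{D(T)}$.

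\textbf{Step 3 (decomposition).} Standardization is a function on the nodes, so the sets $\{S:\std(S)=T\}$, for $T$ a standard Young tableau of shape $\lambda$, partition the SSYT of shape $\lambda$. Summing characters and applying the $P$-partition expansion $s_\lambda=\sum_T F_{D(T)}$ (Theorem 7.19.7 of \cite{EC2}) gives the claimed decomposition in the tableau model.

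For an irreducible crystal $\mathcal B_\lambda$ of words, I would transport the result along the canonical isomorphism sending a SSYT to its reading word. I would first try to show that standardization commutes with this map, i.e.\ $\std(\operatorname{rw}(S))=\operatorname{rw}(\std(S))$. Since both conventions break ties among equal letters positionally, this should be a routine check. For non-distinguished copies of $\mathcal B_\lambda$ I would use the insertion identity $P(\std w)=\std P(w)$ together with Proposition~\ref{prop: Knuth=plactic}.

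\textbf{Main obstacle.} I expect the delicate point to be matching the tie-breaking conventions. Our right-to-left (Kashiwara) standardization must produce exactly the descent sets used in $F_\alpha$, reversed relative to \cite{CM17, FMG23}. I would check this carefully on small examples such as Example~\ref{ex std} before writing the general bookkeeping.
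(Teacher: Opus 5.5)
Steps 1 and 2 are correct. They give a self-contained computation of the character of every standardization fiber; the paper itself gives no proof and only cites \cite{FMG23, CM17}. In the tableau model the decomposition then follows at once, since standardization preserves shape and the SSYT of shape $\lambda$ form a single irreducible crystal. Your Steps 2--3 in fact reprove $s_\lambda=\sum_T F_{D(T)}$ rather than needing it.

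The gap is in the word model. There a quasicrystal is the induced subgraph on \emph{all} words of $[n]^{\otimes k}$ with a given standardization. For quasicrystals to decompose the vertex set of an irreducible component $\mathcal B_\lambda$, you must show that each standardization fiber lies inside one connected component, i.e.\ that $\std(v)=\std(w)$ forces $v$ and $w$ into the same component. Your plan never proves this, and the tools you name for non-distinguished copies cannot. The identity $P(\std w)=\std P(w)$ and ``Knuth equivalent implies plactically equivalent'' only control the position of a word \emph{within} its component. Which component a word lies in is governed by its recording tableau. Indeed, to evaluate the isomorphism $\phi$ onto the distinguished copy at $\std(w)$, you already need $\std(w)$ to lie in the component of $w$. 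Even in the distinguished copy, $\std\circ\operatorname{rw}=\operatorname{rw}\circ\std$ only gives an inclusion of the tableau quasicrystal into the word quasicrystal. Equality needs the further check that the word descent set of $\operatorname{rw}(T)$ equals $D(T)$, which your Steps 1 and 2 do supply.

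To close the gap, you can do any one of the following:
\begin{enumerate}
\item Use the other half of the standardization lemma, $Q(\std w)=Q(w)$, together with the fact recalled in the RSK subsection that equal recording tableaux place words in the same irreducible crystal.
\item Invoke Proposition~\ref{prop:quasicrystals_are_connected_subgraphs}.
\item Prove connectivity directly. If every $i+1$ of $w$ precedes every $i$, then $f_i$ changes the leftmost $i$, which carries the largest label among the $i$'s, so $\std(f_i w)=\std(w)$. Raising the values on labels $k,k-1,\dots,1$ in turn, one step at a time by such moves, reaches every element of the fiber from its minimal destandardization.
\end{enumerate}
Once containment is known, Step 1 alone shows, for every component $\mathcal B$, that $s_\lambda=\chi(\mathcal B)$ is the sum of $F_\alpha$ over the quasicrystals contained in $\mathcal B$. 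Linear independence of the $F_\alpha$ with $\ell(\alpha)\le n$ then identifies the multiset of descent compositions with $\{D(T)\}$, so the reading-word transport becomes unnecessary. The convention matching you flag as the main obstacle is already settled by your Steps 1 and 2; containment is the real missing point.
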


Cain and Malheiro define explicit \textit{quasi-Kashiwara operators} that define edges within a quasicrystal graph on words. 

\begin{definition}\label{def-qKashiwara-op}\cite {CM17}
    For $1 \leq i \leq n-1$, the \emph{quasi-Kashiwara operator} $\Tilde{f}_i$ is NULL on words in which a letter $i$ appears to the left of a letter $i+1$, and otherwise acts as the usual crystal operator on words $f_i$ defined in Section~\ref{subsection_word_crystals}.  
\end{definition}

The crystal edges connecting nodes within different quasicrystals were characterized by Brauner-Corteel-Daugherty-Schilling in their construction of \emph{crystal skeletons} \cite{BCDS}. Importantly, the dual equivalence graphs defined by Assaf \cite{Assaf15} are connected subgraphs of crystal skeletons (see  \cite[Theorem 1.2] {BCDS}).

  \begin{figure}[H]
      \centering

      \begin{tikzpicture}[
    scale=1,
    transform shape,
    every node/.style={font=\tiny},
    node/.style={inner sep=1.5pt, font=\small},
    every edge/.style={draw},
    label distance=-1pt
]
\useasboundingbox (-3.6,-10.0) rectangle (3.0,0.6);
 
\node[node] (A) at (0,0) {$1231$};
 
\node[node] (B) at (-1.2,-1.4) {$1232$};
\node[node] (C) at ( 0.9,-1.1) {$1241$};
 
\node[node] (D) at (-2.4,-3.1) {$1233$};
\node[node] (E) at ( -0.4,-2.8) {$1242$};
\node[node] (F) at ( 1.5,-2.5) {$1341$};
 
\node[node] (G) at (-2.4,-4.6) {$1243$};
\node[node] (H) at ( -0.4,-4.3) {$1342$};
\node[node] (I) at ( 1.5,-4.0) {$2341$};
 
\node[node] (J) at (-2.4,-6.3) {$1244$};
\node[node] (K) at (-0.4,-6.0) {$1343$};
\node[node] (L) at ( 1.5,-5.7) {$2342$};
 
\node[node] (M) at (-1.8,-7.8) {$1344$};
\node[node] (N) at ( 0.3,-7.5) {$2343$};
 
\node[node] (O) at (-0.9,-9.4) {$2344$};
 
\draw[->] (A) -- node[midway, above right] {$f_3$} (C);
\draw[->] (B) -- node[midway, above right] {$f_3$} (E);
\draw[->] (C) -- node[midway, above right] {$f_2$} (F);
 
\draw[->] (D) -- node[midway, left] {$f_3$} (G);
\draw[->] (E) -- node[midway, left] {$f_2$} (H);
\draw[->] (F) -- node[midway, right] {$f_1$} (I);
 
\draw[->] (G) -- node[midway, left] {$f_3$} (J);
\draw[->] (H) -- node[midway, left] {$f_2$} (K);
\draw[->] (I) -- node[midway, right] {$f_1$} (L);
 
\draw[->] (J) -- node[midway, left] {$f_2$} (M);
\draw[->] (K) -- node[midway, above right] {$f_1$} (N);
 
\draw[->] (M) -- node[midway, left] {$f_1$} (O);
 
\node[anchor=west] at (1.9,-5.7) {\large $Q_{(2,1,1)}$};
\node[anchor=west] at (0.7,-7.5) {\large $Q_{(1,2,1)}$};
\node[anchor=east] at (-1.3,-9.4) {\large $Q_{(1,1,2)}$};
 
\end{tikzpicture}
\qquad 
\begin{tikzpicture}[
    scale=1,
    transform shape,
    every node/.style={font=\tiny},
    node/.style={inner sep=1.5pt, font=\small},
    every edge/.style={draw},
    label distance=-1pt
]
\useasboundingbox (-3.6,-10.0) rectangle (3.0,0.6);
 
\node[node] (A) at (0,0) {$ \scalebox{0.6}{\begin{ytableau}
       *(cyan) 1 & *(cyan) 1 \\
*(lime) 2 \\
*(olive) 3
   \end{ytableau}}$};
 
\node[node] (B) at (-1.2,-1.4) {$ \scalebox{0.6}{\begin{ytableau}
       *(cyan) 1 & *(lime) 2 \\
*(lime) 2 \\
*(olive) 3
   \end{ytableau}}$};
\node[node] (C) at ( 0.9,-1.1) {$ \scalebox{0.6}{\begin{ytableau}
       *(cyan) 1 & *(cyan) 1 \\
*(lime) 2 \\
*(olive) 4
   \end{ytableau}}$};
 
\node[node] (D) at (-2.4,-3.1) {$\scalebox{0.6}{\begin{ytableau}
       *(cyan) 1 & *(olive) 3 \\
*(lime) 2 \\
*(olive) 3
   \end{ytableau}}$};
\node[node] (E) at ( -0.4,-2.8) {$\scalebox{0.6}{\begin{ytableau}
       *(cyan) 1 & *(lime) 2 \\
*(lime) 2 \\
*(olive) 4
   \end{ytableau}}$};
\node[node] (F) at ( 1.5,-2.5) {$\scalebox{0.6}{\begin{ytableau}
       *(cyan) 1 & *(cyan) 1 \\
*(lime) 3 \\
*(olive) 4
   \end{ytableau}}$};
 
\node[node] (G) at (-2.4,-4.6) {$\scalebox{0.6}{\begin{ytableau}
       *(cyan) 1 & *(olive) 4 \\
*(lime) 2 \\
*(olive) 3
   \end{ytableau}}$};
\node[node] (H) at ( -0.4,-4.3) {$\scalebox{0.6}{\begin{ytableau}
       *(cyan) 1 & *(lime) 3 \\
*(lime) 2 \\
*(olive) 4
   \end{ytableau}}$};
\node[node] (I) at ( 1.5,-4.0) {$\scalebox{0.6}{\begin{ytableau}
       *(cyan) 1 & *(cyan) 2 \\
*(lime) 3 \\
*(olive) 4
   \end{ytableau}}$};
 
\node[node] (J) at (-2.4,-6.3) {$\scalebox{0.6}{\begin{ytableau}
       *(cyan) 1 & *(olive) 4 \\
*(lime) 2 \\
*(olive) 4
   \end{ytableau}}$};
\node[node] (K) at (-0.4,-6.0) {$\scalebox{0.6}{\begin{ytableau}
       *(cyan) 1 & *(lime) 3 \\
*(lime) 3 \\
*(olive) 4
   \end{ytableau}}$};
\node[node] (L) at ( 1.5,-5.7) {$\scalebox{0.6}{\begin{ytableau}
       *(cyan) 2 & *(cyan) 2 \\
*(lime) 3 \\
*(olive) 4
   \end{ytableau}}$};
 
\node[node] (M) at (-1.8,-7.8) {$\scalebox{0.6}{\begin{ytableau}
       *(cyan) 1 & *(olive) 4 \\
*(lime) 3 \\
*(olive) 4
   \end{ytableau}}$};
\node[node] (N) at ( 0.3,-7.5) {$\scalebox{0.6}{\begin{ytableau}
       *(cyan) 2 & *(lime) 3 \\
*(lime) 3 \\
*(olive) 4
   \end{ytableau}}$};
 
\node[node] (O) at (-0.9,-9.4) {$\scalebox{0.6}{\begin{ytableau}
       *(cyan) 2 & *(olive) 4 \\
*(lime) 3 \\
*(olive) 4
   \end{ytableau}}$};
 
\draw[->] (A) -- node[midway, above right] {$f_3$} (C);
\draw[->] (B) -- node[midway, above right] {$f_3$} (E);
\draw[->] (C) -- node[midway, above right] {$f_2$} (F);
 
\draw[->] (D) -- node[midway, left] {$f_3$} (G);
\draw[->] (E) -- node[midway, left] {$f_2$} (H);
\draw[->] (F) -- node[midway, right] {$f_1$} (I);
 
\draw[->] (G) -- node[midway, left] {$f_3$} (J);
\draw[->] (H) -- node[midway, left] {$f_2$} (K);
\draw[->] (I) -- node[midway, right] {$f_1$} (L);
 
\draw[->] (J) -- node[midway, left] {$f_2$} (M);
\draw[->] (K) -- node[midway, above right] {$f_1$} (N);
 
\draw[->] (M) -- node[midway, left] {$f_1$} (O);
 
\node[anchor=west] at (1.9,-5.7) {\large{$Q_{(2,1,1)}$}};
\node[anchor=west] at (0.7,-7.5) {\large $Q_{(1,2,1)}$};
\node[anchor=east] at (-1.3,-9.4) {\large $Q_{(1,1,2)}$};
 
\end{tikzpicture}
      \caption{Left: The decomposition of the crystal on words $B_{(2,1,1)}(1231)$ into quasicrystals. Right: The decomposition of the crystal on SSYT $B_{(2,1,1)}(T_{(2,1,1)})$ into quasicrystals, with maximal horizontal bands highlighted.}
      \label{fig:quasicrystal-words-tab-ex}
  \end{figure}

\section{Crystals and Quasicrystals on Parsed Content Words}\label{section: Parsed word quasicrystal}
We now introduce a new model for $A_{n-1}$ crystals using parsed content words (see Definition~\ref{def:ptab_associated_parsed words}), which allows us to store all information from a ptableau (or its associated biword) in a single word with parsing bars. We characterize highest weight nodes in the parsed content words model, and provide a simple local rule for crystal operators. The parsed content word model easily identifies the quasicrystals within a crystal without requiring a standardization procedure, and nicely characterizes the distinguished crystal within its isomorphism class (see Theorem~\ref{thm:quasi constant content} and Theorem~\ref{prop:Yamanouchi criterion}).

\subsection{Crystal structure on parsed content words} 
Recall that for a perforated tableau $T$ (associated to the biword $\binom{\kappa}{\rho})$, the \emph{parsed content word of $T$ (or equivalently of $\binom{\kappa}{\rho}$)} is the row word $\widehat{\kappa}_{\rho}$ of the dual biword $\widehat{\binom{\kappa}{\rho}}$  with $(\rho^{i+1} - \rho^i)$-many parsing bars ``$\,|$'' (for $i=1,2,\dots, k-1$) inserted between the $i$-th and $i+1$-st letters in $\widehat{\kappa}_{{\rho}}$ (see Definitions~\ref{Def:parsed word} and \ref{def:ptab_associated_parsed words}). Recall also that the \emph{parsed weight} of $\widehat{\kappa}_{|\rho|}$ is the weak composition given by the sizes of the parsed factors, denoted $\wt_{\mathcal{P}}(\cdot)$.

\begin{example}
If
$$ \left( \begin{matrix} \kappa \\ \rho \end{matrix} \right) =\left( \begin{matrix} 1\,1\,2\,3\,3\,5\,5\,5\,6 \\ 2\,1\,3\,3\,1\,2\,2\,1\,4 \end{matrix} \right) \qquad \text{ and } \qquad \widehat{\binom{\kappa}{\rho}} = \binom{\rho^*}{\widehat{\kappa}_\rho} = \left( \begin{matrix} 1\,1\,1\,2\,2\,2\,3\,3\,4 \\ 5\,3\,1\,5\,5\,1\,3\,2\,6 \end{matrix} \right), $$
then the parsed content word of $\binom{\kappa}{\rho}$ is $\widehat{\kappa}_{|\rho|} = 531|551|32|6$ and $\wt_{\mathcal{P}}(\widehat{\kappa}_{|\rho|}) = (3,3,2,1)$. Note the difference between the  parsed content word $\widehat{\kappa}_{|\rho|}=531|551|32|6$ and the underlying word $\widehat{\kappa}_\rho = 531551326$. 
\end{example} 

From $\widehat{\kappa}_{|\rho|}$ we can uniquely recover the biword $\binom{\kappa}{\rho}$ by placing the letters in the $i$-th parsed factor in $\widehat{\kappa}_{|\rho|}$ above the letter $i$, and then sorting the columns to obtain a biword.

Let $\PCW_{n,k}$ denote the set of parsed content words associated to biwords in $\BW_{n,k}$:
\[
\PCW_{n,k} := \left \{ \widehat{\kappa}_{|\rho|} \mid \binom{\kappa}{\rho} \in \BW_{n,k}\right \}.
\]

Recall that the crystal structure on biwords inherited by Equation~\eqref{eq:biwords_inherited_fi} carries over to the crystal structure on their associated ptableaux (Lemma~\ref{lemma bijections}). 
Similarly, the set of parsed content words inherits a crystal structure from the crystal on biwords.  
We put a type $A_{n-1}$ crystal structure on $\PCW_{n,k}$ by defining
\begin{equation}\label{eq:parsed_words_inherited_fi}
  f_i(\widehat{\kappa}_{|\rho|}) := \widehat{\kappa}_{|f_i(\rho)|},
\end{equation}
where $\widehat{\kappa}_{|f_i(\rho)|}$ is the parsed content word associated to $\displaystyle \binom{\kappa}{f_i(\rho)}$, and $f_i(\rho)$ is the crystal operator on words defined in Section~\ref{subsection_word_crystals}. See Figure~\ref{fig:PCW (2,1,1) } for an example.
Note that $$\wt_{\mathcal{P}}(\widehat{\kappa}_{|\rho|}) = \wt\left(\displaystyle \binom{\kappa}{\rho}\right) = \wt(\rho),$$ since the number of $i$'s in $\rho$ determines the size of the $i$-th parsed factor in $\widehat{\kappa}_{|\rho|}$. To summarize, given a perforated tableau $T$, its associated biword $\binom{\kappa}{\rho}$ and parsed content word $\widehat{\kappa}_{|\rho|}$, the three objects $T$, $\binom{\kappa}{\rho}$, and $\widehat{\kappa}_{|\rho|}$ lie in the same position in isomorphic connected components in their respective crystal models, as does the word $\rho$ in the crystal on words. Furthermore, by Lemma~\ref{lem:reading_words_Ptab} the parsed content word $\widehat{\kappa}_{|\rho|}$ is the reading word of $T$. 

\begin{example}\label{ex:tpab-biword-pcw-f1}
    Below we showcase the crystal operator $f_1$ applied to objects associated via crystal isomorphisms.
    \[
    \begin{tabular}{cccc}
     Words & Biwords & Perforated Tableaux  & Parsed Content Words \\
       $2\,1\,3\,3\,1\,2\,2\,1\,4 $ & $\left( \begin{matrix} 1\,1\,2\,3\,3\,5\,5\,5\,6 \\ 2\,1\,3\,3\,1\,2\,2\,1\,4 \end{matrix} \right)$ & $\begin{ytableau} \  && 1 & 3 & 5\\  &1 &5 & 5& \\2 & 3   &&& \\ &&&&6 \end{ytableau}$ &  $531|551|32|6$ \\ 
      \footnotesize{$f_1$} $\downarrow $ &\footnotesize{$f_1$} $\downarrow $ & \footnotesize{$f_1$} $\downarrow $  & \footnotesize{$f_1$} $\downarrow $ \\
        $2\,1\,3\,3\,1\,2\,2\,2\,4$ & $\left( \begin{matrix} 1\,1\,2\,3\,3\,5\,5\,5\,6 \\ 2\,1\,3\,3\,1\,2\,2\,2\,4 \end{matrix} \right)$ &  $\begin{ytableau} \  && 1 & 3& \\  &1 &5 & 5&5 \\2 & 3   &&& \\ &&&&6 \end{ytableau}$   & $31|5551|32|6$
    \end{tabular}
    \]
\end{example}

\begin{figure}[H]
    \centering
  \scalebox{0.7}{\begin{tikzpicture}[
    node/.style={inner sep=3pt}, 
    every edge/.style={draw},
    label distance=-2pt
]
\node[node] (A) at (0,0) {$\left (\begin{matrix}
    1233 \\ 1231
\end{matrix} \right)$};
\node[node] (B) at (-1.8,-2.0) {$\left (\begin{matrix}
    1233 \\ 1232
\end{matrix} \right)$};
\node[node] (C) at ( 1.8,-2.0) {$\left (\begin{matrix}
    1233 \\ 1241
\end{matrix} \right)$};
\node[node] (D) at (-3.0,-4.0) {$\left (\begin{matrix}
    1233 \\ 1233
\end{matrix} \right)$};
\node[node] (E) at ( 0.0,-4.0) {$\left (\begin{matrix}
    1233 \\ 1242
\end{matrix} \right)$};
\node[node] (F) at ( 3.0,-4.0) {$\left (\begin{matrix}
    1233 \\ 1341
\end{matrix} \right)$};
\node[node] (G) at (-3.0,-6.0) {$\left (\begin{matrix}
    1233 \\ 1243
\end{matrix} \right)$};
\node[node] (H) at ( 0.0,-6.0) {$\left (\begin{matrix}
    1233 \\ 1342
\end{matrix} \right)$};
\node[node] (I) at ( 3.0,-6.0) {$\left (\begin{matrix}
    1233 \\ 2341
\end{matrix} \right)$};
\node[node] (J) at (-3.0,-8.0) {$\left (\begin{matrix}
    1233 \\ 1244
\end{matrix} \right)$};
\node[node] (K) at ( 0.0,-8.0) {$\left (\begin{matrix}
    1233 \\ 1343
\end{matrix} \right)$};
\node[node] (L) at ( 3.0,-8.0) {$\left (\begin{matrix}
    1233 \\ 2342
\end{matrix} \right)$};
\node[node] (M) at (-1.8,-10.0) {$\left (\begin{matrix}
    1233 \\ 1344
\end{matrix} \right)$};
\node[node] (N) at ( 1.8,-10.0) {$\left (\begin{matrix}
    1233 \\ 2342
\end{matrix} \right)$};
\node[node] (O) at (0,-12.0) {$\left (\begin{matrix}
    1233 \\ 2344
\end{matrix} \right)$};
\draw[->] (A) -- node[midway, above left] {$f_1$} (B);
\draw[->] (A) -- node[midway, above right] {$f_3$} (C);
\draw[->] (B) -- node[midway, above left] {$f_2$} (D);
\draw[->] (B) -- node[midway, above right] {$f_3$} (E);
\draw[->] (C) -- node[midway, above left] {$f_1$} (E);
\draw[->] (C) -- node[midway, above right] {$f_2$} (F);
\draw[->] (D) -- node[midway, left] {$f_3$} (G);
\draw[->] (E) -- node[midway, left] {$f_2$} (H);
\draw[->] (F) -- node[midway, right] {$f_1$} (I);
\draw[->] (G) -- node[midway, left] {$f_3$} (J);
\draw[->] (H) -- node[midway, left] {$f_2$} (K);
\draw[->] (I) -- node[midway, right] {$f_1$} (L);
\draw[->] (J) -- node[midway, left] {$f_2$} (M);
\draw[->] (K) -- node[midway, above left] {$f_3$} (M);
\draw[->] (K) -- node[midway, above right] {$f_1$} (N);
\draw[->] (L) -- node[midway, right] {$f_2$} (N);
\draw[->] (M) -- node[midway, left] {$f_1$} (O);
\draw[->] (N) -- node[midway, right] {$f_3$} (O);
\end{tikzpicture}}
   \quad
   \scalebox{0.7}{\begin{tikzpicture}[
    node/.style={inner sep=3pt}, 
    every edge/.style={draw},
    label distance=-2pt
]
\node[node] (A) at (0,0) {\ytableaushort{{1}{3},{2}{ },{3}{ },{ }{ }}};
\node[node] (B) at (-1.8,-2.0) {\ytableaushort{{1}{ },{2}{3},{3}{ },{ }{ }}};
\node[node] (C) at ( 1.8,-2.0) {\ytableaushort{{1}{3},{2}{ },{ }{ },{3}{ }}};
\node[node] (D) at (-3.0,-4.0) {\ytableaushort{{1}{ },{2}{ },{3}{3},{ }{ }}};
\node[node] (E) at ( 0.0,-4.0) {\ytableaushort{{1}{ },{2}{3},{ }{ },{3}{ }}};
\node[node] (F) at ( 3.0,-4.0) {\ytableaushort{{1}{3},{ }{ },{2}{ },{3}{ }}};
\node[node] (G) at (-3.0,-6.0) {\ytableaushort{{1}{ },{2}{ },{3}{ },{ }{3}}};
\node[node] (H) at ( 0.0,-6.0) {\ytableaushort{{1}{ },{ }{3},{2}{ },{3}{ }}};
\node[node] (I) at ( 3.0,-6.0) {\ytableaushort{{ }{3},{1}{ },{2}{ },{3}{ }}};
\node[node] (J) at (-3.0,-8.0) {\ytableaushort{{1}{ },{2}{ },{ }{ },{3}{3}}};
\node[node] (K) at ( 0.0,-8.0) {\ytableaushort{{1}{ },{ }{ },{2}{3},{3}{ }}};
\node[node] (L) at ( 3.0,-8.0) {\ytableaushort{{ }{ },{1}{3},{2}{ },{3}{ }}};
\node[node] (M) at (-1.8,-10.0) {\ytableaushort{{1}{ },{ }{ },{2}{ },{3}{3}}};
\node[node] (N) at ( 1.8,-10.0) {\ytableaushort{{ }{ },{1}{ },{2}{3},{3}{ }}};
\node[node] (O) at (0,-12.0) {\ytableaushort{{ }{ },{1}{ },{2}{ },{3}{3}}};
\draw[->] (A) -- node[midway, above left] {$f_1$} (B);
\draw[->] (A) -- node[midway, above right] {$f_3$} (C);
\draw[->] (B) -- node[midway, above left] {$f_2$} (D);
\draw[->] (B) -- node[midway, above right] {$f_3$} (E);
\draw[->] (C) -- node[midway, above left] {$f_1$} (E);
\draw[->] (C) -- node[midway, above right] {$f_2$} (F);
\draw[->] (D) -- node[midway, left] {$f_3$} (G);
\draw[->] (E) -- node[midway, left] {$f_2$} (H);
\draw[->] (F) -- node[midway, right] {$f_1$} (I);
\draw[->] (G) -- node[midway, left] {$f_3$} (J);
\draw[->] (H) -- node[midway, left] {$f_2$} (K);
\draw[->] (I) -- node[midway, right] {$f_1$} (L);
\draw[->] (J) -- node[midway, left] {$f_2$} (M);
\draw[->] (K) -- node[midway, above left] {$f_3$} (M);
\draw[->] (K) -- node[midway, above right] {$f_1$} (N);
\draw[->] (L) -- node[midway, right] {$f_2$} (N);
\draw[->] (M) -- node[midway, left] {$f_1$} (O);
\draw[->] (N) -- node[midway, right] {$f_3$} (O);
\end{tikzpicture}}
\qquad
\scalebox{0.7}{\begin{tikzpicture}[
    node/.style={inner sep=3pt}, 
    every edge/.style={draw},
    label distance=-2pt
]
\node[node] (A) at (0,0) {$31|2|3$};
\node[node] (B) at (-1.8,-2.0) {$1|32|3$};
\node[node] (C) at ( 1.8,-2.0) {$31|2||3$};
\node[node] (D) at (-3.0,-4.0) {$1|2|33$};
\node[node] (E) at ( 0.0,-4.0) {$1|32||3$};
\node[node] (F) at ( 3.0,-4.0) {$31||2|3$};
\node[node] (G) at (-3.0,-6.0) {$1|2|3|3$};
\node[node] (H) at ( 0.0,-6.0) {$1|3|2|3$};
\node[node] (I) at ( 3.0,-6.0) {$3|1|2|3$};
\node[node] (J) at (-3.0,-8.0) {$1|2||33$};
\node[node] (K) at ( 0.0,-8.0) {$1||32|3$};
\node[node] (L) at ( 3.0,-8.0) {$|31|2|3$};
\node[node] (M) at (-1.8,-10.0) {$1||2|33$};
\node[node] (N) at ( 1.8,-10.0) {$|1|32|3$};
\node[node] (O) at (0,-12.0) {$|1|2|33$};
\draw[->] (A) -- node[midway, above left] {$f_1$} (B);
\draw[->] (A) -- node[midway, above right] {$f_3$} (C);
\draw[->] (B) -- node[midway, above left] {$f_2$} (D);
\draw[->] (B) -- node[midway, above right] {$f_3$} (E);
\draw[->] (C) -- node[midway, above left] {$f_1$} (E);
\draw[->] (C) -- node[midway, above right] {$f_2$} (F);
\draw[->] (D) -- node[midway, left] {$f_3$} (G);
\draw[->] (E) -- node[midway, left] {$f_2$} (H);
\draw[->] (F) -- node[midway, right] {$f_1$} (I);
\draw[->] (G) -- node[midway, left] {$f_3$} (J);
\draw[->] (H) -- node[midway, left] {$f_2$} (K);
\draw[->] (I) -- node[midway, right] {$f_1$} (L);
\draw[->] (J) -- node[midway, left] {$f_2$} (M);
\draw[->] (K) -- node[midway, above left] {$f_3$} (M);
\draw[->] (K) -- node[midway, above right] {$f_1$} (N);
\draw[->] (L) -- node[midway, right] {$f_2$} (N);
\draw[->] (M) -- node[midway, left] {$f_1$} (O);
\draw[->] (N) -- node[midway, right] {$f_3$} (O);
\end{tikzpicture}}

    \caption{Irreducible crystals of highest weight $(2,1,1)$ in the crystal on biwords (left), crystal on perforated tableaux (middle), and crystal on parsed content words (right); these components are isomorphic under maps given in \eqref{lemma bijections} and \eqref{eq:parsed_words_inherited_fi}. The corresponding irreducible crystal on words can be formed by the row words in the biword model. The nodes of the parsed content word model are the reading words of the corresponding ptableaux in the middle crystal. Note that the choice of content word $\kappa = 1233$ in the biwords model informs the content filling the perforated tableaux and the letters forming the underlying words $3123$, $1323$, $1233$ in the parsed content words model. A different content word, for example $1234$ or $1377$, would give different isomorphic irreducible crystals.} 
    \label{fig:PCW (2,1,1) }
\end{figure}

Using the classification of highest weight elements in the perforated tableaux model from Theorem~\ref{highest weight ptab}, we classify highest weight elements in the parsed content words model. 

\begin{theorem}
    A parsed content word is highest weight if and only if it is the reading word of a semistandard Young tableau, according to Definition~\ref{def:reading_word_right_left_top_btm}.
\end{theorem}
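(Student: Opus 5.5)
The plan is to transfer the statement across the crystal isomorphisms already established and then use Theorem~\ref{highest weight ptab}. By construction, the crystal structure on $\PCW_{n,k}$ in \eqref{eq:parsed_words_inherited_fi} is inherited from biwords. By Lemma~\ref{lemma bijections}, the biword crystal is in turn isomorphic to the ptableau crystal via ${}_{\BW}\Phi_{\PTab}$. So a parsed content word $\widehat{\kappa}_{|\rho|}$ is highest weight exactly when the ptableau $T$ with biword $\binom{\kappa}{\rho}$ is highest weight. Indeed, $e_i(\widehat{\kappa}_{|\rho|})=\NULL$ iff $e_i(\rho)=\NULL$ iff $e_i(T)=\NULL$, and this holds for every $i$. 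By Theorem~\ref{highest weight ptab}, this is equivalent to $T^\leftarrow$ being a semistandard Young tableau.

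Next I would use Lemma~\ref{lem:reading_words_Ptab}, which gives $\operatorname{rw}(T)=\widehat{\kappa}_{|\rho|}$. Reading words depend only on the content of each row, read right to left. Row-equivalent ptableaux have the same row contents, so $\operatorname{rw}(T^\leftarrow)=\operatorname{rw}(T)=\widehat{\kappa}_{|\rho|}$. This gives the forward direction: if $\widehat{\kappa}_{|\rho|}$ is highest weight, then it is the reading word of the SSYT $T^\leftarrow$.

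For the converse, suppose $\widehat{\kappa}_{|\rho|}=\operatorname{rw}(S)$ for some SSYT $S$. I would show that $S=T^\leftarrow$, so that Theorem~\ref{highest weight ptab} applies. The parsed factors of $\operatorname{rw}(S)$ record exactly the multiset of entries in each row of $S$. By Lemma~\ref{unique}, a ptableau is determined by its content and the rows in which that content appears. Hence $S$, viewed as a ptableau with the same row contents (padded with blank columns as needed), is row-equivalent to $T$. Since $S$ is left-justified, uniqueness in Lemma~\ref{left-right-justified} then gives $S=T^\leftarrow$.

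I expect this identification to be the main obstacle. One subtlety is confirming that an SSYT, regarded as a ptableau, satisfies the ptableau axioms: horizontal strips, the ordering condition (3), and minimality of columns. Another is confirming that it is genuinely left-justified. Both should follow directly from semistandardness: each value in an SSYT forms a horizontal strip, larger values lie weakly lower and weakly right, and there are no blanks to the left of content. Empty parsed factors need care as well. They correspond to empty rows, which must occur only at the bottom for $S$ to be a partition shape. Blank rows are permitted in ptableaux, so this is consistent with the convention that an SSYT has shape $\lambda$ with trailing zero parts.
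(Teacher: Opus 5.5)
Your proposal is correct and is essentially the paper's argument: you transfer highest weight to the ptableau $T$ through the crystal isomorphisms, then combine Lemma~\ref{lem:reading_words_Ptab} with Theorem~\ref{highest weight ptab}, and you also fill in the converse (which the paper only says ``follows'') by identifying the SSYT $S$ with $T^\leftarrow$ via Lemmas~\ref{unique} and~\ref{left-right-justified}. One small fix in your axiom check: larger SSYT entries need not lie weakly lower or weakly right (a $3$ in row $1$ can sit to the upper right of a $2$ in row $2$); what semistandardness gives, and what condition (3) needs, is that no larger entry lies weakly northwest of a smaller one.
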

\begin{proof}
    Let  $\widehat{\kappa}_{|\rho|}$ be the parsed content word associated to the perforated tableau $T$. By Lemma~\ref{lem:reading_words_Ptab}, $\widehat{\kappa}_{|\rho|}$ is the reading word of $T$. Theorem~\ref{highest weight ptab} classifies the highest weight elements in the ptableaux model as the ptableau with left-justified form equal to a semistandard Young tableaux, and the result follows.
\end{proof}

\subsection{Local crystal operators on parsed content words}

We now give a local combinatorial rule for the crystal operator on $\PCW_{n,k}$ in the parsed content words model that agrees with that given in Equation~\eqref{eq:parsed_words_inherited_fi}. That is, we provide a rule for computing $f_{i}(\widehat{\kappa}_{|\rho|})$ defined entirely in terms of the parsed content word $\widehat{\kappa}_{|\rho|}$.
We first make a provisional definition of operators $\hat{f}_i$ for $i = 1,2,\ldots, n-1$ acting on the parsed content words, and then show that these match the operators $f_i$ on parsed content words given above.

\begin{definition} \label{Def: f_i on PCW}
Let $\widehat{\kappa}_{|\rho|} \in \PCW_{n,k}$. For $i \in [n-1]$, define an operator $\hat{f}_i$ on $\widehat{\kappa}_{|\rho|}$ by restricting its action to the  $i$-th and $(i+1)$-st parsed factors of $\widehat{\kappa}_{|\rho|}$ (noting that parsed factors may be empty). Pairs of letters $(a,b)$, with $a$ in the $i$-th parsed factor and $b$ in the $(i+1)$-st parsed factor are paired iteratively, reading the parsed factors from left to right: the leftmost unpaired $b$ in the $(i+1)$-st parsed factor is paired with the leftmost unpaired $a$ in the $i$-th parsed factor satisfying $a < b$, if such a letter $a$ exists.  Repeat this for all content in the $(i+1)$-st parsed factor. After this process terminates, move the smallest (rightmost) unpaired letter in the $i$-th parsed factor (if it exists) into the $(i+1)$-st parsed factor and reorder into weakly decreasing order. If there is no such unpaired letter to move, $\hat{f}_i$ is $\NULL$. 
\end{definition}

We also define the operators $\hat{e}_i$ for $i = 1, 2, \ldots, n-1$ as inverses to the $\hat{f}_i$ operators whenever their action is non-null.

\begin{definition} \label{Def: e_i on PCW}
 Let $\widehat{\kappa}_{|\rho|} \in \PCW_{n,k}$. For $i \in [n-1]$, we define an operator $\hat{e}_i$ on $\widehat{\kappa}_{|\rho|}$ by restricting its action to the  $i$-th and $(i+1)$-st parsed factors of $\widehat{\kappa}_{|\rho|}$. Pairs of letters $(a,b)$ with $a$ in the $i$-th parsed factor and $b$ in the $(i+1)$-st parsed factor are paired iteratively, reading the factors right to left: the rightmost unpaired $b$ in the $(i+1)$-st parsed factor is paired with the rightmost unpaired $a$ in the $i$-th parsed factor satisfying $a < b$, if it exists.  Repeat this for all content in the $(i+1)$-st parsed factor. After this process terminates, move the largest (leftmost) unpaired letter in the $i+1$-st parsed factor (if it exists) into the $i$-th parsed factor and reorder into weakly decreasing order. If there is no such unpaired letter to move, $\hat{e}_i$ is $\NULL$. 
 \end{definition}

\begin{example}
Continuing Example~\ref{ex:tpab-biword-pcw-f1}, to compute $\hat{f}_1$ on $531|551|32|6$ we pair the letters $(3,5)$ and $(1,5)$ from the 1st and 2nd parsed factors:
$$531|551|32|6\rightarrow 5{\color {red} 3}1|{\color{red}5}51|32|6\rightarrow 
5{\color {red} 3}{\color{red} 1}|{\color{red}5}{\color{red}5}1|32|6$$

The smallest unpaired letter in the 1st factor is $5$, and so we move $5$ to the second parsed factor and reorder the letters into weakly decreasing order:
$$\hat{f}_1(5{\color {red} 3}{\color{red} 1}|{\color{red}5}{\color{red}5}1|32|6) =  31|5551|32|6.$$

If we compute $\hat{f_1}$ on $31|5551|32|6$, we again pair the letters $(3,5)$ and $(1,5)$ from the 1st and 2nd parsed factors. Since no unpaired letter remains in the 1st factor, the result is $\NULL$: 
$$\hat{f}_1({\color {red} 3}{\color{red} 1}|{\color{red}5}{\color{red}5}51|32|6) =  \NULL.$$
\end{example}

\begin{theorem}\label{thm:f hat crystal op}
    The operators $\hat{f}_i$ defined above are the crystal operators on parsed content words $\PCW_{n,k}$ inherited from the biwords model $\BW_{n,k}$ given in Equation~\eqref{eq:parsed_words_inherited_fi}, meaning
    \[
    \hat{f}_i (\widehat{\kappa}_{|\rho|}) = \widehat{\kappa}_{|f_{i}(\rho)|}.
    \]
\end{theorem}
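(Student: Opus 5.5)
We compare the two rules letter by letter. By Equation~\eqref{eq:parsed_words_inherited_fi}, $f_i$ on $\widehat{\kappa}_{|\rho|}$ is computed by applying the word operator $f_i$ to $\rho$, keeping $\kappa$ fixed. Only positions of $\rho$ carrying $i$ or $i+1$ matter, so we restrict attention to the subbiword of columns $\binom{\kappa_j}{\rho_j}$ with $\rho_j\in\{i,i+1\}$. In the dual biword these columns are exactly the $i$-th and $(i+1)$-st parsed factors of $\widehat{\kappa}_{|\rho|}$. The $i$-th factor lists the content letters $\kappa_j$ over the $i$'s of $\rho$, in weakly decreasing order; the $(i+1)$-st factor does the same for the $(i+1)$'s. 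Changing one $\rho_j=i$ to $i+1$ moves the letter $\kappa_j$ from the $i$-th factor to the $(i+1)$-st factor and re-sorts it. This is exactly the shape of the move in $\hat f_i$. All other factors are untouched in both descriptions. So it suffices to prove two things: (1) the bracketing of $\rho$ leaves an unbracketed $i$ if and only if the pairing in Definition~\ref{Def: f_i on PCW} leaves an unpaired letter in the $i$-th factor; and (2) the leftmost unbracketed $i$ of $\rho$ sits beneath the smallest unpaired letter of the $i$-th factor.

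The key translation step uses the order of positions. Positions $j$ in $\rho$ are ordered by $(\kappa_j,\text{position})$. Within a constant block of $\kappa$, $\rho$ is weakly decreasing, so every $i+1$ precedes every $i$ in that block. Hence an $i$ at position $j$ lies to the left of an $i+1$ at position $j'$ exactly when $\kappa_j<\kappa_{j'}$. This is where the strict inequality $a<b$ in the pairing rule comes from.

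The word bracketing pairs each $i$ with the closest available $i+1$ to its right. Equivalently, scanning $\rho$ left to right, each $i+1$ is matched with the most recent unmatched $i$ before it; this is the usual parenthesis matching. I would show that the greedy matching in $\hat f_i$ produces matchings of the same size, and that its unpaired $a$'s coincide, as a multiset of content values, with the content values of the unbracketed $i$'s. The pairing in $\hat f_i$ takes $b$'s from the $(i+1)$-st factor left to right, i.e.\ in decreasing value order. Each $b$ is paired with the leftmost, hence largest, unpaired $a<b$.

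I would argue by induction on the number of letters in the $(i+1)$-st factor. Both procedures compute a maximum matching in the bipartite "$a<b$" relation, which is an interval-type (threshold) relation. For such relations the set of unmatched left vertices of any greedy maximum matching is canonical: it consists of the largest possible $a$'s. The unbracketed $i$'s in $\rho$ form a set of rightmost-type $i$'s, i.e.\ those with the largest $\kappa$ values. Within the same value, ties are broken by position.

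Given this, the leftmost unbracketed $i$ has the smallest $\kappa$ among unbracketed ones. In the $i$-th factor this is the smallest, rightmost unpaired letter, so $\hat f_i$ moves that letter. Since the factors are re-sorted, ties among equal letters do not affect the result. Verifying $\hat e_i$ is analogous but is not needed for the statement.

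I expect the main obstacle to be the combinatorial lemma that the unmatched $a$'s of the value-greedy matching agree with the unbracketed $i$'s of the positional bracketing. Equal values are the delicate point, since a $b$ must not pair with an $a$ of the same content. I would first try the Hall-type counting characterization. An $i$ with content $c$ is unbracketed if and only if, for some threshold, the number of $i$'s with content $\ge c$ exceeds the number of $(i+1)$'s with content $>$ that threshold. Both procedures can then be checked to satisfy this same counting criterion.
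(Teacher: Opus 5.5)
Your reduction to claims (1) and (2) is correct. Comparing directly with the signature rule on $\rho$ is a genuinely different route from the paper's. The paper passes to the ptableau $T$ and reads the $\hat f_i$-pairs as entries of row $i$ sitting directly above entries of row $i+1$ in the right-justified form, so the smallest unpaired letter is the leftmost entry over a blank. It then relies on the compatibility of ptableau and word operators in Lemma~\ref{lemma bijections}.

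However, the lemma that carries all the weight in your version is not proved, and the reason you give for it is false. Maximum matchings for the relation $a<b$ do not have a canonical set of unmatched $a$'s, and the unmatched letters are not ``the largest possible.''

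Take $i=1$ and $\binom{\kappa}{\rho}=\binom{1\,2\,3\,3\,4}{1\,1\,2\,1\,2}$, so $\widehat{\kappa}_{|\rho|}=321|43$.
\begin{itemize}
\item The signs of $\rho=11212$ are $-\,-\,+\,-\,+$. The only unbracketed $1$ is the \emph{leftmost} one, under content $1$, the \emph{smallest} value.
\item Correspondingly, $\hat f_1$ pairs $(3,4)$ and $(2,3)$ and moves $1$, giving $32|431=\widehat{\kappa}_{|f_1(\rho)|}$.
\item But the maximum matchings $\{(1,3),(3,4)\}$ and $\{(1,3),(2,4)\}$ leave $2$, respectively $3$, unmatched. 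The latter is what a greedy taking the smallest available $a<b$ produces.
\end{itemize}
So neither maximality nor greediness as such determines the unmatched set, and your description of the unbracketed $i$'s as those with the largest $\kappa$ values fails. Your counting criterion is also misquantified. For a threshold above every content the right-hand side is $0$, so ``for some threshold'' would make every $i$ unbracketed.

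The missing idea is that the two procedures are the same greedy rule run in opposite orders. The letters $i,i+1$ of $\rho$ occur sorted by content, with the $(i+1)$'s before the $i$'s inside each constant block of $\kappa$. Hence, for an $i+1$ of content $b$, the nearest still-unmatched $i$ to its left is an unmatched $i$ of largest content $a<b$. So both rules say ``each $i+1$ takes the nearest unmatched $i$ to its left.'' The bracketing processes the $(i+1)$'s left to right, and $\hat f_i$ processes them right to left. Which copy of a repeated letter is chosen does not affect the multisets.

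The set of matched $i$'s does not depend on the processing order. Suppose the $(i+1)$'s at positions $p<q$ are processed consecutively, and let $U$ be the current set of unmatched $i$'s.
\begin{itemize}
\item If some element of $U$ lies strictly between $p$ and $q$, both orders give $q$ the nearest such element and give $p$ the nearest element of $U$ left of $p$.
\item If none does, both orders use up the (at most two) nearest elements of $U$ left of $p$.
\end{itemize}
Adjacent transpositions connect any two processing orders, so the multisets of contents over unmatched $i$'s agree, and your final paragraph then completes the proof. Repaired this way, your argument is self-contained. It verifies directly the pairing description that the paper obtains through right-justified ptableaux and the cited ptableau theory.
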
 

\begin{proof} 

Given the parsed content word $\widehat{\kappa}_{|\rho|}$, we wish to show that $\hat{f}_{i}(\widehat{\kappa}_{|\rho|})$ equals $f_i(\widehat{\kappa}_{|\rho|})=\widehat{\kappa}_{|f_i(\rho)|}$. We recover the biword $\kappa \choose \rho$ from the parsed content word $\widehat{\kappa}_{|\rho|}$, and from this determine the unique associated ptableau $T$. By Lemma~\ref{lem:reading_words_Ptab}, $\widehat{\kappa}_{|\rho|}$ is the reading word of $T$, so the $i$-th and $(i+1)$-st rows of $T$ are the $i$-th and $(i+1)$-st factors of $\widehat{\kappa}_{|\rho|}$, respectively, read from right to left. The letters of $\widehat{\kappa}_{|\rho|}$ that are paired in the $i$-th and $(i+1)$-st factors of $\widehat{\kappa}_{|\rho|}$ when computing $\hat{f}_{i}(\widehat{\kappa}_{|(\rho)|})$ are simply the letters in row $i$ that lie directly above letters in row $(i+1)$ in $T^{\rightarrow}$, the right-justified form of $T$. The rightmost unpaired letter in the $i$-th factor of $\widehat{\kappa}_{|\rho|}$ is thus the \emph{leftmost} letter in $T^{\rightarrow}$ over a blank, and hence both such letters are moved under their respective operators, giving the result.
\end{proof}

\begin{example}
Recalling Example~\ref{example ptab operator}, suppose we have the ptableau

\[  T=  \begin{ytableau}
 \ &&&&&&&&4&4&5\\
&&&&1&1&2&&&6&6\\
&&1&1&2&&&4&5&&7\\
1&1&2&3&3&3&4&6&6&& \end{ytableau}\,,\]
with associated parsed content word
\[ \widehat{\kappa}_{|\rho|} = 544|66211|754211|664333211,\]
which is the reading word of $T$ by Lemma~\ref{lem:reading_words_Ptab}.
We calculate $f_{2}(T)$ and $\hat{f_{2}}(\widehat{\kappa}_{|\rho|})$. First,
\begin{equation*} T[2,3]^\rightarrow = \begin{ytableau}
\ &&1&1&2&6&6\\
1&1&2&4&5&&7 \end{ytableau} \,,\label{example eq} \end{equation*}
and so the $6$ in the 6th column of $T[2,3]^\rightarrow$ is the leftmost entry over a blank; applying $f_2$ to $T$ drops this letter into row $3$. The $2$nd and $3$rd factors of $\widehat{\kappa}_{|\rho|}$ are 
\[ 66211|754211.\]
 Computing the pairing of letters in the 2nd and 3rd factors gives

\[ 66211|754211 \rightarrow {\color{red}6}6211|{\color{red} 7}54211 \rightarrow{\color{red}6}6{\color{red}2}11|{\color{red} 75}54211 \rightarrow {\color{red}6}6{\color{red}21}1|{\color{red} 755}4211 \rightarrow {\color{red}6}6{\color{red}211}|{\color{red} 7554}211.\]
Thus, the corresponding unpaired $6$ in the 2nd factor will be moved into the $3$rd factor. To summarize,
\[  f_2(T)=  \begin{ytableau}
 \ &&&&&&&&4&4&5\\
&&&&1&1&2&&&&6\\
&&1&1&2&&&4&5&6&7\\
1&1&2&3&3&3&4&6&6&& \end{ytableau}\, \quad \text{ and } \quad \hat{f}_2(\widehat{\kappa}_{|\rho|}) = 544|6211|7654211|664333211.\]

\end{example}

Thus the directed graph on $\PCW_{n,k}$ with edges created using $\hat{f}_i$ operators matches the inherited the crystal structure established in Equation~\eqref{eq:parsed_words_inherited_fi}. 
We now refer to $\hat{f}_i$ as $f_i$, having established they are crystal operators. As a corollary, the partial inverse $\hat{e}_i$ is also a crystal operator, and so we likewise now refer to $\hat{e}_i$ as $e_i$.  See Figure~\ref{fig:PCW (2,1,1) } for an example.


\subsection{Quasicrystals on parsed content words} 

Quasicrystals in the parsed content words model are inherited from the crystal on words via Equation~\eqref{eq:parsed_words_inherited_fi}.  If  $\binom{\kappa}{\rho}$ and $\binom {\kappa'} {\rho'}$ are nodes in the same connected component of the crystal on biwords, then their content words are the same; that is, $\kappa = \kappa'$. 
However, in taking the dual biword to compute the parsed content word, the letters in $\kappa$ may get rearranged so that $\widehat{\kappa}_{\rho} \neq \widehat{\kappa}_{\rho'}$. The next proposition states that  $\widehat{\kappa}_\rho = \widehat{\kappa}_{\rho'}$ if and only if $\rho, \rho'$ are in the same quasicrystal in the words model, and therefore shows that quasicrystals in the parsed content words model are easily identified as induced subgraphs on nodes sharing a common underlying word.

\begin{theorem}\label{thm:quasi constant content}
Two parsed content words $\widehat{\kappa}_{|\rho|}, \widehat{\kappa}_{|\rho'|} \in \PCW_{n,k}$ lie in the  same quasicrystal if and only if $\widehat{\kappa}_{\rho}= \widehat{\kappa}_{\rho'}$. That is, each quasicrystal in the parsed content word model consists of all parsings of a common underlying word.
\end{theorem}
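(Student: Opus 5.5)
The plan is to reduce everything to a single explicit identity relating the underlying word $\widehat{\kappa}_{\rho}$ to the standardization $\std(\rho)$ of Definition~\ref{def:standardization-words-ssyt}. By Equation~\eqref{eq:parsed_words_inherited_fi}, the quasicrystal structure on $\PCW_{n,k}$ is transported from the crystal on words. So two parsed content words $\widehat{\kappa}_{|\rho|}$ and $\widehat{\kappa'}_{|\rho'|}$ lie in the same quasicrystal exactly when:
\begin{itemize}
\item they come from the same connected component of biwords, which forces $\kappa=\kappa'$;
\item their row words satisfy $\std(\rho)=\std(\rho')$, by Definition~\ref{def:quasicrystal_definition}.
\end{itemize}
Proposition~\ref{prop:quasicrystals_are_connected_subgraphs} says quasicrystals on words are connected. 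Hence equal standardization of $\rho,\rho'$ together with $\kappa=\kappa'$ already places the two biwords in the same component.

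There is also a preliminary remark. If $\widehat{\kappa}_{\rho}=\widehat{\kappa'}_{\rho'}$, then $\kappa=\kappa'$ automatically, since a content word is weakly increasing and is therefore the sorted rearrangement of the letters of $\widehat{\kappa}_\rho$. So the theorem reduces to the following claim: for a fixed content word $\kappa$ and row words $\rho,\rho'$ making $\binom{\kappa}{\rho}$ and $\binom{\kappa}{\rho'}$ biwords,
\[ \std(\rho)=\std(\rho') \iff \widehat{\kappa}_\rho=\widehat{\kappa}_{\rho'}. \]

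\textbf{Key identity.} Write $\sigma=\std(\rho)$. Standardization reads the positions $j$ of $\rho$ in the order ``$\rho_j$ increasing, and within equal values $j$ decreasing''; position $j$ is the $\sigma_j$-th position read. On the other hand, forming the dual biword sorts the columns $\binom{\rho_j}{\kappa_j}$ by $\rho_j$ increasing and, within equal $\rho_j$, by $\kappa_j$ decreasing. Since $\kappa$ is weakly increasing in $j$, ordering by decreasing $\kappa_j$ agrees with ordering by decreasing $j$, up to ties $\kappa_j=\kappa_{j'}$. Such ties contribute identical columns, so they do not affect the resulting word. Hence
\[ (\widehat{\kappa}_\rho)_m=\kappa_{\sigma^{-1}(m)}, \qquad \text{that is,} \qquad \widehat{\kappa}_\rho=\kappa\circ\std(\rho)^{-1}. \]
The forward direction ($\Rightarrow$) of the claim is then immediate.

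\textbf{Converse: recovering $\std(\rho)$ from $\widehat{\kappa}_\rho$.} This is the only real obstacle, because $\kappa$ may have repeated letters and so is not injective. Here I will use the biword condition: over each constant block of $\kappa$ (say the positions with $\kappa_j=c$), the letters of $\rho$ are weakly decreasing. Consequently standardization reads the positions of the $c$-block from right to left: smaller values are read first, and equal values are also read right to left. So the $t$-th occurrence of the letter $c$ in $\widehat{\kappa}_\rho$, counted from the left, must correspond to the $t$-th position of the $c$-block counted from the right. This determines $\sigma^{-1}(m)$ for every $m$ from $\widehat{\kappa}_\rho$ and $\kappa$ alone, so $\widehat{\kappa}_\rho=\widehat{\kappa}_{\rho'}$ forces $\std(\rho)=\std(\rho')$.

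Finally, I will note that the parsings of $\widehat{\kappa}_{\rho}$ occurring in a quasicrystal are the parsed content words $\widehat{\kappa}_{|\rho'|}$ for the various $\rho'$ in it. This gives the ``all parsings of a common underlying word'' reformulation in the statement.
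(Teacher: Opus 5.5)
Your proof is correct, but it follows a different route from the paper's.

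\textbf{The paper's route.} The paper proves the forward direction locally. It takes an edge $\rho\to f_i(\rho)$ inside a quasicrystal and uses the Cain--Malheiro description of such edges: all $(i+1)$'s lie to the left of all $i$'s, so $f_i$ changes the leftmost $i$. It then checks that the moved column $\binom{\kappa_j}{i}$ occupies the same slot of the dual biword before and after the move (last among the $i$'s, then first among the $(i+1)$'s). Connectivity of quasicrystals propagates this to the whole quasicrystal. For the converse, it observes $\widehat{\kappa}_\rho=\widehat{\kappa}_{\std(\rho)}$ and applies the dual involution to $\binom{12\cdots k}{w}$ to recover $\std(\rho)$.

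\textbf{Your route.} You instead prove the global identity $\widehat{\kappa}_\rho=\kappa\circ\std(\rho)^{-1}$. This makes the forward direction immediate, with no appeal to quasi-Kashiwara operators or connectivity, and without using the biword condition on $\rho$. You then invert the identity explicitly, using that standardization reads each constant block of $\kappa$ from right to left.

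\textbf{Comparison.} Your inversion step is exactly the fact the paper's involution argument uses only implicitly: $\binom{\kappa}{\std(\rho)}$ is again a biword, i.e., $\std(\rho)$ is strictly decreasing on $\kappa$-blocks. So your version is more transparent on that point. Your identity also specializes, for $\kappa=12\cdots k$, to $\widehat{\kappa}_\rho=\std(\rho)^{-1}$, which ties in neatly with Theorem~\ref{theorem biword dual Knuth}. What the paper's local argument buys is edge-level information: along an intra-quasicrystal $f_i$-edge there is no ascent between the $i$-th and $(i+1)$-st parsed factors. The paper reuses this in Corollary~\ref{characterizing_quasi-kashi_operators_in_PCW}, and from your identity that fact would have to be extracted separately.

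\textbf{Minor point.} Your closing remark about ``all parsings'' is as brief as the paper's. Strictly, it also needs that every valid parsing of $w$ with at most $n$ factors is the parsed content word of some biword in $\BW_{n,k}$. This follows from the biword-of-a-parsed-word construction together with duality.
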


For example, consider the crystal on words in Figure~\ref{fig:quasi_words_pcw_comp}. The quasicrystal structure is not especially evident without computing the descent composition of each node (e.g. computing the standardization of each node, identifying appropriate maximal runs, applying the Krob-Thibbon quasi-ribbon insertion algorithm to each node to convert to the quasi-ribbons). However, the quasicrystal structure is immediately apparent in the parsed minimal content word model. Examining the isomorphic crystal component on the minimally parsed $\PCW_{(4,4)}$ model given below, we can identify the quasicrystals as the three strips comprised of parsings of the common underlying words $3123, 1323,$ and $1233$, respectively. 

Note that (as we show in Corollary \ref{cor: highest weight elemnt in quasi crystal PCW model}), the highest weight element of each quasicrystal is the minimal parsing of the underlying word, with bars only between strict ascents: $31|2|3, 1|32|3, 1|2|33$.

\begin{figure}[H] 
    \centering
\scalebox{0.8}{\begin{tikzpicture}[
    node/.style={inner sep=3pt}, 
    every edge/.style={draw},
    label distance=-2pt
]
\node[node] (A) at (0,0) {\color{red}1231};
\node[node] (B) at (-1.6,-1.8) {\color{teal}1232};
\node[node] (C) at ( 1.2,-1.4) {\color{red}1241};
\node[node] (D) at (-3.2,-3.6) {\color{olive}1233};
\node[node] (E) at ( -0.6,-3.2) {\color{teal}1242};
\node[node] (F) at ( 2.0,-2.8) {\color{red}1341};
\node[node] (G) at (-3.2,-5.0) {\color{olive}1243};
\node[node] (H) at ( -0.6,-4.6) {\color{teal}1342};
\node[node] (I) at ( 2.0,-4.2) {\color{red}2341};
\node[node] (J) at (-3.2,-6.4) {\color{olive}1244};
\node[node] (K) at (-0.6,-6.0) {\color{teal}1343};
\node[node] (L) at ( 2.0,-5.6) {\color{red}2342};
\node[node] (M) at (-2.4,-7.8) {\color{olive}1344};
\node[node] (N) at ( 0.4,-7.4) {\color{teal}2342};
\node[node] (O) at (-1.2,-9.2) {\color{olive}2344};
\draw[->] (A) -- node[midway, above left] {$f_1$} (B);
\draw[->] (A) -- node[midway, above right] {$f_3$} (C);
\draw[->] (B) -- node[midway, above left] {$f_2$} (D);
\draw[->] (B) -- node[midway, above right] {$f_3$} (E);
\draw[->] (C) -- node[midway, above left] {$f_1$} (E);
\draw[->] (C) -- node[midway, above right] {$f_2$} (F);
\draw[->] (D) -- node[midway, left] {$f_3$} (G);
\draw[->] (E) -- node[midway, left] {$f_2$} (H);
\draw[->] (F) -- node[midway, right] {$f_1$} (I);
\draw[->] (G) -- node[midway, left] {$f_3$} (J);
\draw[->] (H) -- node[midway, left] {$f_2$} (K);
\draw[->] (I) -- node[midway, right] {$f_1$} (L);
\draw[->] (J) -- node[midway, left] {$f_2$} (M);
\draw[->] (K) -- node[midway, above left] {$f_3$} (M);
\draw[->] (K) -- node[midway, above right] {$f_1$} (N);
\draw[->] (L) -- node[midway, right] {$f_2$} (N);
\draw[->] (M) -- node[midway, left] {$f_1$} (O);
\draw[->] (N) -- node[midway, right] {$f_3$} (O);

\node[anchor=west] at (2,-6.2) {{$Q_{(2,1,1)}$}};
\node[anchor=west] at (0.7,-8) { $Q_{(1,2,1)}$};
\node[anchor=east] at (-1.3,-9.6) { $Q_{(1,1,2)}$};
\end{tikzpicture}}
\qquad 
\scalebox{0.8}{\begin{tikzpicture}[
    node/.style={inner sep=3pt}, 
    every edge/.style={draw},
    label distance=-2pt
]
\node[node] (A) at (0,0) {\color{red}$31|2|3$};
\node[node] (B) at (-1.6,-1.8) {\color{teal}$1|32|3$};
\node[node] (C) at ( 1.2,-1.4) {\color{red}$31|2||3$};
\node[node] (D) at (-3.2,-3.6) {\color{olive}$1|2|33$};
\node[node] (E) at ( -0.6,-3.2) {\color{teal}$1|32||3$};
\node[node] (F) at ( 2.0,-2.8) {\color{red}$31||2|3$};
\node[node] (G) at (-3.2,-5.0) {\color{olive}$1|2|3|3$};
\node[node] (H) at ( -0.6,-4.6) {\color{teal}$1|3|2|3$};
\node[node] (I) at ( 2.0,-4.2) {\color{red}$3|1|2|3$};
\node[node] (J) at (-3.2,-6.4) {\color{olive}$1|2||33$};
\node[node] (K) at (-0.6,-6.0) {\color{teal}$1||32|3$};
\node[node] (L) at ( 2.0,-5.6) {\color{red}$|31|2|3$};
\node[node] (M) at (-2.4,-7.8) {\color{olive}$1||2|33$};
\node[node] (N) at ( 0.4,-7.4) {\color{teal}$|1|32|3$};
\node[node] (O) at (-1.2,-9.2) {\color{olive}$|1|2|33$};
\draw[->] (A) -- node[midway, above left] {$f_1$} (B);
\draw[->] (A) -- node[midway, above right] {$f_3$} (C);
\draw[->] (B) -- node[midway, above left] {$f_2$} (D);
\draw[->] (B) -- node[midway, above right] {$f_3$} (E);
\draw[->] (C) -- node[midway, above left] {$f_1$} (E);
\draw[->] (C) -- node[midway, above right] {$f_2$} (F);
\draw[->] (D) -- node[midway, left] {$f_3$} (G);
\draw[->] (E) -- node[midway, left] {$f_2$} (H);
\draw[->] (F) -- node[midway, right] {$f_1$} (I);
\draw[->] (G) -- node[midway, left] {$f_3$} (J);
\draw[->] (H) -- node[midway, left] {$f_2$} (K);
\draw[->] (I) -- node[midway, right] {$f_1$} (L);
\draw[->] (J) -- node[midway, left] {$f_2$} (M);
\draw[->] (K) -- node[midway, above left] {$f_3$} (M);
\draw[->] (K) -- node[midway, above right] {$f_1$} (N);
\draw[->] (L) -- node[midway, right] {$f_2$} (N);
\draw[->] (M) -- node[midway, left] {$f_1$} (O);
\draw[->] (N) -- node[midway, right] {$f_3$} (O);

\node[anchor=west] at (2,-6.2) {{$Q_{(2,1,1)}$}};
\node[anchor=west] at (0.7,-8) { $Q_{(1,2,1)}$};
\node[anchor=east] at (-1.3,-9.6) { $Q_{(1,1,2)}$};
\end{tikzpicture}}
   
    \caption{Irreducible crystal components of highest weight $(2,1,1)$ in the crystal on words (left) and crystal on parsed content words (right). The quasicrystal decomposition is highlighted by the colors of the nodes: $Q_{(1,1,2)}$ in olive, $Q_{(1,2,1)}$ in teal, and $Q_{(2,1,1)}$ in red. In the crystal on words, the quasicrystals are found by computing the standardization of each element, whereas in the crystal on parsed content words, the quasicrystals are found by identifying the common underlying word.}
    \label{fig:quasi_words_pcw_comp}
\end{figure}

\begin{proof}
We examine two nodes $\rho, f_i(\rho) \in [n]^{\otimes k}$ connected by an edge labeled $i$ in a quasicrystal in the model on words. Our first goal is to show that $\widehat{\kappa}_\rho = \widehat{\kappa}_{f_i(\rho)}$ whenever $\rho, f_i(\rho)$ are in the same quasicrystal in the words model. Consider $\rho \in [n]^{\otimes k}$, and suppose there exists an index $i$ such that $f_i(\rho)$ is in the same quasicrystal as $\rho$. This implies that in $\rho$, all of the $i+1$'s appear to the left of all of the $i$'s (see Definition~\ref{def-qKashiwara-op}). Thus, $f_i(\rho)$ changes the leftmost $i$ in $\rho$ to $i+1$. Let $\binom{\kappa}{\rho} = \binom {\kappa_1\kappa_2\cdots \kappa_k }{\rho_1  \rho_2 \cdots  \rho_k}$ be a biword. Consider the biwords $\binom{\kappa} {\rho}$ and $\binom {\kappa} {f_i(\rho)}$, connected by an edge labeled $i$ in the crystal on biwords (see Equation~\eqref{eq:biwords_inherited_fi}).

    If the leftmost $i$ in $\rho$ is in position $j$ (i.e. $\rho_j = i$), then 
    \begin{align*}
        \kappa_j \geq \kappa_s &\text{ for all columns }\left( \begin{matrix} \cdots \kappa_s\cdots \\ \cdots i+1 \cdots\end{matrix} \right) \text{ in }\binom{\kappa}{\rho}, \text{ and}\\
        \kappa_j \leq \kappa_t &\text{ for all columns }\left( \begin{matrix} \cdots \kappa_t \cdots \\ \cdots i\cdots \end{matrix} \right) \text{ in } \binom{\kappa}{\rho}. 
    \end{align*}

    Thus, when $\rho$, and $f_i(\rho)$ are sorted into weakly increasing order in the dual biwords $\widehat{\binom {{\kappa}} {\rho}}$ and $\widehat{\binom {\kappa} {f_i(\rho)}}$, $\kappa_j$ is placed below the last $i$ in $\rho^*$ in $\binom{\rho^* } {\widehat{\kappa}_\rho}$ and below the first $i+1$ in $f_i(\rho)^*$ in $\binom{f_i(\rho)^* }{\widehat{\kappa}_{f_i(\rho)}}$.  Therefore, $\widehat{\kappa}_\rho = \widehat{\kappa}_{f_i(\rho)}$  and furthermore, we may conclude that there is not an ascent between the $i$-th and $(i+1)$-st factor in $\widehat{\kappa}_{|\rho|}$ and in $ \widehat{\kappa}_{|f_i(\rho)|}$.  Since quasicrystals are connected subgraphs (see Proposition~\ref{prop:quasicrystals_are_connected_subgraphs}), this proves the forward direction. 

Our second goal is to show that quasicrystals contain all of the parsings of the common underlying word. That is, we show that if $w = \widehat{\kappa}_{\rho} = \widehat{\kappa}_{\rho'}$, then $\rho, \rho'$ are in the same quasicrystal in the words model. Suppose that $\widehat{\kappa}_\rho = \widehat{\kappa}_{\rho'}$ for two words $\rho, \rho' \in [n]^{\otimes k}$, and let $\std(\rho), \std(\rho')$ denote the standardizations of $\rho, \rho'$ (see Definition~\ref{def:standardization-words-ssyt}.) Since $\rho, \std(\rho)$ are in the same quasicrystal in the words model, we know $\widehat{\kappa}_\rho = \widehat{\kappa}_{\std(\rho)}$. Similarly, $\widehat{\kappa}_{\rho'} = \widehat{\kappa}_{\std(\rho')}$. Therefore, $w = \widehat{\kappa}_{\std(\rho)} = \widehat{\kappa}_{\std(\rho')}$, which implies
\[
\binom{\std(\rho)^* }{w} = \binom{\std(\rho')^* }{ w},
\]
and so under the dual involution on biwords,
\[
\widehat{\left( \begin{matrix} \std(\rho)^* \\ w \end{matrix} \right)} =\left( \begin{matrix} w^* \\ \std(\rho)\end{matrix} \right) =\left( \begin{matrix} w^* \\ \std(\rho')\end{matrix} \right) = \widehat{ \left( \begin{matrix} \std(\rho')^* \\ w \end{matrix} \right)}.
\]
Therefore, $\std(\rho) = \std(\rho')$, proving that $\rho, \rho'$ are in the same quasicrystal in the words model. Thus, if $\widehat{\kappa}_\rho = \widehat{\kappa}_{\rho'}$, then $\widehat{\kappa}_{|\rho|}$ and $\widehat{\kappa}_{|\rho'|}$ are in the same quasicrystal in the parsed content words model. 
\end{proof}

Theorem~\ref{thm:quasi constant content} allows us to characterize quasi-Kashiwara operators in the parsed content words model, extending Definition~\ref{def-qKashiwara-op} of Cain--Malheiro. This characterization then allows us to characterize the highest weight node in a given quasicrystal component.

\begin{corollary}\label{characterizing_quasi-kashi_operators_in_PCW}
For $1 \leq i \leq n-1$, the \emph{quasi-Kashiwara} operator $\Tilde{f}_i$ is $\NULL$ on parsed content words $\omega_\mathcal{P}$ with parsing multiset $\mathcal{P} = \{\!\{p_1 \leq \ldots \leq p_{i-1} < p_i < p_{i+1} \leq \ldots \leq p_{n-1}\}\!\}$ in which the position $p_i$ of the $i$-th parsing bar is an ascent of $\omega$. Otherwise, $\Tilde{f}_i$ acts on the parsed content word by moving the parsing bar one position to the left; i.e. $\Tilde{f}_i(\omega_{\mathcal{P}}) = \omega_{\mathcal{S}}$
where $\mathcal{S} = (\mathcal{P}\setminus \{p_i\}) \cup \{p_i-1\}$.

    \end{corollary}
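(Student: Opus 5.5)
The plan is to transport the quasi-Kashiwara operator of Definition~\ref{def-qKashiwara-op} through the crystal isomorphism of Equation~\eqref{eq:parsed_words_inherited_fi}. I will then use Theorem~\ref{thm:quasi constant content} and the local rule of Theorem~\ref{thm:f hat crystal op} to see what it does to the parsing bars.

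Fix a parsed content word $\omega_{\mathcal P}=\widehat{\kappa}_{|\rho|}$, so $\omega=\widehat{\kappa}_\rho$. The $i$-th bar sits at position $p_i$, and the $i$-th and $(i+1)$-st parsed factors are $\omega_{p_{i-1}+1}\cdots\omega_{p_i}$ and $\omega_{p_i+1}\cdots\omega_{p_{i+1}}$. These are the entries of $\kappa$ lying over the $i$'s and the $(i+1)$'s of $\rho$, respectively, each in weakly decreasing order.

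\textbf{Step 1: translate the NULL condition.} By definition, $\Tilde{f}_i$ on the parsed content word is $\Tilde f_i$ on $\rho$ transported through the isomorphism. It is therefore NULL exactly when some $i$ of $\rho$ lies to the left of some $i+1$ of $\rho$.

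Since $\kappa$ is weakly increasing, an $i$ at position $s$ left of an $i+1$ at position $t$ gives $\kappa_s\le\kappa_t$. Conversely, if every $i+1$ is left of every $i$, then every entry over an $i+1$ is at most every entry over an $i$.

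The delicate case is equality of $\kappa$-values. Within a constant block of $\kappa$ the entries of $\rho$ are weakly decreasing, so an $i+1$ precedes an $i$ inside that block. Hence an $i$ left of an $i+1$ forces a strict inequality $\kappa_s<\kappa_t$.

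Consequently $\Tilde f_i$ is NULL iff $\min(\text{factor } i)<\max(\text{factor } i+1)$, i.e. iff $\omega_{p_i}<\omega_{p_i+1}$, i.e. iff $p_i$ is an ascent of $\omega$. I expect this tie-breaking to be the main point needing care. If factor $i$ is empty there is no $i$ and $f_i$ is NULL anyway, so $p_{i-1}<p_i$ is implicitly assumed, as the statement indicates.

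\textbf{Step 2: compute the action when not NULL.} Now $\Tilde f_i(\rho)=f_i(\rho)$ lies in the same quasicrystal, so by Theorem~\ref{thm:quasi constant content} the underlying word $\omega$ is unchanged. Since $f_i$ changes one $i$ to $i+1$, the new parsed weight has factor $i$ shorter by one and factor $i+1$ longer by one. All other factors are unchanged, because the operator touches only factors $i,i+1$ (Definition~\ref{Def: f_i on PCW}).

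With the same underlying word and only the $i$-th bar allowed to change, the $i$-th bar must move from $p_i$ to $p_i-1$. This gives $\mathcal S=(\mathcal P\setminus\{p_i\})\cup\{p_i-1\}$.

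As a consistency check via Definition~\ref{Def: f_i on PCW}: when no letter of factor $i$ is smaller than any letter of factor $i+1$, no pairs form. The rightmost letter $\omega_{p_i}$ of factor $i$ is then moved, and it lands at the front of factor $i+1$ because $\omega_{p_i}\ge\omega_{p_i+1}$. This is exactly the bar shifting left by one.
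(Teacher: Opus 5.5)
Your proof is correct, and it reaches the statement by a somewhat different route from the paper's.

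The paper argues entirely inside the parsed content word model. It cites Theorem~\ref{thm:quasi constant content} to say that $\Tilde f_i(\widehat{\kappa}_{|\rho|})\neq\NULL$ exactly when $\widehat{\kappa}_\rho=\widehat{\kappa}_{f_i(\rho)}$. It then observes that this equality forces the absence of an ascent across the $i$-th bar. Finally it reads off from the local rule of Definition~\ref{Def: f_i on PCW} that, with no ascent, no pairs form and the rightmost letter of the $i$-th factor crosses the bar.

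You instead obtain the $\NULL$ criterion directly from the Cain--Malheiro definition on $\rho$, using only the biword conventions. You then pin down the action from invariance of the underlying word together with the change $\wt(\rho)\mapsto\wt(\rho)-\epsilon_i+\epsilon_{i+1}$ in the factor sizes, so the local rule serves only as a consistency check.

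What your route buys is that the one delicate point is made explicit. This is the tie-break: an $i$ to the left of an $i+1$ forces $\kappa_s<\kappa_t$, because $\rho$ is weakly decreasing on constant blocks of $\kappa$. In the paper this is absorbed into the citation of Theorem~\ref{thm:quasi constant content} and into the sentence asserting that equality of underlying words is ``only possible'' without an ascent. The paper's route is shorter because it reuses that theorem and Theorem~\ref{thm:f hat crystal op} wholesale.

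Two small points are worth writing down.
\begin{itemize}
\item When the $i$-th factor is nonempty and no $i$ precedes an $i+1$, every $i$ of $\rho$ is unbracketed, so $f_i(\rho)\neq\NULL$. Your consistency check shows this in the parsed model, but your Step~2 relies on it.
\item Your identification of the maximum of the $(i+1)$-st factor with $\omega_{p_i+1}$ needs $p_i<p_{i+1}$. This is exactly the hypothesis in the statement's $\NULL$ clause. If the $(i+1)$-st factor is empty, $p_i$ can be an ascent of $\omega$ while $\Tilde f_i$ is not $\NULL$: for example $1||2$ is sent to $|1|2$.
\end{itemize}
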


\begin{proof} 
   Let $\widehat{\kappa}_{|\rho|}$ be a parsed content word. By Theorem~\ref{thm:quasi constant content},  $\Tilde{f}_i(\widehat{\kappa}_{|\rho|}) \neq \NULL$ if and only if $\widehat{\kappa}_\rho = \widehat{\kappa}_{f_i(\rho)}$. The equality of these underlying words is only possible if there is not an ascent between the smallest (rightmost) letter in the $i$-th factor and the largest (leftmost) letter in the $(i+1)$-st factor of $\widehat{\kappa}_{|\rho|}$. If there is no ascent between the factors, then there is no pairing of letters between factors and the smallest (rightmost) letter in the $i$-th factor moves into the $(i+1)$-st factor, as described in the local crystal operator $f_i$ on parsed content words from Definition~\ref{Def: f_i on PCW}. This is equivalent to exchanging the position for the $i$-th parsing bar $p_i$ with $p_i - 1$ in the parsing multiset.
\end{proof}

\begin{corollary}\label{cor: highest weight elemnt in quasi crystal PCW model}
 The highest weight element within a quasicrystal in the parsed content words model is the minimal parsing of the common underlying word (Definition \ref{Def:parsed word}). 

 \label{quasi highest weight}
\end{corollary}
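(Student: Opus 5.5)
The plan is to work entirely inside a fixed quasicrystal, using Theorem~\ref{thm:quasi constant content} and Corollary~\ref{characterizing_quasi-kashi_operators_in_PCW}. By Theorem~\ref{thm:quasi constant content}, a quasicrystal $Q$ in the parsed content words model consists of the valid parsings $\omega_{\mathcal P}$ of one underlying word $\omega=\widehat{\kappa}_\rho$ that actually occur as nodes of the crystal, and its edges are given by the quasi-Kashiwara operators $\tilde f_i$. We also know from Proposition~\ref{prop:quasicrystals_are_connected_subgraphs} that $Q$ is connected and has a unique highest weight node. Here highest weight means highest weight for the quasi-Kashiwara operators $\tilde e_i$, the partial inverses of the $\tilde f_i$. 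It therefore suffices to show two things: the minimal parsing $\omega_{{\mathcal P}_\mmin}$ lies in $Q$, and every $\tilde e_i$ is $\NULL$ on it.

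First I will describe $\tilde e_i$ concretely by inverting Corollary~\ref{characterizing_quasi-kashi_operators_in_PCW}. Since $\tilde f_i$ moves the $i$-th parsing bar one position to the left, $\tilde e_i$ moves it one position to the right, from $p_i$ to $p_i+1$. This move is non-null exactly when two conditions hold:
\begin{itemize}
\item the result is again a valid parsing, so that $\tilde f_i$ applied to it is defined;
\item the new position $p_i+1$ is not an ascent of $\omega$.
\end{itemize}
Equivalently, $\tilde e_i$ moves the leftmost letter of the $(i+1)$-st factor into the $i$-th factor, and this is allowed precisely when no ascent is created inside a factor and the bar does not cross another bar. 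One should double-check this against Definition~\ref{Def: e_i on PCW}: when the letter crossing the bar is not smaller than the last letter of factor $i$, no pairing obstructs it, so $e_i$ and $\tilde e_i$ agree.

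Next, take $\omega_{\mathcal P}\in Q$ and an index $i$ with $p_i\notin{\mathcal P}_\mmin$, meaning the bar sits at a position that is not an ascent, or it duplicates another bar. I will argue that some $\tilde e_j$ is then non-null. The natural choice is the bar at such a non-ascent position that is \emph{rightmost among the bars not at ascents}, or the right end of a block of stacked bars. Sliding it right keeps all factors weakly decreasing, provided it neither crosses an ascent nor passes the next bar. If the next position is an ascent, there is a forced bar there, and the non-ascent bar can merge into that run: the empty factor created between them is allowed, because parsings are multisets. This is the delicate step. The hardest part will be the bookkeeping showing that a bar can always move right until all $n-1$ bars (counting empty factors, as in the weight) sit on ascent positions or at the end, with none stuck. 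Since ${\mathcal P}\supseteq{\mathcal P}_\mmin$, I expect an induction on $\sum_i (\text{distance from } p_i \text{ to the next ascent of } \omega \text{ at or after } p_i)$ to work, with $\tilde e$ strictly decreasing this quantity.

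Once that is done, a node of $Q$ that is $\tilde e$-highest has every non-empty bar at an ascent. The bars then coincide with ${\mathcal P}_\mmin$, with the remaining bars pushed to the end as trailing empty factors. This gives the minimal parsing of $\omega$, and uniqueness of the highest weight node of $Q$ completes the proof. As a consistency check, this matches Figure~\ref{fig:quasi_words_pcw_comp}, where the highest weight nodes are $31|2|3$, $1|32|3$ and $1|2|33$.
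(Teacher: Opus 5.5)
Your argument runs in the opposite direction from the paper's. The paper checks directly that the minimal parsing is $\tilde e$-highest. Each of its bars sits either alone at an ascent or at the end, so every $e_i$ is $\NULL$ on it or changes the underlying word. The minimal parsing lies in $Q$ (Theorem~\ref{thm:quasi constant content}), and $Q$ has a unique highest weight node (Proposition~\ref{prop:quasicrystals_are_connected_subgraphs}), so it is that node.

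You announce exactly this plan, but you carry out the converse: every non-minimal node of $Q$ admits a non-null $\tilde e_j$. Hence the highest weight node of $Q$ must be the minimal parsing; that node exists by Proposition~\ref{prop:quasicrystals_are_connected_subgraphs}, or simply because $Q$ is finite and $\tilde e_i$ raises weight. This route is valid and slightly more self-contained: it re-derives uniqueness and never needs to know in advance that the minimal parsing is a node of $Q$. The paper's check is shorter. You should commit to one of the two plans.

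Several details need fixing. Your first criterion for $\tilde e_i\neq\NULL$ (``the new position $p_i+1$ is not an ascent'') mis-inverts Corollary~\ref{characterizing_quasi-kashi_operators_in_PCW}, and it would make your key lemma false. In the $(2,1,1)$ crystal with $\kappa=1233$ one has $f_2(31|2||3)=31||2|3$, so $\tilde e_2(31||2|3)=31|2||3$, even though $p_2+1=3$ is an ascent of $3123$. Under your criterion every $\tilde e_i$ vanishes on the non-minimal node $31||2|3$. The correct condition is the one in your ``Equivalently'' sentence: $p_i<p_{i+1}$, and the \emph{old} position $p_i$ is not an ascent unless factor $i$ is empty, so that another bar stays at $p_i$. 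Your check against Definition~\ref{Def: e_i on PCW} also has the inequality reversed. The incoming letter $\omega_{p_i+1}$ must be \emph{at most} the last letter $\omega_{p_i}$ of factor $i$; only then does no pair $a<b$ form, so that $e_i$ moves exactly that letter.

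The induction you call the hardest part is unnecessary. To show a node is not highest weight you need one non-null $\tilde e_j$, not reachability of the minimal parsing. The potential you propose also fails: in $31||2|3$ every bar already sits at an ascent, so the sum is $0$, yet the node is not minimal.

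The step you actually need is immediate. If $\mathcal P$ is not minimal, some position $p<k$ carries more bars than $\mathcal P_{\mmin}$ requires: either a non-ascent carrying a bar, or an ascent carrying two or more. Let $j$ be the largest index with $p_j=p$. Then factor $j+1$ is nonempty, and factor $j$ is either empty or ends in $\omega_p\ge\omega_{p+1}$. Moving bar $j$ to $p+1$ keeps $\mathcal P\supseteq\mathcal P_{\mmin}$, so $\tilde e_j\neq\NULL$. With this lemma in place your proof is complete.
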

\begin{proof} 
 Analogous arguments and results to Corollary~\ref{characterizing_quasi-kashi_operators_in_PCW} imply that the $e_i$ operators for $1 \leq i \leq n-1$ acting within a quasicrystal move parsing bars to the \emph{right}. Each quasicrystal in the parsed content words model contains a unique minimally parsed word in which there is exactly one parsing bar at each ascent in the underlying word (and the remaining parsing bars are placed at the end of the word), which is to say the parsing bars are as right justified as possible. Thus, the application of $e_i$ for $1 \leq i \leq n-1$ to the minimally parsed word in a quasicrystal would either result in $\NULL$ or a change to the underlying word. 
\end{proof}

\begin{remark}
For any quasicrystal indexed by descent composition $\alpha$, the highest weight element is the unique node of weight $\alpha$. Corollary~\ref{cor: highest weight elemnt in quasi crystal PCW model} states that highest weight elements of quasicrystals in the parsed content words model can be immediately identified by whether or not the parsed content word is minimally parsed.
In the crystal on words model, determining if an arbitrary word is the highest weight element in its quasicrystal requires a standardization procedure (more accurately described as destandardization). In the crystal on semistandard Young tableaux, the highest weight elements are the tableaux with only $i$'s in their $i$-th maximal horizontal band. 
\end{remark}

\begin{remark}
To find the descent composition $\alpha = (\alpha_1, \ldots, \alpha_\ell)$ indexing a quasicrystal $Q_\alpha$ in the crystal on words and semistandard Young tableaux models, one must either compute the descent set of the standardization of any node in $Q_\alpha$, or identify the weight of the unique highest weight element in $Q_\alpha$. 

 However, in a quasicrystal on parsed content words, the descent composition indexing the quasicrystal can be computed from the ascent set of the common underlying word, and thus found from any node in the quasicrystal without requiring a standardizaton process. 
\end{remark}

Various approaches have been used to study the relations between the quasicrystals within a given crystal (see \cite{BCDS},\cite{FMG23}). In a crystal of SSYT (represented by the \emph{distinguished crystal}, see Definition~\ref{Def:distinguished crystal}, Proposition~\ref{prop_distinguished_ptab}, and Section~\ref{subsection-Distinguished Crystal on Parsed Words}), a quasicrystal can be associated to a \emph{standard node} in the quasicrystal, which is shown to be the unique SYT in the quasicrystal. Then, using results of Assaf (see~\cite{Assaf15}) we can put a structure of a \emph{dual equivalence graph} on the collection of standard nodes (and hence the quasicrystals), where two quasicrystals are connected by an edge if the associated standard nodes are elementarily dual equivalent  (see Definition~\ref{Def:Dual equiv}). In our model of parsed content words, we are able to (1) interpret this dual equivalence graph structure as one \emph{also} induced by elementary Knuth equivalence (on appropriate words), and (2) easily determine this graph structure for \emph{all} crystals of parsed content words, and not only those in the distinguished crystal (discussed in the next section).

\begin{theorem}
    Let $Q_1, \ldots , Q_{\ell}$ be the quasicrystals of parsed content words in a fixed irreducible crystal ${\mathcal B}_{\lambda}$ of highest weight $\lambda$, and let $\kappa_{\rho_{1}}, \ldots, \kappa_{\rho_{\ell}}$ be the underlying words of each quasicrystal (so $Q_i$ is composed of the parsings of $\kappa_{\rho_{i}}$). Then the set $\{ \kappa_{\rho_{1}}, \ldots, \kappa_{\rho_{\ell}} \}$ forms a complete Knuth equivalence class.
\end{theorem}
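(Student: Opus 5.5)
The plan is to reduce everything to standard words (permutations) and then apply the duality corollaries following Theorem~\ref{theorem biword dual Knuth}.

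Fix the irreducible crystal ${\mathcal B}_\lambda$ of parsed content words. All its nodes come from biwords $\binom{\kappa}{\rho}$ with one fixed content word $\kappa$, where $\rho$ ranges over a connected component $C$ of the crystal on words $[n]^{\otimes k}$. By Theorem~\ref{thm:quasi constant content}, the underlying words $\kappa_{\rho_1},\dots,\kappa_{\rho_\ell}$ are exactly the distinct words $\widehat{\kappa}_\rho$ for $\rho\in C$. By Proposition~\ref{prop:quasicrystals_are_connected_subgraphs} (with $n\ge k$), each quasicrystal of words in $C$ contains a unique standard word $\std(\rho)$. In the proof of Theorem~\ref{thm:quasi constant content} we saw that $\widehat{\kappa}_\rho=\widehat{\kappa}_{\std(\rho)}$. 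So the set in question is
\[ \{\widehat{\kappa}_\pi : \pi \in C \text{ a permutation}\}. \]

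\textbf{Step 1 (standard nodes of $C$ form a dual equivalence class).} I would show that the permutations in $C$ form a single dual equivalence class of permutations. First I would try to cite the known results of Assaf and of \cite{BCDS}. These say that the standard nodes of an irreducible crystal on words are the vertices of a dual equivalence graph. Equivalently, they are the permutations sharing a common RSK recording tableau, which crystal operators preserve and which Haiman's dual equivalence classes are characterized by. The main obstacle is convention-matching: our Knuth relation, $D_i^\pm$, and the Kashiwara convention for $f_i$ differ from those in the literature. I would check that these conventions align using the maximal-biword case of Theorem~\ref{theorem biword dual Knuth}, which identifies elementary dual equivalence of $\pi$ with Knuth moves on $\pi^{-1}$. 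I would then confirm the match on a small example, such as Figure~\ref{fig:quasi_words_pcw_comp}.

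\textbf{Step 2 (closure under Knuth moves).} Let $\pi,\pi'\in C$ be permutations. They are dual equivalent by Step 1. The second corollary to Theorem~\ref{theorem biword dual Knuth} applies to biwords $\binom{\kappa}{\pi}$, $\binom{\kappa}{\pi'}$ whose row words are permutations, and gives $\widehat{\kappa}_\pi\equiv_K\widehat{\kappa}_{\pi'}$. Hence all underlying words lie in one Knuth class.

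\textbf{Step 3 (completeness).} Let $w\equiv_K \widehat{\kappa}_\pi$ for some permutation $\pi\in C$. Then $w$ has the same content as $\widehat{\kappa}_\pi$, namely the multiset of $\kappa$. Consider the biword $\binom{12\cdots k}{w}$; it is valid because its top row is strictly increasing. Its dual is $\binom{\kappa}{\sigma}$ for a unique permutation $\sigma$, since $w^*=\kappa$. Moreover $\widehat{\kappa}_\sigma=w$, because dualizing twice is the identity. Now apply the same corollary to $\binom{\kappa}{\sigma}$ and $\binom{\kappa}{\pi}$: from $w\equiv_K\widehat{\kappa}_\pi$ we get $\sigma\equiv_D\pi$. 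By Step 1 this gives $\sigma\in C$. Therefore $w=\widehat{\kappa}_\sigma$ is the underlying word of the quasicrystal of ${\mathcal B}_\lambda$ containing $\widehat{\kappa}_{|\sigma|}$, which proves the set is a complete Knuth equivalence class.
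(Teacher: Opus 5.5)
Your proposal is correct and takes essentially the same route as the paper. Both arguments reduce to the unique standard node in each quasicrystal, and both use the fact that the permutations in one crystal component form a complete dual equivalence class; the paper also gets this by citing Roberts and the recording-tableau characterization in Bump--Schilling. Both then transfer this to Knuth equivalence of the underlying words via biword duality and Theorem~\ref{thm:quasi constant content}.

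You go further than the paper in two places. In Step 3, dualizing $\binom{12\cdots k}{w}$ to produce $\sigma$ makes the completeness of the Knuth class explicit, which the paper only asserts. You also treat $n\ge k$ directly rather than first reducing to $n=k$.
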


\begin{proof}Suppose $n = k$, for now. Then, as in \cite{Assaf15} and \cite{R13} and noted above, each quasicrystal in a crystal of SSYT of highest weight $\lambda$ (where $\lambda \vdash k$) contains a unique SYT called the \emph{standard node}. Viewed as a ptableau, by Proposition~\ref{prop_distinguished_ptab}, the standard node  is dual to a ptableau T in the distinguished crystal with biword ${\kappa \choose \rho}$ and $\rho \in S_k$. Therefore in any associated crystal of biwords there is a unique biword in each quasicrystal whose row word forms a one-row permutation. By the comment prior to Theorem 2.4 in \cite{R13}, the permutations determined by the standard nodes of the distinguished crystal are dual equivalent and   form a complete dual equivalence class. Further, under RSK insertion, for permutations $\omega \in S_k$ mapping to a pair of standard Young tableaux $(P,Q)$, the SYT $Q$ is constant on dual equivalence classes. As is known (see Theorem 8.7 in~\cite{BS}) this is sufficient to imply that dual equivalent words lie in the same irreducible crystal, which implies the same for biwords built over such row words. By Theorem~\ref{theorem biword dual Knuth}, the underlying words of the parsed content words of these dual equivalent standard nodes are Knuth equivalent and form a complete Knuth equivalence class. But then by Theorem~\ref{thm:quasi constant content}, the underlying content words of these Knuth equivalent standard nodes are constant on each quasicrystal, proving that the collection of underlying content words among the quasicrystals of parsed content words form a Knuth equivalence class. Lastly, we note that the determination of the underlying content words is independent of $n$, provided $n \geq k$. That is, if $n> k$, each crystal (and quasicrystal) may have more nodes, but these will only alter the parsing of the underlying content words, and not the word itself.

\end{proof}

\subsection{Distinguished crystal in the parsed content words model}\label{subsection-Distinguished Crystal on Parsed Words}
We call the crystal consisting of the reading words of the ptableaux in the distinguished crystal (Def. \ref{Def:distinguished crystal}) the \emph{distinguished crystal of parsed content words}. 
While we have emphasized that restricting crystal models to one isomorphism type can introduce some difficulties with problems involving multiple isomorphic crystals, analyzing versions of the distinguished crystal can still provide new insights. 

Let ${\mathcal B_\lambda}(T_{\lambda})$ be the distinguished crystal of ptableaux of highest weight $\lambda$ as defined in Definition~\ref{Def:distinguished crystal}, and let $\widehat{\kappa}_{|\lambda|}$ be the parsed content word associated to $T_\lambda$. (For example, if $\lambda = (4,2,1,1)$, then $\widehat{\kappa}_{|\lambda|} = 1111|22|3|4$.) With this, let ${\mathcal B}(\widehat{\kappa}_{|\lambda|})$ be the associated crystal of parsed content words with nodes associated to the ptableaux in the distinguished crystal. See Figure~\ref{fig:distinguished_pcw_crystal} for an example.
Clearly, every underlying word in ${\mathcal B}(\widehat{\kappa}_{|\lambda|})$ is a word of weight $\lambda$, but we can say more.

\begin{definition}
   A word $w = w_1\ldots w_k\in [n]^{\otimes k}$ is a \emph{(left) Yamanouchi $\lambda$-word} if $\wt(w) = \lambda$ and each prefix of $w$ contains at least as many $i$'s as ($i+1$)'s.  \label{Yamanouchi word def}
\end{definition}

\begin{example}
    Let $\lambda = (5,3,3,2,1)$. Then $11111222333445$ and $11212234533141$ are Yamanouchi $\lambda$-words. However $12121135433412$ is not, because the prefix $12121135$ contains more $5$'s than $4$'s and the prefix  $12121135433$ contains more $3$'s than $2$'s.
\end{example}

In the crystal on words $[n]^{\otimes k}$, each isomorphic crystal of highest weight $\lambda$ is indexed by a Yamanouchi $\lambda$-word.
 We next show that in the parsed content words model, all Yamanouchi $\lambda$-words appear within
 one irreducible crystal.

\begin{proposition}
    For a partition $\lambda$, all Yamanouchi $\lambda$-words appear within the distinguished crystal ${\mathcal B}(\widehat{\kappa}_{|\lambda|})$ as the underlying words indexing each of the quasicrystals in ${\mathcal B}(\widehat{\kappa}_{|\lambda|})$. 
    \label{prop:Yamanouchi criterion}
\end{proposition}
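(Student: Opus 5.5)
Since the underlying word of a parsed content word does not change within a quasicrystal (Theorem~\ref{thm:quasi constant content}), the plan is to understand the underlying words of ${\mathcal B}(\widehat{\kappa}_{|\lambda|})$ through the biword model. The nodes of ${\mathcal B}(\widehat{\kappa}_{|\lambda|})$ correspond to biwords ${\kappa \choose \rho}$ with $\kappa = 1^{\lambda_1}2^{\lambda_2}\cdots$ fixed, since $T_\lambda$ has content $\kappa$ and crystal operators leave the content word unchanged. Every underlying word $\widehat{\kappa}_\rho$ is therefore a rearrangement of $\kappa$, and so has weight $\lambda$. Two inclusions must be proved:
\begin{enumerate}
\item every underlying word $\widehat{\kappa}_\rho$ is Yamanouchi;
\item every Yamanouchi $\lambda$-word appears as some $\widehat{\kappa}_\rho$.
\end{enumerate}

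For (1), I would use Proposition~\ref{prop_distinguished_ptab}: $T$ lies in the distinguished crystal iff it satisfies the word condition, i.e.\ in $T^\rightarrow$ every $i+1$ has an $i$ above it. I would then read $\widehat{\kappa}_\rho = \operatorname{rw}(T)$ (Lemma~\ref{lem:reading_words_Ptab}) row by row, top to bottom and right to left. The claim is that any prefix contains at least as many $i$'s as $(i+1)$'s. The natural argument is an injection: map each $i+1$ in the prefix to the $i$ directly above it in $T^\rightarrow$. That $i$ is in an earlier row, so it is read earlier, and it lies in the prefix. Distinct $(i+1)$'s sit in distinct columns, so the map is injective. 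The one subtlety is that the reading word lists row $r$ right to left, whereas $T^\rightarrow$ is just one row-equivalent representative. Since the entry above sits in a strictly earlier row, however, this ordering does not matter.

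For (2), I would start from a Yamanouchi $\lambda$-word $w$ and build a biword whose dual has row word $w$. Here I would take the biword ${\rho^* \choose w}$, where $\rho^*$ is chosen so that $w$ is weakly decreasing under each constant factor. The simplest choice is the minimal parsing of $w$, i.e.\ the parsing that puts a bar at each ascent. This choice forces $\rho^*=1^{a_1}2^{a_2}\cdots$, where $a_j$ is the length of the $j$th maximal weakly decreasing factor of $w$. Dualizing gives ${\kappa \choose \rho}$ with $\kappa$ the sorted $w$, namely $1^{\lambda_1}2^{\lambda_2}\cdots$, and $\rho$ records the factor in which each letter sits. Its ptableau $T$ has $\operatorname{rw}(T)=w$ with that parsing. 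It remains to check that $T$ satisfies the word condition, after which Proposition~\ref{prop_distinguished_ptab} places $T$ in ${\mathcal B}(\widehat{\kappa}_{|\lambda|})$ and so $w$ occurs as an underlying word.

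The main obstacle is this last verification. The Yamanouchi prefix condition must be turned into the statement that, after right-justifying the ptableau, every $i+1$ sits under an $i$, and right-justification interacts with the ptableau column constraints. I would first try a greedy matching argument. Scanning rows top to bottom, match the $(i+1)$'s of each row to unmatched $i$'s from earlier rows. The Yamanouchi counts guarantee such a matching exists. I would then argue that right-justification realizes exactly this matching, which amounts to a Hall-type condition on columns; this step is where the most care is needed. If the direct argument stalls, a fallback is available: show that $\widehat{T}$ is highest weight, i.e.\ that $\rho_{|\kappa|}$ is a reverse-lattice word in the Kashiwara convention (part (3) of Proposition~\ref{prop_distinguished_ptab}). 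Passing to $\widehat{T}$ converts the Yamanouchi property of $w$ into a bracketing condition on $\rho$ that can be checked factor by factor.
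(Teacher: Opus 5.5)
Your two-inclusion plan is the same as the paper's: the forward direction uses the word condition together with $\operatorname{rw}(T)=\widehat{\kappa}_{|\rho|}$, and the converse builds the ptableau of a minimally parsed Yamanouchi word. Your injection argument for (1) is correct, and more explicit than the paper's one line.

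The gap is in (2), which is the actual content of the proposition, and you leave it open.
\begin{itemize}
\item Your primary route depends on the claim that right-justification realizes your greedy matching. That claim is the entire difficulty, and nothing in the proposal controls where right-justification places entries.
\item Your fallback is misstated. $\rho_{|\kappa|}$ is the \emph{reading} word of $\widehat{T}$, not its row word, and in general it is not a lattice word in either direction. For $w=1213$ you get $\binom{\kappa}{\rho}=\binom{1123}{2123}$ and $\rho_{|\kappa|}=21|2|3$. Here $\rho=2123=f_1(1123)$ is not highest weight, yet $T$ does lie in the distinguished crystal.
\end{itemize}
The paper's proof also only asserts this converse, so you have correctly located where the work is.

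The fix is one line once you look at the right word. The biword of $\widehat{T}$ is $\binom{\rho^*}{w}$, and crystal operators act on its row word, which is $w$ itself. In the paper's bracketing rule each $i$ is bracketed with the nearest available $i+1$ to its right. Hence $e_i(w)=\NULL$ if and only if every $i+1$ is bracketed with an $i$ to its left. That holds if and only if every prefix of $w$ has at least as many $i$'s as $(i+1)$'s. So ``left Yamanouchi'' literally means ``$\widehat{T}$ is highest weight'', and the implication (3)$\Rightarrow$(1) of Proposition~\ref{prop_distinguished_ptab} finishes the argument.

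A crystal-free version also works. Row $i$ of $\widehat{T}$ consists of the factor indices of the $i$'s in $w$. The left-justified filling is column-strict because the $c$-th $(i+1)$ comes after the $c$-th $i$, and the two cannot lie in the same weakly decreasing factor. This argument works for every valid parsing, not only the minimal one.

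Proposition~\ref{prop_distinguished_ptab} carries a standing hypothesis that $T$'s crystal already has highest weight $\lambda$. To avoid relying on it, note that $\rho$ is the reading word of the SSYT $\widehat{T}^{\leftarrow}$ of shape $\lambda$. Therefore $\rho$ lies in the crystal of $1^{\lambda_1}2^{\lambda_2}\cdots$, and $\kappa$ is unchanged.
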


\begin{proof} 
    Suppose $T$ is a ptableau in the distinguished crystal ${\mathcal B}_\lambda(T_{\lambda})$. We first show that the underlying word of the parsed content word associated to $T$ is a Yamanouchi $\lambda$-word.  Let $\widehat{\kappa}_{|\rho|}$ be the reading word of $T$. Since $T$ is in the distinguished crystal, by Proposition~\ref{prop_distinguished_ptab} (1) and (2) we know that the reading word of $T$ contains $\lambda_i$ copies of the letter $i$ and that $T$ satisfies the word condition (Definition \ref{def:word condition}), namely that every $i+1$ in $T^\rightarrow$ has an $i$ in a row above it. Thus, the reading word of $T$ contains at least as many $i$'s as $(i+1)$'s in every prefix, and hence the underlying word $\widehat{\kappa}_{\rho}$ is a Yamanouchi $\lambda$-word. Conversely, every minimally parsed Yamanouchi $\lambda$-word is the reading word of a ptableau whose right-justified form satisfies the word condition, and hence is a node in the distinguished crystal. 
\end{proof}

\begin{figure}
    \centering
       \scalebox{0.7}{\begin{tikzpicture}[
    node/.style={inner sep=3pt}, 
    every edge/.style={draw},
    label distance=-2pt
]
\node[node] (A) at (0,0) {\ytableaushort{{1}{1},{2}{ },{3}{ },{ }{ }}};
\node[node] (B) at (-1.8,-2.0) {\ytableaushort{{}{1},{1}{2},{3}{ },{ }{ }}};
\node[node] (C) at ( 1.8,-2.0) {\ytableaushort{{1}{1},{2}{ },{ }{ },{3}{ }}};
\node[node] (D) at (-3.0,-4.0) {\ytableaushort{{}{1},{}{2},{1}{3},{ }{ }}};
\node[node] (E) at ( 0.0,-4.0) {\ytableaushort{{}{1},{1}{2},{ }{ },{3}{ }}};
\node[node] (F) at ( 3.0,-4.0) {\ytableaushort{{1}{1},{ }{ },{2}{ },{3}{ }}};
\node[node] (G) at (-3.0,-6.0) {\ytableaushort{{}{1},{}{2},{}{3},{1}{}}};
\node[node] (H) at ( 0.0,-6.0) {\ytableaushort{{}{1},{ }{2},{1}{ },{3}{ }}};
\node[node] (I) at ( 3.0,-6.0) {\ytableaushort{{ }{1},{1}{ },{2}{ },{3}{ }}};
\node[node] (J) at (-3.0,-8.0) {\ytableaushort{{}{1},{}{2},{ }{ },{1}{3}}};
\node[node] (K) at ( 0.0,-8.0) {\ytableaushort{{}{1},{ }{ },{1}{2},{3}{ }}};
\node[node] (L) at ( 3.0,-8.0) {\ytableaushort{{ }{ },{1}{1},{2}{ },{3}{ }}};
\node[node] (M) at (-1.8,-10.0) {\ytableaushort{{}{1},{ }{ },{}{2},{1}{3}}};
\node[node] (N) at ( 1.8,-10.0) {\ytableaushort{{ }{ },{}{1},{1}{2},{3}{ }}};
\node[node] (O) at (0,-12.0) {\ytableaushort{{ }{ },{}{1},{}{2},{1}{3}}};
\draw[->] (A) -- node[midway, above left] {$f_1$} (B);
\draw[->] (A) -- node[midway, above right] {$f_3$} (C);
\draw[->] (B) -- node[midway, above left] {$f_2$} (D);
\draw[->] (B) -- node[midway, above right] {$f_3$} (E);
\draw[->] (C) -- node[midway, above left] {$f_1$} (E);
\draw[->] (C) -- node[midway, above right] {$f_2$} (F);
\draw[->] (D) -- node[midway, left] {$f_3$} (G);
\draw[->] (E) -- node[midway, left] {$f_2$} (H);
\draw[->] (F) -- node[midway, right] {$f_1$} (I);
\draw[->] (G) -- node[midway, left] {$f_3$} (J);
\draw[->] (H) -- node[midway, left] {$f_2$} (K);
\draw[->] (I) -- node[midway, right] {$f_1$} (L);
\draw[->] (J) -- node[midway, left] {$f_2$} (M);
\draw[->] (K) -- node[midway, above left] {$f_3$} (M);
\draw[->] (K) -- node[midway, above right] {$f_1$} (N);
\draw[->] (L) -- node[midway, right] {$f_2$} (N);
\draw[->] (M) -- node[midway, left] {$f_1$} (O);
\draw[->] (N) -- node[midway, right] {$f_3$} (O);
\end{tikzpicture}}
\qquad
\scalebox{0.7}{\begin{tikzpicture}[
    node/.style={inner sep=3pt}, 
    every edge/.style={draw},
    label distance=-2pt
]
\node[node] (A) at (0,0) {$11|2|3$};
\node[node] (B) at (-1.8,-2.0) {$1|21|3$};
\node[node] (C) at ( 1.8,-2.0) {$11|2||3$};
\node[node] (D) at (-3.0,-4.0) {$1|2|31$};
\node[node] (E) at ( 0.0,-4.0) {$1|21||3$};
\node[node] (F) at ( 3.0,-4.0) {$11||2|3$};
\node[node] (G) at (-3.0,-6.0) {$1|2|3|1$};
\node[node] (H) at ( 0.0,-6.0) {$1|2|1|3$};
\node[node] (I) at ( 3.0,-6.0) {$1|1|2|3$};
\node[node] (J) at (-3.0,-8.0) {$1|2||31$};
\node[node] (K) at ( 0.0,-8.0) {$1||21|3$};
\node[node] (L) at ( 3.0,-8.0) {$|11|2|3$};
\node[node] (M) at (-1.8,-10.0) {$1||2|31$};
\node[node] (N) at ( 1.8,-10.0) {$|1|21|3$};
\node[node] (O) at (0,-12.0) {$|1|2|31$};
\draw[->] (A) -- node[midway, above left] {$f_1$} (B);
\draw[->] (A) -- node[midway, above right] {$f_3$} (C);
\draw[->] (B) -- node[midway, above left] {$f_2$} (D);
\draw[->] (B) -- node[midway, above right] {$f_3$} (E);
\draw[->] (C) -- node[midway, above left] {$f_1$} (E);
\draw[->] (C) -- node[midway, above right] {$f_2$} (F);
\draw[->] (D) -- node[midway, left] {$f_3$} (G);
\draw[->] (E) -- node[midway, left] {$f_2$} (H);
\draw[->] (F) -- node[midway, right] {$f_1$} (I);
\draw[->] (G) -- node[midway, left] {$f_3$} (J);
\draw[->] (H) -- node[midway, left] {$f_2$} (K);
\draw[->] (I) -- node[midway, right] {$f_1$} (L);
\draw[->] (J) -- node[midway, left] {$f_2$} (M);
\draw[->] (K) -- node[midway, above left] {$f_3$} (M);
\draw[->] (K) -- node[midway, above right] {$f_1$} (N);
\draw[->] (L) -- node[midway, right] {$f_2$} (N);
\draw[->] (M) -- node[midway, left] {$f_1$} (O);
\draw[->] (N) -- node[midway, right] {$f_3$} (O);
\end{tikzpicture}}
    \caption{The distinguished crystal $\mathcal{B}_{(2,1,1)}(T_{(2,1,1)})$ and $\mathcal{B}_{(2,1,1)}(\widehat{\kappa}_{|(2,1,1)|})$. Note the underlying words $1123, 1213, 1231$ are all of the Yamanouchi $(2,1,1)$-words.}
    \label{fig:distinguished_pcw_crystal}
\end{figure}

By Proposition~\ref{prop:Yamanouchi criterion}, the nodes in the crystal ${\mathcal B}(\widehat{\kappa}_{|\lambda|})$ on parsed content words are all of the parsings of Yamanouchi $\lambda$-words. The crystal model on parsed content words thus gives a new combinatorial interpretation of the monomials and the fundamental quasisymmetric functions appearing in a Schur function.
\begin{proposition}
   The Schur function $s_\lambda$ can be obtained as:
    \[
    s_\lambda = \sum_{\omega_{\mathcal{P}} \in {\mathcal B}(\widehat{\kappa}_{|\lambda|})} x^{\wt_{\mathcal{P}}(\omega_{\mathcal{P}})},
    \]
    when we allow an unlimited number of parsing bars. 
        The multiplicity of the fundamental quasisymmetric function $F_{\alpha}$ in $s_{\lambda}$ is the number of distinct Yamanouchi $\lambda$-words that appear as underlying words in ${\mathcal B}(\widehat{\kappa}_{|\lambda|})$ so that the weight of their minimal parsing is $\alpha$.
    That is,  

    \[
s_\lambda  = \sum_{\substack{\omega_{\mathcal{P}_{\text{min}}} \in {\mathcal B}(\widehat{\kappa}_{|\lambda|}) }} F_{\wt_{\mathcal{P}}(\omega_{\mathcal{P}_{\text{min}}})}.
\]
where $\omega$ is a Yamanouchi $\lambda$-word.
\end{proposition}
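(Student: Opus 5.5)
The first identity will follow from the crystal character. Since $\mathcal{B}(\widehat{\kappa}_{|\lambda|})$ is crystal isomorphic to the distinguished crystal $\mathcal{B}_\lambda(T_\lambda)$, and the isomorphism is weight preserving (we saw $\wt_{\mathcal P}(\widehat{\kappa}_{|\rho|})=\wt(\rho)=\wt(T)$), its character is $\chi(\mathcal{B}_\lambda(T_\lambda))=s_\lambda(x_1,\dots,x_n)$. Here each node $\omega_{\mathcal P}$ contributes the monomial $x^{\wt_{\mathcal P}(\omega_{\mathcal P})}$, and $n-1$ is the number of bars. Allowing unboundedly many parsing bars corresponds to $n\to\infty$. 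The underlying words do not depend on $n$ once $n\ge k$, as noted at the end of the proof of the Knuth class theorem, so the crystals form a coherent inductive system and we may pass to the limit to get the Schur function $s_\lambda$.

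For the second identity, we partition the nodes into quasicrystals. By Theorem~\ref{thm:quasi constant content}, the quasicrystals of $\mathcal{B}(\widehat{\kappa}_{|\lambda|})$ are exactly the sets of all parsings, inside the crystal, of a fixed underlying word. By Proposition~\ref{prop:Yamanouchi criterion}, these underlying words are precisely the Yamanouchi $\lambda$-words, each occurring once. By Corollary~\ref{cor: highest weight elemnt in quasi crystal PCW model}, each quasicrystal has as its highest weight node the minimal parsing $\omega_{\mathcal P_{\min}}$ of its word. By Proposition~\ref{prop:quasicrystals_contain_all_destd_and_fqf_character}, transported through the crystal isomorphism, the character of a quasicrystal is $F_\alpha$, where $\alpha$ is the weight of its unique highest weight node (the remark following that corollary). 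Hence $\alpha=\wt_{\mathcal P}(\omega_{\mathcal P_{\min}})$. Summing characters over quasicrystals gives the displayed formula, and the multiplicity statement follows by counting Yamanouchi words whose minimal parsing has weight $\alpha$.

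The main obstacle is justifying that every parsing of a Yamanouchi word that is a refinement of its minimal parsing (with bars appended at the end as needed) actually lies in the crystal. Equivalently, we need the quasicrystal character to be exactly $F_\alpha$ in the parsed model. We will argue this directly. The parsings of $\omega$ refining $\mathcal P_{\min}$ with $n-1$ bars correspond bijectively to weakly increasing sequences $i_1\le\cdots\le i_k$ that strictly increase at ascents of $\omega$: letter $j$ is assigned the index of its factor. The weight of such a parsing is the monomial $x_{i_1}\cdots x_{i_k}$. The resulting sum is exactly $F_\alpha$, where $D(\alpha)$ is the ascent set of $\omega$, and this matches the minimal-parsing weight.

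It remains to know that these parsings are all in the quasicrystal. Corollary~\ref{characterizing_quasi-kashi_operators_in_PCW} shows that quasi-Kashiwara operators move a bar one step left whenever it does not sit at an ascent. Starting from $\omega_{\mathcal P_{\min}}$, repeated moves of this kind reach every valid parsing, so each one is in the connected quasicrystal. Finally, weights agree between parsings, and a parsed content word is determined by its underlying word and its parsing. So distinct nodes give distinct monomials, and there is no overcounting.
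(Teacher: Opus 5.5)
Your first two paragraphs are essentially the argument the paper intends. The paper gives no separate proof, only the remark that by Proposition~\ref{prop:Yamanouchi criterion} the nodes of ${\mathcal B}(\widehat{\kappa}_{|\lambda|})$ are the parsings of Yamanouchi $\lambda$-words. That remark is meant to be combined with the crystal character, Theorem~\ref{thm:quasi constant content}, Corollary~\ref{cor: highest weight elemnt in quasi crystal PCW model} and Proposition~\ref{prop:quasicrystals_contain_all_destd_and_fqf_character}, exactly as you do. (Minor point: the paper suppresses trailing empty factors, e.g.\ $31|2|3$ for $n=4$, so ``$n-1$ bars'' needs padding; this is harmless.)

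Your supplementary ``direct'' verification, however, rests on a false reachability claim. Suppose only bars \emph{not} sitting at an ascent may move left. Then a bar at an ascent is frozen and no bar can ever cross an ascent, so from $\omega_{\mathcal P_{\min}}$ you never reach a parsing with a bar to the left of the first ascent.

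\emph{Counterexample.} Take $\omega=1123$ and $n=4$. Starting from $11|2|3$, the only other node you reach is $11|2||3$. The quasicrystal in Figure~\ref{fig:distinguished_pcw_crystal} also contains $11||2|3$, $1|1|2|3$ and $|11|2|3$. Indeed, the edge $11|2||3\xrightarrow{f_2}11||2|3$ moves a bar sitting at the ascent $2<3$.

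\emph{Corrected rule.} The NULL clause of Corollary~\ref{characterizing_quasi-kashi_operators_in_PCW} applies only when the $(i+1)$-st factor is nonempty. When that factor is empty, there are no pairings in Definition~\ref{Def: f_i on PCW}, and $f_i$ moves bar $i$ left regardless of ascents. With this rule, reachability does hold. Encode a parsing by its factor-index sequence $i_1\le\cdots\le i_k$ and repeatedly raise by one the rightmost entry that is below its target. Each step is an $f_{i_j}$ that moves the last letter of factor $i_j$ into a next factor which is either empty or has first letter at most that letter.

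None of this is actually needed. The ``if'' direction of Theorem~\ref{thm:quasi constant content} (for $n\ge k$) already places every valid parsing of $\omega$ with at most $n$ factors in the quasicrystal of $\omega_{\mathcal P_{\min}}$, since such a parsing is an element of $\PCW_{n,k}$ with underlying word $\omega$. Your transported citation of Proposition~\ref{prop:quasicrystals_contain_all_destd_and_fqf_character} already gives the character $F_\alpha$. So your ``main obstacle'' is covered by results you already cite, and enumerating valid parsings as $F_\alpha$ is only a consistency check.
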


\begin{corollary} 
    There is bijection between parsed content words of weight $\lambda$ and SSYT of shape $\lambda$ given by taking a parsed content word $\widehat{\kappa}_{|\rho|}$ to the SSYT associated to the biword ${\rho^* \choose \widehat{\kappa}_{\rho}}$. This bijection is an isomorphism of crystals. In particular, this map gives a bijection between Yamanouchi $\lambda$-words and standard Young tableaux of shape $\lambda$.
\end{corollary}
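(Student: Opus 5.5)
The plan is to build the map as a composite of bijections already in hand, identify its image using Proposition~\ref{prop_distinguished_ptab}, and then check crystal compatibility via reading words. Throughout, ``parsed content words of weight $\lambda$'' means the nodes of the distinguished crystal ${\mathcal B}(\widehat{\kappa}_{|\lambda|})$, i.e. parsed content words whose underlying word has weight $\lambda$ and which lie in that crystal.

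\textbf{The map and injectivity.} Start from a parsed content word $\widehat{\kappa}_{|\rho|}$. It determines the biword ${\kappa \choose \rho}$ uniquely, since the parsed factors record which content sits over which row letter. Next apply the duality involution on biwords to get ${\rho^* \choose \widehat{\kappa}_{\rho}}$. Then apply $_{\PTab}\Phi_{\BW}$ from Lemma~\ref{lemma bijections} to obtain the dual ptableau $\widehat{T}$, and take its left-justified form $\widehat{T}^\leftarrow$. Each step is injective: the first by the lemma that ptableaux and biwords correspond bijectively to parsed content words, the second because duality is an involution, the third by Lemma~\ref{lemma bijections}, and the last by Lemma~\ref{left-right-justified} together with Lemma~\ref{unique}.

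\textbf{The shape.} The ptableau $T$ of ${\kappa\choose\rho}$ has content $i$ in row $j$ exactly when $\widehat{T}$ has content $j$ in row $i$. The underlying word $\widehat{\kappa}_\rho$ has weight $\lambda$, so $T$ has $\lambda_i$ entries equal to $i$. Hence row $i$ of $\widehat{T}$ has $\lambda_i$ boxes, and the shape of $\widehat{T}^\leftarrow$ is $\lambda$.

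\textbf{Image equals SSYT of shape $\lambda$.} By Proposition~\ref{prop_distinguished_ptab}, (1)$\Leftrightarrow$(3), $T$ lies in the distinguished crystal exactly when $\widehat{T}^\leftarrow$ is an SSYT. So the image consists of SSYT of shape $\lambda$. For surjectivity, take an SSYT $S$ of shape $\lambda$. Regarded as a left-justified ptableau it is highest weight by Theorem~\ref{highest weight ptab}. Its dual $\widehat{S}$ then has $\lambda_i$ entries $i$ and satisfies condition (3), so it lies in the distinguished crystal. Its reading word is a parsed content word mapping back to $S$.

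\textbf{Crystal isomorphism.} By Proposition~\ref{prop_distinguished_ptab}(4), ${\kappa\choose\rho}$ is minimal and $\rho_{|\kappa|}$ is the reading word of the SSYT. By Lemma~\ref{lem:reading_words_Ptab}, $\rho_{|\kappa|}=\operatorname{rw}(\widehat{T})$, so the unparsed reading word of $\widehat{T}^\leftarrow$ is exactly $\rho$. The crystal structure on SSYT is defined by applying $f_i$ to reading words. The crystal structure on parsed content words is defined by $f_i(\widehat{\kappa}_{|\rho|})=\widehat{\kappa}_{|f_i(\rho)|}$, so both act through $f_i$ on $\rho$ with $\kappa$ held fixed. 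The map therefore commutes with $f_i$, including NULL cases, using the fact that $f_i$ preserves reading words of SSYT (Corollary 7.8 of \cite{AW2022}).

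\textbf{Yamanouchi words and SYT.} This is the step I expect to need the most care, specifically in matching which parsing of each underlying word corresponds to the SYT. An SYT corresponds to $\rho^*=12\cdots k$, i.e. $\rho$ is a permutation, so every parsed factor of $\widehat{\kappa}_{|\rho|}$ has size one. Thus the SYT correspond exactly to parsed content words with parsed weight $(1,\dots,1)$ (taking $n\ge k$). By Theorem~\ref{thm:quasi constant content}, each quasicrystal consists of all parsings of one underlying word, and by Proposition~\ref{prop:Yamanouchi criterion} these underlying words are precisely the Yamanouchi $\lambda$-words. Since the quasicrystal contains all valid parsings, it contains the singleton parsing, and that parsing is unique in the quasicrystal, matching the uniqueness of the standard node in Proposition~\ref{prop:quasicrystals_are_connected_subgraphs}. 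Sending a Yamanouchi word to its singleton parsing and then applying the bijection gives the claimed correspondence with SYT of shape $\lambda$.
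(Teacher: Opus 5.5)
Your proposal is correct. The paper states this corollary without a separate proof, and your argument is essentially the intended one: Proposition~\ref{prop_distinguished_ptab} (1)$\Leftrightarrow$(3) and Lemma~\ref{lem:reading_words_Ptab} give the bijection and its crystal compatibility, and Theorem~\ref{thm:quasi constant content} with Proposition~\ref{prop:Yamanouchi criterion} gives the Yamanouchi/SYT correspondence. You are also right to read ``parsed content words of weight $\lambda$'' as the nodes of ${\mathcal B}(\widehat{\kappa}_{|\lambda|})$, because the literal claim fails: e.g.\ $2|1|1$, coming from ${112 \choose 321}$, has underlying word of weight $(2,1)$, yet it lies in a crystal of highest weight $(3)$ and its dual ptableau is not an SSYT.
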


It remains open to classify the underlying words contained in an irreducible crystal of parsed content words beyond the distinguished crystal.

\section{Schur-Weyl Duality}\label{Section: S-W duality}

    \subsection{What is a Combinatorial View of Schur-Weyl Duality?}
   
Let $V$ denote an $n$-dimensional complex vector space. We analyze combinatorially the isomorphism of $\mathfrak{gl}_n \times S_k$ representations known as \emph{Schur-Weyl Duality:}
\[ V^{\otimes k} \cong \bigoplus_{|\lambda|=k} ({B}_\lambda \otimes {S}^{\lambda}),\]
where ${B}_{\lambda}$ denotes an irreducible $\mathfrak{gl}_n$ representation of highest weight $\lambda$ and ${S}^\lambda$ denotes an irreducible representation of the symmetric group $S_k$ of shape $\lambda$ (where $\lambda$ has at most $n$ parts). The Lie algebra $\mathfrak{gl}_n$ acts on $V^{\otimes k}$ via $g(\vec{v}_1 \otimes \cdots \otimes v_k) = (g \vec{v}_1) \otimes \cdots (g\vec{v}_k)$, and $\sigma \in S_k$ acts by permuting tensor factors: $\sigma(\vec{v}_1 \otimes \vec{v}_2 \cdots \otimes v_k) = \vec{v}_{\sigma(1)} \otimes \vec{v}_{\sigma(2)} \otimes \cdots \otimes \vec{v}_{\sigma(k)}$. These actions commute with each other.
\medskip

In particular, in the isomorphism above, we have:
\begin{enumerate}
    \item The multiplicity of ${S}^\lambda$ in $V^{\otimes k}$ is 
    $\dim({B}_{\lambda})$.
    \item The multiplicity of ${B}_{\lambda}$ in $V^{\otimes k}$ is 
    $\dim({S}^{\lambda})$.
    \item The action of $f_i,e_i\in\mathfrak{gl}_n$, when nonzero, maps 
    copies of ${S}^\lambda$ to copies isomorphic to ${S}^\lambda$.
    \item The action of $\sigma\in S_k$ maps copies of ${B}_{\lambda}$ 
    to copies isomorphic to ${B}_{\lambda}$.
\end{enumerate}

By a \emph{copy} of ${S}^{\lambda}$ we mean a subspace 
$u\otimes {S}^{\lambda}$ in the tensor product above, 
and similarly for copies of ${B}_{\lambda}$.

Crystal graphs act as a combinatorial proxy for $\mathfrak{gl}_n$, allowing many questions about
its representation theory to be studied with a graph model. To analyze Schur-Weyl
duality combinatorially, we need an analogous graph model for $S_k$ representations. We can then study how these two collections of graphs interact. We use dual
equivalence graphs (denoted $\mathcal{S}^{\lambda}$ and described more
fully in Section~\ref{section dual equiv}) as this combinatorial proxy for
$S_k$ representations. We summarize the correspondence between the representation theoretic and combinatorial settings that we will establish in the following ``dictionary'':

\renewcommand{\arraystretch}{1.3} 
\[ \begin{array}{|c|c|}\hline
\hbox{Representations} & \hbox{Combinatorics} \\
\hline \hline
V^{\otimes k}, \ \text{dim}(V) = n & [n]^{\otimes k} \\ \hline
\hbox{Basis vector $\vec{v}_{\rho_1} \otimes \vec{v}_{\rho_2} \otimes \cdots \otimes \vec{v}_{\rho_k}$} & \hbox{Word $\rho = (\rho_1 \rho_2 \cdots \rho_k)$} \\ \hline
\begin{array}{c} \hbox{Irreducible $\mathfrak{gl}_n$ Representation}
\\{ B}_{\lambda} \end{array} & \begin{array}{c}\hbox{Connected Crystal Graph} \\ {\mathcal {B}}_{\lambda} \end{array} \\ \hline
\text{dim}({B}_{\lambda}) & \hbox{Number of nodes in $\mathcal{B}_{\lambda}$}\\ \hline
\begin{array}{c} \hbox{Irreducible $S_k$ Representation}\\
{ S}^{\lambda} \end{array} & \begin{array}{c} \hbox{Dual Equivalence Graph} \\ {\mathcal{S}}^{\lambda} \end{array} \\ \hline
\text{dim}({S}^{\lambda}) & \hbox{Number of nodes in $\mathcal{S}^{\lambda}$} \\ \hline
\begin{array}{c} \hbox{Tensor Product of Representations}\\
{ B}_{\lambda} \otimes { S}^{\lambda}
\end{array}& \begin{array}{c} \hbox{Cartesian Product of Graphs} \\ {\mathcal B}_{\lambda} \times {\mathcal S}^{\lambda} \end{array} \\ \hline
\begin{array}{c} \hbox{Isomorphism of $\mathfrak{gl}_n \times S_k$ Representations} \\ V^{\otimes k} \cong \bigoplus_{\lambda} \left( { B}_{\lambda} \otimes { S}^{\lambda} \right) \end{array} 
& 
\begin{array}{c} \hbox{Isomorphism of Graphs} \\
\hbox{$[n]$}^{\otimes k} \cong \bigsqcup_{\lambda} ({\mathcal B}_{\lambda} \times {\mathcal S}^{\lambda}) \end{array} \\ \hline
\begin{array}{c}
\hbox{ Lie operators $e_i$ and $f_i$ map copies of ${S}^{\lambda}$}\\
\hbox{to isomorphic copies}
\end{array} & \begin{array}{c}
\hbox{Crystal operators $e_i$ and $f_i$ map copies of} \\
\hbox{${\mathcal S}^{\lambda}$ to isomorphic copies}
\end{array} \\ \hline
\begin{array}{c}
\hbox{The action of $S_k$ maps copies of ${B}_{\lambda}$}\\
\hbox{to isomorphic copies}
\end{array} &\begin{array}{c}
\hbox{Dual equivalences $D_i^{\pm}$ map copies of} \\
\hbox{${\mathcal B}_{\lambda}$ to isomorphic copies}
\end{array}\\ \hline
\begin{array}{c}
\hbox{Lie operators $e_i$ and $f_i$ commute}\\ \hbox{with $\sigma_i \in S_k$}
\end{array} & \begin{array}{c}
\hbox{Crystal operators $e_i$ and $f_i$ commute} \\
\hbox{with $D_{i}^{\pm}$}
\end{array} \\ \hline
\end{array}
\]
In the chart above, the graph structure on $[n]^{\otimes k}$ in the
isomorphism:
\[
[n]^{\otimes k} \cong
\bigsqcup_{\lambda}(\mathcal B_{\lambda}\times\mathcal S^{\lambda})
\]
is defined by connecting two nodes (words) by an edge if \emph{either}
one word is the image of the other under a single crystal operator, or if the two words are elementarily Knuth equivalent. To establish these interpretations, we first define dual equivalence graphs more intrinsically and then show how they can be used as a proxy for $S_k$ representations. Together with the crystal graph structure we provide a combinatorial proxy for Schur-Weyl duality, completing the dictionary above.  We then briefly describe how the
classical RSK correspondence can be viewed as a related model for
Schur-Weyl duality.

\subsection{Dual Equivalence Graphs and Representations of $S_k$}\label{section dual equiv} 
In this section, we use crystals determined by \emph{maximal} biwords and
their associated ptableaux to establish combinatorial connections between crystals and $S_k$ representations. Hence we assume that biwords ${\kappa \choose \rho}$ in this (and the next) section have $\kappa = 12\cdots k$, so $\widehat{\kappa}_{\rho}$ is a one-row permutation.

Many combinatorial models have been used to construct irreducible representations of symmetric groups.  The Specht module construction (see \cite{sagan2001symmetric}) uses
standard tableaux, identified with standard polytabloids, to index basis elements and count dimensions of irreducible representations of shape $\lambda$. We can identify a standard polytabloid with a unique SYT of shape $\lambda$, but this construction yields only a single representative of an isomorphism type of a particular irreducible representation. In order to enumerate and identify distinct yet isomorphic representations (as in Schur-Weyl duality), we need a more flexible combinatorial proxy.

Dual equivalence was initially proposed by Haiman \cite{Haiman1992} as an equivalence relation on permutations, dual to Knuth equivalence. Knuth equivalent permutations share the same RSK insertion tableau, whereas dual equivalent permutations share the same RSK recording tableau. Assaf \cite{Assaf15,Assaf07thesis,Assaf08fpsac-crystalops} defined a class of \emph{dual equivalence graphs} whose nodes were SYT. 
For example, the dual equivalence graph of SYT of shape $\lambda=(3,1,1)$ is:

\begin{center}
\begin{tikzpicture}[  xscale=1.5,
    node/.style={inner sep=2pt},
    edge/.style={thick},
    edgelabel/.style={font=\small, inner sep=1pt}
]
\node[node] (A) at (0,0) {$\begin{ytableau} 1&2&3\\ 4\\5 \end{ytableau}$};
\node[node] (B) at (2,0) {$\begin{ytableau} 1&2&4\\ 3\\5 \end{ytableau}$};
\node[node] (C) at (4,1.3) {$\begin{ytableau} 1&2&5\\ 3\\4 \end{ytableau}$};
\node[node] (D) at (4,-1.3) {$\begin{ytableau} 1&3&4\\ 2\\5 \end{ytableau}$};
\node[node] (E) at (6,0) {$\begin{ytableau} 1&3&5\\ 2\\4 \end{ytableau}$};
\node[node] (F) at (8,0) {$\begin{ytableau} 1&4&5\\ 2\\3 \end{ytableau}$};

\draw[edge] (A) -- node[edgelabel,above] {$D_{3}^{+}$} (B);
\draw[edge] (B) -- node[edgelabel,above] {$D_{4}^{+}$}(C);
\draw[edge] (C) -- node[edgelabel,above] {$D_{2}^{+}$}(E);
\draw[edge] (E) -- node[edgelabel,above] {$D_{4}^{+}$}(D);
\draw[edge] (D) -- node[edgelabel,above] {$D_{2}^{+}$}(B);
\draw[edge] (E) -- node[edgelabel,above] {$D_{3}^{+}$}(F);

\end{tikzpicture}
\end{center}

A dual equivalence operator acts on a SYT by using the definition of dual equivalence on its (unparsed) reading word, viewed as a one-row permutation, so we can re-express the above graph in the form below:

\begin{center}
\begin{tikzpicture}[  xscale=1.5,
    node/.style={inner sep=2pt},
    edge/.style={thick},
    edgelabel/.style={font=\small, inner sep=1pt}
]
\node[node] (A) at (0,0) {$32145$};
\node[node] (B) at (2,0) {$42135$};
\node[node] (C) at (4,1.3) {$52134$};
\node[node] (D) at (4,-1.3) {$43125$};
\node[node] (E) at (6,0) {$53124$};
\node[node] (F) at (8,0) {$54123$};

\draw[edge] (A) -- node[edgelabel,above] {$D_{3}^{+}$} (B);
\draw[edge] (B) -- node[edgelabel,above] {$D_{4}^{+}$}(C);
\draw[edge] (C) -- node[edgelabel,above] {$D_{2}^{+}$}(E);
\draw[edge] (E) -- node[edgelabel,above] {$D_{4}^{+}$}(D);
\draw[edge] (D) -- node[edgelabel,above] {$D_{2}^{+}$}(B);
\draw[edge] (E) -- node[edgelabel,above] {$D_{3}^{+}$}(F);

\end{tikzpicture}
\end{center}

Assaf \cite{Assaf15} gave local characterizations for dual equivalence
graphs; using these local characterizations, Roberts \cite{R13} described isomorphism classes of dual equivalence graphs on all of $S_k$. Below we recover these and other results but derive them intrinsically in the context of crystal graphs. First, we need to understand the relationship between dual equivalence and crystal operators.

\begin{proposition}\label{Prop: dual equiv - valid parsings}
    Suppose $\omega,\omega' \in S_k$ are elementarily dual equivalent one-rowed permutations, where $\omega'=D_i^+\omega$ (or $\omega'=D_i^-\omega$). Let ${\mathcal P}$ denote a fixed parsing of $\omega$, so $\omega_{\mathcal P}$ is a parsed content word (Definition~\ref{Def:parsed word}).  Then
    
    \begin{enumerate}
        \item The parsing ${\mathcal P}$ is a valid parsing of $\omega'$.
        \item The underlying words of $f_j(\omega_{\mathcal P})$ and $f_j(\omega'_{\mathcal P})$ are elementarily dual equivalent
(see Definition~\ref{Def: f_i on PCW}).
    \end{enumerate}
 \label{prop f_i to dual}
\end{proposition}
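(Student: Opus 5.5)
The plan is to move the whole statement to the biword side, where Theorem~\ref{theorem biword dual Knuth} turns dual equivalence of underlying content words into Knuth equivalence of row words. There, ``crystal operators commute with elementary Knuth moves'' is a classical local check. Since $\omega\in S_k$, a parsed content word $\omega_{\mathcal P}$ corresponds to a biword ${\kappa\choose\rho}$ whose content is $\{1,\dots,k\}$, so $\kappa=12\cdots k$ and the biword is maximal. Its underlying content word is $\widehat{\kappa}_\rho=\omega$. The parsing ${\mathcal P}$ records the weight of $\rho$: the $j$-th parsed factor consists of the positions in $\kappa$ lying over the letter $j$ of $\rho$.

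For (1) I would argue directly and avoid the duality altogether. $D_i^{\pm}$ swaps the two values $\{i-1,i\}$ (resp.\ $\{i,i+1\}$), and the third value of the triple sits strictly between them in position, so the two swapped values are never adjacent in $\omega$. Comparisons between adjacent letters of $\omega$ change only if the two adjacent letters are exactly the swapped pair. Hence $\omega$ and $\omega'$ have the same ascent set. A parsing is valid precisely when it contains the ascent set, so ${\mathcal P}$ is a valid parsing of $\omega'$.

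For (2) the argument goes through the row words.
\begin{enumerate}
\item Define $\omega'_{\mathcal P}$ to be the parsed content word of ${\kappa\choose\rho'}$. Theorem~\ref{theorem biword dual Knuth} applied to the maximal biword gives a word $\rho'$ with $\widehat{\kappa}_{\rho'}=\omega'$ and $\rho'=K_i^{\pm}\rho$. An elementary Knuth move permutes letters, so $\rho'$ has the same weight as $\rho$. Hence $(\rho')^*=\rho^*$, the parsing of $\omega'$ induced by $\rho'$ is exactly ${\mathcal P}$, and the two constructions of $\omega'_{\mathcal P}$ agree. This is a second proof of (1).
\item By Theorem~\ref{thm:f hat crystal op} and \eqref{eq:parsed_words_inherited_fi}, $f_j(\omega_{\mathcal P})=\widehat{\kappa}_{|f_j(\rho)|}$ and $f_j(\omega'_{\mathcal P})=\widehat{\kappa}_{|f_j(\rho')|}$.
\item It then suffices to show that $f_j(\rho')=K_i^{\pm}f_j(\rho)$ whenever either side is non-$\NULL$, and that $f_j(\rho)=\NULL$ exactly when $f_j(\rho')=\NULL$. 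Once this holds, Theorem~\ref{theorem biword dual Knuth} converts it back into $D_i^{\pm}$ relating the underlying words of $f_j(\omega_{\mathcal P})$ and $f_j(\omega'_{\mathcal P})$, both maximal biwords with $\kappa=12\cdots k$.
\end{enumerate}

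The main obstacle is step 3, the verification that $f_j$ commutes with an elementary Knuth move at the same position and preserves the inequalities $a<b\le c$ (resp.\ $a\le b<c$). This is essentially the statement that Knuth relations are crystal morphisms (Proposition~\ref{prop: Knuth=plactic} and the corresponding result in \cite{BS}). I would still check it directly with the bracketing rule of Section~\ref{subsection_word_crystals}.

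In that check, only letters equal to $j$ or $j+1$ matter. If at most one letter of the triple lies in $\{j,j+1\}$, the $\pm$ sequence is unchanged, so the same letter changes and the inequalities are untouched. Otherwise I go through the finitely many patterns, such as $cab$ versus $acb$ with $a=j,\ b=c=j+1$, or $a=j,\ b=j+1$, and so on. In each pattern the reordering inside the triple only moves a bracketed $-+$ pair or leaves the unbracketed pattern invariant. So the leftmost unbracketed $j$ is the same letter, or the corresponding letter, in both words. One then checks that replacing it by $j+1$ keeps the strict and weak inequalities of the same case, for example $a<b\le c$ with $a=j\mapsto j+1$ still requires $b>j+1$, which is where the bracketing forces $a$ to have been bracketed.

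I expect this case analysis to be the only genuinely fiddly part. If it becomes unwieldy, I would instead invoke the cited theorem that crystal operators commute with elementary Knuth relations.
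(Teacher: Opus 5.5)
Your part (1) is correct; it is the argument of the paper's Lemma~\ref{Lemma: ordering prop dual equiv words}. Transporting (2) to row words via Theorem~\ref{theorem biword dual Knuth} and \eqref{eq:parsed_words_inherited_fi} is a legitimate alternative to the paper's direct pairing analysis on the factors $W_j,W_{j+1}$.

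The gap is in step 3. The lemma you set out to verify is false: $f_j$ does not in general commute with an elementary Knuth move at the same position and in the same case, preserving $a<b\le c$ (resp.\ $a\le b<c$). Take $\rho=112$ and $\rho'=K_2^+\rho=121$, a Case II move with $b=a=1$, $c=2$.
\begin{itemize}
\item The sign patterns are $--+$ and $-+-$.
\item The unbracketed $1$ is the first letter of $112$ but the last letter of $121$. That last letter is the image of the \emph{second} letter of $112$ under the swap, not of the first.
\item Hence $f_1(\rho)=212$ and $f_1(\rho')=122$. These differ by $K_2^-$ (Case I, $cab\to acb$ with $c=b=2$, $a=1$), not by $K_2^+$.
\item On the content side, $\omega=213$ and $\omega'=D_2^+\omega=312$, with common parsing $21|3$, $31|2$. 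Applying $f_1$ gives $2|31$ and $1|32$, whose underlying words satisfy $D_2^-(231)=132$ but $D_2^+(231)=231$.
\end{itemize}
This is precisely the paper's Case II(b), which the remark right after the statement flags. So two of your assertions fail whenever $f_j$ acts inside a window $j\,j\,(j+1)\leftrightarrow j\,(j+1)\,j$: that ``the leftmost unbracketed $j$ is the same letter, or the corresponding letter,'' and that the inequalities of the same case survive.

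The repair is to prove the weaker, correct lemma. Either $f_j(\rho)$ and $f_j(\rho')$ are both $\NULL$, or they differ by $K_i^+$ or $K_i^-$ on the same window of positions $i-1,i,i+1$. The sign switches exactly in the pattern above, where the Case II window becomes the Case I window $(j+1)\,j\,(j+1)\leftrightarrow j\,(j+1)\,(j+1)$. Since the statement only asks for elementary dual equivalence of either sign, Theorem~\ref{theorem biword dual Knuth} then finishes.

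Your fallback does not close the gap. The fact that Knuth equivalent words are plactically equivalent only tells you that $f_j(\rho)$ and $f_j(\rho')$ occupy corresponding positions in isomorphic crystals. At best that yields (non-elementary) Knuth equivalence, hence dual equivalence, not a single elementary move. Any finer citation must also allow the sign switch.

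With the corrected lemma, your route is a valid alternative. Your check is confined to three-letter windows under the standard bracketing rule. The paper instead works with the pairing rule of Definition~\ref{Def: f_i on PCW} on whole parsed factors, where the same switch appears as Case II(b).
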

 Note that if $D_{i}^{+}\omega= \omega'$, it is not necessarily true that $D_{i}^{+}f_j (\omega) = f_j (\omega')$. However, the proof will show that if
$D_i^{+}f_j(\omega) \neq f_j(\omega')$, then
$D_i^{-}f_j(\omega) = f_j(\omega')$, and so the $f_j$ images of $\omega$ and $\omega'$ will still be elementarily dual equivalent.

 Before proving the theorem, we need the following:
\begin{lemma}\label{Lemma: ordering prop dual equiv words}
    Let $\omega =\omega_1 \cdots \omega_k \in S_k$, and $\omega'= D_{i}^{\pm} \omega=\omega_{1}' \cdots  \omega_{k}'$. Then $\omega_\ell < \omega_{\ell+1}$ if and only if $\omega_{\ell}' < \omega_{\ell+1}'$ for $\ell=1, \ldots , (k-1).$
\end{lemma}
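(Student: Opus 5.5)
The plan is a direct positional check: $D_i^{\pm}$ only swaps two values that differ by one, so relative comparisons between positions can change only at those two positions, and only if they are adjacent. I treat $D_i^-$ (Case I of Definition~\ref{Def:Dual equiv}) in detail. Case II ($D_i^+$) is identical after exchanging the roles of $i-1$ and $i+1$.

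Write $\omega = (\ldots i \ldots i+1 \ldots i-1 \ldots)$. Let $p<q<r$ be the positions of $i$, $i+1$, $i-1$. Then $\omega'$ agrees with $\omega$ except that $\omega'_p = i-1$ and $\omega'_r = i$. In other words, $\omega'$ is obtained from $\omega$ by exchanging the values $i-1$ and $i$.

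\textbf{Key observation.} Exchanging two values $v$ and $v+1$ that are consecutive integers preserves every comparison $\omega_a<\omega_b$, except the one between the two positions holding $v$ and $v+1$. Indeed, for any third value $x\notin\{v,v+1\}$, we have $x<v$ if and only if $x<v+1$. So the order relation between $x$ and either swapped value is unchanged. The only comparison that reverses is the one between positions $p$ and $r$.

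\textbf{Applying it to adjacent pairs.} Consider an adjacent pair $(\ell,\ell+1)$.
\begin{itemize}
\item If $\{\ell,\ell+1\}\neq\{p,r\}$, the key observation shows that whether $\omega_\ell<\omega_{\ell+1}$ holds is the same for $\omega$ and $\omega'$.
\item It remains to rule out $\{\ell,\ell+1\}=\{p,r\}$. This is impossible: the position $q$ of $i+1$ lies strictly between $p$ and $r$, so $r-p\ge 2$ and $p,r$ are never adjacent.
\end{itemize}
This proves the equivalence for every $\ell=1,\ldots,k-1$. In Case II the swapped values are $i$ and $i+1$, at the outer positions, and the value $i-1$ sits strictly between them. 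The same argument applies verbatim.

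Finally, if $D_i^{\pm}$ acts trivially, then $\omega'=\omega$ and there is nothing to prove.

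The only real content is the observation that the intermediate letter separates the two swapped positions. No step here is a serious obstacle; the main care needed is to state the consecutive-values observation precisely.
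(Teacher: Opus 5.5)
Your proof is correct and follows essentially the same route as the paper: the two swapped letters are consecutive integers, so their comparison with any third letter is unchanged, and the intermediate letter ($i+1$ for $D_i^-$, $i-1$ for $D_i^+$) keeps the two swapped positions from ever being adjacent. The paper phrases this as ``$i$ and $i+1$ do not appear consecutively'' and then checks the adjacent pairs containing exactly one swapped letter, which is your key observation.
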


\begin{proof} 
For ease of reading, we represent words in a set of parentheses.  

It suffices to consider the case $D_i^+\omega=\omega'$, since the case $D_i^-\omega=\omega'$ is analogous. Thus the letters in $\omega$ and $\omega'$ are identical except for the placement of the letters $i$ and $i+1$, which have been swapped:
 \[\omega = (\ldots i+1 \ldots i-1 \ldots  i \ldots )  \ \ \hbox{and} \ \ \omega' = (\ldots i \ldots i-1 \ldots  i+1 \ldots ).  \]
 In particular, $i$ and $(i+1)$ do not appear consecutively in $\omega$ or $\omega'$. From this the lemma follows: any inequality
$\omega_{\ell}<\omega_{\ell+1}$ is either identical to
$\omega_{\ell}'<\omega_{\ell+1}'$, or exactly one of the two entries $\omega_{\ell},\omega_{\ell+1}$ is $i$ or $(i+1)$, and is replaced by $(i+1)$ or $i$, respectively, in $\omega'$. In either case the inequalities hold.

\end{proof}
We can now prove Proposition \ref{Prop: dual equiv - valid parsings}.
\begin{proof} 
Assume $D_{i}^{+}\omega = \omega'$. The case $D_{i}^{-}\omega = \omega'$ follows similarly. 

By Lemma~\ref{Lemma: ordering prop dual equiv words}, the locations of all ascents and descents of $\omega$ and $\omega'$ are the same; thus they have the same minimal parsing. Since $\mathcal P$ is a valid parsing of $\omega$, it is a refinement of this shared minimal parsing, and hence
is also a valid parsing of $\omega'$. This proves the first claim.

To prove the second claim, fix the common parsing $\mathcal P$.  For clarity, we suppress the subscript $\mathcal P$ on $\omega_{\mathcal P}$ and $\omega'_{\mathcal P}$ in what follows.

Assume that $f_j(\omega) \ne \NULL$. The words $\omega$ and $\omega'$ are identical except for the position of $i$ and $i+1$:
 \[\omega = (\ldots i+1 \ldots i-1 \ldots  i \ldots )  \ \ \hbox{and} \ \ \omega' = (\ldots i \ldots i-1 \ldots  i+1 \ldots ).  \]

Let $W_j$ and $W_{j+1}$ (resp. $W_{j}'$ and $W_{j+1}'$) denote the $j$th and $(j+1)$st factors of $\omega$ (resp. $\omega'$). Thus, $(i+1)$ appears in $W_j$ if and only if $i$ appears in $W_{j}'$, and similarly for $W_{j+1}$ and $W_{j+1}'$.

The crystal operator $f_j$ on $\omega$ (and $\omega'$) changes content only in $W_j$ and $W_{j+1}$ (and similarly in $W_{j}'$ and $W_{j+1}'$).  In particular, $\omega$ and $f_j (\omega)$ are identical except one letter in $W_j$ of $\omega$ has been moved into $W_{j+1}$ in $f_j (\omega)$. 

We claim that $f_j(\omega')\ne\NULL$.  Indeed, by Lemma \ref{Lemma: ordering prop dual equiv words}, in computing $f_j (\omega)$ and $f_j (\omega')$, if $(i+1)$ appears in $W_{j}$, it is paired with a letter $a$ (to its right) in $W_{j+1}$ if and only if $i$ is paired with $a$ in $W_{j+1}'$. Similarly, if $(i+1)$ appears in $W_{j+1}$ it is paired with a letter $b$ (to its left) if and only if $i$ is paired with $b$ in $W_{j}'$. Thus, the collection of pairings in $\omega$ and $\omega'$ are the same, except that the letter paired with $(i+1)$ in $\omega$ is necessarily paired with $i$ in $\omega'$, and conversely, except in Case IIb given below, in which the $(i+1)$ in $\omega$ is paired, and an $(i-1)$ is paired in $\omega'$. In particular, when $f_j(\omega)\ne\NULL$, there is an unpaired letter in $W_j$ that can be moved to $W_{j+1}$, and likewise some unpaired letter in $W_j'$  that can be moved to $W_{j+1}'$ and so $f_j(\omega')\ne\NULL$.

As a first case, assume that $f_j$ acting on $\omega$ moves some letter $x$, with $x\in W_j$ and $x\notin\{i-1,i,i+1\}$. Then the same letter $x$ in factor $W_j'$ is moved by $f_j$ acting on $\omega'$. Thus, the relative order of $(i-1),i,(i+1)$ is unchanged upon applying $f_j$: in $\omega$ and $f_j(\omega)$ it is $(\ldots i+1\ldots i-1\ldots i\ldots)$, while in $\omega'$ and $f_j(\omega')$ it is $(\ldots i\ldots i-1\ldots i+1\ldots)$. Hence, $D_i^+f_j(\omega)=f_j(\omega'),$ so $f_j(\omega)$ and $f_j(\omega')$ are elementarily dual equivalent.

To complete the proof, we need only consider when the moved letter of $W_j$ is in $\{i-1, i, i+1\}$. Note that $(i+1)$ and $i$ must appear in distinct weakly decreasing factors of $\omega$, with the factor containing $(i+1)$ to the left of the factor containing $i$, since the letter $i-1$ lies between them.

Further, if $(i-1), i$ or $(i+1)$ in $W_j$ is moved into $W_{j+1}$ by applying $f_j$ to $\omega$, and none of the letters $(i-1), i$ or $(i+1)$ occupies a spot in $W_{j+1}$ prior to the move, then the order of $(i-1), i$ and $(i+1)$ is the same in $\omega$ and $f_j(\omega)$:
\[ (\ldots i+1 \ldots i-1 \ldots  i \ldots ), \]
and in $\omega'$ and $f_j(\omega')$:
\[(\ldots i \ldots i-1 \ldots  i+1 \ldots ). \]
In particular, this includes the case where the moved letter is $i$ in $\omega$ , and thus $(i+1)$ in $\omega'$.

So we need only consider the effect of applying $f_j$ under the hypothesis that either $(i+1)$ or $(i-1)$ is in $W_j$ and is moved, and that $i-1$ or $i$ is in $W_{j+1}$.

\medskip
Case I. Suppose $(i-1)$ is in $W_j$, but $(i+1)$ is \emph{not} in $W_j$ (and hence $i$ is not in $W_j'$ of $\omega'$), while the letter $i$ \emph{is} in the factor $W_{j+1}$:

\begin{align*} \omega &=(\ldots (i+1) \ldots| \underbrace{\ldots (i-1) \ldots}_{W_j} |\underbrace{  \ldots i \ldots }_{W_{j+1}} |\ldots) \\
\omega' &=(\ldots i \ldots| \underbrace{\ldots (i-1) \ldots}_{W_{j}'} |\underbrace{  \ldots (i+1) \ldots }_{W_{j+1}'} |  \ldots )
\end{align*}

  We claim that $(i-1)$ is not moved into $W_{j+1}$ by applying $f_j$ to $\omega$, so this case is vacuous.  Indeed, if the letter $i$ exists in $W_{j+1}$, then it is large enough to be paired with $(i-1)$, but too small to be paired with any letter to the left of $(i-1)$ in $W_j$. Hence $(i-1)$ would be paired with $i$ or some entry in $W_{j+1}$ larger than $i$, implying that $(i-1)$ cannot be the right-most unpaired entry in $W_j$. A similar argument shows that $(i-1)$ cannot be the moved letter of $\omega'$, as it would be paired with a letter at least as big as $(i+1)$. 

 \medskip
 Case II. Suppose that \emph{both} $(i+1)$ and $(i-1)$ are in $W_j$ of $\omega$ (where they necessarily appear consecutively because $W_j$ is weakly decreasing) and that $i$ is in factor $W_{j+1}$. Correspondingly $i$ and $(i-1)$ appear consecutively in $W_j'$ and $(i+1)$ appears in $W_{j+1}'$. 

 \begin{align*} \omega &=(\ldots| \underbrace{ \ldots (i+1) (i-1) \ldots}_{W_j} |\underbrace{ \ldots i \ldots}_{W_{j+1}} | \ldots ) \\
\omega' &=( \ldots| \underbrace{\ldots i(i-1) \ldots}_{W_{j}'} |\underbrace{\ldots (i+1) \ldots}_{W_{j+1}'} | \ldots )
\end{align*}

We consider two subcases.

\medskip
Case II(a): Suppose that $(i+1)$ is paired with some letter of $W_{j+1}$ in $\omega$ (and hence this letter is paired with $i$ in $\omega'$). Then, similar to the previous case, $(i-1)$ is also paired with some letter (either $i$ in $W_{j+1}$ or a letter to its right), so that neither $(i+1)$ nor $(i-1)$ is moved by $f_j$. Similarly, $(i+1)$ is paired with $(i-1)$ or some letter to the left of $(i+1)$ in $W_{j+1}'$ of $\omega'$, and again the result holds vacuously.

\medskip

Case II(b): If \emph{no} letter of $W_{j+1}$ is paired with $(i+1)$, then $i$ in $W_{j+1}$ \emph{will} pair with $(i-1)$, and hence the letter $(i+1)$ is moved by applying $f_j$:

 \begin{align*} \omega &=(\ldots| \underbrace{ \ldots {\color{teal}(i+1)} {\color {red}(i-1)} \ldots}_{W_j} |\underbrace{ \ldots {\color {red}i}\ldots}_{W_{j+1}} | \ldots ) \xrightarrow{f_j} f_j (\omega) =(\ldots |\underbrace{ \ldots {\color{red}(i-1)} \ldots}_{W_{j}} |\underbrace{\ldots {\color{teal}(i+1)}{\color{red}i} \ldots}_{W_{j+1}} | \ldots )
\end{align*}
Then, since $(i+1)$ in $W_j$ is unpaired with every letter preceding $i$ in $W_{j+1}$, the corresponding letter $i$ in $W_j'$ is likewise unpaired with every letter preceding $(i+1)$ in $W_{j+1}'$.  But, unlike the action of $f_j$ on $\omega$, the letter $i$ in $W_j'$ will be paired with $(i+1)$ in $W_{j+1}'$, and $(i-1)$ will move to $W_{j+1}'$. 

  \begin{align*} 
\omega' &=( \ldots| \underbrace{\ldots {\color{red}i}{\color{teal}(i-1)} \ldots}_{W_{j}'} |\underbrace{\ldots {\color{red}(i+1)} \ldots}_{W_{j+1}'} | \ldots )\xrightarrow{f_j} f_j(\omega')= ( \ldots |\underbrace{ \ldots {\color{red}i }\ldots}_{W_{j}'} |\underbrace{\ldots {\color{red}(i+1)}{\color{teal}(i-1)} \ldots}_{W_{j+1}'} | \ldots  )
\end{align*}

Thus, while the relative order of $(i-1), i$ and $(i+1)$ in $\omega$ and $\omega'$ was

\[ \omega = (\ldots i+1 \ldots i-1 \ldots  i \ldots ), \quad \omega' =(\ldots i \ldots i-1 \ldots  i+1 \ldots ) \]

upon applying $f_j$ to each their relative orders are now

\[ f_j (\omega) = (\ldots i-1 \ldots i+1 \ldots  i \ldots ),\quad  f_j(\omega') =(\ldots i \ldots i+1 \ldots  i-1 \ldots ). \]

Hence, $D_{i}^{+}\omega= \omega'$, but $D_{i}^{-}f_j(\omega )= f_j (\omega')$ and so $f_j(\omega)$ and $f_j(\omega')$ are elementarily dual equivalent.

\medskip
Case III. Finally, suppose $(i+1)$ appears in $W_j$ and is moved by applying $f_j$ to $\omega$, while $(i-1)$ is in $W_{j+1}$. Then, upon applying $f_j$, $(i+1)$ and $(i-1)$ will necessarily appear consecutively in $W_{j+1}$, retaining the same relative order in $f_j (\omega)$ as in $\omega$. Similarly, $i$ in $\omega'$ will be moved into $W_{j+1}'$, where it
appears immediately to the left of $(i-1)$, so that $f_j(\omega)$ and $f_j(\omega')$ remain elementarily dual equivalent:

 \begin{align*} \omega &=(\ldots| \underbrace{ \ldots (i+1)  \ldots}_{W_j} |\underbrace{ \ldots (i-1) \ldots}_{W_{j+1}} | \ldots i \ldots ) \xrightarrow{f_j} f_j(\omega) =(\ldots |\underbrace{  \ldots}_{W_{j}} |\underbrace{\ldots (i+1)(i-1) \ldots}_{W_{j+1}} | \ldots i \ldots )\\
  \omega' &=(\ldots| \underbrace{ \ldots i  \ldots}_{W_j'} |\underbrace{ \ldots (i-1) \ldots}_{W_{j+1}'} | \ldots (i+1) \ldots ) \xrightarrow{f_j} f_j(\omega') =(\ldots |\underbrace{  \ldots}_{W_{j}'} |\underbrace{\ldots i(i-1) \ldots}_{W_{j+1}'} | \ldots (i+1) \ldots )
\end{align*}

This exhausts all possible cases, and the proposition is proved.  
    \end{proof}

We now turn to our analysis of dual equivalence graphs of parsed maximal content words, beginning with a formal definition of their dual equivalence operators. 
\begin{definition}\label{Def:PCW_Dual_Equiv}
    Let $\PCW_{n,k}^{\max} \subset \PCW_{n,k}$ denote the set of  parsed maximal content words, and suppose $\omega_{\mathcal{P}}, \omega_{\mathcal{P}'}' \in \PCW_{n,k}^{\max} $. Then $\omega_{\mathcal{P}}$ and $\omega_{\mathcal{P}'}'$ are \emph{elementarily dual equivalent} if and only if their underlying words $\omega, \omega'$ (which are permutations) are elementarily dual equivalent, as in Definition~\ref{Def:Dual equiv}.
\end{definition}

The set of parsed maximal content words associated with row words in $[n]^{\otimes k}$ inherits a dual equivalence graph structure from that on permutations by the above definition. This allows us to consider dual equivalence graphs in several different combinatorial models. We distinguish the particular realizations of these graphs in $S_k$ and in collections of crystals and their abstract graph isomorphism types:

\begin{definition} 
\begin{enumerate}
\hskip .25in
\item A \emph{Knuth equivalence graph} is a graph whose nodes are a complete Knuth equivalence class of words $G \subseteq [n]^{\otimes k}$.  Two words $\rho, \rho' \in G$ are connected by an edge if and only if there is an \emph{elementary} Knuth equivalence between them. 

    \item $\DE(S_k)$ denotes the set of all dual equivalence graphs on $S_k$,
each of whose nodes are (one-rowed) permutations in $S_k$.
    \item $\DE(\PCW^{\text{max}}_{n,k})$ denotes the set of all graphs on parsed maximal content words associated to row words in $[n]^{\otimes k}$, in which two parsed maximal content words are connected by an edge if and only if they are related by the elementary dual equivalence operator $D_i^+$ or $D_i^-$. By Definition~\ref{Def:PCW_Dual_Equiv}, these are dual equivalence graphs on their underlying words (permutations). 
    \item $\DE_{\lambda}(\PCW_{n,k}^\mmax)$ denotes the subset of
$\DE(\PCW_{n,k}^\mmax)$ consisting of those dual equivalence graphs
whose nodes lie in irreducible crystals of highest weight $\lambda$.
    \item The abstract underlying graph of any element of $\DE_{\lambda}(\PCW_{n,k}^\mmax)$ will be called \emph{the} dual equivalence graph of shape $\lambda$, denoted $\mathcal{S}^\lambda$. 
\end{enumerate}
    
\end{definition}

Using Proposition~\ref{prop f_i to dual} and
Theorem~\ref{theorem biword dual Knuth}, we establish the following relations among the classes of dual equivalence graphs:

\begin{corollary} \label{cor DE types}
\hskip .2in
    \begin{enumerate}
        \item Define a map ${\mathcal D}: [n]^{\otimes k} \rightarrow S_k$ by mapping a word $\rho \in [n]^{\otimes k}$ to the underlying permutation ${\mathcal D}(\rho)=\widehat{\kappa}_\rho$ of the parsed maximal content word $\widehat{\kappa}_{|\rho|}$. Then ${\mathcal D}$ maps a Knuth equivalence graph $\Gamma$ in
$[n]^{\otimes k}$ onto a dual equivalence graph ${\mathcal D}(\Gamma)$ in $\DE(S_k)$ that is graph isomorphic to $\Gamma$.
    \item Let ${\mathcal C}_{\lambda}$ be the collection of all irreducible crystals in $[n]^{\otimes k}$ of highest weight $\lambda$. Then the Knuth equivalence graphs partition the nodes of the crystals in ${\mathcal C}_{\lambda}$ into disjoint isomorphic graphs, each of which is isomorphic to the dual equivalence graph $\mathcal{S}^\lambda$.

    \item The map
\[
{\mathcal D}:\DE(\PCW_{n,k}^{\max})\longrightarrow\DE(S_k),
\]
obtained by mapping each parsed maximal content word to its underlying
permutation, is a surjection. Define
\[
\DE_{\lambda}(S_k)
:=
{\mathcal D}\bigl(\DE_{\lambda}(\PCW_{n,k}^{\max})\bigr).
\]
Thus, the graphs in $\DE_{\lambda}(S_k)$ are precisely the graphs
obtained from those in $\DE_{\lambda}(\PCW_{n,k}^{\max})$ by removing
the parsings, and
\[
\DE(S_k)=\coprod_{\lambda}\DE_{\lambda}(S_k).
\]
    
    \item Applying the same non-$\NULL$ crystal operator to the nodes in a dual equivalence graph in
$\DE_{\lambda}(\PCW_{n,k}^{\max})$ (which, when viewed as nodes in isomorphic irreducible crystals, are plactically equivalent),  produces an isomorphic, yet distinct, dual
equivalence graph within $\DE_{\lambda}(\PCW_{n,k}^{\max})$.

\item Any two graphs $S,S'\in\DE_{\lambda}(S_k)$ can be related by first placing a common valid parsing on the nodes of $S$, viewing the resulting nodes as parsed maximal content words, and then applying a sequence of crystal operators simultaneously to those nodes to obtain $S'$.
    \end{enumerate}
\end{corollary}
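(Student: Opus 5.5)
We prove the five parts in order; each rests on Theorem~\ref{theorem biword dual Knuth}, Proposition~\ref{prop f_i to dual}, or both.

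\textbf{Part (1).} For maximal biwords, $\mathcal D(\rho)=\widehat{\kappa}_\rho$ is injective, since dualizing again recovers $\rho$. Theorem~\ref{theorem biword dual Knuth} gives the edge statement: $K_i^{\pm}\rho=\rho'$ if and only if $D_i^{\pm}\widehat{\kappa}_\rho=\widehat{\kappa}_{\rho'}$. So $\mathcal D$ sends edges of a Knuth equivalence graph $\Gamma$ exactly to edges among the images. The image $\mathcal D(\Gamma)$ is a full dual equivalence class, by the same theorem read in reverse: any permutation dual equivalent to some $\widehat{\kappa}_\rho$ is $\widehat{\kappa}_{\rho'}$ for some $\rho'$ Knuth equivalent to $\rho$. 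Hence $\mathcal D$ is a graph isomorphism onto a graph in $\DE(S_k)$.

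\textbf{Part (2).} By Proposition~\ref{prop: Knuth=plactic}, Knuth equivalent words are plactically equivalent. Plactic equivalence preserves the highest weight, so each Knuth class inside $\mathcal C_\lambda$ meets each crystal in $\mathcal C_\lambda$ in the same position. The Knuth classes therefore partition the nodes of $\mathcal C_\lambda$. To see that they are all isomorphic to $\mathcal S^\lambda$, apply part (1): under $\mathcal D$, each Knuth class becomes a dual equivalence graph of parsed maximal content words lying in crystals of highest weight $\lambda$, which is an element of $\DE_\lambda(\PCW^{\max}_{n,k})$. The remaining point is that all such graphs are isomorphic to one another; this is where part (4) is needed.

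\textbf{Part (4).} This is the main step, and we prove it before finishing (2). Take a graph $S\in\DE_\lambda(\PCW^{\max}_{n,k})$ and a crystal operator $f_j$ that is non-$\NULL$ on one node. All nodes of $S$ are plactically equivalent: they are images of a Knuth class under $\mathcal D$, and Knuth equivalent words are plactically equivalent. Hence $f_j$ is non-$\NULL$ on every node of $S$. Proposition~\ref{prop f_i to dual} shows that $f_j$ sends elementarily dual equivalent nodes to elementarily dual equivalent nodes. One subtlety is that a $D_i^+$ edge may become a $D_i^-$ edge, but the index $i$ is preserved, which is what the isomorphism type requires. The map $f_j$ is injective, and $e_j$ gives the inverse map on edges by the symmetric argument. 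Hence $f_j(S)$ is a graph isomorphic to $S$, and it is distinct from $S$ because weights change. Now, for any two Knuth classes in $\mathcal C_\lambda$, pick plactically equivalent representatives lying in a common crystal; a sequence of crystal operators connects them. This gives the isomorphism needed in (2), and also shows that $\mathcal S^\lambda$ is well defined.

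\textbf{Parts (3) and (5).} For surjectivity in (3), every permutation $\pi$ arises from some word. Take $\rho=\pi^{-1}$ read as a word: it lies in $[k]^{\otimes k}\subseteq[n]^{\otimes k}$ since $n\ge k$, and dualizing the two-rowed array of $\pi^{-1}$ gives $\pi$. By part (1), dual equivalence classes are images of Knuth classes, so $\mathcal D$ is surjective on graphs. The disjoint decomposition $\DE(S_k)=\coprod_\lambda \DE_\lambda(S_k)$ follows because a Knuth class lies in crystals of a single highest weight.

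For (5), use Proposition~\ref{prop f_i to dual}(1): a valid parsing of one node of $S$ is valid on all of $S$, because dual equivalent permutations have the same ascent set. Choose such a common parsing realizing one node of $S$ as a parsed maximal content word. Then connect $S$ to $S'$ through the crystals using the plactic-equivalence argument from part (4).

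The hardest step is controlling $D_i^{\pm}$ edges under $f_j$ uniformly across the whole graph. We rely on the case analysis of Proposition~\ref{prop f_i to dual} and on the observation that plactic equivalence makes $\NULL$-ness uniform.
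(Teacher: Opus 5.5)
Your proposal follows the paper's route: Theorem~\ref{theorem biword dual Knuth} for (1), plactic equivalence for (2), Proposition~\ref{prop f_i to dual} for (4), and common parsings for (3) and (5). Several steps are more careful than the paper's. However, there is a genuine gap, and it starts with your first sentence of (1).

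\textbf{The gap: $\mathcal D$ is not injective.} Dualizing $\binom{\rho^*}{\widehat{\kappa}_\rho}$ recovers $\rho$ only if you also know $\rho^*$, and $\mathcal D$ discards it. For example, $11$, $21$, $22\in[2]^{\otimes 2}$ all have $\mathcal D(\rho)=21$. Injectivity does hold on words of a fixed weight, hence on a Knuth class, which is all (1) needs. (To build $\rho'$ from $\pi'=D_i^{\pm}\widehat{\kappa}_\rho$, cite Lemma~\ref{Lemma: ordering prop dual equiv words} so that $\binom{\rho^*}{\pi'}$ is still a biword.)

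The damage is in (3). A graph $G\in\DE(S_k)$ is in general the image of several Knuth classes, one for each common valid parsing of its nodes. So ``a Knuth class lies in crystals of a single highest weight'' does not make the sets $\DE_\lambda(S_k)$ disjoint. You need that all parsed lifts of a permutation lie in crystals of the same highest weight; the paper states this as ``the shape $\lambda$ associated to a graph is independent of the chosen valid parsing.'' Your (5) needs the same fact. The common parsing you place on $S$ is arbitrary, yet the lifted graph must lie in $\DE_\lambda(\PCW^{\max}_{n,k})$ before you move it by crystal operators. Alternatively, take the parsing from a lift witnessing $S\in\DE_\lambda(S_k)$.

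\textbf{One way to close it.} The dual of the maximal biword lists the positions of the $1$'s of $\rho$ from right to left, then those of the $2$'s, and so on. This is exactly the labelling order in Definition~\ref{def:standardization-words-ssyt}, so $\mathcal D(\rho)=\std(\rho)^{-1}$. Hence the fibre of $\mathcal D$ over $\pi$ is the set of words standardizing to $\pi^{-1}$. By Proposition~\ref{prop:quasicrystals_are_connected_subgraphs} (with $n\ge k$), this is a connected quasicrystal, so it lies in one irreducible crystal (equivalently, use Theorem~\ref{thm:quasi constant content}). Thus $\lambda$ depends only on $G$, which gives disjointness in (3) and justifies the lift in (5).

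\textbf{A smaller point in (2).} Proposition~\ref{prop: Knuth=plactic} gives only Knuth $\Rightarrow$ plactic. That shows a Knuth class occupies a single position, but not that it meets \emph{every} crystal of $\mathcal C_\lambda$. For that you need the converse, plactic $\Rightarrow$ Knuth, which is standard (see \cite[Ch.~8]{BS}) and is also what the paper's own proof of (2) uses. Your last step in (4) rests on it: you choose representatives of two Knuth classes in a common crystal. So state the converse explicitly. Also, ``plactically equivalent representatives'' there should just read ``representatives''.
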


\begin{proof}
For (1), Theorem~\ref{theorem biword dual Knuth} shows that an elementary Knuth equivalence between words $\rho$ and $\rho'$ corresponds precisely to an elementary dual equivalence between the underlying permutations ${\mathcal D}(\rho)=\widehat{\kappa}_\rho$ and ${\mathcal D}(\rho')=\widehat{\kappa}_{\rho'}$. Moreover, distinct words in a fixed Knuth equivalence graph give distinct underlying permutations of their associated parsed maximal content words. Hence ${\mathcal D}$ restricts to a graph isomorphism from each Knuth equivalence graph $\Gamma$ onto the dual equivalence graph ${\mathcal D}(\Gamma)$ in $\DE(S_k)$.

For (2), words occupying corresponding positions in isomorphic irreducible crystals are plactically equivalent, and hence lie in the same Knuth equivalence graph. Conversely, Knuth equivalent words are plactically equivalent and therefore occupy corresponding positions in isomorphic irreducible crystals. Thus the Knuth equivalence graphs partition the nodes of the crystals in ${\mathcal C}_\lambda$ transversely to the crystal components. By (1), each of these graphs is graph isomorphic to a dual equivalence graph on permutations, and since the crystal components have highest weight $\lambda$, each has abstract underlying graph $\mathcal S^\lambda$. 

For (3), every permutation admits its minimal parsing, so the map ${\mathcal D}$ that removes parsings is onto. By Proposition~\ref{Prop: dual equiv - valid parsings}, dual equivalent permutations have the same valid parsings, so removing or restoring a common parsing preserves the dual equivalence graph. In particular, the shape $\lambda$ associated to a graph is independent of the chosen valid parsing. It follows that the sets
\[
\DE_\lambda(S_k)
=
{\mathcal D}\bigl(\DE_\lambda(\PCW_{n,k}^{\max})\bigr)
\]
are well defined and partition $\DE(S_k)$, giving
\[
\DE(S_k)=\coprod_\lambda\DE_\lambda(S_k).
\]

For (4), suppose that two nodes of a graph in $\DE_\lambda(\PCW_{n,k}^{\max})$ are related by an elementary dual equivalence $D_i^+$ or $D_i^-$. Proposition~\ref{prop f_i to dual} shows that, after applying the same crystal operator to these nodes, provided it is not $\NULL$, their underlying permutations remain elementarily dual equivalent. The type of elementary dual equivalence may change from $D_i^+$ to $D_i^-$, or conversely, but the corresponding edge is preserved. Applying the crystal operator simultaneously to all plactically equivalent nodes therefore carries the entire dual equivalence graph to a graph-isomorphic dual equivalence graph. Since a (not $\NULL$) crystal operator moves these nodes to distinct positions in their respective crystal components, the resulting dual equivalence graph is distinct from the original one and remains in $\DE_\lambda(\PCW_{n,k}^{\max})$.

For (5), let $S,S'\in\DE_\lambda(S_k)$, and choose a common valid parsing of the nodes of $S$. By Proposition~\ref{Prop: dual equiv - valid parsings}, this parsing is valid on every node of $S$, so the resulting parsed maximal content words form a graph in $\DE_\lambda(\PCW_{n,k}^{\max})$. Since $S$ and $S'$ have the same shape $\lambda$, choose a node of $S$ and the corresponding node of $S'$ under the graph isomorphism between them. The parsed maximal content word associated to the first node and a parsed maximal content word having the second as its underlying permutation occupy positions in irreducible crystals of the same highest weight $\lambda$. Since these crystals are isomorphic and connected, a sequence of crystal operators carries the first position to the second. Applying this sequence simultaneously to all nodes of the lifted graph, and using (4) at each step, produces a sequence of graph-isomorphic dual equivalence graphs. The underlying permutations of the final graph are precisely the nodes of $S'$. Thus $S'$ is obtained from $S$ by first placing a common valid parsing on the nodes of $S$ and then applying a sequence of crystal operators simultaneously to those nodes.
\end{proof}

\begin{remark}
Although we have distinguished the elementary dual equivalence operators $D_i^+$ and $D_i^-$, these are typically identified as a single operator $D_i$. A crystal operator may interchange the $+$ and $-$ forms, but after this identification, the two structures commute in the sense that
\[
D_i f_j(\widehat{\kappa}_{|\rho|})
=
f_j(D_i\widehat{\kappa}_{|\rho|}),
\]
whenever the operators involved are defined, and similarly for $e_j$. Equivalently, on words in $[n]^{\otimes k}$, crystal operators preserve the Knuth equivalence relations corresponding to these elementary dual equivalences. We return to the significance of this compatibility in our discussion of Schur-Weyl duality.
\end{remark}

\bigskip
Consider our previous example of a dual equivalence graph of SYT of
shape $(3,1,1)$, but now realize the same dual equivalence graph on the
parsed reading words of those SYT:

\begin{center}
\begin{tikzpicture}[  xscale=1.5,
    node/.style={inner sep=2pt},
    edge/.style={thick},
    edgelabel/.style={font=\small, inner sep=1pt}
]
\node[node] (A) at (0,0) {$321|4|5$};
\node[node] (B) at (2,0) {$421|3|5$};
\node[node] (C) at (4,1.3) {$521|3|4$};
\node[node] (D) at (4,-1.3) {$431|2|5$};
\node[node] (E) at (6,0) {$531|2|4$};
\node[node] (F) at (8,0) {$541|2|3$};

\draw[edge] (A) -- node[edgelabel,above] {$D_{3}^{+}$} (B);
\draw[edge] (B) -- node[edgelabel,above] {$D_{4}^{+}$}(C);
\draw[edge] (C) -- node[edgelabel,above] {$D_{2}^{+}$}(E);
\draw[edge] (E) -- node[edgelabel,above] {$D_{4}^{+}$}(D);
\draw[edge] (D) -- node[edgelabel,above] {$D_{2}^{+}$}(B);
\draw[edge] (E) -- node[edgelabel,above] {$D_{3}^{+}$}(F);

\end{tikzpicture}
\end{center}
When we view the reading words of the given SYT as parsed content words, we can apply $f_1$ to each and obtain a distinct dual equivalence graph,
in which two of the previous $D_{2}^{+}$ elementary dual equivalences have become $D_{2}^{-}$ (in bold):

\begin{center}
\begin{tikzpicture}[  xscale=1.5,
    node/.style={inner sep=2pt},
    edge/.style={thick},
    edgelabel/.style={font=\small, inner sep=1pt}
]
\node[node] (A) at (0,0) {$32|41|5$};
\node[node] (B) at (2,0) {$42|31|5$};
\node[node] (C) at (4,1.3) {$52|31|4$};
\node[node] (D) at (4,-1.3) {$41|32|5$};
\node[node] (E) at (6,0) {$51|32|4$};
\node[node] (F) at (8,0) {$51|42|3$};

\draw[edge] (A) -- node[edgelabel,above] {$D_{3}^{+}$} (B);
\draw[edge] (B) -- node[edgelabel,above] {$D_{4}^{+}$}(C);
\draw[edge] (C) -- node[edgelabel,above] {$\mathbf D_{2}^{-}$}(E);
\draw[edge] (E) -- node[edgelabel,above] {$ D_{4}^{+}$}(D);
\draw[edge] (D) -- node[edgelabel,above] {$\mathbf D_{2}^{-}$}(B);
\draw[edge] (E) -- node[edgelabel,above] {$D_{3}^{+}$}(F);

\end{tikzpicture}
\end{center}

In other words, while we can use dual equivalence relations on permutations to define edges in dual equivalence graphs, we can also apply a common parsing to each node (by virtue of Proposition~\ref{Prop: dual equiv - valid parsings}) and view permutations as parsed maximal content words. We can then use crystal operators to define maps between isomorphic, yet distinct, dual equivalence graphs. This gives an intrinsic way to associate a shape $\lambda$ to every dual equivalence graph in $S_k$: choose any
permutation in the graph and any valid parsing of that permutation, regard the resulting parsed maximal content word as a node in a crystal, and let $\lambda$ be the highest weight of that crystal. Equivalently, $\lambda$ is the shape for which the graph is graph isomorphic to the dual equivalence graph on the reading words of SYT
of shape $\lambda$. Thus
\[
\DE(S_k)=\coprod_{\lambda}\DE_{\lambda}(S_k),
\]
where $\DE_{\lambda}(S_k)$ consists precisely of the dual equivalence graphs of shape $\lambda$. Roberts (see~\cite{R13}) identified isomorphism classes of dual equivalence graphs by analyzing the local definitions of such graphs given by Assaf~\cite{Assaf15}. Here, we identify the shape of each dual equivalence graph and define explicit isomorphisms between dual equivalence graphs of the same shape intrinsically, through their connection to crystal structures.

The intrinsic identification of the shapes of dual equivalence graphs through crystal structure, together with the explicit crystal-theoretic maps between distinct dual equivalence graphs of the same shape, appears to be new.

The parsed content word model allows us to make sense of applying dual equivalence operators to the parsed maximal content word
associated to any node within a crystal. Furthermore, the parsed content words model makes it possible to interpret dual equivalence operators as moving between different irreducible crystals. Previously, the dual equivalence operators used by Assaf \cite{Assaf15} were applied to reading words of SYT. Assaf was able to find explicit sequences of crystal operators connecting the distinguished standard nodes of different quasicrystals within a single irreducible crystal; see
\cite[Theorem~4.2]{Assaf08fpsac-crystalops}.

We state the following straightforward observation to underscore its connection to the representation theory of $S_k$ and (in the next section) its role in Schur-Weyl duality:

\begin{corollary}
The order of the dual equivalence graph $\mathcal{S}^{\lambda}$ is equal to the number of SYT of shape $\lambda$, and hence equal to the dimension of $S^{\lambda}$, the irreducible $S_k$ representation of shape $\lambda$.
\end{corollary}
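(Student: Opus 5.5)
The plan is to reduce the count of nodes of $\mathcal{S}^\lambda$ to a count of standard Young tableaux, using the Knuth/dual equivalence dictionary of Theorem~\ref{theorem biword dual Knuth} and Corollary~\ref{cor DE types}. By definition, $\mathcal{S}^\lambda$ is the abstract graph underlying any element of $\DE_\lambda(\PCW^{\max}_{n,k})$. Corollary~\ref{cor DE types}(2) tells us that each such graph is isomorphic to a Knuth equivalence graph $\Gamma$ whose nodes lie in the crystals $\mathcal{C}_\lambda$. So the task is to count the nodes of a single such $\Gamma$.

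First, I would fix a convenient representative. The nodes of a Knuth equivalence class are exactly the words occupying one fixed position in the various irreducible crystals of highest weight $\lambda$; this is the transversality noted in the proof of Corollary~\ref{cor DE types}(2), combined with Proposition~\ref{prop: Knuth=plactic}. Hence $|\Gamma|$ is the number of irreducible crystals of highest weight $\lambda$ in $[n]^{\otimes k}$, and the choice of position does not matter. I would take the class of highest weight words. Each irreducible crystal of highest weight $\lambda$ has exactly one highest weight node, and in $[n]^{\otimes k}$ these nodes are the Yamanouchi $\lambda$-words under our Kashiwara convention, as recalled before Proposition~\ref{prop:Yamanouchi criterion}. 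So $|\Gamma|$ equals the number of Yamanouchi $\lambda$-words.

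Second, I would invoke the final corollary of Section~\ref{subsection-Distinguished Crystal on Parsed Words}, which gives a bijection between Yamanouchi $\lambda$-words and SYT of shape $\lambda$. Alternatively, one can use the classical bijection: record in row $i$ the positions of the letters $i$. Either way, $|\mathcal{S}^\lambda|$ equals $f^\lambda$, the number of SYT of shape $\lambda$. The equality with $\dim S^\lambda$ is then the standard fact that Specht module polytabloids indexed by SYT form a basis.

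The main obstacle I expect is justifying carefully that a Knuth class meets each irreducible crystal of highest weight $\lambda$ exactly once, so that counting the class amounts to counting components. Plactic equivalence gives at most one node per component, since words in the same class sit at the same position. For surjectivity, I would argue in the opposite direction: plactically equivalent words are Knuth equivalent because they share an RSK insertion tableau, citing \cite{BS} in the same way as the proof of Corollary~\ref{cor DE types}(2). A fallback is to count directly with $\mathcal{D}$: the highest weight permutations of shape $\lambda$ form one dual equivalence class, namely the one corresponding to the reading words of SYT, as in the example for $(3,1,1)$ and \cite{R13}.
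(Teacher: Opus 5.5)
Your proof is correct, and it takes a somewhat different route from the paper's. The paper gives no separate argument. It treats the statement as immediate from the remark just before it: every graph in $\DE_\lambda(\PCW_{n,k}^{\max})$ is isomorphic to the dual equivalence graph on the reading words of SYT of shape $\lambda$, an identification drawn from \cite{Assaf15} and \cite{R13}. That graph's vertex set is the set of SYT of shape $\lambda$ by construction, and the count follows.

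You reach the same graph from the crystal side. The map $\mathcal D$ sends the Knuth class of Yamanouchi $\lambda$-words exactly onto those parsed reading words; for instance $11123\mapsto 321|4|5$. So your position-recording bijection is the paper's identification made explicit.

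What your version adds is the transversality count. A Knuth class inside $\mathcal C_\lambda$ meets each irreducible component exactly once, so $|\mathcal S^\lambda|$ also equals the number of irreducible crystals of highest weight $\lambda$ in $[n]^{\otimes k}$. This is the combinatorial form of ``the multiplicity of $B_\lambda$ is $\dim S^\lambda$'' in the Schur--Weyl dictionary.

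The cost is the converse of Proposition~\ref{prop: Knuth=plactic}: plactic equivalence implies Knuth equivalence, via a common insertion tableau in the paper's convention. You rightly single this out as the one external input. The paper also uses it tacitly in Corollary~\ref{cor DE types}(2).

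Two small points:
\begin{enumerate}
\item Use the classical position-recording bijection rather than the final corollary of that section, which the paper states without proof.
\item Identifying $\mathcal S^\lambda$ with a Knuth class presupposes that dual equivalence edges on parsed maximal content words keep the parsing fixed, as in Proposition~\ref{Prop: dual equiv - valid parsings}. Otherwise, for example, $2|31$ and $1|3|2$ would be joined, although they come from the words $212$ and $132$, which have different weights.
\end{enumerate}
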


\subsection{Schur-Weyl Duality}

Our combinatorial view of Schur-Weyl duality involves decomposing $[n]^{\otimes k}$, for each partition $\lambda$, into two transverse collections of connected graphs: irreducible crystal graphs and dual equivalence graphs. This is most easily accomplished by representing words $\rho\in[n]^{\otimes k}$ by parsed maximal content words. Recall that we embed a word $\rho$ in a parsed maximal biword and then compute the row word of its dual:
\[
\rho \longrightarrow {\kappa \choose \rho}
\longrightarrow {\rho^{*} \choose \widehat{\kappa}_{\rho}}
\longrightarrow \widehat{\kappa}_{|\rho|}, \quad \hbox{where $\kappa=1234\cdots k$.}
\]

\begin{definition}
Given a partition $\lambda$, let
$\PCW_{n,k}^{\text{max},\lambda}\subset
\PCW_{n,k}^{\max}$ denote the set of parsed maximal content words $\widehat{\kappa}_{|\rho|}$ associated to words $\rho\in[n]^{\otimes k}$ lying in irreducible crystal graphs of highest weight $\lambda$.
\end{definition}
The set $\PCW_{n,k}^{\text{max},\lambda}$ can be partitioned into a finite collection of disjoint, isomorphic irreducible crystal graphs of highest weight $\lambda$. By Corollary~\ref{cor DE types}, its nodes can also be partitioned into disjoint dual equivalence graphs, each
graph isomorphic to $\mathcal S^\lambda$. Thus
$\PCW_{n,k}^{\text{max},\lambda}$ carries two transverse graph structures, both of which will be used in the combinatorial Schur-Weyl decomposition.

The idea is portrayed in the diagrams below. Suppose the vertical graphs denote isomorphic irreducible crystals, connecting nodes (parsed maximal content words) $A_i$ to $B_i$ to $C_i$, etc., while the light edges denote edges of dual equivalence graphs. The correspondence between nodes determined by these horizontal dual equivalence graphs gives crystal isomorphisms between the vertical crystals.

\begin{center}
       \scalebox{0.7}{\begin{tikzpicture}[
    x=1.8cm,
    y=1.5cm,
    every node/.style={font=\large}
]

\node (A1) at (0,2) {$A_1$};
\node (A2) at (1,2) {$A_2$};

\node (A3) at (2.3,2.35) {$A_3$};
\node (A4) at (2.7,1.65) {$A_4$};

\node (A5) at (4,2) {$A_5$};
\node (A6) at (5,2) {$A_6$};

\node (B1) at (-0.5,0) {$B_1$};
\node (B2) at (0.5,0) {$B_2$};

\node (B3) at (2.0,0.35) {$B_3$};
\node (B4) at (2.4,-0.45) {$B_4$};

\node (B5) at (3.5,0) {$B_5$};
\node (B6) at (4.5,0) {$B_6$};

\node (C1) at (0,-2) {$C_1$};
\node (C2) at (1,-2) {$C_2$};

\node (C3) at (2.3,-1.65) {$C_3$};
\node (C4) at (2.7,-2.45) {$C_4$};

\node (C5) at (4,-2) {$C_5$};
\node (C6) at (5,-2) {$C_6$};


\draw[gray] (A1) -- node[above,font=\small] {$D_i$} (A2);

\draw[gray] (A2) -- node[above,font=\small] {$D_j$} (A3);
\draw[gray] (A2) -- node[below,font=\small] {$D_i$} (A4);

\draw[gray] (A3) -- node[above,font=\small] {$D_j$} (A5);
\draw[gray] (A4) -- node[below,font=\small] {$D_i$} (A5);

\draw[gray] (A5) -- node[above,font=\small] {$D_j$} (A6);


\draw[gray] (B1) -- node[above,font=\small] {$D_i$} (B2);

\draw[gray] (B2) -- node[above,font=\small] {$D_j$} (B3);
\draw[gray] (B2) -- node[below,font=\small] {$D_i$} (B4);

\draw[gray] (B3) -- node[above,font=\small] {$D_j$} (B5);
\draw[gray] (B4) -- node[below,font=\small] {$D_i$} (B5);

\draw[gray] (B5) -- node[above,font=\small] {$D_j$} (B6);


\draw[gray] (C1) -- node[above,font=\small] {$D_i$} (C2);

\draw[gray] (C2) -- node[above,font=\small] {$D_j$} (C3);
\draw[gray] (C2) -- node[below,font=\small] {$D_i$} (C4);

\draw[gray] (C3) -- node[above,font=\small] {$D_j$} (C5);
\draw[gray] (C4) -- node[below,font=\small] {$D_i$} (C5);

\draw[gray] (C5) -- node[above,font=\small] {$D_j$} (C6);


\draw[thick][->] (A1) -- (B1);
\draw[thick][->] (A2) -- (B2);
\draw[thick][->] (A3) -- (B3);
\draw[thick][->] (A4) -- (B4);
\draw[thick][->] (A5) -- (B5);
\draw[thick][->] (A6) -- (B6);

\draw[thick][->] (B1) -- (C1);
\draw[thick][->] (B2) -- (C2);
\draw[thick][->] (B3) -- (C3);
\draw[thick][->] (B4) -- (C4);
\draw[thick][->] (B5) -- (C5);
\draw[thick][->] (B6) -- (C6);

\node[rotate=-63] at (0.22,-2.45) {$\cdots$};
\node[rotate=-63] at (1.22,-2.45) {$\cdots$};

\node[rotate=-63] at (2.52,-2.08) {$\cdots$};
\node[rotate=-63] at (2.92,-2.88) {$\cdots$};

\node[rotate=-63] at (4.22,-2.45) {$\cdots$};
\node[rotate=-63] at (5.22,-2.45) {$\cdots$};

\end{tikzpicture}}
\end{center}

However, from another perspective, we can regard the horizontal graphs as distinct, isomorphic dual equivalence graphs (the dark edges below), consisting successively of all the $A_i$'s, all the $B_i$'s, etc. Crystal operators, applied simultaneously to corresponding nodes, carry one such dual equivalence graph to another, as in Corollary~\ref{cor DE types} (4).

\begin{center}
       \scalebox{0.7}{\begin{tikzpicture}[
    x=1.8cm,line cap=round,
line join=round,
    y=1.5cm,
    every node/.style={font=\large}
]
line cap=round,
line join=round
\node (A1) at (0,2) {$A_1$};
\node (A2) at (1,2) {$A_2$};

\node (A3) at (2.3,2.35) {$A_3$};
\node (A4) at (2.7,1.65) {$A_4$};

\node (A5) at (4,2) {$A_5$};
\node (A6) at (5,2) {$A_6$};

\node (B1) at (-0.5,0) {$B_1$};
\node (B2) at (0.5,0) {$B_2$};

\node (B3) at (2.0,0.35) {$B_3$};
\node (B4) at (2.4,-0.45) {$B_4$};

\node (B5) at (3.5,0) {$B_5$};
\node (B6) at (4.5,0) {$B_6$};

\node (C1) at (0,-2) {$C_1$};
\node (C2) at (1,-2) {$C_2$};

\node (C3) at (2.3,-1.65) {$C_3$};
\node (C4) at (2.7,-2.45) {$C_4$};

\node (C5) at (4,-2) {$C_5$};
\node (C6) at (5,-2) {$C_6$};


\draw[thick] (A1) -- (A2);

\draw[thick] (A2) -- (A3);
\draw[thick] (A2) -- (A4);

\draw[thick] (A3) -- (A5);
\draw[thick] (A4) -- (A5);

\draw[thick] (A5) -- (A6);


\draw[thick] (B1) -- (B2);

\draw[thick] (B2) -- (B3);
\draw[thick] (B2) -- (B4);

\draw[thick] (B3) -- (B5);
\draw[thick] (B4) -- (B5);

\draw[thick] (B5) -- (B6);


\draw[thick] (C1) -- (C2);

\draw[thick] (C2) -- (C3);
\draw[thick] (C2) -- (C4);

\draw[thick] (C3) -- (C5);
\draw[thick] (C4) -- (C5);

\draw[thick] (C5) -- (C6);

node[left,font=\small]
\draw[gray][->] (A1) -- node[left,font=\small] {$f_i$} (B1);
\draw (A2)[gray][->] -- node[left,font=\small] {$f_i$} (B2);
\draw (A3)[gray][->] -- node[left,font=\small] {$f_i$} (B3);
\draw (A4)[gray][->] -- node[right,font=\small] {$f_i$} (B4);
\draw (A5)[gray][->] -- node[left,font=\small] {$f_i$} (B5);
\draw (A6)[gray][->] -- node[right,font=\small] {$f_i$} (B6);

\draw (B1)[gray][->] -- node[left,font=\small] {$f_j$} (C1);
\draw (B2)[gray][->] -- node[left,font=\small] {$f_j$} (C2);
\draw (B3)[gray][->] -- node[left,font=\small] {$f_j$} (C3);
\draw (B4)[gray][->] -- node[right,font=\small] {$f_j$} (C4);
\draw (B5)[gray][->] -- node[left,font=\small] {$f_j$} (C5);
\draw (B6)[gray][->] -- node[right,font=\small] {$f_j$} (C6);



\node[rotate=-63] at (0.22,-2.45) {$\cdots$};
\node[rotate=-63] at (1.22,-2.45) {$\cdots$};

\node[rotate=-63] at (2.52,-2.08) {$\cdots$};
\node[rotate=-63] at (2.92,-2.88) {$\cdots$};

\node[rotate=-63] at (4.22,-2.45) {$\cdots$};
\node[rotate=-63] at (5.22,-2.45) {$\cdots$};

\end{tikzpicture}}
\end{center}

If we consider these two graph structures simultaneously, the diagrams suggest that $\PCW_{n,k}^{\text{max},\lambda}$ has the structure of a Cartesian product of an irreducible crystal graph of highest weight $\lambda$ with a dual equivalence graph of shape $\lambda$. We now make this precise.

\begin{theorem}\label{Thm:S-W Duality} Let ${\mathcal G}_{\lambda}$ be the graph whose nodes are the parsed maximal content words in $\PCW_{n,k}^{\text{max},\lambda}$, with two nodes $\omega$ and $\omega'$ connected if either they are connected by a single crystal operator $f_i$ or $e_i$, or they are elementarily dual equivalent. Thus the first type of edge lies within an irreducible crystal graph, while the second connects plactically equivalent nodes in isomorphic irreducible crystals. Then ${\mathcal G}_{\lambda}$ is graph isomorphic to the Cartesian product of $\mathcal B_{\lambda}$, the abstract irreducible crystal graph of highest weight $\lambda$, and $\mathcal S^{\lambda}$, the dual equivalence graph of shape $\lambda$. \end{theorem}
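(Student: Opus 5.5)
We will build an explicit bijection $\Psi:\PCW_{n,k}^{\text{max},\lambda}\to \mathcal B_\lambda\times\mathcal S^\lambda$ and check that it carries the two edge types of ${\mathcal G}_\lambda$ to the two edge types of the Cartesian product. First fix one irreducible crystal $C_0\subseteq \PCW_{n,k}^{\text{max},\lambda}$ to serve as the model of $\mathcal B_\lambda$. Then fix one dual equivalence graph $S_0\in \DE_\lambda(\PCW_{n,k}^{\max})$ to serve as the model of $\mathcal S^\lambda$; we may take $S_0$ to be the dual equivalence graph through the highest weight node of $C_0$.

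For a node $\omega$, let $C(\omega)$ be its irreducible crystal and $S(\omega)$ its dual equivalence graph. By Corollary~\ref{cor DE types}(2), transported through $\mathcal D$ and Theorem~\ref{theorem biword dual Knuth}, $S(\omega)$ corresponds to a Knuth class. It therefore meets each crystal of highest weight $\lambda$ in exactly one node, namely the plactically equivalent one (Proposition~\ref{prop: Knuth=plactic}). So $S(\omega)\cap C_0=\{b(\omega)\}$ is a single node, and $C(\omega)\cap S_0=\{s(\omega)\}$ is a single node. Set $\Psi(\omega)=(b(\omega),s(\omega))$.

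Injectivity and surjectivity come from the transversality. Given $(b,s)$, the node $\omega$ with $\Psi(\omega)=(b,s)$ is the unique node of $C(s)$ plactically equivalent to $b$. It exists because the plactic isomorphism $C_0\to C(s)$ is a crystal isomorphism. It is unique because each crystal meets each Knuth class once. As a consistency check, the count gives $|\mathcal B_\lambda|\cdot|\mathcal S^\lambda|$, since the number of crystals of highest weight $\lambda$ equals $|S_0|$.

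Next we check the edges.

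\emph{Crystal edges.} If $\omega'=f_i\omega$, then $C(\omega')=C(\omega)$, so $s(\omega')=s(\omega)$. Plactic equivalence is preserved by crystal operators, so $b(\omega')=f_i b(\omega)$. This is exactly a $\mathcal B_\lambda$-edge in the first coordinate.

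\emph{Dual equivalence edges.} If $\omega'=D_i^\pm\omega$, the two nodes are Knuth-related and hence plactically equivalent, so $S(\omega')=S(\omega)$ and $b(\omega')=b(\omega)$. It remains to show that $s(\omega)$ and $s(\omega')$ are adjacent in $S_0$. Choose a sequence of crystal operators $g$ carrying $\omega$ to $s(\omega)$ inside $C(\omega)$; for instance, go up to the highest weight node and then down. By Corollary~\ref{cor DE types}(4) and Proposition~\ref{prop f_i to dual}, applying $g$ simultaneously to all nodes of $S(\omega)$ yields a dual equivalence graph in which $g\omega$ and $g\omega'$ are still elementarily dual equivalent. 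The image of $\omega'$ lies in $C(\omega')$ at the position of $s(\omega)$ within $C(\omega)$. Since $g\omega=s(\omega)\in S_0$, the image graph is $S(s(\omega))=S_0$, so $g\omega'\in S_0\cap C(\omega')$, i.e.\ $g\omega'=s(\omega')$. Thus $s(\omega)$ and $s(\omega')$ are adjacent in $S_0$, with the same label $D_i$ up to the $\pm$ ambiguity. Conversely, each product edge pulls back to a ${\mathcal G}_\lambda$ edge by running the same arguments in reverse, since $\Psi$ is a bijection and the arguments are symmetric.

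The main obstacle is this last step. We have to be sure that applying the same operator sequence to all of $S(\omega)$ maps it \emph{onto} $S_0$. This requires that crystal operators are either non-$\NULL$ on every node of a dual equivalence graph or on none. That holds because the nodes are plactically equivalent, so they sit in the same position of isomorphic crystals. We must also account for the $D_i^+/D_i^-$ switching, which is handled by treating $D_i$ as a single unlabeled edge type, as in the preceding remark.
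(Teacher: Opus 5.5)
Your proposal is correct and follows essentially the same route as the paper: fix one irreducible crystal and one dual equivalence graph, coordinatize each node by its position within its crystal and by the crystal component containing it, and check that crystal edges change only the first coordinate while dual equivalence edges change only the second. You are more explicit than the paper in checking, via Corollary~\ref{cor DE types}(4), that a dual equivalence edge lands on an actual edge of $S_0$. One caveat: with $S_0$ passing through highest weight nodes, your transporting sequence $g$ consists of raising operators, whereas Proposition~\ref{prop f_i to dual} is proved only for the $f_j$ (the $e_j$ case is only asserted in the subsequent remark); you can avoid this by transporting $S_0$ downward with lowering operators and comparing edge counts, since both graphs are isomorphic to $\mathcal S^\lambda$.
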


\begin{proof} Choose a dual equivalence graph $S$ in $\DE_{\lambda}(\PCW_{n,k}^{\max})$, so that $S$ is graph isomorphic to $\mathcal S^\lambda$, and choose one of the irreducible crystal components $B$ that shares at least one node with $S$, so that $B$ is graph isomorphic to $\mathcal B_\lambda$. By Corollary~\ref{cor DE types}, the nodes of $S$ occupy corresponding, plactically equivalent positions in distinct isomorphic crystal components. Moreover, applying a crystal operator simultaneously to the nodes of $S$, when it is not $\NULL$, carries $S$ to a distinct isomorphic dual equivalence graph, and every dual equivalence graph in $\DE_{\lambda}(\PCW_{n,k}^{\max})$ is obtained in this way. Hence every node of $\PCW_{n,k}^{\text{max},\lambda}$ is uniquely determined by a node of $B$, specifying its position within a crystal component, together with a node of $S$, specifying the crystal component containing it. Under this bijection, a crystal edge changes only the $B$-coordinate, while a dual equivalence edge changes only the $S$-coordinate. Thus ${\mathcal G}_{\lambda}$ is graph isomorphic to the Cartesian product $B\times S$, and hence to $\mathcal B_\lambda\times\mathcal S^\lambda$. \end{proof}

Having established the Cartesian product structure on parsed maximal content words, we now transport this structure to $[n]^{\otimes k}$, giving a combinatorial realization of Schur-Weyl duality directly on words.

\begin{theorem}[Combinatorial Schur-Weyl Duality] Give $[n]^{\otimes k}$ a graph structure by connecting two words $\rho,\rho'$ by an edge if either one is obtained from the other by a single crystal operator, or they are elementarily Knuth equivalent. Then this graph decomposes as a disjoint union, indexed by partitions $\lambda$, in which each component ${\mathcal G}_{\lambda}$ is graph isomorphic to the Cartesian product of the abstract irreducible crystal graph $\mathcal B_{\lambda}$ of highest weight $\lambda$ and the abstract dual equivalence graph $\mathcal S^{\lambda}$ of shape $\lambda$. Thus \[ [n]^{\otimes k}\cong\bigsqcup_{\lambda}\bigl(\mathcal B_{\lambda}\times\mathcal S^{\lambda}\bigr). \] \end{theorem}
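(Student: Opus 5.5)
The plan is to transport Theorem~\ref{Thm:S-W Duality} from parsed maximal content words back to words via the map $\rho \mapsto \widehat{\kappa}_{|\rho|}$, where $\kappa = 12\cdots k$. By Proposition~\ref{prop: words to biwords}, $\rho \mapsto {\kappa \choose \rho}$ is injective and a crystal isomorphism onto its image. The biword ${\kappa\choose\rho}$ is recovered uniquely from its parsed content word, and by \eqref{eq:parsed_words_inherited_fi} the crystal structure on $\PCW_{n,k}$ is inherited from biwords. So the composite
\[
\Psi:[n]^{\otimes k}\to \PCW^{\max}_{n,k},\qquad \rho\mapsto \widehat{\kappa}_{|\rho|},
\]
is a bijection with $\Psi(f_i\rho)=f_i\Psi(\rho)$ whenever either side is non-$\NULL$, and similarly for $e_i$. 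This sends crystal edges in $[n]^{\otimes k}$ exactly onto crystal edges in $\PCW^{\max}_{n,k}$.

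Next I check that Knuth edges correspond to dual equivalence edges. By Theorem~\ref{theorem biword dual Knuth}, $K_i^{\pm}\rho=\rho'$ if and only if $D_i^{\pm}\widehat{\kappa}_\rho=\widehat{\kappa}_{\rho'}$. By Definition~\ref{Def:PCW_Dual_Equiv}, the latter is exactly an elementary dual equivalence edge between $\Psi(\rho)$ and $\Psi(\rho')$. So $\Psi$ is a graph isomorphism from $[n]^{\otimes k}$, with crystal and elementary Knuth edges, onto $\PCW^{\max}_{n,k}$, with crystal and elementary dual equivalence edges. This is Corollary~\ref{cor DE types}(1), now combined with the crystal edges.

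It remains to decompose by $\lambda$. Every word lies in a unique irreducible crystal, of some highest weight $\lambda\vdash k$ with at most $n$ parts, so $\PCW^{\max}_{n,k}=\bigsqcup_\lambda \PCW^{\max,\lambda}_{n,k}$. Crystal edges clearly stay inside a fixed $\lambda$. For Knuth edges, Proposition~\ref{prop: Knuth=plactic} shows that Knuth equivalent words are plactically equivalent, hence lie in crystals of the same highest weight (Corollary~\ref{cor DE types}(2)). So no edge joins different $\lambda$, and each piece is the graph ${\mathcal G}_\lambda$ of Theorem~\ref{Thm:S-W Duality}. Applying that theorem to each piece gives $[n]^{\otimes k}\cong\bigsqcup_\lambda(\mathcal B_\lambda\times\mathcal S^\lambda)$.

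The main point needing care is that the edge sets match exactly. A single edge must not arise from both a crystal operator and a Knuth move in a conflicting way, and no dual equivalence edge may lack a Knuth preimage. Both follow from the ``if and only if'' in Theorem~\ref{theorem biword dual Knuth} and from the injectivity of $\Psi$. I would state this explicitly rather than leave it implicit.
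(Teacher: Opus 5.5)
Your argument is the paper's own proof, step for step: transport along $\rho\mapsto\widehat{\kappa}_{|\rho|}$, match crystal edges through the inherited structure, match Knuth edges through Theorem~\ref{theorem biword dual Knuth}, then apply Theorem~\ref{Thm:S-W Duality}. Your check that no edge joins different $\lambda$ supplies a point the paper leaves implicit. However, the step you flag as needing care is not settled by the reasons you give, and as written it fails; the paper's proof makes the same move.

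Injectivity of $\Psi$ concerns the \emph{parsed} word. The underlying permutation $\widehat{\kappa}_\rho$ is not injective in $\rho$: it is constant on quasicrystals, e.g.\ $\widehat{\kappa}_{11}=\widehat{\kappa}_{21}=21$. Moreover, the converse direction of Theorem~\ref{theorem biword dual Knuth} holds only for $\rho,\rho'$ of the same content, because its proof compares two dual biwords with a common top row $\rho^*$. For instance, $\Psi(121)=31|2$ and $\Psi(213)=2|1|3$ have underlying permutations $312$ and $D_2^{+}(312)=213$. Definition~\ref{Def:PCW_Dual_Equiv}, read as you use it, therefore makes them adjacent, yet $121$ and $213$ have different weights and are not Knuth equivalent. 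In fact, in $\PCW^{\max}_{3,3}$ the node $31|2$ would be adjacent to all four parsings $21|3$, $21||3$, $2|1|3$, $|21|3$ of $213$. The graph with these edges thus has strictly more edges than $\mathcal B_{(2,1)}\times\mathcal S^{(2,1)}$, so it is not that Cartesian product and $\Psi$ is not an isomorphism onto it.

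The repair is short. Dual-equivalence edges must join only parsed maximal content words carrying the \emph{same} parsing; this is what ``connects plactically equivalent nodes'' in Theorem~\ref{Thm:S-W Duality} intends. The parsing of $\Psi(\rho)$ encodes exactly $\rho^*$, and Knuth moves merely permute letters. Hence the correct edge correspondence is
\[ K_i^{\pm}\rho=\rho' \iff \rho^*=(\rho')^* \ \text{and}\ D_i^{\pm}\widehat{\kappa}_{\rho}=\widehat{\kappa}_{\rho'}, \]
which is precisely what the proof of Theorem~\ref{theorem biword dual Knuth} establishes. Once you state the same-content hypothesis explicitly, the edge sets match exactly and the rest of your argument goes through.
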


\begin{proof} The map $\Phi:[n]^{\otimes k}\longrightarrow \PCW_{n,k}^{\max}$, defined by $\Phi(\rho)=\widehat{\kappa}_{|\rho|}$, is a bijection. By Theorem~\ref{thm:f hat crystal op}, it preserves crystal operators, so two words are joined by a crystal edge if and only if their corresponding parsed maximal content words are joined by the same crystal edge. By Theorem~\ref{theorem biword dual Knuth}, two words are elementarily Knuth equivalent if and only if their corresponding parsed maximal content words are elementarily dual equivalent. Hence $\Phi$ is a graph isomorphism from the graph structure defined above on $[n]^{\otimes k}$ to the graph on $\PCW_{n,k}^{\max}$ whose edges are crystal edges and elementary dual equivalence edges. The latter decomposes as the disjoint union of the graphs ${\mathcal G}_{\lambda}$, and by Theorem~\ref{Thm:S-W Duality}, ${\mathcal G}_{\lambda}\cong\mathcal B_{\lambda}\times\mathcal S^{\lambda}$. Therefore \[ [n]^{\otimes k}\cong\bigsqcup_{\lambda}\left(\mathcal B_{\lambda}\times\mathcal S^{\lambda}\right), \] as claimed. \end{proof}
\begin{remark}
The preceding theorem also explains in what sense the two graph structures can be regarded as crystallized versions of the two commuting actions in classical Schur-Weyl duality. Lie algebra operators commute with the action of $S_k$ by permutation of tensor factors in $V^{\otimes k}$. Crystal operators $e_j,f_j$ on $[n]^{\otimes k}$ provide the familiar crystallization of the $\mathfrak{gl}_n$-action: the linear action on $V^{\otimes k}$ is replaced by partial operations on individual words. In a similar, though less literal, sense, elementary Knuth operators provide a crystallized proxy for the $S_k$-action. The action of $K_i^+$, when not $\NULL$, permutes the $i$-th and $(i+1)$-st letters of a word, while $K_i^-$ permutes the $(i-1)$-st and $i$-th letters. Thus these operators retain, in partial combinatorial form, the permutation of tensor factors underlying the $S_k$-action.

Our results identify this structure with the second factor in the combinatorial Schur-Weyl decomposition. Under the bijection taking a word $\rho$ to its associated parsed maximal content word $\widehat{\kappa}_{|\rho|}$, elementary Knuth equivalence becomes elementary dual equivalence of the underlying permutations. Consequently, the connected graphs generated by these crystallized permutation operations are precisely the dual equivalence graphs $\mathcal S^\lambda$ occurring in
\[
[n]^{\otimes k}\cong\bigsqcup_\lambda
\left(\mathcal B_\lambda\times\mathcal S^\lambda\right).
\]

 Thus plactic equivalence, Knuth equivalence, and dual equivalence appear not merely as parallel equivalence relations, but as different realizations of the $S_k$ direction (depicted as the horizontal graphs in the diagram above) that is compatible with the crystal structure in our combinatorial model of Schur-Weyl duality.
\end{remark}

\medskip
    \renewcommand{\arraystretch}{1} 
\subsection{Combinatorial Schur-Weyl Duality and RSK}

Our combinatorial view of Schur-Weyl duality can also be interpreted naturally in terms of the RSK correspondence. The classical RSK algorithm on biwords gives a map
\[
\rho\longmapsto RSK\left({\kappa\choose\rho}\right)=(P,Q),
\]
where $P$ and $Q$ are tableaux of the same shape $\lambda$. Since throughout this section we assume $\kappa=12\cdots k$, the biword is maximal, and $Q$ is necessarily a SYT. Thus, in the present setting, RSK gives explicit coordinates for a word $\rho$: $P$ specifies a node in an irreducible crystal graph of highest weight $\lambda$, while $Q$ specifies a node in the dual equivalence graph of SYT of shape $\lambda$. In this sense, the pair $(P,Q)$ gives coordinates for $\rho$ in the Cartesian product $\mathcal B_\lambda\times\mathcal S^\lambda$. The RSK correspondence appears in diverse combinatorial settings (see \cite{Alexandersson_SymCatalog} for an overview). As shown in \cite[Chapter 8]{BS}, if
\[
RSK\left({\kappa\choose\rho}\right)=(P,Q)
\quad\text{and}\quad
RSK\left({\kappa\choose\rho'}\right)=(P',Q'),
\]
then $P=P'$ implies that the two biwords are plactically equivalent and hence occupy corresponding positions in isomorphic irreducible crystal graphs of highest weight $\lambda$, while $Q=Q'$ implies that the two biwords lie in the same irreducible crystal.

Using the bijection from biwords to ptableaux, these classical results of RSK on biwords can be interpreted naturally in terms of ptableaux. In~\cite{AW2023}, the RSK algorithm is defined as a map $T\mapsto RSK(T)=(PT,T_{\mmax})$, where $PT$ and $T_{\mmax}$ are perforated tableaux:

\begin{theorem}
The algorithm {\emph{RSK}}, defined in \cite{AW2023}, maps a ptableau $T$ to a pair of ptableaux $RSK(T)=(PT,T_{\mmax})$ such that
\begin{enumerate}
\item $PT$ is an element in the distinguished crystal.
\item $PT$ is plactically equivalent to $T$.
\item $T_{\mmax}$ is the highest weight element in the irreducible crystal graph containing $T$.
\end{enumerate}
\label{RSK algorithm}
\end{theorem}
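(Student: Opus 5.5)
The plan is to obtain Theorem~\ref{RSK algorithm} by transporting the classical RSK correspondence on biwords through the bijection ${}_{\BW}\Phi_{\PTab}$ of Lemma~\ref{lemma bijections}. I will not analyze the ptableau algorithm of \cite{AW2023} step by step.

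Let $T$ have biword ${\kappa\choose\rho}$, and let $RSK\binom{\kappa}{\rho}=(P,Q)$ classically, with both tableaux of shape $\lambda$. First, check from the construction in \cite{AW2023} that $PT$ is the ptableau obtained from the insertion tableau $P$ of the row word $\rho$ by the word-condition construction. That is, $PT$ is the ptableau whose dual $\widehat{PT}$ has left-justified form $P$. Likewise, $T_{\mmax}$ should be the ptableau with the same content word $\kappa$ whose row word is the highest weight word in the crystal of $\rho$. I expect this matching of definitions to be the main obstacle. If the insertion in \cite{AW2023} is phrased directly on ptableaux, I would prove the identification by induction on the number of columns of the biword: inserting one more column of ${\kappa\choose\rho}$ should correspond to row-inserting one entry into the ptableau.

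With this identification, item (1) follows from Proposition~\ref{prop_distinguished_ptab}. Its part (3) says that a ptableau lies in the distinguished crystal exactly when its dual is highest weight, i.e.\ left-justifies to an SSYT, and $\widehat{PT}$ left-justifies to $P$. For item (2), take the minimal biword of the reading word of $P$, which is the biword of $PT$. By Schensted's theorem this reading word is Knuth equivalent to $\rho$. Proposition~\ref{prop: Knuth=plactic} then gives that they are plactically equivalent. The crystal isomorphisms of Lemma~\ref{lemma bijections} and Proposition~\ref{prop: words to biwords} carry this equivalence to $T$ and $PT$.

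For item (3), I use the fact quoted from \cite[Chapter 8]{BS} that biwords with the same recording tableau $Q$ lie in the same irreducible crystal. This holds because crystal operators preserve $Q$. The crystal of ${\kappa\choose\rho}$ therefore contains a unique highest weight element ${\kappa\choose\rho_{\max}}$ with the same $Q$. Its ptableau has an SSYT as left-justified form, by Theorem~\ref{highest weight ptab}. Once it is checked that this ptableau is $T_{\mmax}$, item (3) follows.
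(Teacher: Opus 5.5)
The paper gives no proof of this statement. It is quoted from \cite{AW2023}, where the algorithm is defined. The only related material here is the later remark that $P$ and $PT$ are dual and that $Q$ corresponds to $T_{\mmax}$. Your idea of transporting classical RSK through ${}_{\BW}\Phi_{\PTab}$ is consistent with that remark. Your arguments for items (1) and (2) are sound, modulo the convention issue below, once one knows that $PT$ is the ptableau with biword $\binom{1^{\lambda_1}2^{\lambda_2}\cdots}{\mathrm{rw}(P)}$, i.e.\ the minimal biword of $\mathrm{rw}(P)$. But there is a genuine gap. The statement is about a specific algorithm that this paper never defines. The identification of its outputs with your objects, which you postpone to ``check from the construction,'' is therefore the entire content of the theorem, and everything you prove is conditional on it.

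The gap is sharpest in item (3). Your identification of $T_{\mmax}$ (the ptableau with content word $\kappa$ whose row word is the highest weight word of the crystal of $\rho$) is item (3) verbatim, so your last sentence reduces (3) to itself. The appeal to $Q$ does no work as written: you only conclude that the highest weight ptableau left-justifies to \emph{some} SSYT, which Theorem~\ref{highest weight ptab} already gives. For $Q$ to matter you need two things:
\begin{enumerate}
\item a description of $T_{\mmax}$ taken from the algorithm, e.g.\ that $T_{\mmax}$ is the ptableau whose left-justified form is the recording tableau $Q$;
\item the lemma that the highest weight ptableau of the component left-justifies to exactly $Q$.
\end{enumerate}

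The second point is true, but only in this paper's conventions, which you must make explicit. The relations of Definition~\ref{Def:Knuth Equiv} are the reverses of the textbook Knuth relations, and reading words run right to left, top to bottom. The matching insertion is therefore column insertion of $\rho_1,\rho_2,\dots,\rho_k$, equivalently row insertion of the reversed word. This is also what makes $Q$ semistandard when the bottom row weakly decreases under constant factors of $\kappa$. With this insertion, a highest weight (left Yamanouchi) word keeps the insertion tableau superstandard, and each letter $i$ adds a box at the end of row $i$. So $Q$ has $\kappa_j$ in row $(\rho_{\max})_j$, and invariance of $Q$ under crystal operators then finishes the argument.

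With ordinary row insertion the facts you cite fail here. Already for $\rho=21$, its crystal $\{11,21,22\}$ has highest weight $(2)$, while row insertion produces shape $(1,1)$. So ``Schensted's theorem'' and ``crystal operators preserve $Q$'' must be restated and checked in these conventions before you can use them.
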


For the maximal biwords considered here, the SYT $Q$ of the classical RSK algorithm corresponds to $T_{\mmax}$, now viewed as the highest weight element of the irreducible crystal containing $T$. In addition, the SSYT $P$ and the ptableau $PT$, as well as their respective biwords, are dual. Consequently, if $T$ is the ptableau associated to the maximal biword of $\rho$, then
\[
RSK(T)=(PT,T_{\mmax})
\]
again gives coordinates for $\rho$ in $\mathcal B_\lambda\times\mathcal S^\lambda$: $PT$ specifies a position in the distinguished crystal of highest weight $\lambda$, while the standard tableau $T_{\mmax}$ specifies a position in the dual equivalence graph $\mathcal S^\lambda$.

The RSK correspondence therefore gives another way to construct the Cartesian graph $\mathcal B_\lambda\times\mathcal S^\lambda$: after identifying a word with its pair $(P,Q)$, we can use crystal operators in the $P$-coordinate and dual equivalence operators in the $Q$-coordinate to define its two families of edges. What connects this Cartesian graph specifically to Schur-Weyl duality, however, is that both graph structures can be realized directly on the words in $[n]^{\otimes k}$. Crystal operators give a combinatorial proxy for the $\mathfrak{gl}_n$-action, while the results above show that the dual equivalence direction can be realized by elementary Knuth equivalences on words, or equivalently by elementary dual equivalences on their parsed maximal content words. Moreover, these two structures are compatible: applying a crystal operator to the corresponding nodes of a dual equivalence graph produces another dual equivalence graph, although an edge labeled $D_i^+$ may become one labeled $D_i^-$, or conversely. Thus RSK provides explicit coordinates for the Cartesian structure, while the preceding construction identifies the two directions of that structure intrinsically on the words themselves.

\subsection{Combinatorial Schur-Weyl Duality and Quasicrystals}

We now position our results on combinatorial Schur-Weyl duality within earlier work on quasicrystals and our results in Section~\ref{section: Parsed word quasicrystal}. Assaf \cite{Assaf08fpsac-crystalops} explored aspects of Schur-Weyl duality by means of dual equivalence graphs, studying a single irreducible crystal of SSYT of fixed shape $\lambda$.  Such crystals contain some nodes that are SYT of shape $\lambda$, and hence are naturally the nodes of the dual equivalence graph $\mathcal S^\lambda$. Assaf showed that the elementary dual equivalence operators on these standard nodes can be realized by explicit sequences of crystal operators within this single irreducible crystal. Thus the dual equivalence graph can, in this setting, be recovered from relations among distinguished nodes internal to one crystal. It was not clear how this related specifically to Schur-Weyl duality, since the construction was contained within a single crystal, but it did establish a bridge between crystals and dual equivalence.

A related perspective was developed by Maas-Gariépy \cite{FMG23}, who decomposed a crystal of SSYT into connected induced subgraphs called \emph{quasicrystals}. There, it was determined that each quasicrystal contains a unique standard node of the sort studied by Assaf. Maas-Gariépy defined the \emph{crystal skeleton} as the graph induced by treating each quasicrystal as a single node. She conjectured that the dual equivalence graph of shape $\lambda$ is a subgraph of the crystal skeleton. More recently, Brauner, Corteel, Daugherty, and Schilling \cite{BCDS} proved this conjecture as part of a more general analysis showing that crystal skeletons generalize dual equivalence graphs.

The parsed maximal content word model and the view of combinatorial Schur-Weyl duality presented here allow a broader view. Our analysis allows us to model \emph{all} quasicrystals within all irreducible crystals in $[n]^{\otimes k}$, and not only the distinguished crystal of words that are reading words of SSYT. Recall (Theorem~\ref{thm:quasi constant content}) that within a fixed irreducible crystal, the parsed maximal content words belonging to a given quasicrystal are precisely the different valid parsings of a single underlying permutation. We can therefore ``collapse'' each quasicrystal to this unique permutation and regard that permutation as an index for the quasicrystal. In this way, the quasicrystals throughout $[n]^{\otimes k}$, rather than only those within a single crystal of SSYT, can be represented by permutations (and all permutations represent quasicrystals).

The permutation indices associated to quasicrystals retain information about both the \emph{transverse} structure connecting plactically equivalent quasicrystals and the \emph{vertical} relations among quasicrystals within a given irreducible crystal. In the transverse direction, dual equivalent permutations index plactically equivalent quasicrystals, and hence quasicrystals occupying corresponding positions in isomorphic irreducible crystals. Thus the permutation indices of a plactic equivalence class of quasicrystals form a dual equivalence graph. Within a given irreducible crystal, on the other hand, the parsed forms of the permutations indexing its quasicrystals are related through crystal operators. These crystal operators therefore map the transverse dual equivalence graphs to one another, identifying graph proxies for distinct isomorphic $S_k$ representations. After collapsing each quasicrystal to its permutation index, crystal relations consequently retain the organization of quasicrystals within irreducible crystals, while dual equivalence retains the relations among corresponding quasicrystals across isomorphic irreducible crystals.

The following examples illustrate this collapse and its interaction with the two graph structures. In particular, they provide a way to compare the dual equivalence structure obtained from the permutation indices with the quasicrystal structure from which those indices arise. 
\medskip

 \begin{example}
Let $\lambda=(2,1,1)$. There exist three irreducible crystals of words of highest weight $(2,1,1)$ in $[4]^{\otimes 4}$. We begin with these crystals before passing to their quasicrystal indices given by the parsed content word model. The diagram below shows the three isomorphic crystals, together with the explicit elementary Knuth equivalences connecting their highest weight elements. Similar Knuth equivalences connect the plactically equivalent words occupying corresponding positions throughout the three crystals:

\begin{center}
\scalebox{0.88}{
\begin{tikzpicture}[
    node/.style={inner sep=2pt},
    >=stealth
]

\def\xsep{1.35}   
\def\ysep{1.15}   
\def\gsep{6.8}    

\def\Ashift{0}
\def\Bshift{\gsep}
\def\Cshift{2*\gsep}


\node[node] (A11) at ({\Ashift+1*\xsep},{-1*\ysep}) {$1123$};
\node[node] (A12) at ({\Ashift+2*\xsep},{-1*\ysep}) {$1124$};
\node[node] (A13) at ({\Ashift+3*\xsep},{-1*\ysep}) {$1134$};
\node[node] (A14) at ({\Ashift+4*\xsep},{-1*\ysep}) {$2134$};
\node[node] (A15) at ({\Ashift+5*\xsep},{-1*\ysep}) {$2234$};

\node[node] (A21) at ({\Ashift+1*\xsep},{-2*\ysep}) {$2123$};
\node[node] (A22) at ({\Ashift+2*\xsep},{-2*\ysep}) {$2124$};
\node[node] (A23) at ({\Ashift+3*\xsep},{-2*\ysep}) {$3124$};
\node[node] (A24) at ({\Ashift+4*\xsep},{-2*\ysep}) {$3134$};
\node[node] (A25) at ({\Ashift+5*\xsep},{-2*\ysep}) {$3234$};

\node[node] (A31) at ({\Ashift+1*\xsep},{-3*\ysep}) {$3123$};
\node[node] (A32) at ({\Ashift+2*\xsep},{-3*\ysep}) {$4123$};
\node[node] (A33) at ({\Ashift+3*\xsep},{-3*\ysep}) {$4124$};
\node[node] (A34) at ({\Ashift+4*\xsep},{-3*\ysep}) {$4134$};
\node[node] (A35) at ({\Ashift+5*\xsep},{-3*\ysep}) {$4234$};



\node[node] (B11) at ({\Bshift+1*\xsep},{-1*\ysep}) {$1213$};
\node[node] (B12) at ({\Bshift+2*\xsep},{-1*\ysep}) {$1214$};
\node[node] (B13) at ({\Bshift+3*\xsep},{-1*\ysep}) {$1314$};
\node[node] (B14) at ({\Bshift+4*\xsep},{-1*\ysep}) {$2314$};
\node[node] (B15) at ({\Bshift+5*\xsep},{-1*\ysep}) {$2324$};

\node[node] (B21) at ({\Bshift+1*\xsep},{-2*\ysep}) {$1223$};
\node[node] (B22) at ({\Bshift+2*\xsep},{-2*\ysep}) {$1224$};
\node[node] (B23) at ({\Bshift+3*\xsep},{-2*\ysep}) {$1324$};
\node[node] (B24) at ({\Bshift+4*\xsep},{-2*\ysep}) {$1334$};
\node[node] (B25) at ({\Bshift+5*\xsep},{-2*\ysep}) {$2334$};

\node[node] (B31) at ({\Bshift+1*\xsep},{-3*\ysep}) {$1323$};
\node[node] (B32) at ({\Bshift+2*\xsep},{-3*\ysep}) {$1423$};
\node[node] (B33) at ({\Bshift+3*\xsep},{-3*\ysep}) {$1424$};
\node[node] (B34) at ({\Bshift+4*\xsep},{-3*\ysep}) {$1434$};
\node[node] (B35) at ({\Bshift+5*\xsep},{-3*\ysep}) {$2434$};



\node[node] (C11) at ({\Cshift+1*\xsep},{-1*\ysep}) {$1231$};
\node[node] (C12) at ({\Cshift+2*\xsep},{-1*\ysep}) {$1241$};
\node[node] (C13) at ({\Cshift+3*\xsep},{-1*\ysep}) {$1341$};
\node[node] (C14) at ({\Cshift+4*\xsep},{-1*\ysep}) {$2341$};
\node[node] (C15) at ({\Cshift+5*\xsep},{-1*\ysep}) {$2342$};

\node[node] (C21) at ({\Cshift+1*\xsep},{-2*\ysep}) {$1232$};
\node[node] (C22) at ({\Cshift+2*\xsep},{-2*\ysep}) {$1242$};
\node[node] (C23) at ({\Cshift+3*\xsep},{-2*\ysep}) {$1342$};
\node[node] (C24) at ({\Cshift+4*\xsep},{-2*\ysep}) {$1343$};
\node[node] (C25) at ({\Cshift+5*\xsep},{-2*\ysep}) {$2343$};

\node[node] (C31) at ({\Cshift+1*\xsep},{-3*\ysep}) {$1233$};
\node[node] (C32) at ({\Cshift+2*\xsep},{-3*\ysep}) {$1243$};
\node[node] (C33) at ({\Cshift+3*\xsep},{-3*\ysep}) {$1244$};
\node[node] (C34) at ({\Cshift+4*\xsep},{-3*\ysep}) {$1344$};
\node[node] (C35) at ({\Cshift+5*\xsep},{-3*\ysep}) {$2344$};


\draw[->] (A11) -- node[below] {$\scriptstyle f_3$} (A12);
\draw[->] (A12) -- node[above] {$\scriptstyle f_2$} (A13);
\draw[->] (A13) -- node[above] {$\scriptstyle f_1$} (A14);
\draw[->] (A14) -- node[above] {$\scriptstyle f_1$} (A15);

\draw[->] (A21) -- node[above] {$\scriptstyle f_3$} (A22);
\draw[->] (A22) -- node[above] {$\scriptstyle f_2$} (A23);
\draw[->] (A23) -- node[above] {$\scriptstyle f_2$} (A24);
\draw[->] (A24) -- node[above] {$\scriptstyle f_1$} (A25);

\draw[->] (A31) -- node[above] {$\scriptstyle f_3$} (A32);
\draw[->] (A32) -- node[above] {$\scriptstyle f_3$} (A33);
\draw[->] (A33) -- node[above] {$\scriptstyle f_2$} (A34);
\draw[->] (A34) -- node[above] {$\scriptstyle f_1$} (A35);

\draw[->] (A11) -- node[left] {$\scriptstyle f_1$} (A21);
\draw[->] (A21) -- node[left] {$\scriptstyle f_2$}(A31);
\draw[->] (A15) -- node[left] {$\scriptstyle f_2$}(A25);
\draw[->] (A25) -- node[left] {$\scriptstyle f_3$}(A35);
\draw[->] (A12) -- node[left] {$\scriptstyle f_1$}(A22);
\draw[->] (A24) -- node[left] {$\scriptstyle f_2$}(A34);


\draw[->] (B11) -- node[below] {$\scriptstyle f_3$} (B12);
\draw[->] (B12) -- node[above] {$\scriptstyle f_2$} (B13);
\draw[->] (B13) -- node[above] {$\scriptstyle f_1$} (B14);
\draw[->] (B14) -- node[above] {$\scriptstyle f_1$} (B15);

\draw[->] (B21) -- node[above] {$\scriptstyle f_3$} (B22);
\draw[->] (B22) -- node[above] {$\scriptstyle f_2$} (B23);
\draw[->] (B23) -- node[above] {$\scriptstyle f_2$} (B24);
\draw[->] (B24) -- node[above] {$\scriptstyle f_1$} (B25);

\draw[->] (B31) -- node[above] {$\scriptstyle f_3$} (B32);
\draw[->] (B32) -- node[above] {$\scriptstyle f_3$} (B33);
\draw[->] (B33) -- node[above] {$\scriptstyle f_2$} (B34);
\draw[->] (B34) -- node[above] {$\scriptstyle f_1$} (B35);

\draw[->] (B11) -- node[left] {$\scriptstyle f_1$} (B21);
\draw[->] (B21) -- node[left] {$\scriptstyle f_2$}(B31);
\draw[->] (B15) -- node[left] {$\scriptstyle f_2$}(B25);
\draw[->] (B25) -- node[left] {$\scriptstyle f_3$}(B35);
\draw[->] (B12) -- node[left] {$\scriptstyle f_1$}(B22);
\draw[->] (B24) -- node[left] {$\scriptstyle f_2$}(B34);


\draw[->] (C11) -- node[above] {$\scriptstyle f_3$} (C12);
\draw[->] (C12) -- node[above] {$\scriptstyle f_2$} (C13);
\draw[->] (C13) -- node[above] {$\scriptstyle f_1$} (C14);
\draw[->] (C14) -- node[above] {$\scriptstyle f_1$} (C15);

\draw[->] (C21) -- node[above] {$\scriptstyle f_3$} (C22);
\draw[->] (C22) -- node[above] {$\scriptstyle f_2$} (C23);
\draw[->] (C23) -- node[above] {$\scriptstyle f_2$} (C24);
\draw[->] (C24) -- node[above] {$\scriptstyle f_1$} (C25);

\draw[->] (C31) -- node[above] {$\scriptstyle f_3$} (C32);
\draw[->] (C32) -- node[above] {$\scriptstyle f_3$} (C33);
\draw[->] (C33) -- node[above] {$\scriptstyle f_2$} (C34);
\draw[->] (C34) -- node[above] {$\scriptstyle f_1$} (C35);

\draw[->] (C11) -- node[left] {$\scriptstyle f_1$} (C21);
\draw[->] (C21) -- node[left] {$\scriptstyle f_2$}(C31);
\draw[->] (C15) -- node[left] {$\scriptstyle f_2$}(C25);
\draw[->] (C25) -- node[left] {$\scriptstyle f_3$}(C35);
\draw[->] (C12) -- node[left] {$\scriptstyle f_1$}(C22);
\draw[->] (C24) -- node[left] {$\scriptstyle f_2$}(C34);


\draw (A11) ellipse [x radius=0.52cm, y radius=0.30cm];
\draw (B11) ellipse [x radius=0.52cm, y radius=0.30cm];
\draw (C11) ellipse [x radius=0.52cm, y radius=0.30cm];

\draw[ thick]
    (A11.20)
    to[out=25,in=155]
    node[midway, above=2pt] {$K_{2}^{-}$}
    (B11.160);

\draw[thick]
    (B11.20)
    to[out=25,in=155]
    node[midway, above=2pt] {$K_{3}^{+}$}
    (C11.160);
\end{tikzpicture}
}
\end{center}

 We now pass to the associated crystals of parsed maximal content words. Within each such crystal, the parsed maximal content words belonging to a single quasicrystal have the same underlying permutation; in the diagram below, we enclose each such quasicrystal in a box. Thus, collapsing each box to its underlying permutation gives the permutation index of that quasicrystal. Moreover, the elementary Knuth equivalences between plactically equivalent words in the diagram above correspond to elementary dual equivalences between the underlying permutations indexing the corresponding quasicrystals below.
\begin{center}
\scalebox{0.78}{
\begin{tikzpicture}[
    node/.style={inner sep=2pt},
    >=stealth
]

\def\xsep{1.45}   
\def\ysep{1.35}   
\def\gsep{7.7}    

\def\Ashift{0}
\def\Bshift{\gsep}
\def\Cshift{2*\gsep}


\draw[rounded corners=10pt] ({\Ashift+0.45*\xsep},{-1*\ysep+0.34}) rectangle ({\Ashift+5.55*\xsep},{-1*\ysep-0.34});
\draw[rounded corners=10pt] ({\Ashift+0.45*\xsep},{-2*\ysep+0.34}) rectangle ({\Ashift+5.55*\xsep},{-2*\ysep-0.34});
\draw[rounded corners=10pt] ({\Ashift+0.45*\xsep},{-3*\ysep+0.34}) rectangle ({\Ashift+5.55*\xsep},{-3*\ysep-0.34});

\draw[rounded corners=10pt] ({\Bshift+0.45*\xsep},{-1*\ysep+0.34}) rectangle ({\Bshift+5.55*\xsep},{-1*\ysep-0.34});
\draw[rounded corners=10pt] ({\Bshift+0.45*\xsep},{-2*\ysep+0.34}) rectangle ({\Bshift+5.55*\xsep},{-2*\ysep-0.34});
\draw[rounded corners=10pt] ({\Bshift+0.45*\xsep},{-3*\ysep+0.34}) rectangle ({\Bshift+5.55*\xsep},{-3*\ysep-0.34});

\draw[rounded corners=10pt] ({\Cshift+0.45*\xsep},{-1*\ysep+0.34}) rectangle ({\Cshift+5.55*\xsep},{-1*\ysep-0.34});
\draw[rounded corners=10pt] ({\Cshift+0.45*\xsep},{-2*\ysep+0.34}) rectangle ({\Cshift+5.55*\xsep},{-2*\ysep-0.34});
\draw[rounded corners=10pt] ({\Cshift+0.45*\xsep},{-3*\ysep+0.34}) rectangle ({\Cshift+5.55*\xsep},{-3*\ysep-0.34});


\node[node] (A11) at ({\Ashift+1*\xsep},{-1*\ysep}) {$21|3|4$};
\node[node] (A12) at ({\Ashift+2*\xsep},{-1*\ysep}) {$21|3||4$};
\node[node] (A13) at ({\Ashift+3*\xsep},{-1*\ysep}) {$21||3|4$};
\node[node] (A14) at ({\Ashift+4*\xsep},{-1*\ysep}) {$2|1|3|4$};
\node[node] (A15) at ({\Ashift+5*\xsep},{-1*\ysep}) {$21|3|4$};

\node[node] (A21) at ({\Ashift+1*\xsep},{-2*\ysep}) {$2|31|4$};
\node[node] (A22) at ({\Ashift+2*\xsep},{-2*\ysep}) {$2|32||4$};
\node[node] (A23) at ({\Ashift+3*\xsep},{-2*\ysep}) {$2|3|1|4$};
\node[node] (A24) at ({\Ashift+4*\xsep},{-2*\ysep}) {$2||31|4$};
\node[node] (A25) at ({\Ashift+5*\xsep},{-2*\ysep}) {$|2|31|4$};

\node[node] (A31) at ({\Ashift+1*\xsep},{-3*\ysep}) {$2|3|41$};
\node[node] (A32) at ({\Ashift+2*\xsep},{-3*\ysep}) {$2|3|4|13$};
\node[node] (A33) at ({\Ashift+3*\xsep},{-3*\ysep}) {$2|3||41$};
\node[node] (A34) at ({\Ashift+4*\xsep},{-3*\ysep}) {$2||3|41$};
\node[node] (A35) at ({\Ashift+5*\xsep},{-3*\ysep}) {$|2|3|41$};



\node[node] (B11) at ({\Bshift+1*\xsep},{-1*\ysep}) {$31|2|4$};
\node[node] (B12) at ({\Bshift+2*\xsep},{-1*\ysep}) {$31|2||4$};
\node[node] (B13) at ({\Bshift+3*\xsep},{-1*\ysep}) {$31||2|4$};
\node[node] (B14) at ({\Bshift+4*\xsep},{-1*\ysep}) {$3|1|2|4$};
\node[node] (B15) at ({\Bshift+5*\xsep},{-1*\ysep}) {$|31|2|4$};

\node[node] (B21) at ({\Bshift+1*\xsep},{-2*\ysep}) {$1|32|4$};
\node[node] (B22) at ({\Bshift+2*\xsep},{-2*\ysep}) {$1|32||4$};
\node[node] (B23) at ({\Bshift+3*\xsep},{-2*\ysep}) {$1|3|2|4$};
\node[node] (B24) at ({\Bshift+4*\xsep},{-2*\ysep}) {$1||32|4$};
\node[node] (B25) at ({\Bshift+5*\xsep},{-2*\ysep}) {$|1|3|24$};

\node[node] (B31) at ({\Bshift+1*\xsep},{-3*\ysep}) {$1|3|42$};
\node[node] (B32) at ({\Bshift+2*\xsep},{-3*\ysep}) {$1|3|4|2$};
\node[node] (B33) at ({\Bshift+3*\xsep},{-3*\ysep}) {$1|3||42$};
\node[node] (B34) at ({\Bshift+4*\xsep},{-3*\ysep}) {$1||3|42$};
\node[node] (B35) at ({\Bshift+5*\xsep},{-3*\ysep}) {$|1|3|42$};



\node[node] (C11) at ({\Cshift+1*\xsep},{-1*\ysep}) {$41|2|3$};
\node[node] (C12) at ({\Cshift+2*\xsep},{-1*\ysep}) {$42|2||3$};
\node[node] (C13) at ({\Cshift+3*\xsep},{-1*\ysep}) {$41||2|3$};
\node[node] (C14) at ({\Cshift+4*\xsep},{-1*\ysep}) {$4|1|2|3$};
\node[node] (C15) at ({\Cshift+5*\xsep},{-1*\ysep}) {$|41|2|3$};

\node[node] (C21) at ({\Cshift+1*\xsep},{-2*\ysep}) {$1|42|3$};
\node[node] (C22) at ({\Cshift+2*\xsep},{-2*\ysep}) {$1|42||3$};
\node[node] (C23) at ({\Cshift+3*\xsep},{-2*\ysep}) {$1|42||3$};
\node[node] (C24) at ({\Cshift+4*\xsep},{-2*\ysep}) {$1||42|3$};
\node[node] (C25) at ({\Cshift+5*\xsep},{-2*\ysep}) {$|1|42|3$};

\node[node] (C31) at ({\Cshift+1*\xsep},{-3*\ysep}) {$1|2|43$};
\node[node] (C32) at ({\Cshift+2*\xsep},{-3*\ysep}) {$1|2|4|3$};
\node[node] (C33) at ({\Cshift+3*\xsep},{-3*\ysep}) {$1|2||43$};
\node[node] (C34) at ({\Cshift+4*\xsep},{-3*\ysep}) {$1||2|43$};
\node[node] (C35) at ({\Cshift+5*\xsep},{-3*\ysep}) {$|1|2|43$};


\draw[->] (A11) -- node[below] {$\scriptstyle f_3$} (A12);
\draw[->] (A12) -- node[above] {$\scriptstyle f_2$} (A13);
\draw[->] (A13) -- node[above] {$\scriptstyle f_1$} (A14);
\draw[->] (A14) -- node[above] {$\scriptstyle f_1$} (A15);

\draw[->] (A21) -- node[above] {$\scriptstyle f_3$} (A22);
\draw[->] (A22) -- node[above] {$\scriptstyle f_2$} (A23);
\draw[->] (A23) -- node[above] {$\scriptstyle f_2$} (A24);
\draw[->] (A24) -- node[above] {$\scriptstyle f_1$} (A25);

\draw[->] (A31) -- node[above] {$\scriptstyle f_3$} (A32);
\draw[->] (A32) -- node[above] {$\scriptstyle f_3$} (A33);
\draw[->] (A33) -- node[above] {$\scriptstyle f_2$} (A34);
\draw[->] (A34) -- node[above] {$\scriptstyle f_1$} (A35);

\draw[->] (A11) -- node[left] {$\scriptstyle f_1$} (A21);
\draw[->] (A21) -- node[left] {$\scriptstyle f_2$}(A31);
\draw[->] (A15) -- node[left] {$\scriptstyle f_2$}(A25);
\draw[->] (A25) -- node[left] {$\scriptstyle f_3$}(A35);
\draw[->] (A12) -- node[left] {$\scriptstyle f_1$}(A22);
\draw[->] (A24) -- node[left] {$\scriptstyle f_2$}(A34);


\draw[->] (B11) -- node[below] {$\scriptstyle f_3$} (B12);
\draw[->] (B12) -- node[above] {$\scriptstyle f_2$} (B13);
\draw[->] (B13) -- node[above] {$\scriptstyle f_1$} (B14);
\draw[->] (B14) -- node[above] {$\scriptstyle f_1$} (B15);

\draw[->] (B21) -- node[above] {$\scriptstyle f_3$} (B22);
\draw[->] (B22) -- node[above] {$\scriptstyle f_2$} (B23);
\draw[->] (B23) -- node[above] {$\scriptstyle f_2$} (B24);
\draw[->] (B24) -- node[above] {$\scriptstyle f_1$} (B25);

\draw[->] (B31) -- node[above] {$\scriptstyle f_3$} (B32);
\draw[->] (B32) -- node[above] {$\scriptstyle f_3$} (B33);
\draw[->] (B33) -- node[above] {$\scriptstyle f_2$} (B34);
\draw[->] (B34) -- node[above] {$\scriptstyle f_1$} (B35);

\draw[->] (B11) -- node[left] {$\scriptstyle f_1$} (B21);
\draw[->] (B21) -- node[left] {$\scriptstyle f_2$}(B31);
\draw[->] (B15) -- node[left] {$\scriptstyle f_2$}(B25);
\draw[->] (B25) -- node[left] {$\scriptstyle f_3$}(B35);
\draw[->] (B12) -- node[left] {$\scriptstyle f_1$}(B22);
\draw[->] (B24) -- node[left] {$\scriptstyle f_2$}(B34);


\draw[->] (C11) -- node[below] {$\scriptstyle f_3$} (C12);
\draw[->] (C12) -- node[above] {$\scriptstyle f_2$} (C13);
\draw[->] (C13) -- node[above] {$\scriptstyle f_1$} (C14);
\draw[->] (C14) -- node[above] {$\scriptstyle f_1$} (C15);

\draw[->] (C21) -- node[above] {$\scriptstyle f_3$} (C22);
\draw[->] (C22) -- node[above] {$\scriptstyle f_2$} (C23);
\draw[->] (C23) -- node[above] {$\scriptstyle f_2$} (C24);
\draw[->] (C24) -- node[above] {$\scriptstyle f_1$} (C25);

\draw[->] (C31) -- node[above] {$\scriptstyle f_3$} (C32);
\draw[->] (C32) -- node[above] {$\scriptstyle f_3$} (C33);
\draw[->] (C33) -- node[above] {$\scriptstyle f_2$} (C34);
\draw[->] (C34) -- node[above] {$\scriptstyle f_1$} (C35);

\draw[->] (C11) -- node[left] {$\scriptstyle f_1$} (C21);
\draw[->] (C21) -- node[left] {$\scriptstyle f_2$}(C31);
\draw[->] (C15) -- node[left] {$\scriptstyle f_2$}(C25);
\draw[->] (C25) -- node[left] {$\scriptstyle f_3$}(C35);
\draw[->] (C12) -- node[left] {$\scriptstyle f_1$}(C22);
\draw[->] (C24) -- node[left] {$\scriptstyle f_2$}(C34);

\draw[thick]
    (A11.20)
    to[out=25,in=155]
    node[midway, above=2pt] {$D_{2}^{+}$}
    (B11.160);

\draw[thick]
    (B11.20)
    to[out=25,in=155]
    node[midway, above=2pt] {$D_{3}^{+}$}
    (C11.160);
\end{tikzpicture}
}
\end{center}

In particular, the diagram contains three rows and three columns of quasicrystals. Each column is the quasicrystal decomposition of a fixed irreducible crystal of highest weight $\lambda=(2,1,1)$, while each row consists of plactically equivalent quasicrystals whose permutation indices form a dual equivalence graph. Collapsing each quasicrystal to its underlying permutation therefore produces three distinct, isomorphic dual equivalence graphs, one from each row. At the same time, the crystal operators relating quasicrystals within each column descend to relations among the permutation indices of these three dual equivalence graphs. Thus the collapsed diagram retains both the transverse dual equivalence structure and the vertical crystal structure exhibited by the original collection of quasicrystals.

\end{example}

\section{A Biword Proof for $h_{\mu} = \sum_{\lambda} K_{\lambda \mu} s_{\lambda}$\label{homogeneous} }

As an application of our biword approach that uses biwords with content words that are neither maximal nor minimal, we present the following elementary proof of a well-known result:
\begin{theorem} The multiplicity of the Schur function $s_\lambda$ in the expansion of a homogeneous symmetric function $h_\mu$ is $K_{\lambda \mu}$, the number of SSYT of shape $\lambda$ and content $\mu:$
    $$h_{\mu} = \sum_{\lambda} K_{\lambda \mu} s_{\lambda}.$$
\end{theorem}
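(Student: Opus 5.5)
We will realize $h_\mu$ as the character of a crystal built from biwords whose content word is fixed to be $\kappa_\mu = 1^{\mu_1}2^{\mu_2}\cdots \ell^{\mu_\ell}$, and then count the highest weight nodes. The content word $\kappa_\mu$ is, in general, neither minimal nor maximal for the row words placed beneath it. Let $\BW^{\mu}_{n,k}\subseteq \BW_{n,k}$ be the set of biwords $\binom{\kappa_\mu}{\rho}$ with $k=|\mu|$. By Definition~\ref{Def:biword}, such a biword is the same thing as a choice, for each $j$, of a weakly decreasing word of length $\mu_j$ in $[n]$, namely the factor of $\rho$ lying under the $j$'s. Summing $x^{\wt(\rho)}$ over these choices gives $\prod_j h_{\mu_j}(x_1,\ldots,x_n)=h_\mu$.

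The next step is to check that $\BW^\mu_{n,k}$ is a union of connected components of the biword crystal. By \eqref{eq:biwords_inherited_fi}, crystal operators only change the row word. The point to verify is that $f_i$ and $e_i$ on $\rho$ keep each factor under a constant block of $\kappa_\mu$ weakly decreasing. The cleanest route is through Lemma~\ref{lemma bijections}: $_{\BW}\Phi_{\PTab}$ is a crystal isomorphism on all of $\BW$, and $f_i$ on ptableaux moves an entry between rows without altering the content. Hence the image biword still has top row $\kappa_\mu$, and it is automatically a valid biword. Consequently
\[ h_\mu=\chi(\BW^\mu_{n,k})=\sum_{\mathcal B}\chi(\mathcal B)=\sum_\lambda c_{\lambda\mu}\, s_\lambda, \]
where $c_{\lambda\mu}$ is the number of highest weight nodes of weight $\lambda$ in $\BW^\mu_{n,k}$. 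This uses that the character of an irreducible crystal of highest weight $\lambda$ is $s_\lambda$, with the inductive limit in $n$ as in Section~\ref{section: quasicrystals}.

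It remains to show $c_{\lambda\mu}=K_{\lambda\mu}$. By Theorem~\ref{highest weight ptab}, a biword $\binom{\kappa_\mu}{\rho}$ is highest weight exactly when the left-justified form of its ptableau $T$ is an SSYT. That ptableau has content $\mu$, since it has $\mu_j$ entries equal to $j$, and its shape is $\wt(\rho)=\lambda$. Conversely, given an SSYT $S$ of shape $\lambda$ and content $\mu$, we read off the row in which each entry sits. This produces a biword with top row $\kappa_\mu$, after sorting each block decreasingly, and by Lemma~\ref{unique} and the bijection in Lemma~\ref{lemma bijections} it corresponds to a ptableau whose left-justified form is $S$. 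These two maps are inverse to each other, so $c_{\lambda\mu}=K_{\lambda\mu}$.

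The main obstacle is this last bijection, specifically showing that the ptableau determined by an SSYT's (content, row) data left-justifies back to that SSYT. I would argue it from Definition~\ref{ptab}: an SSYT already satisfies the horizontal-strip and ordering conditions and has the minimal number of columns, so it is itself a left-justified ptableau. Lemma~\ref{unique} then identifies it with the ptableau of its biword. The closure step is the second point needing care, and I expect it to follow directly from the content-preservation argument above.
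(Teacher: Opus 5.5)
Your proof is correct, and it takes a genuinely different route from the paper's. The paper applies RSK to $\binom{\kappa_\mu}{\rho}$ to get a pair $(P,Q)$ of common shape $\lambda$ with $Q$ of content $\mu$. It then argues that fixing $Q$ and letting $P$ vary gives one copy of $s_\lambda$, so the copies are indexed by the $K_{\lambda\mu}$ possible $Q$'s.

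You never run an insertion algorithm. Instead, you show that the biwords with top row $\kappa_\mu$ form a union of crystal components, so $h_\mu$ is a sum of Schur functions over components. The closure you need is built into Lemma~\ref{lemma bijections}. It can also be checked directly: inside a weakly decreasing block no $i$ precedes an $i+1$, so the leftmost unbracketed $i$ is the first $i$ of its block, and changing it to $i+1$ keeps the block weakly decreasing. You then count components by their highest weight nodes, which Theorem~\ref{highest weight ptab} and Lemma~\ref{unique} put in bijection with SSYT of shape $\lambda$ and content $\mu$ via $T\mapsto T^\leftarrow$.

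What each approach buys:
\begin{itemize}
\item The paper's route gives explicit coordinates $(P,Q)$ for every monomial of $h_\mu$. It rests on RSK being a bijection onto all same-shape pairs with $Q$ of content $\mu$, under the paper's reversed ordering convention on biwords; the paper quotes this rather than proves it.
\item Your route needs only the ptableau highest-weight theorem and the fact that each irreducible component has character $s_\lambda$. It also makes the ``copies of $s_\lambda$'' literally the connected components of a crystal.
\end{itemize}
The two pictures are consistent by Theorem~\ref{RSK algorithm}: its recording datum $T_{\mmax}$ is exactly the highest weight node of the component, which is the object you count.
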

\begin{proof}
   A term in the product expansion $h_\mu=h_{\mu_1}h_{\mu_2}\cdots$ is obtained by choosing, for each factor $h_{\mu_i}$, a sequence of $\mu_i$ variables whose indices are weakly increasing. Represent such a term by a word $\rho$ in which the variable indices chosen from each factor $h_{\mu_i}$ are written in weakly \emph{decreasing} order. For example, if $\mu=(5,3)$, the term $(x_1^2x_2x_3^2)(x_1^3)$ in the product expansion $h_5h_3$ has associated word $33211|111$, with the parsing bar separating the indices arising from the two factors.

For each such term and its associated word $\rho$, form the biword ${\kappa \choose \rho}$, where $\kappa=1^{\mu_1}2^{\mu_2}\cdots$; that is, place $1$'s over the first factor of $\rho$, $2$'s over the second factor, and so on. Thus, each term in the product expansion of $h_\mu$ determines a unique biword whose content row $\kappa$ has weight $\mu$. Note that this biword is typically neither minimal nor maximal (see Definitions~\ref{defn:minimal_biword} and~\ref{def:maximal-biword}).

Now apply RSK to the associated biword. As in Section~6, RSK associates to the biword a pair $(P,Q)$ of SSYT of the same shape $\lambda$. The content of $P$ is determined by the bottom row $\rho$, while the content of $Q$ is determined by the top row $\kappa$. Since $\wt(\kappa)=\mu$, $Q$ is an SSYT of shape $\lambda$ and content $\mu$.

Recall from Section~4.1 that the Schur function $s_\lambda$ is the character of the irreducible crystal $\mathcal B_\lambda$, and can be expressed as the sum of the monomials $x^{\wt(P)}$ over all SSYT $P$ of shape $\lambda$. Thus, for a fixed $Q$ of shape $\lambda$, the collection of $P$'s occurring in the RSK pairs $(P,Q)$ indexes precisely the terms of $s_\lambda$. The distinct tableaux $Q$ therefore index the distinct copies of $s_\lambda$ arising in this decomposition of $h_\mu$.

By RSK, each tableau $Q$ has the same shape $\lambda$ as $P$, while its content is determined by the content word $\kappa$, which by construction is $\mu$. Thus, the distinct copies of $s_\lambda$ are indexed precisely by the SSYT of shape $\lambda$ and content $\mu$. There are $K_{\lambda\mu}$ such tableaux, and therefore \[h_\mu=\sum_\lambda K_{\lambda\mu}s_\lambda.\]
\end{proof}

\bibliographystyle{alpha}
\bibliography{biblio}

\end{document}